\documentclass[12pt, a4paper, english]{amsart}

% Page layout
\oddsidemargin0.3cm
\evensidemargin0.3cm
\textwidth15.7cm
\textheight 22.5 cm
\topmargin=1cm

% die folgenden Packete erlauben den Gebrauch von Umlauten und ß in der Latex Datei
\usepackage[utf8]{inputenc}
\usepackage[T1]{fontenc}
\usepackage[english]{babel}
\usepackage{csquotes}
\usepackage{enumitem}

\usepackage[pdftex]{graphicx} % für Bilder
\usepackage{booktabs} % für Tabellen
\usepackage{latexsym}
\usepackage{amsmath,amssymb,amsthm,amsfonts,amscd}
\usepackage{mathrsfs}
\usepackage[mathscr]{euscript}
\usepackage{color}
\usepackage{dsfont}

%BibTex
\usepackage[backend=bibtex, style=numeric, url=false, doi=false, hyperref=false,backref=false]{biblatex}
\addbibresource{literatur.bib}

% verschiedene Script Buchstaben auch in klein
\usepackage{stmaryrd}
\SetSymbolFont{stmry}{bold}{U}{stmry}{m}{n}

% Größen von Klammern optimieren
\usepackage{mleftright}

%Diagramme
\usepackage{tikz}			
\usetikzlibrary{arrows, angles, quotes, calc,through,backgrounds,matrix,decorations.pathmorphing}
\usepackage{multirow}

\usepackage[font=small, format=plain, labelfont=bf, up, justification=justified, singlelinecheck=false]{caption}

\usepackage{float}

% Schöne diagonale Brüche
%\usepackage{eufrak}
\usepackage{faktor}
\usepackage{xfrac}

\usepackage{colonequals}

% klickbare links in unauffälliger farbe
\usepackage{xcolor}
\usepackage{hyperref}
\usepackage[nameinlink,noabbrev, capitalise]{cleveref} 

\usepackage[title]{appendix}

% Abstand obere Blattkante zur Kopfzeile ist 2.54cm - 15mm
\setlength{\topmargin}{-15mm}
\setlength{\parindent}{0em}

% Abstände zwischen Worten und Umbrüche
\tolerance=500

\patchcmd{\subsection}{-.5em}{.5em}{}{}

% Umgebungen für Definitionen, Sätze, usw.
\newtheoremstyle{style1}
{1cm}                   %Space above
{1cm}                   %Space below
{\normalfont}           %Body font: original {\normalfont}
{}                      %Indent amount
{\normalfont\bfseries}  %Thm head font original {\normalfont\bfseries}
{}                      %Punctuation after thm head original :
{ }              %Space after thm head: " " = normal interword
%Space; \newline = linebreak
{\textbf{\thmname{#1}\thmnumber{ #2} \thmnote{(#3)}}}  			

\newtheoremstyle{style2}
{1cm}                   %Space above
{1cm}                   %Space below
{\itshape }           %Body font: original {\normalfont}
{}                      %Indent amount
{\normalfont\bfseries}  %Thm head font original {\normalfont\bfseries}
{}                      %Punctuation after thm head original :
{ }              %Space after thm head: " " = normal interword
%Space; \newline = linebreak
{\textbf{\thmname{#1}\thmnumber{ #2} \thmnote{(#3)}}}  

\theoremstyle{style2}
\newtheorem{Theorem}{Theorem}[section]
\newtheorem{Lemma}[Theorem]{Lemma}	
\newtheorem{Corollary}[Theorem]{Corollary}	
\newtheorem{Proposition}[Theorem]{Proposition}

\theoremstyle{style1}
\newtheorem{Definition}[Theorem]{Definition} 
\newtheorem{Remark}[Theorem]{Remark}	
\newtheorem{Example}[Theorem]{Example}
\newtheorem{Convention}[Theorem]{Convention}	

\numberwithin{equation}{section} 

% kann später noch für index/abhebungszwecke geändert werden

\addtolength\textheight{70pt}

\setlist[enumerate]{topsep=0.65ex, itemsep=0.5ex}

% einige Abkuerzungen
\newcommand{\HH}{\mathbb{H}} 
  
\newcommand{\NN}{\mathbb{N}} 
\newcommand{\RR}{\mathbb{R}}
\newcommand{\ZZ}{\mathbb{Z}}

\DeclareMathOperator{\ad}{\mathrm{ad}}
\DeclareMathOperator{\homdim}{\mathrm{homdim}}

\DeclareMathOperator{\cA}{\mathcal{A}}

\DeclareMathOperator{\cF}{\mathcal{F}}
\DeclareMathOperator{\cH}{\mathcal{H}}
\DeclareMathOperator{\cP}{\mathcal{P}}
\DeclareMathOperator{\cS}{\mathcal{S}}
\DeclareMathOperator{\fg}{\mathfrak{g}}

\renewcommand{\epsilon}{\varepsilon}

\setcounter{tocdepth}{2}

\title[Complexity of Cut-and-Project Sets]{\textbf{Complexity of Cut-and-Project Sets of Polytopal Type in Special Homogeneous Lie Groups}}

\author{Peter Kaiser}
\address{Institut für Algebra und Geometrie, KIT, Karlsruhe, Germany}
\email{peter.kaiser2@kit.edu}
\urladdr{https://www.math.kit.edu/didaktik/~kaiserp/de}

\begin{document}
	\setlength{\parindent}{0mm}
	\renewcommand{\thesection}{\arabic{section}}
	\renewcommand{\thesubsection}{\thesection.\arabic{subsection}}
	
	\begin{abstract}
		The aim of this paper is to determine the asymptotic growth rate of the complexity function of cut-and-project sets in the non-abelian case. In the case of model sets of polytopal type in homogeneous two-step nilpotent Lie groups we can establish that the complexity function asymptotically behaves like $r^{\homdim(G) \dim(H)}$. Further we generalize the concept of acceptance domains to locally compact second countable groups.
	\end{abstract}
	\maketitle
	
	\section{Introduction}
	This article is concerned with the complexity of discrete subsets of locally compact groups, which obey some form of aperiodic order.\par 
	For discrete subsets of locally compact abelian groups, notably for discrete subsets of $\RR^n$, there is a established notion of complexity based on the study of the so-called patch counting function \cite{ArnouxMauditShiokawa, Baryshnikov, Julien, KoivusaloWalton1, Lagarias2, Lagarias3, Moody1, Moody2, Vuillon}. More recently, there has been an approach to extend results about discrete subsets of locally abelian groups to general locally compact groups \cite{Beckus, BjorklundHartnick1, BjorklundHartnick2, BjorklundHartnick3, LenzStrungaru, Schlottmann1, Schlottmann2}.\par
	In the present article we contribute to this program by extending the notion of complexity to discrete subsets of non-abelian locally compact group. More specifically, we are going to generalize an approach of Julien, \cite{Julien}, and Koivusalo and Walton, \cite{KoivusaloWalton1}. While the theory works in full generality, we will obtain our strongest results in the case of two-step-nilpotent Lie groups. 
	
	\subsection{Aperiodic order in the Euclidean case}
	Consider the abelian group $(\RR^n, +)$ as a metric group with respect to the standard Euclidean metric. A set $\Lambda \subset \RR^n$ is called \textit{locally finite} if for all bounded sets $B \subset \RR^n$ the intersection $\Lambda \cap B$ is finite. For this sets one can define the patch counting function $p(r)$ (see \cref{Def:p}) as  a measure of their complexity. Examples of locally finite sets are lattices. Their complexity functions are constant 1, meaning that lattices are highly structured. In the case of aperiodic ordered sets the patch counting function is growing at least linearly \cite{Lagarias2, Lagarias3, Moody1, Moody2, Vuillon}. A locally finite set with $p(r)< \infty$ for all $r>0$ is called a set with \textit{finite local complexity} or an \textit{FLC set}.\par\medskip
	
	There are two important methods to construct FLC sets, either by \textit{substitution} or by \textit{cut-and-project}. We are interested in the cut-and-project approach, which is due to Yves Meyer, who is a pioneer in the field of aperiodic order and has laid the foundation for a lot of our common knowledge in the 1960s \cite{Meyer1, Meyer2, Meyer3}. The idea in the cut-and-project approach is to consider a lattice $\Gamma$ in the product $\RR^n \times \RR^d$. Then one chooses a subset $W \subset \RR^d$, which is called the \textit{window}. The projection of $(\RR^n \times W) \cap \Gamma$ to $\RR^n$ results in a point set, which is called a \textit{cut-and-project set}. Under some extra conditions this cut-and-project set is an FLC set, in this case it is called the \textit{model set} defined by the data $\Lambda(\RR^n, \RR^d, \Gamma, W)$. Such sets have been studied from different perspectives, \cite{BaakeHuckStrungaru, KoivusaloWalton1, Hof3, Lagarias1, Moody3}. \par\medskip
	
	In the 1980s the popularity of this field was pushed by the discovery of \textit{quasi-crystals}, \cite{Shechtman}. After this discovery physicists, crystallographers and mathematicians worked on models to describe these newly discovered aperiodic structures. Physicists are primarily interested in quasi-crystals in $\RR^n$, for $n \leq 3$, but mathematically the restriction on the dimension is unnatural and therefore was rapidly dropped. A history of the developments in this time can be found in the book by Senechal, \cite{Senechal}. The characterising property of a quasi-crystal is \textit{pure-point diffraction}, which is a global property and was studied in \cite{BaakeLenz, Dworkin, Hof1, Hof2, Lagarias2b}. Another line of research is to characterise the structure of an aperiodic ordered set by some local data, namely its \textit{repetitivity} or its \textit{complexity}, \cite{Lagarias2, Lagarias3, Moody1, Moody2, Vuillon}. For a comprehensive overview of the field see \cite{BaakeGrimm}.\par \medskip 
	
	This paper will focus on understanding the complexity of model sets. We want to determine how the \textit{complexity function $p(r)$} behaves asymptotically. 
	\begin{Definition}[Patch]
		Let $X$ be a metric space, $\Lambda \subset X$ a locally finite subset, $\lambda \in \Lambda$ and $r\in \RR^+$. Then the \textit{$r$-patch} $P_r(\lambda)$ is the constellation of points from $\Lambda$ around $\lambda$, which have distance at most $r$ to $\lambda$, i.e. $ P_r(\lambda):= B_r(\lambda) \cap \Lambda$.\par\medskip
		
		If $G$ is a lcsc group the set of patches of radius $r$ impose an equivalence relation on the elements of $\Lambda \subset G$ by
		\begin{equation} 
			\label{Def:requiv}
			\lambda \sim_r \mu :\Leftrightarrow P_r(\lambda)\lambda^{-1}=P_r(\mu)\mu^{-1}.
		\end{equation}
		We will denote the \textit{$r$-equivalence class of $\lambda$} by
		\begin{equation*}
			A_r^G(\lambda):= \{\mu \in \Lambda \mid \lambda \sim_r \mu\} \subset G
		\end{equation*}
		and the set of all equivalence classes by 
		\begin{equation*}
			A_r^G:=\{A_r^G(\lambda) \mid \lambda \in \Lambda\}.
		\end{equation*}
	\end{Definition}
	
	\begin{Definition}[Complexity function]\label{Def:p}
		Let $G$ be a lcsc group and $\Lambda \subset G$ a locally finite subset. Then the \textit{complexity function} $p(r)$ is given by
		\begin{equation*}
			p(r) := \left\vert\left\{B_r(e) \cap \Lambda \lambda^{-1} \,\middle\vert\, \lambda \in \Lambda \right\} \right\vert = \left\vert\left\{P_r(\lambda) \lambda^{-1} \,\middle\vert\, \lambda \in \Lambda \right\}\right\vert = \left\vert A_r^G \right\vert . 
		\end{equation*}
	\end{Definition}

	The function $p$ is also called the \textit{patch-counting function} and first appears in the work by Lagarias and Pleasants, \cite{Lagarias3}. Note that model sets carry information than the underlying point set itself, this can be used to determine the complexity function. Some early work in this context is done in \cite{ArnouxMauditShiokawa} and \cite{Baryshnikov} for some special cases and low dimensions. A general approach first appeared in the paper by Julien, \cite{Julien}, the main idea is that each class of patches corresponds to a certain region inside the window, the so called \textit{acceptance domain}. Optimal results can be obtained in the case of \textit{polytopal windows}, that is when $W$ is a convex polytope. The ideas of Julien where picked up by Koivusalo and Walton, \cite{KoivusaloWalton2}, who where proved the following theorem. We will assume the stabilizers of the hyperplanes which bound the window are trivial; in the original theorem the role of this stabilizers is addressed.
	
	\begin{Theorem}[Koivusalo, Walton, {\cite[Theorem 7.1]{KoivusaloWalton2}}]
		Consider a model set $\Lambda(\RR^n, \RR^d, \Gamma, W)$ with a polytopal window $W$. Assume that the stabilizer of the hyperplanes which bound the window are trivial. Then the complexity grows asymptotically as $p(r) \asymp r^{n \cdot d}$.
	\end{Theorem}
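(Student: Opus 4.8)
The plan is to follow the strategy pioneered by Julien and refined by Koivusalo--Walton: translate the problem of counting $r$-patches into one of counting the cells (``acceptance domains'') of an explicit finite hyperplane arrangement inside the window, and then bound the number of cells from above and below.

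Write $\pi\colon\RR^n\times\RR^d\to\RR^n$ and $\pi^\star\colon\RR^n\times\RR^d\to\RR^d$ for the two coordinate projections, so that $\Lambda=\pi\bigl(\Gamma\cap(\RR^n\times W)\bigr)$; by the standard model-set assumptions $\pi|_\Gamma$ is injective and $\pi^\star(\Gamma)$ is dense in $\RR^d$, so every $\lambda\in\Lambda$ has a well-defined internal coordinate $\lambda^\star:=\pi^\star(\tilde\lambda)$, where $\tilde\lambda\in\Gamma$ is the lift of $\lambda$. A short computation shows that for $\gamma\in\Gamma$ with $|\pi(\gamma)|\le r$ one has $\lambda+\pi(\gamma)\in P_r(\lambda)$ precisely when $\lambda^\star\in W-\pi^\star(\gamma)$, and that this is relevant only for $\gamma$ in the finite set $\Gamma_r:=\{\gamma\in\Gamma:|\pi(\gamma)|\le r,\ \pi^\star(\gamma)\in W-W\}$. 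Writing $W=\{x:\langle a_i,x\rangle\le c_i,\ 1\le i\le k\}$ with facet hyperplanes $H_i=\{x:\langle a_i,x\rangle=c_i\}$, membership of $\lambda^\star$ in a given $W-\pi^\star(\gamma)$ is determined by the position of $\lambda^\star$ relative to the finite arrangement
\[
\mathcal A_r:=\bigl\{\,H_i-\pi^\star(\gamma)\ :\ 1\le i\le k,\ \gamma\in\Gamma_r\,\bigr\}
\]
of hyperplanes in $\RR^d$. Hence $\lambda\sim_r\mu$ exactly when $\lambda^\star$ and $\mu^\star$ lie in the same cell of $\mathcal A_r$, and --- invoking the standing nonsingularity assumption that no $\lambda^\star$ lies on any hyperplane of any $\mathcal A_r$ --- we obtain
\[
p(r)=\#\bigl\{\text{cells }C\text{ of }\mathcal A_r\ :\ C\cap\pi^\star(\Lambda)\ne\emptyset\bigr\}.
\]

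For the upper bound I would note that $\Gamma$ meets the box $B_r(0)\times(W-W)$ in $\asymp r^n$ points, so $|\Gamma_r|\asymp r^n$ and $\mathcal A_r$ consists of $O(r^n)$ hyperplanes; the classical fact that $N$ hyperplanes cut $\RR^d$ into $O(N^d)$ cells then gives $p(r)=O(r^{nd})$. For the lower bound I would first observe that $\pi^\star(\Lambda)=\pi^\star(\Gamma)\cap W$ is dense in $W$, so every cell of $\mathcal A_r$ whose interior meets $\operatorname{int}W$ is counted; it therefore suffices to exhibit $\gtrsim r^{nd}$ such cells. Fix $d$ facets whose normals $a_{i_1},\dots,a_{i_d}$ are linearly independent (possible since the facet normals of a bounded polytope positively span $\RR^d$). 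In direction $i_j$ the translates $H_{i_j}-\pi^\star(\gamma)$ are indexed by the values of the linear form $\gamma\mapsto\langle a_{i_j},\pi^\star(\gamma)\rangle$ on $\Gamma_r$; triviality of the stabilizer of $H_{i_j}$ says exactly that this form is injective on $\Gamma$ (its kernel $(\pi^\star)^{-1}(a_{i_j}^{\perp})$ meets $\Gamma$ only in $0$), so the $\asymp r^n$ translates in direction $i_j$ are pairwise distinct, and a further lattice-point count in an expanding box shows that $\gtrsim r^n$ of them cross a fixed parallelepiped $Q\subset\operatorname{int}W$. These $d$ pencils of $\gtrsim r^n$ parallel hyperplanes with independent normals form a grid-like subarrangement inside $Q$ whose $\asymp r^{nd}$ cells are full-dimensional and contained in $\operatorname{int}W$; together with the upper bound this yields $p(r)\asymp r^{nd}$.

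The main obstacle is the lower bound, in the two places where the trivial-stabilizer hypothesis is indispensable. First, one must know that each chosen facet direction really contributes $\gtrsim r^n$ \emph{pairwise distinct} translated hyperplanes: this counts the distinct values of a linear form on the lattice points in a thin slab, a number that collapses to $O(1)$ exactly when the form is rational on $\Gamma$, i.e.\ when the stabilizer is nontrivial. Second, one must check that $\gtrsim r^{nd}$ of the cells of the grid subarrangement are simultaneously full-dimensional, pairwise distinct, and contained in $W$ --- rather than degenerating near $\partial W$ or collapsing because several chosen hyperplanes pass through a common lower-dimensional face --- which needs a little care about where the translated facets sit inside $W-W$ and about the (quasi-)equidistribution of the offsets $\langle a_{i_j},\pi^\star(\gamma)\rangle$. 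The remaining book-keeping --- passing between the facets of $W$ and their affine hulls, between cells of $\mathcal A_r$ and cells of the restriction $\mathcal A_r\cap W$, and discarding the measure-zero singular configurations --- costs only bounded multiplicative factors and is routine.
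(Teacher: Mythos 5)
Your outline is the right one and matches the Koivusalo--Walton strategy that the paper cites: translate patch classes into regions of a finite hyperplane arrangement inside the window, bound the number of regions above by $O(|\mathcal A_r|^d)$, and below by exhibiting a grid-like subarrangement. The upper bound as you state it is fine. The lower bound contains a genuine gap, however, that is hidden in the line ``$\lambda\sim_r\mu$ exactly when $\lambda^\star$ and $\mu^\star$ lie in the same cell of $\mathcal A_r$.'' Only one direction of this holds in general: two points in the same cell of $\mathcal A_r$ do give $r$-equivalent patches, but two $r$-equivalent points can lie in different cells, because being on opposite sides of the affine hull $H_i-\pi^\star(\gamma)$ of a translated facet does not by itself force a different incidence pattern with the translated polytopes $W-\pi^\star(\gamma)$ (both points can be outside $W-\pi^\star(\gamma)$, one on each side of $H_i-\pi^\star(\gamma)$). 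In other words, the cells of $\mathcal A_r$ refine the equivalence classes, so $p(r)\le \#\{\text{cells}\}$, but exhibiting $\gtrsim r^{nd}$ cells in $Q$ does \emph{not} immediately exhibit $\gtrsim r^{nd}$ distinct patch classes. What is missing is exactly the ``full cut / all-round cut'' device of the paper (\cref{Def:SmallBall}, \cref{Lem:CutsFully}, \cref{Lem:CutsAllround}): one must choose $Q$ and restrict the displacements $\pi^\star(\gamma)$ to a small ball so that, inside $Q$, every translated facet coincides with its affine hull and so that crossing that hyperplane inside $Q$ is equivalent to entering or leaving $W-\pi^\star(\gamma)$. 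Only then does a grid cell inside $Q$ correspond to a single acceptance domain, and only then does the count of grid cells give a lower bound for $p(r)$. You flag this concern in your last paragraph, but it is the heart of the lower bound rather than routine book-keeping, and as written the argument quietly assumes the false ``exactly when'' equivalence.

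Two smaller remarks. First, your claim that in each chosen direction $\gtrsim r^n$ of the distinct translates cross a fixed $Q$ needs a quantitative equidistribution statement (or, equivalently, a growth estimate for $\Gamma\cap(B_r\times U)$ with $U$ a small ball near $0$ rather than all of $W-W$); this is precisely \cref{Prop:Growth} applied to the restricted slab, and once you make the ``all-round'' restriction above, you get both issues for free from the same estimate. Second, on the comparison with the paper: the paper does not reprove this Euclidean theorem; it generalizes it. Its lower bound proceeds via \cref{Thm:BeckInduction}, a higher-dimensional local dual of Beck's theorem, because in the two-step nilpotent case the group action \emph{rotates} hyperplanes and the translates $uP_i$ for $u\in U_i$ are no longer parallel, so one cannot form a grid. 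In the abelian case your grid argument is the natural and substantially simpler route, and with the all-round-cut step supplied it is a complete proof; the Beck machinery is what one pays to drop commutativity.
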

	
	\subsection{Aperiodic order beyond the Euclidean case}
	A natural generalisation of FLC sets in $\RR^n$ is to consider FLC sets in arbitrary \textit{locally compact groups} equipped with some metric. We will be interested in studying their complexity functions. We emphasise that in doing so the choice of metric is important. By the restriction to \textit{metric locally compact second countable (lcsc)} groups a theorem of Struble, \cite{Struble}, guarantees the existence of a `nice' metric. `Nice' means in this context that the metric is right-invariant, proper and compatible. This is the setup in which the euclidean ideas are generalized \cite{Beckus, BjorklundHartnick1, BjorklundHartnick2, BjorklundHartnick3, LenzStrungaru}.\par \medskip 
	
	The cut-and-project approach also applies in this more general setup, \cite{BjorklundHartnick3,Schlottmann1, Schlottmann2}. The question we want to answer is how the approach of Julien and Koivusalo and Walton can be translated to this more general set-up?
	
	\subsection{Results on two-step homogeneous Lie groups}
	Ideally one would like to describe the complexity of FLC sets for all lcsc groups. However this turns out to be quite challenging so we will have to introduce some more restrictions. In particular, since we want to follow the approach of Julien, \cite{Julien}, and Koivusalo and Walton, \cite{KoivusaloWalton2}, we need a notion of hyperplanes. So a first question is, in which groups can we define hyperplanes?\par 
	We will consider homogeneous Lie groups. These groups are nilpotent, real, finite-dimensional, connected, simply connected and admit a family of dilations which replace the scalar multiplication. For a detailed discussion of such groups we refer to the book by Fischer and Ruzhansky, \cite{FischerRuzhansky}. For this class of groups it is possible to identify the underlying set of the Lie group $G$ with the corresponding Lie algebra $\fg$. Since $\fg$ is a vector space we can define hyperplanes in the usual sense.\par 
	Moreover these groups admit a canonical quasi-isometry class of homogeneous norms, which provide the same complexity resolving the aforementioned issue of dependence of the choice of metric. It turns out that balls with respect to such norms have exact polynomial growth, i.e. the volume of a ball $B_r(e)$ grows as $r^{\alpha}$. The exponent of this growth is called the \textit{homogeneous dimension} of the homogeneous Lie group.\par\medskip 
	
	A second restriction has to be made since we also need that the group acts on the space of hyperplanes in the vector space underlying $\fg$. We can show that this is the case exactly if the Lie group has nilpotency degree one or two, i.e. if it is abelian or two-step nilpotent. For higher nilpotency degree the action of the group bends the hyperplanes into algebraic hypersurfaces.\par\medskip
	Naively one would expect that the complexity function of a model set $\Lambda(G,H, \Gamma, W)$ would depend on the dimension of the Lie groups $G$ and $H$, i.e. $p(r)\asymp r^{\dim(G)\dim(H)}$, or if not their homogeneous dimensions , i.e. $p(r) \asymp r^{\homdim(G)\homdim(H)}$, but surprisingly both turn out to be false. In fact the two factors behave differently, on the $G$-side the homogenous dimension replaces the dimension, while on the $H$-side it does not. More precisely, we prove the following theorem, which is the main theorem of this paper.
	
	\begin{Theorem}[Informal version of the main theorem]
		Consider a model set $\Lambda(G, H, \Gamma, W)$ with a convex polytopal window $W$ and $G$ and $H$ two-step nilpotent homogeneous Lie groups. Assume that the stabilizer of the hyperplanes which bound the window are trivial. Then the complexity grows asymptotically as $p(r) \asymp r^{\homdim(G) \cdot \dim(H)}$.
	\end{Theorem}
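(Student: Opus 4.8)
The plan is to adapt the strategy of Julien and of Koivusalo--Walton: patch classes correspond to cells of a hyperplane arrangement inside the window (the acceptance domains), and one must isolate the two places where non-commutativity intervenes. Normalise the lift $\tilde\lambda\in\Gamma$ of each $\lambda\in\Lambda$ (unique since $\pi_G|_\Gamma$ is injective) and write $\lambda^\star:=\pi_H(\tilde\lambda)\in W$; since $\Gamma\le G\times H$ is a subgroup and $\pi_G,\pi_H$ are homomorphisms, $\Gamma_G:=\pi_G(\Gamma)$ is a subgroup of $G$ and $\star\colon\Gamma_G\to H$ a homomorphism. Right-invariance of the metric gives, for $\lambda,\mu\in\Lambda$, that $\mu\in P_r(\lambda)$ iff $\nu:=\mu\lambda^{-1}\in\Gamma_G$ satisfies $\nu\in B_r(e)$ and $\nu^\star\lambda^\star\in W$, so
\begin{equation*}
	P_r(\lambda)\lambda^{-1}=\bigl\{\,\nu\in\Gamma_G\cap B_r(e)\ \big|\ \lambda^\star\in(\nu^\star)^{-1}W\,\bigr\},
\end{equation*}
and the $r$-patch at $\lambda$ is determined by the pattern of the memberships $\lambda^\star\in(\nu^\star)^{-1}W$. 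Here the two-step hypothesis on $H$ is used: by Baker--Campbell--Hausdorff, left translation $L_h(x)=h+x+\tfrac12[h,x]$ is an \emph{affine} bijection of $\mathfrak h$, so each $(\nu^\star)^{-1}W$ is again a convex polytope whose bounding hyperplanes are genuine affine hyperplanes of $\mathfrak h$ (the translate of the $i$-th facet has normal $(\operatorname{Id}+\tfrac12\ad_{\nu^\star}^{T})n_i$, with $n_i$ the $i$-th facet normal). Let $\mathcal A_r$ be the finite arrangement of those such hyperplanes that meet $\operatorname{int}W$; its cells meeting $W$ are the acceptance domains, $\lambda\sim_r\mu$ whenever $\lambda^\star,\mu^\star$ lie in one cell, and therefore $p(r)$ is at most the number of cells of $\mathcal A_r$ that meet $\Lambda^\star$.

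For the upper bound I bound $N:=|\mathcal A_r|$. A translate $(\nu^\star)^{-1}\Pi_i$ can meet $\operatorname{int}W$ only if the polytope $(\nu^\star)^{-1}W$, whose diameter is then uniformly bounded, lies within a fixed distance of $W$; writing $\mathfrak h=V_1\oplus[\mathfrak h,\mathfrak h]$ and reading $(\nu^\star)^{-1}W\cap W\neq\emptyset$ layer by layer (the $V_1$-component of $\nu^\star$ is forced bounded, and then the bracket term forces the $[\mathfrak h,\mathfrak h]$-component bounded) shows that $\nu^\star$ ranges over a fixed compact set $K\subset H$. Hence $N$ is, up to the constant number of facets of $W$, the number of $\gamma\in\Gamma$ with $\pi_G(\gamma)\in B_r(e)$ and $\pi_H(\gamma)\in K$; since $\Gamma$ is a cocompact lattice in $G\times H$ and homogeneous balls satisfy $\operatorname{vol}B_r(e)\asymp r^{\homdim(G)}$, a lattice-point count gives $N\asymp r^{\homdim(G)}$. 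As an arrangement of $N$ hyperplanes in $\mathfrak h\cong\RR^{\dim(H)}$ has at most $\sum_{j=0}^{\dim(H)}\binom{N}{j}=O\bigl(N^{\dim(H)}\bigr)$ cells, we conclude $p(r)=O\bigl(r^{\homdim(G)\dim(H)}\bigr)$. The asymmetry of the exponent is forced exactly here: the growing ball sits in $G$ and enters through its volume, hence its \emph{homogeneous} dimension, while the window is fixed in $H$ and enters only the combinatorics of an arrangement, hence only the \emph{linear} dimension $\dim(H)$.

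For the lower bound one must produce $\gtrsim N^{\dim(H)}$ cells of $\mathcal A_r$ that are populated by $\Lambda^\star$ and carry pairwise distinct patches, which I would do in three steps. First, enough cells: a bounded full-dimensional polytope has at least $\dim(H)+1$ facets whose normals span $\mathfrak h^{*}$, and, combining the standing irrationality assumptions on $\Gamma$ with the triviality of the facet stabilizers, the $N$ hyperplanes of $\mathcal A_r$ are in general position across the $\asymp N$ lattice points producing them, apart from the boundedly many incidences internal to a single copy of $W$; those defects change the cell count only to lower order, so $\mathcal A_r$ is essentially a simple arrangement and the estimate $\sum_{j\le\dim(H)}\binom{N}{j}\asymp N^{\dim(H)}$ also holds from below. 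Second, population: by unique ergodicity of the hull of $\Lambda$ --- equidistribution of $\Lambda^\star$ in $W$ --- a positive proportion of these cells contains a point of $\Lambda^\star$. Third, injectivity: crossing a single wall $(\nu^\star)^{-1}\Pi_i$ toggles the membership of exactly the one index $\nu$, where triviality of the stabilizers is used to prevent a whole coset of $\Gamma_G$ from toggling at once (this is why the correction terms of the Koivusalo--Walton theorem do not appear), so adjacent populated cells, and hence all distinct populated cells, carry distinct patches. Combining the three steps gives $p(r)\gtrsim r^{\homdim(G)\dim(H)}$, and with the upper bound the claimed asymptotics $p(r)\asymp r^{\homdim(G)\dim(H)}$.

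The main obstacle is the lower bound, and within it the first step: establishing quantitatively that $\mathcal A_r$ really has order $N^{\dim(H)}$ cells although its hyperplanes fall into only finitely many facet directions --- each further perturbed by the $\nu^\star$-dependent term --- and that the rigid combinatorics of a single window does not collapse the count. This is precisely where the general-position content of the trivial-stabilizer and irrationality hypotheses must be extracted and made effective; by comparison the non-abelian structural input (affineness of left translations in two-step nilpotent groups, the consequent boundedness of the relevant star coordinates, and thus $N\asymp r^{\homdim(G)}$ and not larger) is routine.
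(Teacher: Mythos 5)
Your upper bound matches the paper's: translate patches to acceptance domains, bound $|\cS_r| \asymp r^{\homdim(G)}$ by lattice point counting, then apply the Schl\"afli bound $\sum_{j\le\dim H}\binom{N}{j} = O(N^{\dim H})$. This is Section 5.1 of the paper essentially verbatim.

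The lower bound has a genuine gap at the step you yourself flag as the main obstacle, namely your ``Step 1''. You assert that apart from ``boundedly many incidences internal to a single copy of $W$'' the arrangement $\mathcal A_r$ is ``essentially simple'', so that the Schl\"afli bound is achieved from below. This is not established, and the sketched reasoning would not survive scrutiny: the hyperplanes fall into a bounded number of facet families, and within each family the translates are all small perturbations of a common direction, so the arrangement is very far from being in general position; moreover nothing in your sketch rules out many hyperplanes from a single family concentrating at one point, which would collapse the cell count. Precisely this concentration issue is what forces the paper to prove a \emph{local dual of Beck's theorem} (\cref{Thm:BeckInduction}), built on the Szemer\'edi--Trotter theorem, under an explicit bounded-multiplicity hypothesis (``at most $c\cdot |F_i|$ hyperplanes through a point''). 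Verifying that hypothesis for the model set is itself non-trivial and is done in \cref{Lem:BoundInsidenz} via the Gorodnik--Nevo ergodic lattice-point counting theorem. Beck's theorem yields $\gtrsim N^{\dim H}$ intersection points, and the conversion from intersection points to cells is Stanley's characteristic-polynomial identity (\cref{Prop:StanleyBound}, giving $r_B(\cH) \ge |F_{0,B}(\cH)|$), another ingredient missing from your sketch.

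Two further structural differences. First, the paper does \emph{not} work with all of $\cS_r$ on all of $W$; the hyperplane rotation you correctly identify as the non-abelian phenomenon (your formula for the perturbed normal is the right idea, modulo a sign/inverse) means the translates of a fixed facet need not stay ``almost parallel'', so the paper restricts to a small ball $B_h(c_W)$ and to carefully chosen operating sets $U_i \subset \cS_r$ centered at $c_W p_i^{-1}$ (\cref{Def:SmallBall}, \cref{Prop:GoodPair}), controlling the rotation angle (\cref{Lem:Angle}) so that any $\dim(H)$-tuple drawn from distinct families meets in a single point inside $B_h(c_W)$ (\cref{Prop:ToolComb}). Second, your Step~2 (equidistribution of $\Lambda^\star$ in $W$ via unique ergodicity) is unnecessary: the acceptance domains are open, non-empty, and intersect $\Gamma_H$ densely by construction, and \cref{Lem:CutsAllround} shows that each connected component of the refined cut of $B_h(c_W)$ is \emph{contained} in a single acceptance domain, so the cell count directly lower-bounds $|A_r^H|$ without any further equidistribution input.
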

	
	\subsection{Method of proof}
	The proof of the main theorem consists of four steps. The first three are similar to the Euclidean case, while the forth one is uses different techniques.\par\medskip
	
	First we will establish the connection between the equivalence classes of patches and the acceptance domains in \cref{Sec:Complexity}. This is a translation from the Euclidean case considered in \cite{KoivusaloWalton2}. The only difference is that we have to be a bit more careful since our groups are in general non-abelian. The established result is the same as in the Euclidean case.
	\begin{Theorem}[Acceptance domains vs. Equivalence classes]
		Let $\Lambda(G,H, \Gamma, W)$ be a model set, with $G, H$ lcsc groups, $\Gamma \subset G \times H$ a uniform lattice and $W\subset H$ a non-empty, pre-compact, $\Gamma$-regular window, then
		\begin{equation*}
			A_r^H(\lambda)  \subset \left(\bigcap_{\mu \in \cS_r(\lambda)}\mu \mathring{W}\right)\cap \left(\bigcap_{\mu \in \cS_r(\lambda)^\mathrm{C}} \mu W^\mathrm{C} \right)=:W_r(\lambda).
		\end{equation*}
		The $W_r(\lambda)$ are called \textit{$r$-acceptance domains of $\lambda$}. Further for $\lambda \not\sim_r \lambda'$ we have
		\begin{equation*}
			W_r(\lambda) \cap W_r(\lambda') = \emptyset. 
		\end{equation*}
		Finally we have
		\begin{equation*}
			\overline{W} = \bigcup_{\lambda \in A_r^G}\overline{W_r(\lambda)}.
		\end{equation*}
	\end{Theorem}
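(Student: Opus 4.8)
The plan is to transport the combinatorial datum of a centred patch to the position of the single point $\lambda^\star$ relative to a finite family of translates of $W$, and then to run the Euclidean argument of \cite[Theorem 7.1]{KoivusaloWalton2}. Write $\pi_G,\pi_H$ for the two coordinate projections on $G\times H$ and recall that in a cut-and-project scheme $\pi_G|_\Gamma$ is injective while $\pi_H(\Gamma)$ is dense; thus every $\lambda\in\Lambda$ has a unique $\lambda^\star\in W$ with $(\lambda,\lambda^\star)\in\Gamma$, and $\Lambda^\star=\pi_H(\Gamma)\cap W$ is dense in $W$. Choosing the metric right-invariant (Struble, \cite{Struble}) we get $B_r(\lambda)=B_r(e)\lambda$, so the centred patch is $P_r(\lambda)\lambda^{-1}=B_r(e)\cap\Lambda\lambda^{-1}$. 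For $\gamma=(g,h)\in\Gamma$ with $g\in B_r(e)$, the relation $(g\lambda,h\lambda^\star)=\gamma\cdot(\lambda,\lambda^\star)\in\Gamma$ and injectivity of $\pi_G|_\Gamma$ give $g\in\Lambda\lambda^{-1}\iff h\lambda^\star\in W\iff\lambda^\star\in h^{-1}W$. Pre-compactness of $W$ ensures that only finitely many such $\gamma$ satisfy $h\in\overline{WW^{-1}}$, and the others impose conditions that are vacuous on $W$; collecting the resulting shifts $s=h^{-1}$ into a finite set $\cS_r$, one checks that $\cS_r(\lambda)=\{\,s\in\cS_r:\lambda^\star\in sW\,\}$ and $\cS_r(\lambda)^{\mathrm C}=\cS_r\setminus\cS_r(\lambda)$. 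This set determines and is determined by $P_r(\lambda)\lambda^{-1}$, so $\lambda\sim_r\mu\Leftrightarrow\cS_r(\lambda)=\cS_r(\mu)$, and hence $W_r(\lambda)$ depends only on the $\sim_r$-class of $\lambda$.

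Now the three assertions follow. For the first, if $s\in\cS_r(\lambda)$ then $\lambda^\star\in sW$ and, for the corresponding patch point $\mu\in\Lambda$, $s^{-1}\lambda^\star=h\lambda^\star=\mu^\star\in\mathring W$ because $\Gamma$-regularity keeps $\Lambda^\star$ off $\partial W$; and if $s\in\cS_r(\lambda)^{\mathrm C}$ then $\lambda^\star\notin sW$. Thus $\lambda^\star\in W_r(\lambda)$, and since $W_r$ is a class invariant, $A_r^H(\lambda)$, the $\star$-image of $A_r^G(\lambda)$, is contained in $W_r(\lambda)$. For disjointness, if $x\in W_r(\lambda)\cap W_r(\lambda')$ then for every $s\in\cS_r$ one has $x\in s\mathring W$ or $x\notin sW$ according as $s\in\cS_r(\lambda)$ or not, so $\cS_r(\lambda)=\{\,s\in\cS_r:x\in sW\,\}=\cS_r(\lambda')$ and hence $\lambda\sim_r\lambda'$. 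For the covering, $e\in\cS_r(\lambda)$ for every $\lambda$ (the point $\lambda$ lies in its own patch), so $W_r(\lambda)\subset\mathring W$ and the finite union $\bigcup_{\lambda\in A_r^G}\overline{W_r(\lambda)}$ is contained in $\overline W$; conversely, the window hypotheses make $N:=\bigcup_{s\in\cS_r}s\,\partial W$ closed with empty interior and $\overline{\mathring W}=\overline W$, so $\mathring W\setminus N$ is dense in $\overline W$, and for $x$ in this set the set $\cS:=\{\,s\in\cS_r:x\in s\mathring W\,\}$ satisfies $x\in\bigcap_{s\in\cS}s\mathring W\cap\bigcap_{s\in\cS_r\setminus\cS}s(\overline W)^{\mathrm C}$, which is open. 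Density of $\Lambda^\star$ in $W$ yields $\lambda\in\Lambda$ with $\lambda^\star$ in this open set; then $\cS_r(\lambda)=\cS$, so $x\in W_r(\lambda)$. As $A_r^G$ is finite ($\cS_r(\lambda)$ ranges over subsets of the finite set $\cS_r$), the union $\bigcup_{\lambda\in A_r^G}\overline{W_r(\lambda)}$ is closed and therefore contains $\overline{\mathring W\setminus N}=\overline W$.

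Everything downstream of the first paragraph is essentially the abelian argument; the one genuinely new point — and the place where care is needed — is the non-commutative bookkeeping in the reduction. One must fix the metric to be right-invariant so that recentring a patch is right translation, while the shifts $s$ act on the window $W$ by left multiplication, and the identity $(g\lambda,h\lambda^\star)=\gamma\cdot(\lambda,\lambda^\star)$ then plays the role that $v\mapsto v$, $x\mapsto x-v$ played in $\RR^n\times\RR^d$. The remaining inputs — finiteness of the relevant piece of $\Gamma$ (properness of the Struble metric together with pre-compactness of $W$), density of $\Lambda^\star=\pi_H(\Gamma)\cap W$, and $\partial W$ being closed and nowhere dense with $\overline{\mathring W}=\overline W$ ($\Gamma$-regularity of the window) — are exactly as in \cite{KoivusaloWalton2}, so the main thing to watch is simply that left and right are never interchanged.
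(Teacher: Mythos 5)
Your proof is correct, and while the first two claims follow the same skeleton as the paper (the key input in both is that the centred patch $P_r(\lambda)\lambda^{-1}$ is equivalent data to the subset $\cS_r(\lambda)\subset\cS_r$, which the paper isolates as a separate lemma and you derive inline from the characterization $\cS_r(\lambda)=\{s\in\cS_r:\lambda^\star\in s\mathring W\}$), your covering argument is genuinely different. The paper identifies $A_r^H(\lambda)=\Gamma_H\cap W_r(\lambda)$, uses density of $\Gamma_H$ in the open set $W_r(\lambda)$ to conclude $\overline{A_r^H(\lambda)}=\overline{W_r(\lambda)}$, writes $W\cap\Gamma_H=\bigcup_\lambda A_r^H(\lambda)$, and takes closures. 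You instead observe that the cut locus $N=\bigcup_{s\in\cS_r}s\,\partial W$ is closed and nowhere dense, so $\mathring W\setminus N$ is dense in $\overline W$; for $x$ in that set you exhibit an open neighbourhood on which the combinatorics is locked, pull a projected lattice point into that neighbourhood to produce a $\lambda$ with $\cS_r(\lambda)$ equal to the prescribed subset, and conclude $x\in W_r(\lambda)$, after which finiteness of $\cS_r$ (hence of the index set) closes the union. The two routes are logically interchangeable here; yours has the small advantage of making explicit exactly which regularity of $W$ is used, while the paper's is shorter because it reuses $A_r^H(\lambda)=\Gamma_H\cap W_r(\lambda)$ (which it does not fully justify). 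One small point of hygiene: you assert that the window hypotheses give $\overline{\mathring W}=\overline W$ and that $\Lambda^\star$ is dense in $W$; neither is literally a consequence of ``non-empty, pre-compact, $\Gamma$-regular'' — they are part of the standing regularity one silently imposes on windows (the paper's argument needs the same thing to pass from $\overline{W\cap\Gamma_H}$ to $\overline W$). Since your open set is contained in $\mathring W$, you only actually need density of $\Gamma_H$ in $\mathring W$, which does follow from the CPS axioms, so the density claim as used is fine; just be careful not to claim more than you need.
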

	We will give the precise definition of $\cS_r(\lambda)$ in \cref{Sec:Complexity} and of $A_r^H(\lambda)$ in \cref{Def:Pre}. For now think of $A_r^H(\lambda)$ as the projection of all the  points in the equivalence class of $\lambda$ to $H$. Further $\cS_r(\lambda)$ and $\cS_r^\mathrm{C}(\lambda)$ are roughly speaking a decomposition of the possible neighbours of $\lambda$ projected to $H$.\par\medskip
	
	As a second step we establish a lattice point counting argument in \cref{Sec:Lattice}.
	\begin{Proposition}[Growth Lemma]
		Let $G$ and $H$ be lcsc groups. For a model set $\Lambda(G,H, \Gamma, W)$ with a uniform lattice $\Gamma \subset G \times H$ and a bounded open set ${\emptyset \neq A \subset H}$. The asymptotic growth of the number of lattice points inside $B^G_r(e) \times A$ is bounded by
		\begin{equation*} 
			\mu_G\left(B^G_{r-k_1}(e)\right) \ll \left\vert(B^G_r(e)\times A)\cap \Gamma \right\vert \ll \mu_G\left(B^G_{r+k_2}(e)\right),
		\end{equation*}
		for some constants $k_1,k_2>0$ as $r \to \infty$.
	\end{Proposition}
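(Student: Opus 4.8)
The plan is to derive both inequalities from a packing-and-covering argument for the uniform lattice $\Gamma\subset G\times H$, using throughout that, by the theorem of Struble, the metrics on $G$ and on $H$ may be taken right-invariant and proper, so that balls are relatively compact and bounded sets have compact closure. Write $\pi_G$ and $\pi_H$ for the two coordinate projections. Two features of $\Gamma$ are needed: since $\Gamma$ is discrete there is a relatively compact symmetric identity neighbourhood $V=V_G\times V_H\subset G\times H$ whose $\Gamma$-translates $\gamma V$ are pairwise disjoint; and since $\Gamma$ is a uniform lattice the quotient $\Gamma\backslash(G\times H)$ is compact. The first fact will yield the upper bound, the second the lower bound.

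For the upper bound I would observe that if $\gamma\in(B^G_r(e)\times A)\cap\Gamma$ then $\gamma V\subset\bigl(B^G_r(e)V_G\bigr)\times\bigl(AV_H\bigr)$; right-invariance of the metric on $G$ gives $B^G_r(e)V_G\subset B^G_{r+k_2}(e)$ for a constant $k_2>0$ depending only on $V_G$, while $AV_H$ lies in a fixed bounded set $\tilde A\subset H$. Since the $\gamma V$ are pairwise disjoint, comparing Haar measures yields $\bigl|(B^G_r(e)\times A)\cap\Gamma\bigr|\cdot\mu_{G\times H}(V)\le\mu_G\bigl(B^G_{r+k_2}(e)\bigr)\,\mu_H(\tilde A)$, which is the asserted upper estimate.

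For the lower bound I first need a relatively compact set that $\Gamma$-covers $G\times H$ and is \emph{thin in the $H$-factor}. Using that $\pi_H(\Gamma)$ is dense in $H$, a direct computation shows that $\Gamma\cdot(G\times U)=G\times H$ for every non-empty open $U\subset H$; feeding this through the compactness of $\Gamma\backslash(G\times H)$ shows that in fact $\Gamma\cdot F_1=G\times H$ for $F_1:=\overline{B^G_N(e)}\times\overline U$ and a suitable $N=N(U)$. I would then take $U$ to be a small symmetric identity neighbourhood, chosen small enough -- and here continuity of conjugation on the compact set $\overline A$ enters, $H$ not being abelian in general -- that the eroded window $A^{\ominus\overline U}:=\{h\in H:h\overline U\subset A\}$ still has positive Haar measure. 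Covering $B^G_{r-k_1}(e)\times A^{\ominus\overline U}$ by the translates $\gamma F_1$, each point of this set lies in some such $\gamma F_1$, and right-invariance of the metric on $G$ together with the symmetry of $\overline U$ forces the corresponding $\gamma$ into $B^G_r(e)\times A$ once $k_1\ge N$; a measure comparison then gives $\mu_G\bigl(B^G_{r-k_1}(e)\bigr)\,\mu_H\bigl(A^{\ominus\overline U}\bigr)\le\bigl|(B^G_r(e)\times A)\cap\Gamma\bigr|\cdot\mu_{G\times H}(F_1)$, which is the lower estimate with $k_1:=N$.

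The metric bookkeeping (inclusions of the shape $B^G_r(e)V_G\subset B^G_{r+k_2}(e)$) and the finiteness and strict positivity of the Haar measures that appear are routine. The one step requiring genuine care -- and the only point at which the full model-set hypotheses, rather than mere cocompactness of $\Gamma$, are used -- is the construction of the thin covering slab $F_1$ together with the claim that $A^{\ominus\overline U}$ has positive measure: this rests on the density of $\pi_H(\Gamma)$ and, in the non-abelian setting, on uniform continuity of conjugation on compact sets. Everything else is a line-by-line adaptation of the classical Euclidean lattice-point count.
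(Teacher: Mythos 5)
Your proof is correct. For the upper bound you take essentially the paper's route: use discreteness of $\Gamma$ to pick a small identity neighbourhood whose $\Gamma$-translates are pairwise disjoint, observe that the translates issuing from $(B^G_r(e)\times A)\cap\Gamma$ all sit inside a slightly enlarged box $B^G_{r+k_2}(e)\times\tilde A$, and compare Haar measures. Your lower bound, however, is a genuinely different argument. You erode $A$ to a positive-measure subset $A^{\ominus\overline U}$ (correctly flagging uniform continuity of conjugation on compacta as the non-abelian ingredient needed for this), build a thin slab $F_1=\overline{B^G_N(e)}\times\overline U$ with $\Gamma\cdot F_1=G\times H$ (which is \cref{Lem:ProduktOffenKompakt} of the paper, up to swapping left and right $\Gamma$-translation), and then cover $B^G_{r-k_1}(e)\times A^{\ominus\overline U}$ directly by those translates $\gamma F_1$ whose index $\gamma$ lies in $(B^G_r(e)\times A)\cap\Gamma$. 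The paper instead argues by packing: it fixes a small ball $B^H_\epsilon(e)\subset A$, notes via \cref{Lem:ProduktOffenKompakt} that every right-translate $(B^G_{c_1}(e)\times B^H_\epsilon(e))z$ meets $\Gamma$, and counts how many such pairwise-disjoint sets fit into $B^G_r(e)\times B^H_\epsilon(e)$; that packing count itself passes through the auxiliary fact that a uniformly discrete subset of $G$ extends to a Delone set. Your covering route bypasses the Delone-set detour, trading it for the erosion bookkeeping. One point you should make explicit: in both of your bounds you use $\mu(\gamma V)=\mu(V)$ and $\mu(\gamma F_1)=\mu(F_1)$ for \emph{left} translates, whereas the paper's metric and Haar measure are taken right-invariant; this is legitimate because $G\times H$ carries a lattice and is therefore unimodular (which forces $G$ and $H$ individually to be unimodular), but at the generality of the statement it is not automatic and deserves a sentence.
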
 
	
	\begin{Remark}
		For the asymptotic behaviour we use the common notation $g(t) \ll f(t)$ which means $\limsup\limits_{t \to \infty}\left\vert \frac{g(t)}{f(t)} \right\vert < \infty$. If both $g(t) \ll f(t)$ and $g(t) \gg f(t)$ holds we write $g(t) \asymp f(t)$.
	\end{Remark}
	
	This result connects the number of lattice points in sets of a certain form to the measure of these sets. The standard proof is via ergodic theory, but we will give a more elementary proof.\par\medskip
	
	In the third step we show in \cref{Sec:Growth} that we can estimate the number of acceptance domains by extending the boundary of the window. This is also done in \cite{KoivusaloWalton2}, the new regions inside the window are called \textit{cut regions} in this paper. Again the result we obtain is the same as in the Euclidean case. But we have to overcome a major difference in its proof, since in the Euclidean case the group acts on a hyperplane by translations, which preserves the directions of the hyperplane. Or formulated differently, a translation does not rotate a hyperplane. In our general setup the group action can rotate hyperplanes, leading to a new phenomenon.\par 
	\begin{Theorem}[Cut regions vs. Acceptance domains]
		For a polytopal model set $\Lambda(G,H,\Gamma,W)$, where the window is bounded by $P_1,...,P_N$, and $G$ and $H$ at most two-step nilpotent homogeneous Lie groups we have
		\begin{equation*}
			\left\vert A_r^H\right\vert \leq \# \pi_0 \left( H \setminus \bigcup_{\mu\in\cS_r} \bigcup_{i=1}^N \mu P_i\right).
		\end{equation*}
		And for a certain ball $B_h(c_W)\subset W$ we also have
		\begin{equation*}
			\# \pi_0 \left( B_h(c_W) \setminus \bigcup_{i=1}^N \bigcup_{\mu \in U_i(r)} \mu P_i \right) \leq \left\vert A_r^H \right\vert.
		\end{equation*}
	\end{Theorem}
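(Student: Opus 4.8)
The plan is to prove the two estimates separately, both resting on the previous theorem, which tells us that the acceptance domains $W_r(\lambda)$ form a pairwise disjoint family whose closures cover $\overline{W}$, and that each $W_r(\lambda)$ is cut out of $H$ by the conditions ``lies in $\mu\mathring{W}$'' for $\mu\in\cS_r(\lambda)$ and ``lies in $\mu W^{\mathrm{C}}$'' for $\mu\in\cS_r(\lambda)^{\mathrm{C}}$. The one structural input needed beforehand is that, because $H$ is at most two-step nilpotent, left translation $y\mapsto\mu y=\mu+y+\tfrac12[\mu,y]$ is an \emph{affine} self-map of the vector space underlying $H$, with linear part $\mathrm{id}+\tfrac12\ad(\mu)$; hence each $\mu P_i$ is again a genuine affine hyperplane and each $\mu W$ a convex polytope with facets on $\mu P_1,\dots,\mu P_N$. (This is exactly where two-step nilpotency enters, and where the argument would fail for higher step, cf.\ the introduction.) In particular $\partial(\mu W)\subseteq\bigcup_{i=1}^N\mu P_i$ for every $\mu$.

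For the upper bound, consider the locally finite hyperplane arrangement $\mathcal A_r=\{\mu P_i:\mu\in\cS_r,\ 1\le i\le N\}$ and call the connected components of $H\setminus\bigcup\mathcal A_r$ the \emph{cut regions}. A cut region $C$ is connected and meets no $\mu P_i\supseteq\partial(\mu W)$, so for each $\mu$ it lies entirely inside $\mu\mathring{W}$ or entirely inside $\mu W^{\mathrm{C}}$; hence $C$ is contained in a single acceptance domain. Conversely each nonempty $W_r(\lambda)$ is open (an intersection of the open sets $\mu\mathring{W}$ and $\mu W^{\mathrm{C}}$), so it has positive Haar measure and therefore meets the complement of the null set $\bigcup\mathcal A_r$, i.e.\ it contains at least one cut region; picking one such cut region for every nonempty acceptance domain yields an injection, the $W_r(\lambda)$ being pairwise disjoint. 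Thus $|A_r^H|\le\#\pi_0\big(H\setminus\bigcup\mathcal A_r\big)$, which is the first inequality.

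For the lower bound one wants the opposite: a region in which distinct cut regions are forced into distinct acceptance domains. Choose a point $c_W\in\mathring{W}$ and a ball $B_h(c_W)\subseteq W$ which is convex for the vector-space structure on $H$ and small enough (possibly after shrinking $h$) that each polytope $\mu W$ meeting $\overline{B_h(c_W)}$ intersects it in the intersection of $B_h(c_W)$ with a single half-space: that is, exactly one facet of $\mu W$ crosses $B_h(c_W)$, and it does so without any ``extension'' beyond the facet. Let $U_i(r)\subseteq\cS_r$ collect those $\mu$ whose crossing facet lies on $\mu P_i$. Then inside $B_h(c_W)$ the arrangement $\bigcup_{i=1}^N\bigcup_{\mu\in U_i(r)}\mu P_i$ coincides with $\bigcup_{\mu\in\cS_r}\partial(\mu W)\cap B_h(c_W)$; since each $W_r(\lambda)$ avoids every $\partial(\mu W)$, the set $W_r(\lambda)\cap B_h(c_W)$ is the intersection of the convex ball with finitely many half-spaces, hence convex and connected. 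As every point of $B_h(c_W)$ off the arrangement lies in $\mathring{W}\subseteq\bigcup_\lambda\overline{W_r(\lambda)}$ and off all $\partial(\mu W)$, it lies in some $W_r(\lambda)$; consequently the connected components of $B_h(c_W)\setminus\bigcup_{i=1}^N\bigcup_{\mu\in U_i(r)}\mu P_i$ are precisely the nonempty sets $W_r(\lambda)\cap B_h(c_W)$, so ``component $\mapsto$ the acceptance domain containing it'' is a well-defined injection into $A_r^H$, and $\#\pi_0\big(B_h(c_W)\setminus\bigcup_{i=1}^N\bigcup_{\mu\in U_i(r)}\mu P_i\big)\le|A_r^H|$.

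The main obstacle is the ``rotation'' phenomenon signalled in the introduction. In the Euclidean case $\mu P_i$ is always parallel to $P_i$, so the hyperplanes occurring fall into $N$ fixed directions and a single genericity choice of $c_W$ and $h$ suffices; here the linear part $\mathrm{id}+\tfrac12\ad(\mu)$ of left translation is in general not the identity, so the facet directions of $\mu W$ — and the loci where $\mu P_i$ stops being a facet and becomes an extension — vary with $\mu$, and the extensions can slice through $W$ in ways with no Euclidean analogue. The delicate work is therefore to establish the existence of $c_W$ and of a radius $h$ (possibly depending on $r$, in a way compatible with the growth estimates of the next section) enjoying the ``half-space near $c_W$'' property used in the third paragraph, and to pin down the sets $U_i(r)$ accordingly; once this is in place, the rest is the bookkeeping sketched above. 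Controlling how $\ad(\mu)$ displaces and tilts the bounding hyperplanes is the heart of the matter.
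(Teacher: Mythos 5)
Your upper-bound argument is correct and is a modest streamlining of the paper's: rather than first passing through $W\setminus\bigcup_{\mu\in\cS_r}\mu\partial W$ and then relaxing to full hyperplanes, you work directly with the arrangement $\{\mu P_i\}$ in $H$, observe that each cut region has a constant sign pattern with respect to every $\mu W$, and inject $A_r^H$ into the set of cut regions by picking, inside each open acceptance domain, a point off the measure-zero union of hyperplanes. That is valid and uses the same ingredients.

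The lower bound, however, has a genuine gap. You want to choose $h$ small enough that \emph{every} $\mu W$, $\mu\in\cS_r$, meeting $\overline{B_h(c_W)}$ intersects it in a single half-space (one facet, no extension), and then define $U_i(r)$ post hoc as those $\mu$ whose crossing facet is $\mu\partial_i W$; this is what lets you claim that $\bigcup_{i}\bigcup_{\mu\in U_i(r)}\mu P_i$ agrees with $\bigcup_{\mu\in\cS_r}\partial(\mu W)$ inside $B_h(c_W)$ and hence that the cut regions are \emph{exactly} the sets $W_r(\lambda)\cap B_h(c_W)$. But no fixed $h>0$ can achieve this uniformly in $r$: $\cS_r$ exhausts the dense set $\Gamma_H\cap WW^{-1}$, and for any vertex $v$ of $W$ and any $\varepsilon>0$ there are, for $r$ large, elements $\mu\in\cS_r$ within $\varepsilon$ of $c_W v^{-1}$; then the vertex $\mu v$ of $\mu W$ lies in $B_h(c_W)$ and several facets of $\mu W$ cross it. Such $\mu$ fall into no $U_i(r)$ under your definition, so $\partial(\mu W)\cap B_h(c_W)$ is not captured by your arrangement; the sets $W_r(\lambda)\cap B_h(c_W)$ then need not be connected and the claimed bijection with cut regions fails. (Letting $h$ shrink with $r$ does not help either; the theorem asserts a fixed ball, and the subsequent quantitative step in the paper needs one.)

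The paper resolves this the other way around: $U_i$ is fixed \emph{in advance} as a small ball $B_k(c_W p_i^{-1})$ and $U_i(r)=\pi_H\bigl((B_r^G(e)\times U_i)\cap\Gamma\bigr)$, so that the ``good pair'' conditions — in particular the \emph{all-round cut} property — hold for every $s\in U_i$, independently of $r$. No attempt is made to capture all boundary activity inside $B_h(c_W)$; instead one proves only an \emph{injection} $\pi_0(B_h(c_W)\setminus\bigcup U_i(r)P_i)\hookrightarrow A_r^H$: any two distinct cut regions $C,C'$ are separated by some $sP_i$ with $s\in U_i(r)$, and since that cut is all-round one of $C,C'$ lies in $s\mathring W$ and the other in $sW^{\mathrm{C}}$, whence no single $W_r(\lambda)$ (which is entirely on one side) can meet both. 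To repair your proof you should adopt this structure: fix $U_i(r)$ from a small translation ball, establish the all-round cut property for those $s$ only, and replace the bijection claim by the injection above.
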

	The last step is devoted to solving the problem with the rotating hyperplanes see \cref{Sec:Combinatorics}. To do so we use the theory of hyperplane arrangements; this tool was not needed in the abelian case. The research of such arrangements has a long history and goes back as far as \cite{Schlafli}, more modern approaches are due to Grünbaum \cite{Grunbaum1, Grunbaum2, Grunbaum3} and Zaslavsky \cite{Zaslavsky}. Two sources for a survey of the field are the book by Dimca, \cite{Dimca}, and the lecture notes by Stanley, \cite{Stanley}. An important tool for our combinatorial argument is by Beck, \cite{Beck}. We need a special version of Beck's theorem. To prove this version of Beck's theorem we need some combinatorial inputs from \cite{Szekely} and \cite{SzemerediTrotter}. This lets us extend the standard counting formulas to our specific context and we can proof the following theorem, which will then finish the proof of the main theorem.
	\begin{Theorem}[Higher dimensional local dual of Beck's Theorem]
		Let $\cH$ be a hyperplane arrangement in $\RR^d$, let $B \subset \RR^d$ be convex and let $c_d$ be a constant only depended on the dimension. Further let $\cH$ consist of $d$ families $F_1,...,F_d$ with $\vert F_i \vert = \frac{n}{d}$ and such that for all $(f_1,...,f_d) \in F_1 \times ... \times F_d$ we have $B \cap \bigcap_{i=1}^d f_i = \{p\}$ for some point $p \in B$. Moreover assume that there is $c< \frac{1}{100}$ such that at most $c \cdot \vert F_i \vert$ hyperplanes from $F_i$ can intersect in one point. Then the number of intersection points in $B$ exceeds $c_d \cdot n^d$, i.e. $\vert F_{0,B} \vert \geq c_d \cdot n^d$.
	\end{Theorem}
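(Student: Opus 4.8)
The plan is to induct on the dimension $d$, with the base case $d=2$ being a local, ``dual'' form of Beck's theorem proved through the Szemer\'edi--Trotter incidence bound in its crossing-number incarnation (Sz\'ekely), and the inductive step a slicing argument in which the concentration hypothesis $c<\tfrac1{100}$ is what keeps the forced degeneracies of $\cH$ under control. The underlying heuristic is that $n$ hyperplanes in general position in $\RR^d$ have exactly $\binom nd\asymp n^d$ vertices, and that our hypotheses already force $(n/d)^d$ vertices (one per transversal); but since many transversals may meet at a common vertex, the naive count --- equivalently Cauchy--Schwarz applied to the vertex multiplicities --- only yields a constant, so a genuine Beck-type input is needed, and it is essentially two-dimensional.

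For the base case, write $a_i(p)$ for the number of lines of $F_i\subset\RR^2$ through a point $p$. Every crossing of an $F_1$-line with an $F_2$-line lies in $B$ by hypothesis, and $\sum_p a_1(p)a_2(p)=(n/2)^2$, the sum ranging over these crossings. I would sort the crossings dyadically by their total degree $a_1(p)+a_2(p)$ and bound, via the point--line dual of the Szemer\'edi--Trotter theorem, the number of points lying on at least $t$ of the $n$ lines by $O(n^2/t^3+n/t)$. Substituting this into the dyadic sum, the weight $\sum_p a_1(p)a_2(p)$ carried by points of degree exceeding an absolute constant $M$ is $O(n^2/M+cn^2)$; since $c<\tfrac1{100}$ and the Szemer\'edi--Trotter constant is absolute, taking $M$ large makes this at most $\tfrac12(n/2)^2$. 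Hence at least half the weight sits on points $p$ with $a_1(p)a_2(p)\le M^2$, of which there must be $\gg n^2$; these are the sought vertices, and crucially each has multiplicity $\le M$ in each family --- a refinement the induction will require.

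For the inductive step I would fix a hyperplane $f_d\in F_d$ and restrict the remaining families to $f_d\cong\RR^{d-1}$, setting $F_i^{f_d}:=\{f\cap f_d : f\in F_i\}$ and keeping the convex set $B\cap f_d$. Transversality is inherited, since $f_1\cap\dots\cap f_{d-1}\cap f_d$ is a single point of $B$ by hypothesis, and a point of $f_d$ on $k$ restricted hyperplanes of $F_i^{f_d}$ lies on $\ge k$ hyperplanes of $F_i$, so the concentration bound survives. Applying the inductive hypothesis to $(F_1^{f_d},\dots,F_{d-1}^{f_d})$ then gives $\gg n^{d-1}$ bounded-multiplicity vertices in $B\cap f_d$, and letting $f_d$ range over $F_d$ produces $\gg n\cdot n^{d-1}=n^d$ vertex--slice incidences, which the bounded-multiplicity refinement is meant to convert into $\gg n^d$ \emph{distinct} vertices of $B$.

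The hard part --- and the reason the ``extended counting formulas'' and the special (higher-dimensional) version of Beck's theorem are needed --- is to make this slicing lose only a constant rather than a whole power of $n$. Two issues must be handled. First, restriction to $f_d$ can collapse a family: two hyperplanes of $F_i$ restrict to the same hyperplane of $f_d$ exactly when they share a $(d-2)$-flat lying in $f_d$, and one must show, using the concentration hypothesis (a full pencil through a common flat would create an over-concentrated vertex) together with an averaging over $f_d\in F_d$, that some slice keeps $|F_i^{f_d}|=\Omega(|F_i|)$. Second, and more seriously, a vertex counted in a slice has controlled multiplicity only in the restricted families, not in $F_d$, so the vertex--slice incidences could a priori pile up on a few vertices of large $F_d$-multiplicity; ruling this out --- showing that high-multiplicity vertices are rare enough, which is precisely the local, higher-dimensional content of Beck's theorem that has to be established here with the Szemer\'edi--Trotter and Sz\'ekely inputs --- is the step I expect to be the main obstacle.
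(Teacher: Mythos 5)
Your outline is essentially the paper's proof: a local dual of Beck's theorem in the plane via Szemer\'edi--Trotter (the paper uses a three-range split of $\sum_p\binom{a(p)}{2}$ with Beck's $t(n,k)$ bounds plus a Szemer\'edi--Trotter corollary, which amounts to the same dyadic bookkeeping you describe), followed by an induction via restriction of the remaining families to a single hyperplane (the paper slices along $H\in F_1$ rather than $F_d$, an immaterial choice). You also correctly identify the two points of tension. The genuine gap is the second one you flag and do not resolve: counting incidences between bounded-multiplicity slice vertices and hyperplanes of the slicing family gives $\gg n^d$, but converting this into $\gg n^d$ \emph{distinct} vertices requires a uniform upper bound on the multiplicity $a(p)$ of these vertices in the \emph{full} arrangement $\cH$, not merely in the restricted arrangements, and nothing in your sketch supplies that.

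The paper's fix is to make the bounded-multiplicity refinement the inductive statement itself: one proves $\vert\{p\in F_{0,B}(\cH): d\le a(p)<d^k\}\vert\ge c_d n^d$, with $a(p)$ the multiplicity in the full arrangement and $k$ an absolute constant ($k=10$ in the two-dimensional base case, read off from the Szemer\'edi--Trotter estimates). The inductive step then double-counts $d$-element subsets of hyperplanes through a vertex, bounding $\sum_{d\le a(p)<d^k}\binom{a(p)}{d}$ below by $\sum_{H\in F_1}\sum_{d-1\le a_{\cH^{H\ast}}(p)<(d-1)^k}\binom{a_{\cH^{H\ast}}(p)}{d-1}$; the explicit upper range $d^k$ is precisely what lets one divide by $\binom{d^k}{d}$ to recover a count of distinct vertices. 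On your first obstacle: the concentration hypothesis alone does not get you what you want, since ruling out a pencil of $\ge c\vert F_i\vert$ hyperplanes through a common $(d-2)$-flat only bounds the multiplicity of each restricted hyperplane, giving $\vert F_i^{H}\vert\ge 1/c$ rather than $\Omega(\vert F_i\vert)$. The paper instead cuts $\cH$ with a generic $2$-plane, identifies hyperplanes with lines and $(d-2)$-flats with points, applies the $d=2$ theorem to obtain $\gg(n/d)^2$ distinct flats of the form $H\cap K$ with $H\in F_1$, $K\in F_2$, and averages over $H$ to conclude that a positive proportion of slices each carry $\gg n$ induced hyperplanes.
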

	
	\subsection{General results for lcsc groups}
	As discussed above our restriction to homogeneous Lie groups is necessary for the proof of the main theorem, but the individual steps work in greater generality. Acceptance domains and cut regions can be defined for all connected lcsc groups, the polytopal condition on the window is not needed for this approach.
	
	\subsection{Notation}
	Throughout this text, $G$ and $H$ will always be locally compact second countable groups, $\Gamma \subset G \times H$ a uniform lattice and $W \subset H$ a precompact $\Gamma$-regular, i.e. $\partial W \cap \pi_H(\Gamma) = \emptyset$, subset with nonempty interior, which is called the \textit{window}. Why these restrictions on $W$ are required will be explained in \cref{Prop:CPSFLCrelativelydense} below. Further we denote the projection on $G$ by $\pi_G$ and on $H$ by $\pi_H$.
	
	\begin{Definition}
		A triple $(G,H, \Gamma)$ is called a \textit{cut-and-project scheme (CPS)} if $\pi_G\vert_\Gamma$ is injective and $\pi_H(\Gamma)$ is dense in $H$. The set
		\begin{equation*}
			\Lambda = \Lambda(G, H, \Gamma, W) := \pi_G((G \times W) \cap \Gamma) = \tau^{-1}(\Gamma_H \cap W)
		\end{equation*}
		is called a \textit{model set} if $(G,H, \Gamma)$ is a cut-and-project scheme. Here we use the notation $\Gamma_H := \pi_H(\Gamma)$ and $\tau:= \pi_H \circ (\pi_G \vert_\Gamma)^{-1}$.
	\end{Definition}
	
	\begin{Remark}
		Observe that we do not consider non-uniform model sets, so in our terminology of a model set the lattice is always uniform.
	\end{Remark}
	
	We will always put a $G$, resp. $H$ in the index if we consider the projection of an object to the factor $G$, resp. $H$. The diagram in \cref{Fig:CPS} visualizes the relation between the different groups in this setup.\par\medskip
	
	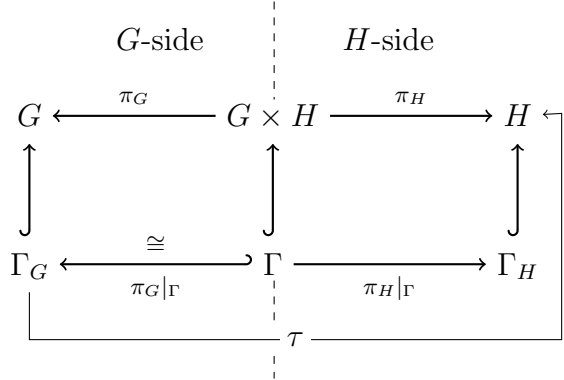
\begin{figure}[h]
		\centering
		\captionsetup{width=0.45\linewidth}
		\begin{tikzpicture}[descr/.style={fill=white,inner sep=2.5pt}]
			\matrix (m) [matrix of math nodes, row sep=3em,
			column sep=5em, text height=2ex, text depth=0.5ex]
			{ G & G \times H & H \\
				\Gamma_G	& \Gamma & \Gamma_H\\ };
			
			\path[thick, ->,font=\scriptsize] (m-1-2) edge node[above] {$\pi_G$} (m-1-1)
			edge node[above] {$\pi_H$}(m-1-3)
			(m-2-2)	edge node[below] {${\pi_H}\vert_\Gamma$} (m-2-3);
			\path[thick,right hook->,font=\scriptsize] (m-2-1) edge (m-1-1)
			(m-2-2) edge (m-1-2)
			(m-2-3) edge (m-1-3);
			\path[thick,left hook->,font=\scriptsize] (m-2-2) edge node[below] {${\pi_G}\vert_\Gamma$} node[above] {$\cong$} (m-2-1);	
			
			\draw[->] (m-2-1) -- ++ (0,-1) -- node[descr] {$\tau$} ++ (7,0) -- ++ (0,3) -- (m-1-3);
			\node at (-1.5,2) {$G$-side};
			\node at (1.5,2) {$H$-side};
			\draw[-, dashed] (0, 2.5) -- (0, 1.2);
			\draw[-, dashed] (0, -1.2) -- (0, -1.8);
			\draw[-, dashed] (0, -2.2) -- (0, -2.5);
		\end{tikzpicture}
		\caption{Visualisation of a CPS.}
		\label{Fig:CPS}
	\end{figure}
	
	To study complexity we need a metric, which will always be associated to a norm by $d(x,y):= \vert x y^{-1}\vert$. In the case of homogeneous Lie groups, we have a homogeneous norm on $G$ and $H$, which we will denote by $\vert \cdot \vert_G$, $\vert \cdot \vert _H$, if it is clear which norm we consider we drop the index. 
	
	\begin{Definition}\textbf{(Delone)}
		Let $X$ be a metric space, a subset $\Lambda \subset X$ is called \textit{$(R,r)$-Delone} if:
		\begin{enumerate}
			\item It is \textit{$R$-relatively dense}, i.e.
			\begin{equation*}
				\exists R>0\, \forall x \in X : B_R(x) \cap \Lambda \neq \emptyset. 
			\end{equation*}
			\item It is \textit{$r$-uniformly discrete}, i.e.
			\begin{equation*}
				\exists r>0\, \forall \lambda, \mu \in \Lambda: d(\lambda,\mu) \geq r.
			\end{equation*}
		\end{enumerate}
		If one is not interested in the parameters $R$ and $r$ one simply speaks of a \textit{Delone set}.
	\end{Definition}
	
	\begin{Remark}
		By \cref{Prop:CPSFLCrelativelydense} model sets are Delone sets.
	\end{Remark}
	
	\begin{Definition}[Pre-Acceptance domains]\label{Def:Pre}
		Let $\Lambda(G,H, \Gamma, W)$ be a model set.	The image of $A_r^G(\lambda)$ under the map $\tau$ is called the \textit{$r$-pre-acceptance domain of $\lambda$}
		\begin{equation*}
			A_r^H(\lambda):=\tau\left(A_r^G(\lambda)\right) \subset H.
		\end{equation*}
		We denote the set of all pre-acceptance domains by $A_r^H$.
	\end{Definition}
	
	\section{How to measure complexity?}\label{Sec:Complexity}
	In this section we will explain, how one can determine the complexity function of a given FLC set, e.g. a model set $\Lambda(G,H, \Gamma,W)$, where $G=(G,d_G)$ and $H=(H,d_H)$ are metric locally compact second countable groups. The statements in this section are translations from the euclidean case, we refer to the paper of Koivusalo and Walton, \cite{KoivusaloWalton2}, for this set-up. We will take a closer look on how the complexity function $p$ is connected to local constellations, which are called \textit{patches}, in the FLC set. We will then establish a connection between these patches and certain regions in the window $W$.\par\medskip

	\begin{Definition}[Finite local complexity]\label{Rem:FLC}
		Let $\Lambda \subset G$ be a FLC subset. If the complexity function of $\Lambda$ is finite for all $r$ we say that $\Lambda$ has \textit{finite local complexity}. There are several ways of viewing this condition, compare \cref{FLC_Lemma}.
	\end{Definition}
	
	\begin{Theorem}[Acceptance domains]\label{Thm:AcceptanceDomains}
		Let $\Lambda(G,H, \Gamma, W)$ be a model set, with $G, H$ lcsc groups, $\Gamma \subset G \times H$ a uniform lattice and $W\subset H$ a non-empty, pre-compact, $\Gamma$-regular window, then
		\begin{equation}\label{Thm:AcceptanceDomainsEq1}
			A_r^H(\lambda)  \subset \left(\bigcap_{\mu \in \cS_r(\lambda)}\mu \mathring{W}\right)\cap \left(\bigcap_{\mu \in \cS_r(\lambda)^\mathrm{C}} \mu W^\mathrm{C} \right)=:W_r(\lambda),
		\end{equation}
		where $\cS_r(\lambda)$ will be defined in \cref{Def:Slab}.
		The $W_r(\lambda)$ are called \textit{$r$-acceptance domains of $\lambda$}. Further for $\lambda \not\sim_r \lambda'$ we have
		\begin{equation}\label{Thm:AcceptanceDomainsEq2}
			W_r(\lambda) \cap W_r(\lambda') = \emptyset. 
		\end{equation}
		Finally we have
		\begin{equation}\label{Thm:AcceptanceDomainsEq3}
			\overline{W} = \bigcup_{\lambda \in A_r^G}\overline{W_r(\lambda)}.
		\end{equation}
	\end{Theorem}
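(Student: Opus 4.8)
The engine of the proof is the observation that $\tau=\pi_H\circ(\pi_G\vert_\Gamma)^{-1}\colon\Gamma_G\to H$ is a \emph{group homomorphism}, being the composition of the homomorphism $\pi_H$ with the inverse of the isomorphism $\pi_G\vert_\Gamma$. Hence for $\lambda\in\Lambda$ and $\nu\in\Gamma_G$ we have $\tau(\nu\lambda)=\tau(\nu)\tau(\lambda)$, and since $d(\nu\lambda,\lambda)=\vert\nu\vert$ the recentred patch is
\[
	P_r(\lambda)\lambda^{-1}=\bigl\{\nu\in\Gamma_G\cap B_r(e)\ :\ \tau(\nu)\tau(\lambda)\in W\bigr\},
\]
a finite set because $W$ is precompact. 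The second tool is $\Gamma$-regularity: for any such $\nu$ the point $\tau(\nu\lambda)$ lies in $\Gamma_H$, which is disjoint from $\partial W$, so $\tau(\nu\lambda)\in W\iff\tau(\nu\lambda)\in\mathring W$ and $\tau(\nu\lambda)\notin W\iff\tau(\nu\lambda)\in W^{\mathrm{C}}$. Writing $\mu=\tau(\nu)^{-1}$ for the slab parameter attached to $\nu\in\Gamma_G\cap B_r(e)$ (this is exactly how $\cS_r$ and its splitting into $\cS_r(\lambda)$ and $\cS_r(\lambda)^{\mathrm{C}}$ from \cref{Def:Slab} arise), the two facts translate everything into lattice bookkeeping: $\mu\in\cS_r(\lambda)\iff\nu\lambda\in\Lambda\iff\nu\in P_r(\lambda)\lambda^{-1}$, and $\mu\in\cS_r(\lambda)^{\mathrm{C}}\iff\nu\lambda\notin\Lambda$. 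In particular $\cS_r(\lambda)$, hence $W_r(\lambda)$, depends only on the $\sim_r$-class of $\lambda$, and $\tau(\lambda)\in W_r(\lambda)$ because $\lambda\sim_r\lambda$.

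Granting this dictionary, \eqref{Thm:AcceptanceDomainsEq1} and \eqref{Thm:AcceptanceDomainsEq2} become direct verifications. For \eqref{Thm:AcceptanceDomainsEq1} I would take $\kappa\in\Lambda$ with $\kappa\sim_r\lambda$, so $P_r(\kappa)\kappa^{-1}=P_r(\lambda)\lambda^{-1}$: for $\mu=\tau(\nu)^{-1}\in\cS_r(\lambda)$ this gives $\nu\kappa\in\Lambda$, whence $\tau(\kappa)=\tau(\nu)^{-1}\tau(\nu\kappa)\in\mu\mathring W$, while for $\mu\in\cS_r(\lambda)^{\mathrm{C}}$ it gives $\nu\kappa\notin\Lambda$ (using $\vert\nu\vert\le r$ to see $\nu\kappa$ cannot have slipped out of $B_r(\kappa)$), whence $\tau(\kappa)\in\mu W^{\mathrm{C}}$; thus $\tau(\kappa)\in W_r(\lambda)$, and ranging over $\kappa$ yields $A_r^H(\lambda)=\tau(A_r^G(\lambda))\subseteq W_r(\lambda)$. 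For \eqref{Thm:AcceptanceDomainsEq2}, if $\lambda\not\sim_r\lambda'$ then after possibly swapping the two there is $\nu\in P_r(\lambda)\lambda^{-1}\setminus P_r(\lambda')\lambda'^{-1}$; then $\nu\lambda\in\Lambda$ but $\nu\lambda'\notin\Lambda$ (again since $\vert\nu\vert\le r$), so the slab parameter $\mu=\tau(\nu)^{-1}\in\cS_r$ lies in $\cS_r(\lambda)\cap\cS_r(\lambda')^{\mathrm{C}}$, forcing $W_r(\lambda)\cap W_r(\lambda')\subseteq\mu\mathring W\cap\mu W^{\mathrm{C}}=\emptyset$.

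The step that genuinely uses global information is the covering identity \eqref{Thm:AcceptanceDomainsEq3}. For the inclusion "$\supseteq$" I would use that $\Gamma_H$ is dense in $H$, so the set $\tau(\Lambda)=\Gamma_H\cap W$ is dense in $\mathring W$ and therefore $\overline{\tau(\Lambda)}=\overline{\mathring W}=\overline{W}$; since $\tau(\lambda)\in W_r(\lambda)$ for every $\lambda$ and the model set has finite local complexity (so $A_r^G$ is finite), the finite union $\bigcup_{\lambda\in A_r^G}\overline{W_r(\lambda)}$ is closed and contains $\tau(\Lambda)$, hence contains $\overline{\tau(\Lambda)}=\overline W$. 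For "$\subseteq$" note that $e\in\cS_r(\lambda)$ (the case $\nu=e$), so $W_r(\lambda)\subseteq e\mathring W=\mathring W$ and the union of closures sits inside $\overline{\mathring W}=\overline W$. This covering direction is where I expect the real care to be needed: one must simultaneously invoke density of $\Gamma_H$, finiteness of the patch count, and the regularity $\overline W=\overline{\mathring W}$ of the window to conclude that finitely many closed acceptance domains exhaust all of $\overline W$ rather than merely a dense subset; the first two parts, by contrast, are pure translation once the homomorphism property of $\tau$ and $\Gamma$-regularity are in place.
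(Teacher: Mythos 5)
Your proof is correct and follows essentially the same route as the paper's. The dictionary you set up (the homomorphism property of $\tau$, the characterisation of the recentred patch, and the translation $\mu=\tau(\nu)^{-1}$) is exactly what the paper's \cref{Lem:WindowShift} and \cref{Lem:Equiv} encode; you simply re-derive \cref{Lem:Equiv} inline rather than quoting it as a separate lemma, and your treatment of \eqref{Thm:AcceptanceDomainsEq3} (density of $\Gamma_H$, finiteness of $A_r^G$, and $W_r(\lambda)\subset\mathring W$) matches the paper's ``completion by sequences'' argument in substance. Two minor points worth flagging: your asserted equivalence ``$\mu\in\cS_r(\lambda)^{\mathrm C}\iff\nu\lambda\notin\Lambda$'' only holds once one also knows $\tau(\nu)^{-1}\in WW^{-1}$, which in your applications is always guaranteed (because the other point $\nu\lambda$ or $\nu\lambda'$ lies in $\Lambda$), but as a standalone iff it is slightly too loose; and the final ``$\supseteq$'' step, in your version as in the paper's, tacitly uses that $\overline W=\overline{\mathring W}$ --- you make this visible, which is a small gain in transparency, and for the polytopal windows used later this regularity of course holds.
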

	
	\begin{Remark}
		The terminology is due to Koivusalo and Walton, \cite{KoivusaloWalton2}. In their paper they treat the case of model sets $\Lambda(\RR^d,\RR^n,\Gamma, W)$ and extend Julien's paper, \cite{Julien}, which first introduced the idea of considering a decomposition of the window.
	\end{Remark}
	
	\begin{Corollary}
		$p(r)=\vert \{W_r(\lambda) \mid \lambda \in \Lambda\}\vert=\left\vert A_r^H\right\vert$.
	\end{Corollary}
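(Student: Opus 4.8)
The plan is to exhibit two bijections, $A_r^G \to \{W_r(\lambda)\mid\lambda\in\Lambda\}$ and $A_r^G \to A_r^H$, after which the corollary follows immediately from $p(r)=|A_r^G|$ (\cref{Def:p}) and the three assertions of \cref{Thm:AcceptanceDomains}.

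First I would record that every acceptance domain is non-empty: since $\lambda\in A_r^G(\lambda)$, the set $A_r^H(\lambda)=\tau(A_r^G(\lambda))$ is non-empty, and by \eqref{Thm:AcceptanceDomainsEq1} it is contained in $W_r(\lambda)$, so $W_r(\lambda)\neq\emptyset$ as well. Next I would check that $\lambda\mapsto W_r(\lambda)$ factors through $A_r^G$, i.e.\ depends only on the $r$-equivalence class of $\lambda$: by construction $W_r(\lambda)$ is assembled from $W$ and $\cS_r(\lambda)$, and $\cS_r(\lambda)$ is read off the translated patch $P_r(\lambda)\lambda^{-1}$ (this is how $\cS_r$ is defined in \cref{Def:Slab}), hence is constant on $\sim_r$-classes; this is already implicit in the indexing of the union over $A_r^G$ in \eqref{Thm:AcceptanceDomainsEq3}. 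Consequently $A_r^G(\lambda)\mapsto W_r(\lambda)$ and $A_r^G(\lambda)\mapsto A_r^H(\lambda)=\tau(A_r^G(\lambda))$ are well-defined maps out of $A_r^G$, and they are surjective onto $\{W_r(\lambda)\mid\lambda\in\Lambda\}$ and onto $A_r^H$ respectively, the latter directly by the definition of $A_r^H$ in \cref{Def:Pre}.

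For injectivity of both maps I would invoke the disjointness statement \eqref{Thm:AcceptanceDomainsEq2}. If $W_r(\lambda)=W_r(\mu)$, then this common set is non-empty, so $W_r(\lambda)\cap W_r(\mu)\neq\emptyset$, which forces $\lambda\sim_r\mu$, i.e.\ $A_r^G(\lambda)=A_r^G(\mu)$. Likewise, if $A_r^H(\lambda)=A_r^H(\mu)$, then using $A_r^H(\lambda)\subseteq W_r(\lambda)$ and $A_r^H(\mu)\subseteq W_r(\mu)$ together with non-emptiness, the two acceptance domains intersect, again forcing $\lambda\sim_r\mu$. Hence both maps are bijections and $p(r)=|A_r^G|=|\{W_r(\lambda)\mid\lambda\in\Lambda\}|=|A_r^H|$. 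The single point requiring care — and the one I would scrutinise — is the well-definedness in the second step, namely that $\cS_r(\lambda)$, and therefore $W_r(\lambda)$, is an invariant of the $r$-equivalence class; since $\cS_r$ is only introduced in \cref{Def:Slab}, one may either appeal to the notation already used in \eqref{Thm:AcceptanceDomainsEq3} or postpone this verification until after \cref{Def:Slab}. Everything else is routine bookkeeping with \cref{Thm:AcceptanceDomains}.
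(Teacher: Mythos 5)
Your argument is correct and is essentially what the paper leaves implicit: the corollary is a direct bookkeeping consequence of \cref{Thm:AcceptanceDomains} together with $p(r)=\vert A_r^G\vert$ from \cref{Def:p}. The one point you flag as "requiring care" — that $\cS_r(\lambda)$ and hence $W_r(\lambda)$ depend only on the $\sim_r$-class of $\lambda$ — is exactly the content of \cref{Lem:Equiv} ($\lambda\sim_r\mu\Leftrightarrow\cS_r(\lambda)=\cS_r(\mu)$), so you can cite that lemma directly rather than re-deriving it or appealing to the indexing convention in \eqref{Thm:AcceptanceDomainsEq3}.
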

	
	The rest of the section is devoted to the proof of the theorem and we begin by working towards the definition of $S_r(\lambda)$.
	
	\begin{Definition}[Displacements]
		Let $\Lambda$ be a CPS. We define the displacements of $\lambda \in \Lambda$ as
		\begin{equation*}
			\mathrm{Disp}(\lambda):=\{\mu \in \Gamma_G \mid \mu \lambda \in \Lambda\}.
		\end{equation*}
	\end{Definition}
	
	\begin{Lemma}[{\cite[Lemma 2.1]{KoivusaloWalton2}}]\label{Lem:WindowShift}
		Let $\lambda \in \Lambda$ and $\mu \in G$, if $\mu \lambda \in \Lambda$ then $\mu \in \Gamma_G$. On the other hand if $\mu \in \Gamma_G$: 
		\begin{equation*}
			\mu \lambda \in \Lambda \;\Leftrightarrow\; \tau(\lambda) \in \tau(\mu)^{-1}\mathring{W} \;\Leftrightarrow\; \tau(\mu) \in \mathring{W}\tau(\lambda)^{-1}.
		\end{equation*}
		In particular $\tau(\mathrm{Disp}(\lambda)) \subset \mathring{W}\mathring{W}^{-1}$.
		\begin{proof}
			Since $\lambda, \mu \lambda \in \Lambda$ we find elements $\gamma, \delta \in \Gamma$ such that $\pi_G(\gamma)=\lambda$, $\pi_G(\delta)=(\mu \lambda)^{-1}$. Then $\gamma \delta \in \Gamma$ and $\pi_G(\gamma \delta)= \lambda (\mu \lambda)^{-1} = \mu^{-1} \in \Gamma_G$ and therefore $\mu \in \Gamma_G$.\par 
			Now let $\mu \in \Gamma_G$. By definition $\mu \lambda \in \Lambda$ if and only if $\tau(\mu \lambda) \in \mathring{W}$ and since $\tau$ is a homomorphism this is equivalent to $\tau(\mu) \in \mathring{W} \tau(\lambda)^{-1}$ and $\tau(\lambda) \in \tau(\mu)^{-1}\mathring{W}$.
		\end{proof}
	\end{Lemma}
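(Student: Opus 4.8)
The plan is to reduce everything to two structural facts about a cut-and-project scheme: first, that $\Gamma_G = \pi_G(\Gamma)$ is a genuine subgroup of $G$, being the image of the group $\Gamma$ under the homomorphism $\pi_G$; and second, that $\tau = \pi_H \circ (\pi_G|_\Gamma)^{-1}$ is a group homomorphism $\Gamma_G \to \Gamma_H$, which holds because the injectivity condition in the definition of a CPS makes $\pi_G|_\Gamma \colon \Gamma \to \Gamma_G$ a group isomorphism. I would also note at the outset that $\Gamma$-regularity of $W$ gives $\Gamma_H \cap \mathring{W} = \Gamma_H \cap W = \Gamma_H \cap \overline{W}$, so that the model set can be written as $\Lambda = \{\, g \in \Gamma_G \mid \tau(g) \in \mathring{W} \,\}$; in particular $\Lambda \subset \Gamma_G$.

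For the first claim I would argue as follows: if $\lambda \in \Lambda$ and $\mu\lambda \in \Lambda$, then $\lambda, \mu\lambda \in \Gamma_G$ by the description above, and since $\Gamma_G$ is a subgroup, $\mu = (\mu\lambda)\lambda^{-1} \in \Gamma_G$. Equivalently, one lifts $\lambda$ and $(\mu\lambda)^{-1}$ to elements $\gamma,\delta\in\Gamma$ and reads off $\pi_G(\gamma\delta) = \mu^{-1} \in \Gamma_G$.

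For the equivalences, take $\mu \in \Gamma_G$. Then $\mu\lambda \in \Gamma_G$ as well, so $\mu\lambda \in \Lambda$ if and only if $\tau(\mu\lambda) \in \mathring{W}$; using that $\tau$ is a homomorphism this reads $\tau(\mu)\tau(\lambda) \in \mathring{W}$, and translating by the appropriate element from the left, resp.\ from the right, turns it into $\tau(\mu) \in \mathring{W}\tau(\lambda)^{-1}$, resp.\ into $\tau(\lambda) \in \tau(\mu)^{-1}\mathring{W}$. The final ``in particular'' then drops out: for $\mu \in \mathrm{Disp}(\lambda)$ we have $\tau(\mu) \in \mathring{W}\tau(\lambda)^{-1}$ by the equivalence just established, and $\tau(\lambda) \in \mathring{W}$ because $\lambda\in\Lambda$, whence $\tau(\mu) \in \mathring{W}\mathring{W}^{-1}$.

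I do not expect any real obstacle; the content is bookkeeping. The one place to be careful, and essentially the only non-abelian wrinkle, is the side on which one translates the window: since $G$ and $H$ need not be abelian one must keep the composition order $\mu\lambda$ fixed throughout and make sure that $\tau(\mu)\tau(\lambda)\in\mathring{W}$ becomes $\tau(\mu)\in\mathring{W}\tau(\lambda)^{-1}$, with the inverse on the right. The other point worth stating explicitly is why $\mathring{W}$, $W$, and $\overline{W}$ may be used interchangeably in the definition of $\Lambda$ — this is exactly the hypothesis $\partial W \cap \Gamma_H = \emptyset$ — since it is the open window $\mathring{W}$ that appears in the equivalences.
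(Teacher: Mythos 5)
Your proof is correct and follows essentially the same route as the paper's: lift to $\Gamma$ (or equivalently use that $\Lambda\subset\Gamma_G$ with $\Gamma_G$ a subgroup) for the first claim, and use that $\tau$ is a homomorphism together with the membership criterion $\tau(\cdot)\in\mathring{W}$ for the equivalences. The one thing you make explicit that the paper leaves tacit is the role of $\Gamma$-regularity in justifying the passage from $W$ (used in the definition of $\Lambda$) to $\mathring{W}$ (appearing in the lemma); this is a genuine, if small, improvement in transparency rather than a different argument.
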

	
	To understand patches on the $H$-side of the model set we transport the information of the displacements to this side. Since we always consider patches in dependence of $r$ we only need displacements of magnitude at most $r$.
	
	\begin{Definition}[$r$-slab]\label{Def:Slab}
		Let $\Lambda(G,H,\Gamma,W)$ be a model set. We define the \textit{$r$-slab} as
		\begin{equation*}
			\cS_r := \pi_H\left(\left\{(\gamma,\mu) \in \Gamma \,\middle\vert\,  \vert \gamma \vert <r \text{ and } \mu \in WW^{-1} \right\}\right).
		\end{equation*}
		Further in the case that we only are interested in the displacements of a certain equivalence class we define the \textit{$r$-slab of $\lambda$} as
		\small\begin{equation*}
			\cS_r(\lambda) := \pi_H\left(\left\{(\gamma,\mu) \in \Gamma \,\middle\vert\, \vert \gamma \vert <r \text{ and } \mu \in WW^{-1} \text{ and } \gamma^{-1} \in \mathrm{Disp}(\lambda) \right\}\right)
		\end{equation*}\normalsize
		and 
		\small\begin{equation*}
			\cS_r^\mathrm{C}(\lambda) := \pi_H\left(\left\{(\gamma,\mu) \in \Gamma \,\middle\vert\, \vert \gamma \vert <r \text{ and } \mu \in WW^{-1} \text{ and } \gamma^{-1} \notin \mathrm{Disp}(\lambda) \right\}\right).
		\end{equation*}\normalsize
	\end{Definition}
	
	\begin{Remark}
		In the paper of Koivusalo and Walton the sets $S_r(\lambda)$ and $S_r^\mathrm{C}(\lambda)$ are called $P_{in}$ and $P_{out}$, we think that our notation highlights the connection to the slab in a better way. Whereas their notation highlights the connection to the patch $P$.
	\end{Remark}
	
	\begin{figure}
		\centering
		\captionsetup{width=0.75\linewidth}
		\begin{tikzpicture}
			\draw[->] (-1.5,0) -- (10,0);
			\node at (10,-0.3) {$G$};
			\draw[->] (0,-1.5) -- (0,5);
			\node at (-0.3,5) {$H$};
			\filldraw [gray] 
			(-1.8284+1/2, 3.8284+1/2) circle (2pt)
			
			(-0.4142-1.4142/2, 2.4142+1.4142/2) circle (2pt)
			(-0.4142-0.4142/2, 2.4142+2.4142/2) circle (2pt)
			(-0.4142-0.4142/2+1/2, 2.4142+3.4142/2) circle (2pt)
			
			(-1.4142, 1.4142) circle (2pt)
			(-1.4142+1/2, 1.4142+1/2) circle (2pt)
			(-0.4142, 2.4142) circle (2pt)
			(-0.4142+1/2, 2.4142+1/2) circle (2pt)
			( 0.5858, 3.4142) circle (2pt)
			( 0.5858+1/2, 3.4142+1/2) circle (2pt)
			( 1.5858, 4.4142) circle (2pt)
			
			(-2.4142/2, 0.4142/2) circle (2pt)
			(-1.4142/2, 1.4142/2) circle (2pt)
			(-0.4142/2, 2.4142/2) circle (2pt)
			( 0.5858/2, 3.4142/2) circle (2pt)
			( 1.5858/2, 4.4142/2) circle (2pt)
			( 2.5858/2, 5.4142/2) circle (2pt)
			( 3.5858/2, 6.4142/2) circle (2pt)
			( 4.5858/2, 7.4142/2) circle (2pt)
			( 5.5858/2, 8.4142/2) circle (2pt)
			
			(-1,-1) circle (2pt)
			(-1/2,-1/2) circle (2pt)
			(0,0) circle (2pt)
			(1/2,1/2) circle (2pt)
			(1,1) circle (2pt)
			(3/2,3/2) circle (2pt)
			(2,2) circle (2pt)
			(5/2,5/2) circle (2pt)
			(3,3) circle (2pt)
			(7/2,7/2) circle (2pt)
			(4,4) circle (2pt)	
			
			(1.4142,-1.4142) circle (2pt)
			(1.4142+1/2,-1.4142+1/2) circle (2pt)
			(2.4142,-0.4142) circle (2pt)
			(2.4142+1/2,-0.4142+1/2) circle (2pt)
			(3.4142, 0.5858) circle (2pt)
			(3.4142+1/2, 0.5858+1/2) circle (2pt)
			(4.4142, 1.5858) circle (2pt)
			(4.4142+1/2, 1.5858+1/2) circle (2pt)
			(5.4142, 2.5858) circle (2pt)
			(5.4142+1/2, 2.5858+1/2) circle (2pt)
			(6.4142, 3.5858) circle (2pt)
			(6.4142+1/2, 3.5858+1/2) circle (2pt)
			
			(0.4142/2,-2.4142/2) circle (2pt)
			(1.4142/2,-1.4142/2) circle (2pt)
			(2.4142/2,-0.4142/2) circle (2pt)
			(3.4142/2, 0.5858/2) circle (2pt)
			(4.4142/2, 1.5858/2) circle (2pt)
			(5.4142/2, 2.5858/2) circle (2pt)
			(6.4142/2, 3.5858/2) circle (2pt)
			(7.4142/2, 4.5858/2) circle (2pt)
			(8.4142/2, 5.5858/2) circle (2pt)
			(9.4142/2, 6.5858/2) circle (2pt)
			(10.4142/2, 7.5858/2) circle (2pt)
			(11.4142/2, 8.5858/2) circle (2pt)
			
			(4.8284-1.4142/2-1,-0.8284+1.4142/2-1) circle (2pt)
			(4.8284-1.4142/2-1/2,-0.8284+1.4142/2-1/2) circle (2pt)
			(4.8284-1.4142/2,-0.8284+1.4142/2) circle (2pt)
			(4.8284-1.4142/2+1/2,-0.8284+1.4142/2+1/2) circle (2pt)
			(4.8284-1.4142/2+2/2,-0.8284+1.4142/2+2/2) circle (2pt)
			(4.8284-1.4142/2+3/2,-0.8284+1.4142/2+3/2) circle (2pt)
			(4.8284-1.4142/2+4/2,-0.8284+1.4142/2+4/2) circle (2pt)
			(4.8284-1.4142/2+5/2,-0.8284+1.4142/2+5/2) circle (2pt)
			(4.8284-1.4142/2+6/2,-0.8284+1.4142/2+6/2) circle (2pt)
			(4.8284-1.4142/2+7/2,-0.8284+1.4142/2+7/2) circle (2pt)
			(4.8284-1.4142/2+8/2,-0.8284+1.4142/2+8/2) circle (2pt)
			
			(4.8284-1/2,-0.8284-1/2) circle (2pt)
			(4.8284,-0.8284) circle (2pt)
			(4.8284+1/2,-0.8284+1/2) circle (2pt)
			(5.8284, 0.1716) circle (2pt)
			(5.8284+1/2, 0.1716+1/2) circle (2pt)
			(6.8284, 1.1716) circle (2pt)
			(6.8284+1/2, 1.1716+1/2) circle (2pt)
			(7.8284, 2.1716) circle (2pt)
			(7.8284+1/2, 2.1716+1/2) circle (2pt)
			(8.8284, 3.1716) circle (2pt)
			(8.8284+1/2, 3.1716+1/2) circle (2pt)
			(9.8284, 4.1716) circle (2pt)
			
			(7.2426-1.4142/2-1/2, -1.2426+1.4142/2-1/2) circle (2pt)
			(7.2426-1.4142/2, -1.2426+1.4142/2) circle (2pt)
			(7.2426-1.4142/2+1/2, -1.2426+1.4142/2+1/2) circle (2pt)
			(7.2426-1.4142/2+2/2, -1.2426+1.4142/2+2/2) circle (2pt)
			(7.2426-1.4142/2+3/2, -1.2426+1.4142/2+3/2) circle (2pt)
			(7.2426-1.4142/2+4/2, -1.2426+1.4142/2+4/2) circle (2pt)
			(7.2426-1.4142/2+5/2, -1.2426+1.4142/2+5/2) circle (2pt)
			
			(7.2426, -1.2426) circle (2pt)
			(7.2426+1/2, -1.2426+1/2) circle (2pt)
			(8.2426, -0.2426) circle (2pt)
			(8.2426+1/2, -0.2426+1/2) circle (2pt)
			(9.2426,  0.7574) circle (2pt)
			;
			
			\node at (9.6,4.3) {$\textcolor{gray}{\Gamma}$};
			\draw[color=green] (-1.5,3.4) -- (10,3.4);
			\draw[color=green] (-1.5,2.4) -- (10,2.4);
			\draw[color=green, line width=2] (0,2.4) -- (0,3.4);
			\fill[fill=green!10, opacity=0.2] (-1.5,3.4) -- (10,3.4) -- (10,2.4) -- (-1.5,2.4) -- cycle;
			\filldraw [green] 	(-0.4142-1.4142/2, 2.4142+1.4142/2) circle (1pt)
			(-0.4142, 2.4142) circle (1pt)
			(-0.4142+1/2, 2.4142+1/2) circle (1pt)
			( 2.5858/2, 5.4142/2) circle (1pt)
			( 3.5858/2, 6.4142/2) circle (1pt)
			(5/2,5/2) circle (1pt)
			(3,3) circle (1pt)
			(8.4142/2, 5.5858/2) circle (1pt)
			(9.4142/2, 6.5858/2) circle (1pt)
			(5.4142, 2.5858) circle (1pt)
			(5.4142+1/2, 2.5858+1/2) circle (1pt)
			(4.8284-1.4142/2+6/2,-0.8284+1.4142/2+6/2) circle (1pt)
			(4.8284-1.4142/2+7/2,-0.8284+1.4142/2+7/2) circle (1pt)
			(7.8284+1/2, 2.1716+1/2) circle (1pt)
			(8.8284, 3.1716) circle (1pt)
			;		
			\node at (0.3,2.6) {$\textcolor{green}{W}$};			
			\filldraw[blue]  	(-0.4142-1.4142/2, 0) circle (1pt)
			(-0.4142, 0) circle (1pt)
			(-0.4142+1/2, 0) circle (1pt)
			( 2.5858/2,0) circle (1pt)
			( 3.5858/2, 0) circle (1pt)
			(5/2,0) circle (1pt)
			(3,0) circle (1pt)
			(8.4142/2, 0) circle (1pt)
			(9.4142/2, 0) circle (1pt)
			(5.4142, 0) circle (1pt)
			(5.4142+1/2, 0) circle (1pt)
			(4.8284-1.4142/2+6/2,0) circle (1pt)
			(4.8284-1.4142/2+7/2,0) circle (1pt)
			(7.8284+1/2, 0) circle (1pt)
			(8.8284,0) circle (1pt)
			;
			\node at (4.2,-0.3) {$\textcolor{blue}{\Lambda}$};
			\draw[color=red] (-1.5,1) -- (1.5,1);
			\draw[color=red] (-1.5,1) -- (-1.5,-1);
			\draw[color=red] (1.5,-1) -- (1.5,1);
			\draw[color=red] (1.5,-1) -- (-1.5,-1);
			\fill[fill=red!10, opacity=0.2] (-1.5,-1) -- (-1.5,1) -- (1.5,1) -- (1.5,-1) -- cycle;
			\filldraw[red] 		(-1,-1) circle (1pt)
			(-1/2,-1/2) circle (1pt)
			(0,0) circle (1pt)
			(1/2,1/2) circle (1pt)
			(1,1) circle (1pt)
			(-2.4142/2, 0.4142/2) circle (1pt)
			(-1.4142/2, 1.4142/2) circle (1pt)
			(1.4142/2,-1.4142/2) circle (1pt)
			(2.4142/2,-0.4142/2) circle (1pt)
			;
			\node at (0.3,-1.6) {\textcolor{red}{$r$-slab}};
		\end{tikzpicture}
		\caption{This figure shows the preimage of the slab for a fixed $r$ in the setting of a $\RR \times \RR$ model set.}
		\label{Fig:Slab}
	\end{figure}
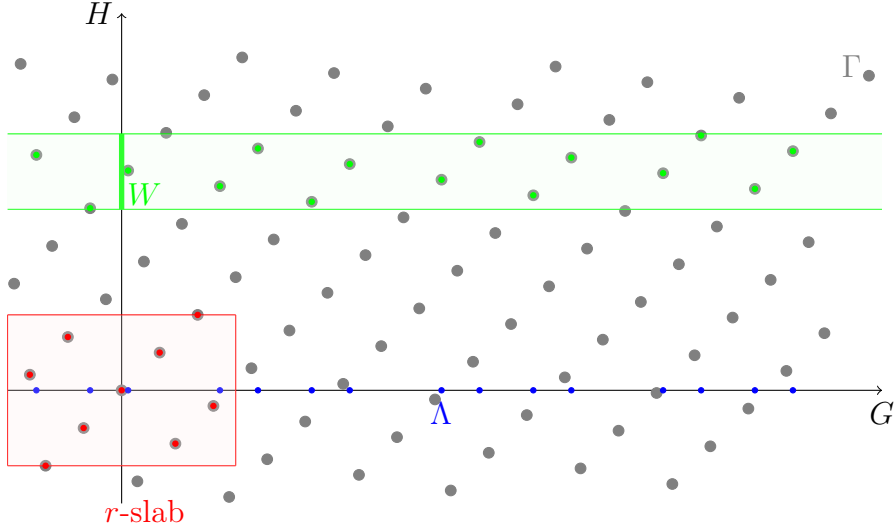\noindent

	\vspace{-1cm}
	\begin{Lemma}\label{Lem:Equiv}
		For $\lambda, \mu \in \Lambda$ we have that: $\lambda \sim_r \mu \Leftrightarrow \cS_r(\lambda)=\cS_r(\mu)$.
		\begin{proof}
			Assume $\lambda \sim_r \mu$, then $(B_r(\lambda) \cap \Lambda)\lambda^{-1}=(B_r(\mu) \cap \Lambda)\mu^{-1}$. Let $x \in S_r(\lambda)$ then there exists a $(\gamma, x)\in \Gamma$ such that
			\begin{align*}
				\gamma^{-1}\lambda &\in B_r(\lambda) \cap \Lambda\\
				&\Leftrightarrow \gamma^{-1} \in (B_r(\lambda) \cap \Lambda)\lambda^{-1} = (B_r(\mu) \cap \Lambda)\mu^{-1}\\
				&\Leftrightarrow \gamma^{-1}\mu \in B_r(\mu) \cap \Lambda.
			\end{align*}
			Therefore $x \in S_r(\mu)$.\par 
			
			Now assume $\cS_r(\lambda)=\cS_r(\mu)$ and let $x \in (B_r(\lambda)\cap \Lambda)\lambda^{-1}$ then $\tau(x^{-1}) \in \cS_r(\lambda)=\cS_r(\mu)$ and this implies $x \in (B_r(\mu)\cap \Lambda)\mu^{-1}$.
		\end{proof}
	\end{Lemma}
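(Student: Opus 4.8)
The plan is to recognise the $r$-slab $\cS_r(\lambda)$ as nothing but the pointed patch $(B_r(\lambda)\cap\Lambda)\lambda^{-1}$ read off on the $H$-side through the star map $\tau$, so that the two equalities in the statement translate directly into one another. First I would set up this dictionary. By \cref{Lem:WindowShift}, an element $g\in(B_r(\lambda)\cap\Lambda)\lambda^{-1}$ automatically satisfies $g\in\Gamma_G$, $g\in\mathrm{Disp}(\lambda)$ and $|g|<r$; conversely any $g\in\mathrm{Disp}(\lambda)$ with $|g|<r$ produces $g\lambda\in B_r(\lambda)\cap\Lambda$, since $d(g\lambda,\lambda)=|g^{-1}|<r$. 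Setting $\gamma:=g^{-1}$, the unique lift $(\gamma,\tau(\gamma))\in\Gamma$ then has $|\gamma|<r$, $\gamma^{-1}\in\mathrm{Disp}(\lambda)$, and — again by \cref{Lem:WindowShift}, which gives $\tau(\mathrm{Disp}(\lambda))\subset\mathring W\mathring W^{-1}$ — also $\tau(\gamma)\in WW^{-1}$; hence $\tau(g^{-1})\in\cS_r(\lambda)$, and running this backwards yields $\cS_r(\lambda)=\{\tau(g^{-1})\mid g\in(B_r(\lambda)\cap\Lambda)\lambda^{-1}\}$. The extra fact I would record is that the membership conditions defining $\cS_r(\cdot)$ depend on a lattice pair $(\gamma,\nu)$ only through $\nu=\tau(\gamma)$: indeed $\gamma^{-1}\in\mathrm{Disp}(\lambda)$ unwinds, using that $\tau$ is a homomorphism, to $\tau(\gamma)^{-1}\tau(\lambda)\in\mathring W$.

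For the forward implication I would assume $\lambda\sim_r\mu$, i.e.\ $(B_r(\lambda)\cap\Lambda)\lambda^{-1}=(B_r(\mu)\cap\Lambda)\mu^{-1}$, take $x\in\cS_r(\lambda)$ with a witnessing pair $(\gamma,x)\in\Gamma$ satisfying $|\gamma|<r$, $x\in WW^{-1}$ and $\gamma^{-1}\in\mathrm{Disp}(\lambda)$, and observe that $\gamma^{-1}\lambda\in B_r(\lambda)\cap\Lambda$, so $\gamma^{-1}=(\gamma^{-1}\lambda)\lambda^{-1}$ lies in the common patch $(B_r(\mu)\cap\Lambda)\mu^{-1}$; this forces $\gamma^{-1}\mu\in\Lambda$, i.e.\ $\gamma^{-1}\in\mathrm{Disp}(\mu)$, so the very same pair $(\gamma,x)$ now certifies $x\in\cS_r(\mu)$. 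Symmetry then gives $\cS_r(\lambda)=\cS_r(\mu)$.

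For the converse I would assume $\cS_r(\lambda)=\cS_r(\mu)$ and take $x\in(B_r(\lambda)\cap\Lambda)\lambda^{-1}$; then $x\in\Gamma_G$, $|x|<r$, $x\in\mathrm{Disp}(\lambda)$, and the lift $(x^{-1},\tau(x^{-1}))$ witnesses $\tau(x^{-1})\in\cS_r(\lambda)=\cS_r(\mu)$. Unwinding this last membership there is a pair $(\gamma',\tau(x^{-1}))\in\Gamma$ with $\gamma'^{-1}\in\mathrm{Disp}(\mu)$, and since $\tau(\gamma')=\tau(x^{-1})=\tau(x)^{-1}$ the observation from the first paragraph converts $\gamma'^{-1}\in\mathrm{Disp}(\mu)$ into $\tau(x)\tau(\mu)=\tau(x\mu)\in\mathring W$, i.e.\ $x\mu\in\Lambda$; combined with $|x|<r$ this gives $x\in(B_r(\mu)\cap\Lambda)\mu^{-1}$, and symmetry yields $\lambda\sim_r\mu$.

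I do not expect a serious obstacle; the statement is essentially bookkeeping. The two points that need genuine care — and where I would slow down — are, first, that a point of $\cS_r(\lambda)$ may be represented by several lattice pairs when $\pi_H|_\Gamma$ is not injective, which is exactly why it matters that the defining conditions see only $\tau(\gamma)$ and why one should never attempt to recover $\gamma$ itself from a slab element; and second, the norm bookkeeping $|\gamma^{-1}|=|\gamma|$ that lets the ball condition on the $G$-side match the condition $|\gamma|<r$ in \cref{Def:Slab}, which should be checked against the chosen conventions for the homogeneous norm and the associated right-invariant metric.
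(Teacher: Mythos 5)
Your proposal is correct and follows essentially the same route as the paper: show $\cS_r(\cdot)$ is the image under $\tau$ of the pointed patch, then transfer membership using \cref{Lem:WindowShift}. Your expansion of the converse step — pointing out that the $\mathrm{Disp}$ condition depends only on the $H$-component $\tau(\gamma)$, so that the (possibly different) lattice lift of $\tau(x^{-1})$ appearing in $\cS_r(\mu)$ still forces $x\mu\in\Lambda$ — is exactly the detail the paper's one-line "and this implies" elides, and is a welcome clarification.
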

	
	\begin{proof}[Proof of  \cref{Thm:AcceptanceDomains}]
		\Cref{Lem:Equiv} tells us that for all $\lambda \in A_r^G(\lambda)$ the set
		\begin{equation*}
			W_r(\lambda):=\left(\bigcap_{\mu \in \cS_r(\lambda)}\mu \mathring{W}\right)\cap \left(\bigcap_{\mu \in \cS_r(\lambda)^\mathrm{C}} \mu W^\mathrm{C} \right)
		\end{equation*}
		is the same. So to prove \cref{Thm:AcceptanceDomainsEq1} it is enough to show $\tau(\lambda)\in W_r(\lambda)$. By the definition of the $r$-slab of $\lambda$ we have for all $\mu \in \cS_r(\lambda)$ that there is a $\mu_G \in \Gamma_G$ with $\tau(\mu_G)=\mu$ and $\mu_G^{-1}\lambda \in \Lambda$. Further \cref{Lem:WindowShift} tells us that $\tau(\lambda) \in \mu \mathring{W}$. For $\mu \in \cS_r^\mathrm{C}(\lambda)$ \cref{Lem:WindowShift} tells us that $\tau(\lambda) \not\in \mu W$, but this means that $\tau(\lambda) \in \mu W^\mathrm{C}$. So it follows that $\tau(\lambda)\in W_r(\lambda)$.\par \medskip
		Now let $\lambda \not\sim_r \lambda'$, so by \cref{Lem:Equiv} $S_r(\lambda) \neq S_r(\mu)$ and the disjointness of $W_r(\lambda)$ and $W_r(\lambda')$ follows by the same argument.\par\medskip
		
		Finally we show that the $\overline{W_r(\lambda)}$ tile the closure of the window $\overline{W}$. The inclusion ${\overline{W_r(\lambda)} \subseteq \overline{W}}$ is clear since $e_H \in \cS_r(\lambda)$ for all $\lambda \in \Lambda$ and all $r>0$. Since $\Gamma_H$ is dense in $W$ and $W_r(\lambda)$ is open we know that $\Gamma_H$ is dense in $W_r(\lambda)$. Since $A_r^H(\lambda)=\Gamma_H \cap W_r(\lambda)$ we know that $A_r^H(\lambda)$ is dense in $W_r(\lambda)$. Therefore the completion by sequences $\overline{A_r^H(\lambda)}^{seq}$ is the topological closure $\overline{W_r(\lambda)}^{top}$. Further since every $\gamma \in \Gamma_H \cap W$ has to belong to some $W_r(\lambda)$ we get that
		\begin{equation*}
			W \cap \Gamma_H= \bigcup_{\lambda \in A_r^G} A_r^H(\lambda).
		\end{equation*}
		Completion by sequences on both sides delivers
		\begin{equation*}
			\overline{W} = \bigcup_{\lambda \in A_r^G} \overline{A_r^H(\lambda)}^{seq} = \bigcup_{\lambda \in A_r^G} \overline{W_r(\lambda)}^{top}.
		\end{equation*}
	\end{proof}
	
	\begin{Remark}
		Taking the closure of the window in the theorem does not make a big difference, since by $\Gamma$-regularity there are no projected lattice points on the boundary. This also holds for the shifted window, since if $\gamma_1, \gamma_2 \in \Gamma_H$ with $\gamma_1 \in \gamma_2 W$, then $\gamma_2^{-1}\gamma_1 \in W$ in contradiction to the $\Gamma$-regularity of $W$. So for all acceptance domains $\partial W_{r}(\lambda) \cap \Gamma_H = \emptyset$.
	\end{Remark}
	
	\section{Lattice point counting}\label{Sec:Lattice}
	Before we begin with the actual proof of our main theorem we will establish the growth lemma, \cref{Prop:Growth}. The growth lemma tells us how to count points in sets of the form $(B_r(e) \times A) \cap \Gamma$. Notice that in particular the slab $\cS_r=\pi_H\left(\left(B_r(e)\times WW^{-1}\right)\cap \Gamma\right)$ is of this form.
	
	\begin{Proposition}[Growth Lemma]\label{Prop:Growth}
		Let $G$ and $H$ be lcsc groups. For a model set with a uniform lattice $\Gamma \subset G \times H$ and a bounded open set ${\emptyset \neq A \subset H}$. The asymptotic growth of the number of lattice points inside $B^G_r(e) \times A$ is bounded by
		\begin{equation*} 
			\mu_G\left(B^G_{r-k_1}(e)\right) \ll \left\vert\left(B^G_r(e)\times A\right)\cap \Gamma \right\vert \ll \mu_G\left(B^G_{r+k_2}(e)\right),
		\end{equation*}
		for some constants $k_1,k_2>0$ as $r \to \infty$.
	\end{Proposition}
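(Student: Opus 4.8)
The plan is to exploit the fact that a uniform lattice $\Gamma$ in the product group $G \times H$ tiles $G \times H$ by (left or right) translates of a relatively compact fundamental domain $F$, and to compare the set $(B^G_r(e) \times A) \cap \Gamma$ with the volume it "sweeps out" under this tiling. Fix a Borel fundamental domain $F \subset G \times H$ for the right action of $\Gamma$, so that $\mu_{G \times H}(F) = \mathrm{covol}(\Gamma)$ and every point of $G \times H$ lies in exactly one set $\gamma F$, $\gamma \in \Gamma$. Since $F$ is relatively compact and both $G$ and $H$ carry proper right-invariant metrics, there is a constant $k > 0$ with $\pi_G(F) \subset B^G_k(e)$ and $\pi_H(F) \subset B^H_k(e)$; after enlarging $k$ we may also assume $B^H_k(e) \supset A A^{-1}$ or at least that $A$ is contained in a $k$-ball, which is where the boundedness of $A$ enters. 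The point is that for $\gamma = (\gamma_G, \gamma_H) \in \Gamma$ the fibre $\gamma F$ projects into $B^G_k(\gamma_G) \times B^H_k(\gamma_H)$, so if $\gamma_G \in B^G_r(e)$ and $\gamma_H \in A$ then $\gamma F \subset B^G_{r+k}(e) \times (A \cdot B^H_k(e))$; conversely if $\gamma F$ meets $B^G_{r-k}(e) \times A'$ for a suitable slightly shrunk open set $A'$ then $\gamma_G \in B^G_r(e)$ and $\gamma_H \in A$.

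For the upper bound, observe that the translates $\gamma F$ over the $\gamma \in (B^G_r(e) \times A) \cap \Gamma$ are pairwise disjoint and all contained in $B^G_{r+k}(e) \times (A \cdot B^H_k(e))$. Taking $\mu_{G \times H}$-volume and using the product structure $\mu_{G\times H} = \mu_G \otimes \mu_H$ gives
\begin{equation*}
	\bigl|(B^G_r(e) \times A) \cap \Gamma\bigr| \cdot \mathrm{covol}(\Gamma) \;\leq\; \mu_G\bigl(B^G_{r+k}(e)\bigr)\cdot \mu_H\bigl(A\cdot B^H_k(e)\bigr),
\end{equation*}
and since $\mu_H(A \cdot B^H_k(e))$ is a finite constant independent of $r$ (here again boundedness of $A$ is used), this yields $\bigl|(B^G_r(e)\times A)\cap\Gamma\bigr| \ll \mu_G(B^G_{r+k_2}(e))$ with $k_2 = k$. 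For the lower bound one runs the reverse inclusion: pick a nonempty open $A' \subset H$ with $A' \cdot B^H_k(e) \subset A$ (possible since $A$ is open and nonempty — shrink around an interior point), and note that every fibre $\gamma F$ meeting $B^G_{r-k}(e) \times A'$ has its index $\gamma$ lying in $B^G_r(e) \times A$. Since the $\gamma F$ cover $G \times H$, the union of such fibres contains $B^G_{r-k}(e) \times A'$, so comparing volumes gives
\begin{equation*}
	\bigl|(B^G_r(e)\times A)\cap\Gamma\bigr| \cdot \mathrm{covol}(\Gamma) \;\geq\; \mu_G\bigl(B^G_{r-k}(e)\bigr)\cdot \mu_H(A'),
\end{equation*}
i.e. $\bigl|(B^G_r(e)\times A)\cap\Gamma\bigr| \gg \mu_G(B^G_{r-k_1}(e))$ with $k_1 = k$. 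Combining the two estimates is exactly the claim.

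The main obstacle, and the only place where real care is needed, is the bookkeeping with the metric balls: one must be sure that a relatively compact fundamental domain exists (uniformity of $\Gamma$) and that its projections to $G$ and $H$ sit inside balls of a fixed radius, which requires the metrics to be proper — this is precisely where Struble's theorem, invoked earlier in the paper, is used. A minor subtlety is the direction of invariance: the metric is right-invariant while it is most convenient to tile by right-translates $\gamma F$, so one should check that $B^G_k(\gamma_G) = B^G_k(e)\gamma_G$ and that multiplication of a ball by a bounded set stays inside a slightly larger ball, both of which follow from right-invariance and the triangle inequality. No ergodic theory is needed; the argument is the elementary "pack fundamental domains" comparison, and the constants $k_1, k_2$ can be taken equal to the single radius $k$ bounding the fundamental domain together with the correction needed to absorb $A$.
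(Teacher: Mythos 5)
Your upper bound is correct and is essentially the same disjointness--volume argument as the paper's: you pack disjoint translates of a relatively compact fundamental domain into a slightly enlarged box, where the paper packs disjoint $\tfrac{c_1}{2}$-balls around the lattice points into the same box; both give the identical estimate. The minor left/right bookkeeping you flag ($\gamma F$ vs.\ $F\gamma$, $A\cdot B^H_k(e)$ vs.\ $B^H_k(e)\cdot A$) is real but easy to fix once you commit to right-translates $F\gamma$ to match the right-invariant metric.

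The lower bound, however, has a genuine gap. You need a nonempty open $A'\subset H$ with $B^H_k(e)\cdot A' \subset A$, and you assert this is "possible since $A$ is open and nonempty --- shrink around an interior point." That is false in general: $k$ is determined by the (fixed) fundamental domain while $A$ is given, and if $k$ exceeds the inner radius of $A$ there is no such $A'$. Concretely, with $H=\RR$, $A=(0,\,0.1)$ and $k=10$, any nonempty $A'$ gives a set $B_k(e)+A'$ of diameter at least $20$, which cannot fit in $A$. The quantifiers are in the wrong order: you must be allowed to choose the fundamental domain \emph{after} seeing how small $A$ is. The ingredient that makes this possible is the density of $\pi_H(\Gamma)$ in $H$ (part of the CPS hypothesis), which guarantees that $G\times H$ is covered by $\Gamma$-translates of a box $K\times U$ with $U\subset H$ as small as you like, at the cost of a larger compact $K\subset G$ --- this is exactly \cref{Lem:ProduktOffenKompakt} in the paper, and the enlarged $K$ is harmless because it only shifts the constant $k_1$ in the $G$-direction, where $r\to\infty$. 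The paper's lower bound first shrinks $A$ to a small ball and then invokes that lemma; your argument would be repaired by first fixing $B_\epsilon(a)\subset A$ and then choosing a fundamental domain $F$ with $\pi_H(F)\subset B_{\epsilon/2}(e)$, justified by \cref{Lem:ProduktOffenKompakt}, but as written this crucial step is missing.
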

	
	The proof consists of the following well known proposition, see for example \cite[Lemma 7.4]{BaakeGrimm}, and the two subsequent lemmas.
	
	\begin{Proposition}\label{Lem:ProduktOffenKompakt}
		Let $G$ and $H$ be lcsc groups and $\Gamma \subset G \times H$ a uniform lattice, such that $\pi_H(\Gamma)$ is dense in $H$. Further let $U \subset H$ be an open non-empty set. Then there exists a compact set $K \subset G$ such that
		\begin{equation*}
			G \times H = (K \times U)\Gamma.
		\end{equation*}
		\begin{proof}
			Since $\Gamma$ is a uniform lattice in $G \times H$ there exists a compact set $C$ such that $G \times H= C\Gamma$. We can cover $C$ by the bigger compact set $\pi_G(C) \times \pi_H(C)$, which implies $ G \times H = (C_G \times C_H)\Gamma$. By density of $\pi_H(\Gamma)$ in $H$ we get a covering
			\begin{equation*}
				\bigcup_{ \gamma \in \Gamma} U\pi_H(\gamma) = H \supset C_H.
			\end{equation*}
			Since $C_H$ is compact we can choose a finite subcovering with finite $F\subset \Gamma$ such that
			\begin{equation*}
				\bigcup_{ \gamma \in F} U\pi_H(\gamma) \supset C_H.
			\end{equation*}
			Now let $z\in  G \times H$ be arbitrary. By the choice of $C$ we find a $\gamma \in \Gamma$ such that ${z \gamma^{-1} \in C \subset C_G \times C_H}$. By our covering argument we find a $f \in F$ such that ${\pi_H(z \gamma^{-1}) \in U \pi_H(f)}$ and therefore $\pi_H(z \gamma^{-1} f^{-1}) \in U$. If we project the same element to $G$ we get
			\begin{equation*}
				\pi_G(z \gamma^{-1} f^{-1}) \in C_G \pi_G(F^{-1}) =:K.
			\end{equation*}
			Now $K$ is compact since $C_G$ is compact and $\pi_G(F^{-1})$ is finite. Putting things together we realise
			\begin{equation*}
				z = (z \gamma^{-1} f^{-1}) (f \gamma) \in (K \times U) \Gamma.
			\end{equation*}
		\end{proof}
	\end{Proposition}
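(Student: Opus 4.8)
The plan is to combine cocompactness of $\Gamma$ with density of $\pi_H(\Gamma)$ in a compactness sandwich. Since $\Gamma$ is a uniform lattice in $G\times H$, there is a compact set $C\subset G\times H$ with $G\times H=C\Gamma$. Enlarging $C$ to the compact box $C_G\times C_H$, where $C_G:=\pi_G(C)$ and $C_H:=\pi_H(C)$ are compact as continuous images of $C$, we still have $G\times H=(C_G\times C_H)\Gamma$ (enlarging the tile only enlarges its $\Gamma$-translates). The entire content of the proposition is then to shrink the $H$-factor $C_H$ down to the prescribed open set $U$ while keeping the $G$-factor compact.

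For this, I would first note that the right translates $\{U\pi_H(\gamma)\mid\gamma\in\Gamma\}$ cover $H$: given $h\in H$, the set $U^{-1}h$ is open and nonempty, so it meets the dense set $\pi_H(\Gamma)$, which produces $\gamma$ with $h\in U\pi_H(\gamma)$. In particular these translates cover the compact set $C_H$, so by compactness finitely many suffice; fix a finite $F\subset\Gamma$ with $C_H\subseteq\bigcup_{\gamma\in F}U\pi_H(\gamma)$. I would then set $K:=C_G\cdot\pi_G(F^{-1})$, which is compact in $G$ as the product of the compact set $C_G$ with the finite set $\pi_G(F^{-1})$.

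It then remains to verify $G\times H=(K\times U)\Gamma$. Given $z\in G\times H$, choose $\gamma\in\Gamma$ with $z\gamma^{-1}\in C\subseteq C_G\times C_H$. Then $\pi_H(z\gamma^{-1})\in C_H$, so there is $f\in F$ with $\pi_H(z\gamma^{-1})\in U\pi_H(f)$, i.e.\ $\pi_H(z\gamma^{-1}f^{-1})\in U$; projecting the same element to $G$ gives $\pi_G(z\gamma^{-1}f^{-1})\in C_G\,\pi_G(F^{-1})=K$. Hence $z\gamma^{-1}f^{-1}\in K\times U$, and since $f\gamma\in\Gamma$ we get $z=(z\gamma^{-1}f^{-1})(f\gamma)\in(K\times U)\Gamma$, as desired.

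I do not expect a genuine obstacle here: the argument is a routine compactness-plus-density manipulation. The only point requiring care is bookkeeping in the non-abelian product group — making sure every translate is taken on the correct side throughout, and observing that nothing is assumed or needed about $\pi_G(\Gamma)$ (it need not be discrete or dense); one uses only that $\Gamma$ is cocompact in $G\times H$ and that $\pi_H(\Gamma)$ is dense in $H$.
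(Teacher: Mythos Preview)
Your proof is correct and follows essentially the same route as the paper's: enlarge the fundamental compact $C$ to the box $C_G\times C_H$, use density of $\pi_H(\Gamma)$ to extract a finite $F\subset\Gamma$ whose $U$-translates cover $C_H$, and set $K=C_G\,\pi_G(F^{-1})$. You even add a couple of clarifying details the paper leaves implicit (why the $U\pi_H(\gamma)$ cover $H$, and why $C_G,C_H$ are compact).
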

	
	\begin{Lemma}\label{Lem:GrowthSrUpperGeneral}
		Let $G$ and $H$ be  lcsc groups. For a model set with a uniform lattice $\Gamma \subset G \times H$ and a bounded open set ${\emptyset \neq A \subset H}$ the growth of the lattice points inside $B^G_r(e) \times A$ is asymptotically bounded from above by
		\begin{equation*} 
			\left\vert(B^G_r(e)\times A)\cap \Gamma \right\vert \ll \mu_G(B^G_{r+k_2}(e)),
		\end{equation*}
		where $k_2$ is some constant as $r \to \infty$.
		\begin{proof}
			Since $\Gamma$ is a lattice it is uniformly discrete, therefore we find a constant $c_1$ such that $d(\gamma_1,\gamma_2)> c_1$ for all $\gamma_1 \neq \gamma_2 \in \Gamma$. If we halve the constant we get disjoint balls around the lattice points, i.e. $B^{G\times H}_{\frac{c_1}{2}}(\gamma_1) \cap B^{G\times H}_{\frac{c_1}{2}}(\gamma_2)=\emptyset$. \par\medskip
			Since $A$ is bounded we find a second constant $c_2$ such that $A \subset B_{c_2}^H(e)$ and that $B_{c_1}^{G\times H}(x)\subset G \times B^H_{c_2}(e)$ for every $x \in G\times A$. The norm in the product is given by the maximum of the norms of the components.\par
			
			The idea is that we build a set which contains not only the points of $(B^G_r(e)\times A)\cap \Gamma$, but also the balls around them. Then we can obtain an upper bound for the number of points in $(B^G_r(e)\times A) \cap \Gamma$ by estimating how often the thickened set of points could fit in this set via a volume estimate.
			Since by our choice of $\frac{c_1}{2}$ the balls do not overlap, we obtain that
			\begin{equation*}
				\sum_{\gamma \in (B^G_r(e)\times A) \cap \Gamma} \mu_{G \times H}\left(B^{G \times H}_{\frac{c_1}{2}}(\gamma)\right) \leq \mu_{G\times H}\left(B^G_{r+c_1}(e) \times B^H_{c_2}(e)\right). 
			\end{equation*}
			We need the ''$+c_1$`` in the index so the set can also contain all the balls whose center lie close to the border of $B^G_r(e)$. The volume of a ball is independent of its center point, since the metric and the Haar-measure are right-invariant. Therefore we can write the inequality as
			\begin{equation*}
				\left\vert\left(B^G_r(e)\times A\right) \cap \Gamma\right\vert \cdot \mu_{G \times H}\left(B^{G \times H}_{\frac{c_1}{2}}(e)\right) \leq \mu_{G \times H}\left(B^G_{r+c_1}(e)\times B^H_{c_2}(e)\right). 
			\end{equation*}
			We will now divide this equation by $\mu_{G \times H}\left(B^{G\times H}_{\frac{c_1}{2}}(e)\right)$, which is just a constant dependent on $c_1$ which we will denote by $c_1'$, and the constant $\mu_{H}\left(B^H_{c_2}(e)\right)$ will be denoted by $c_2'$.
			\begin{align*}
				\left\vert\left(B^G_r(e)\times A\right) \cap \Gamma\right\vert &\leq \frac{\mu_{G \times H}\left(B^G_{r+c_1}(e) \times B^H_{c_2}(e)\right)}{c_1'}\\ &=\frac{\mu_{G}\left(B^G_{r+c_1}(e)\right) \cdot \mu_{H}\left(B^H_{c_2}(e)\right)}{c_1'}
				= \frac{c_2'}{c_1'} \mu_G\left(B^G_{r+c_1}(e)\right). 
			\end{align*}
		\end{proof}
	\end{Lemma}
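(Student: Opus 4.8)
The plan is a straightforward ball-packing (volume) estimate. I would thicken each lattice point lying in $(B^G_r(e)\times A)\cap\Gamma$ to a small ball, observe that these balls are pairwise disjoint, check that their union is contained in a product set whose $G$-factor exceeds $B^G_r(e)$ by only a bounded amount and whose $H$-factor stays bounded independently of $r$, and then compare the total volume of the disjoint balls with the volume of the containing product set. Since the individual ball volume is a fixed positive constant (by right-invariance of the metric and Haar measure), this turns the volume comparison into the desired upper bound on the number of lattice points.

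Concretely, I would first use that a uniform lattice is in particular uniformly discrete: there is a constant $c_1>0$ with $d(\gamma_1,\gamma_2)>c_1$ for all $\gamma_1\neq\gamma_2$ in $\Gamma$, so the balls $B^{G\times H}_{c_1/2}(\gamma)$, $\gamma\in\Gamma$, are pairwise disjoint. Since $A$ is bounded, I would fix a second constant $c_2>0$ large enough that $A\subset B^H_{c_2}(e)$ and, more precisely, that $B^{G\times H}_{c_1}(x)\subset G\times B^H_{c_2}(e)$ for every $x\in G\times A$; this uses that the norm on $G\times H$ is the maximum of the two component norms, so a point within distance $c_1$ of $G\times A$ has its $H$-coordinate within distance $c_1$ of $A$. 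Then for $\gamma\in(B^G_r(e)\times A)\cap\Gamma$ the ball $B^{G\times H}_{c_1/2}(\gamma)$ lies in $B^G_{r+c_1}(e)\times B^H_{c_2}(e)$, the $G$-coordinate being perturbed by at most $c_1/2<c_1$ and the $H$-coordinate being controlled by the choice of $c_2$.

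Disjointness then gives
\[
\sum_{\gamma\in(B^G_r(e)\times A)\cap\Gamma}\mu_{G\times H}\bigl(B^{G\times H}_{c_1/2}(\gamma)\bigr)\;\le\;\mu_{G\times H}\bigl(B^G_{r+c_1}(e)\times B^H_{c_2}(e)\bigr).
\]
By right-invariance each summand equals the fixed constant $c_1':=\mu_{G\times H}\bigl(B^{G\times H}_{c_1/2}(e)\bigr)>0$, and since the product Haar measure factors as $\mu_{G\times H}=\mu_G\otimes\mu_H$, the right-hand side is $\mu_G\bigl(B^G_{r+c_1}(e)\bigr)\cdot c_2'$ with $c_2':=\mu_H\bigl(B^H_{c_2}(e)\bigr)$. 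Dividing by $c_1'$ yields
\[
\bigl|(B^G_r(e)\times A)\cap\Gamma\bigr|\;\le\;\tfrac{c_2'}{c_1'}\,\mu_G\bigl(B^G_{r+c_1}(e)\bigr),
\]
which is exactly the claimed estimate with $k_2:=c_1$, the leading factor $c_2'/c_1'$ being a constant independent of $r$.

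The only delicate point — and the main, rather minor, obstacle — is the geometric bookkeeping that squeezes the thickened balls into a product set that grows by only a bounded amount in the $G$-direction and not at all (asymptotically) in the $H$-direction: this is precisely where boundedness of $A$, the max-form of the product norm, and right-invariance of both the metric and the Haar measure are used. Everything else is a one-line volume comparison.
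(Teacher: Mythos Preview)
Your proposal is correct and follows essentially the same approach as the paper's proof: the same ball-packing argument using uniform discreteness of $\Gamma$, the same choice of constants $c_1,c_2$, the same containment $B^{G\times H}_{c_1/2}(\gamma)\subset B^G_{r+c_1}(e)\times B^H_{c_2}(e)$ via the max-norm, and the same volume comparison yielding $k_2=c_1$.
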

	
	\begin{Lemma}\label{Lem:GrowthSrLowerGeneral}
		Let $G$ and $H$ be lcsc groups. For a model set with a uniform lattice $\Gamma \subset G \times H$ and a bounded open set ${\emptyset \neq A \subset H}$ the growth of the number of lattice points inside $B^G_r(e) \times A$ is asymptotically bounded from below by
		\begin{equation*} 
			\left\vert\left(B^G_r(e)\times A\right)\cap \Gamma \right\vert \gg \mu_G\left(B^G_{r-k_1}(e)\right),		
		\end{equation*}
		where $k_1$ is some constant as $r \to \infty$.
		\begin{proof}
			Let $\epsilon>0$ be fixed. We choose an open ball $B^H_\epsilon(\gamma_H) \subset A$ with $\gamma_H \in \Gamma_H$, this can be done since $\Gamma_H$ is dense in $H$ and $A$ is open and therefore $\Gamma_H \cap A \subset A$ is dense.\par
			First we assume that $\gamma_H = e$. By \cref{Lem:ProduktOffenKompakt} we find a compact set $K \subset G$ such that $ G \times H = \left(K \times B^H_\epsilon(e)\right)\Gamma$. Since $K$ is compact it is bounded, we can consider $\overline{B^G_{c_1}}(e)$, with $c_1$ large enough, instead. Then for all $z \in G \times H$ we see $\left(B^G_{c_1}(e) \times B^H_\epsilon(e)\right) z \cap \Gamma \neq \emptyset$. This holds true since we can write $z=(k_z,u_z)(\gamma_{zG},\gamma_{zH})$ with $(\gamma_{zG},\gamma_{zH}) \in \Gamma$, $k_z \in B^G_{c_1}(e)$ and $u_z \in B^H_\epsilon(e)$. But then
			\begin{equation*}
				(\gamma_{zG},\gamma_{zH}) = \left(k_z^{-1},u_z^{-1}\right) z \in \left(B^G_{c_1}(e) \times B^H_\epsilon(e)\right) z \cap \Gamma,
			\end{equation*}
			since $k_z^{-1} \in B^G_{c_1}(e)$ and $u_z^{-1} \in B^H_\epsilon(e)$.\par 
			We can find a lower bound of the growth if we can fit enough of the sets of type ${\left(B^G_{c_1}(e)\times B^H_\epsilon(e)\right) z}$ into $B^G_r(e) \times A$ in a disjoint way. One should think of this as stacking these sets onto another with base $B^H_\epsilon(e)$.  This comes down to
			\begin{align*} 
				&\left\vert\left(B^G_r(e)\times A\right)\cap \Gamma \right\vert > \left\vert\left(B^G_r(e) \times B^H_\epsilon(e)\right)\cap \Gamma \right\vert\\ 
				&\quad\geq \max \left\{ \vert X \vert \,\middle\vert\, X \subset G,\text{ such that } \forall x \in X:  B^G_{c_1}(x) \subset B^G_r(e) \right.\\
				&\quad\quad \quad \quad \left.\text{ and } B^G_{c_1}(x)\cap B^G_{c_1}(y) = \emptyset\,\, \forall x \neq y \in X \right\}\\
				&\quad\geq \max \left\{ \vert X \vert \,\middle\vert\, X \subset B^G_{r-c_1}(e) \text{ and } X \text{ is } 2c_1\text{-uniformly discrete}\right\}.
			\end{align*}
			We can extend every $c_1$-uniformly discrete set to a $(c_2,c_1)$-Delone set for some constant $c_2$, \cite[Proposition 3.C.3]{Harpe}. Thus
			\small\begin{equation*}
				\left\vert\left(B^G_r(e)\times A\right)\cap \Gamma \right\vert > \max \left\{ \vert X \vert \,\middle\vert\, X \subset B^G_{r-c_1}(e) \text{ and } X \text{ is a } (c_2,2c_1)\text{-Delone subset of } B_r^G(e)\right\}.
			\end{equation*}\normalsize
			For every such Delone set we can cover $B^G_{r-c_1}(e)$ with balls $B^G_{c_2}(x)$ for $x\in X$, so that
			\begin{align*}
				\bigcup_{x \in X} B^G_{c_2}(x) \supset B^G_{r-c_1}(e) \Rightarrow \sum_{x\in X} \mu_{G}\left(B^G_{c_2}(x)\right) \geq \mu_G\left(B^G_{r-c_1}(e)\right).
			\end{align*}
			Since the metric and the Haar-measure are right-invariant all of these balls have the same measure and we get
			\begin{align*}
				\vert X \vert \cdot \mu_G\left(B^G_{c_2}(e)\right) \geq \mu_G\left(B^G_{r-c_1}(e)\right) \Leftrightarrow \vert X \vert \geq \frac{\mu_G\left(B^G_{r-c_1}(e)\right)}{\mu_G\left(B^G_{c_2}(e)\right)}.
			\end{align*}
			Summing up we have
			\begin{equation*}
				\left\vert\left(B^G_r(e)\times A\right)\cap \Gamma \right\vert > \frac{\mu_G\left(B^G_{r-c_1}(e)\right)}{\mu_G\left(B^G_{c_2}(e)\right)}.
			\end{equation*}
		\end{proof}
	\end{Lemma}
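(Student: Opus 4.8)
The plan is to prove the lower bound by a packing argument: inside $B^G_r(e) \times A$ I will exhibit many pairwise disjoint cells of one fixed shape, each of which \cref{Lem:ProduktOffenKompakt} forces to contain a lattice point, and then count the cells by a Haar-volume comparison.

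First I would reduce to the situation $B^H_\epsilon(e) \subseteq A$ for some $\epsilon > 0$. Since $(G,H,\Gamma)$ is a cut-and-project scheme, $\Gamma_H$ is dense in $H$, so the open set $A$ contains some $\gamma_H \in \Gamma_H$ together with a ball $B^H_\epsilon(\gamma_H) \subseteq A$; fix $\gamma_G \in \Gamma_G$ with $(\gamma_G,\gamma_H) \in \Gamma$. Right multiplication by $(\gamma_G,\gamma_H)$ is a bijection of $\Gamma$; using that the metric is right-invariant (hence the associated norm is symmetric, $|x^{-1}|=|x|$, and subadditive, $|xy|\le|x|+|y|$), it maps $(B^G_{r-|\gamma_G|}(e)\times A')\cap\Gamma$ injectively into $(B^G_r(e)\times A)\cap\Gamma$, where $A':=A\gamma_H^{-1}$ is again bounded and open and contains $B^H_\epsilon(\gamma_H)\gamma_H^{-1}=B^H_\epsilon(e)$. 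As $|\gamma_G|$ is a constant, a lower bound for $A'$ transfers to $A$ at the cost of increasing $k_1$ by $|\gamma_G|$, so I may assume $B^H_\epsilon(e)\subseteq A$ from now on.

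Next I would apply \cref{Lem:ProduktOffenKompakt} with $U=B^H_\epsilon(e)$ to get a compact $K\subseteq G$, hence a constant $c_1$ with $K\subseteq B^G_{c_1}(e)$ and $G\times H=(B^G_{c_1}(e)\times B^H_\epsilon(e))\Gamma$. Exactly as in the proof of \cref{Lem:GrowthSrUpperGeneral} this gives: for every $z\in G\times H$ the cell $(B^G_{c_1}(e)\times B^H_\epsilon(e))z$ meets $\Gamma$ (writing $z=(k_z,u_z)\delta$ with $\delta\in\Gamma$, $k_z\in B^G_{c_1}(e)$, $u_z\in B^H_\epsilon(e)$, one has $\delta=(k_z^{-1},u_z^{-1})z$ in the cell by symmetry of the norm). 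Taking $z=(x,e_H)$, the cell becomes $B^G_{c_1}(e)x\times B^H_\epsilon(e)$; if $x\in B^G_{r-c_1}(e)$ then $B^G_{c_1}(e)x\subseteq B^G_r(e)$ by subadditivity and $B^H_\epsilon(e)\subseteq A$, so this cell lies inside $B^G_r(e)\times A$ and contains a lattice point. If $x,y$ satisfy $d(x,y)\ge 2c_1$ the two cells are disjoint (a common point $g=ax=by$ with $|a|,|b|<c_1$ forces $d(x,y)=|a^{-1}b|<2c_1$ by subadditivity), so distinct admissible $x$'s contribute distinct lattice points.

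Finally I would take $X\subseteq B^G_{r-c_1}(e)$ to be a maximal $2c_1$-separated subset, which is finite because $B^G_{r-c_1}(e)$ is precompact (the metric is proper). The previous step yields $|(B^G_r(e)\times A)\cap\Gamma|\ge|X|$. By maximality every point of $B^G_{r-c_1}(e)$ lies within $2c_1$ of $X$, so $B^G_{r-c_1}(e)\subseteq\bigcup_{x\in X}B^G_{2c_1}(x)$; comparing Haar volumes, and using right-invariance of $\mu_G$ so that all these balls have the common measure $\mu_G(B^G_{2c_1}(e))$, gives $|X|\ge\mu_G(B^G_{r-c_1}(e))/\mu_G(B^G_{2c_1}(e))$. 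Since $\mu_G(B^G_{2c_1}(e))$ is a positive finite constant, this yields $|(B^G_r(e)\times A)\cap\Gamma|\gg\mu_G(B^G_{r-c_1}(e))$, i.e.\ the claim with $k_1=c_1$ (plus the constant from the reduction step). The point I expect to require the most care is not any computation but the geometric observation that one may use a single cell shape whose $H$-base $B^H_\epsilon(e)$ is fixed and whose $G$-factor is merely right-translated; it is precisely non-commutativity of $H$ that prevents doing this naively and forces the initial translation putting the window ball at the identity, after which \cref{Lem:ProduktOffenKompakt} supplies the lattice points.
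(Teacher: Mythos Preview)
Your proof is correct and follows the same route as the paper's: reduce to $B^H_\epsilon(e)\subseteq A$, use \cref{Lem:ProduktOffenKompakt} to place a lattice point in each translated cell $(B^G_{c_1}(e)\times B^H_\epsilon(e))(x,e_H)$, and finish by a packing/volume comparison in $G$. The only differences are cosmetic improvements: you spell out the reduction to $\gamma_H=e$ that the paper merely asserts, and you avoid the paper's appeal to \cite[Proposition~3.C.3]{Harpe} by noting directly that a \emph{maximal} $2c_1$-separated subset of $B^G_{r-c_1}(e)$ already has covering radius $2c_1$.
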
	
	
	\begin{Definition}
		Let $G$ be a locally compact group and let $d$ be a right-invariant metric on $G$ compatible with the topology on $G$. Then $G$ is a group with \textit{exact polynomial growth of degree $\kappa$ with respect to $d$} if there exists a constant $c>0$ such that
		\begin{equation*}
			\lim\limits_{r \to \infty} \frac{\mu_{G}(B_r(e))}{c r^\kappa} =1.
		\end{equation*}
	\end{Definition}
	
	\begin{Corollary}
		If $G$ is a group with exact polynomial growth of degree $\kappa$ with respect to $d$, then
		\begin{equation*} 
			\left\vert\left(B^G_r(e)\times A\right)\cap \Gamma \right\vert \asymp r^\kappa.
		\end{equation*}
	\end{Corollary}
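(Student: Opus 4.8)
\begin{proof}
The plan is simply to combine the Growth Lemma, \cref{Prop:Growth}, with the definition of exact polynomial growth. By \cref{Prop:Growth} there are constants $k_1, k_2 > 0$ such that
\begin{equation*}
	\mu_G\left(B^G_{r-k_1}(e)\right) \ll \left\vert\left(B^G_r(e)\times A\right)\cap \Gamma \right\vert \ll \mu_G\left(B^G_{r+k_2}(e)\right)
\end{equation*}
as $r \to \infty$, so it suffices to show that both the left and the right hand side are $\asymp r^\kappa$.

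By hypothesis there is a constant $c>0$ with $\lim_{r\to\infty} \mu_G(B_r(e)) / (c r^\kappa) = 1$. Replacing $r$ by $r + k_2$ and noting that $(r+k_2)^\kappa / r^\kappa \to 1$ as $r \to \infty$, we obtain
\begin{equation*}
	\lim_{r\to\infty} \frac{\mu_G\left(B^G_{r+k_2}(e)\right)}{c r^\kappa} = \lim_{r\to\infty} \frac{\mu_G\left(B^G_{r+k_2}(e)\right)}{c (r+k_2)^\kappa} \cdot \frac{(r+k_2)^\kappa}{r^\kappa} = 1,
\end{equation*}
and the same computation with $r - k_1$ in place of $r + k_2$ gives $\mu_G\left(B^G_{r-k_1}(e)\right) \asymp r^\kappa$ as well.

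Combining these, the upper bound yields $\left\vert\left(B^G_r(e)\times A\right)\cap \Gamma \right\vert \ll r^\kappa$ and the lower bound yields $\left\vert\left(B^G_r(e)\times A\right)\cap \Gamma \right\vert \gg r^\kappa$, which is precisely the assertion $\left\vert\left(B^G_r(e)\times A\right)\cap \Gamma \right\vert \asymp r^\kappa$.
\end{proof}

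There is essentially no obstacle here: the only point requiring a word of care is that the symbol $\asymp$ is defined via $\limsup$ of absolute ratios, so one must extract the upper and lower comparisons separately from the sandwich in \cref{Prop:Growth}, but both follow at once since shifting the radius of a ball by a bounded constant does not affect the leading polynomial asymptotics.
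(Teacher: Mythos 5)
Your proof is correct and is exactly the intended derivation: the paper states this as an immediate corollary of \cref{Prop:Growth} together with the definition of exact polynomial growth, without spelling out the argument, and your reasoning (shift the radius by the bounded constants $k_1, k_2$, observe $(r \pm k)^\kappa / r^\kappa \to 1$, and chain the $\ll$ / $\gg$ relations) is precisely how one fills in that gap.
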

	
	\section{Homogeneous Lie groups}\label{Sec:HomLie}
	In this section we will review the basic concepts of homogeneous Lie groups, following the book by Fisher and Ruzhansky \cite{FischerRuzhansky}. In this context we also recall an ergodic theorem, due to Gorodnik and Nevo, \cite{GorodnikNevo1, GorodnikNevo2, NevoErgodic}, which will be recalled later.
	
	\begin{Definition}\textbf{\cite[Definition 3.1.7]{FischerRuzhansky}}\label{Def:HomLie}
		\begin{enumerate}
			\item A family of \textit{dilations} of a Lie algebra $\fg$ is a family of linear mappings $\{D_r, r>0\}$ from $\fg$ to itself which satisfies:
			\begin{enumerate}[label=\roman*)]
				\item The mappings are of the form
				\begin{equation*}
					D_r=\exp(\ln(r) A) = \sum_{l=0}^\infty \frac{1}{l !}(\ln(r)A)^l,
				\end{equation*}
				where $A$ is a diagonalisable linear operator on $\fg$ with positive eigenvalues, $\exp$ denotes the exponential of matrices and $\ln(r)$ the natural logarithm of $r>0$.
				\item Each $D_r$ is a morphism of the Lie algebra $\fg$, that is, a linear mapping from $\fg$ to itself which respects the Lie bracket, i.e.
				\begin{equation*}
					\forall X,Y \in \fg, r>0: [D_r X, D_r Y] = D_r[X,Y].
				\end{equation*}
			\end{enumerate}
			\item A \textit{homogeneous} Lie group is a connected simply connected Lie group whose Lie algebra is equipped with a fixed family of dilations.
			\item We call the eigenvalues of $A$ the \textit{dilations' weights}, the sum of these weights is the \textit{homogeneous dimension of $\fg$}, denoted by $\homdim(\fg)$.
		\end{enumerate}
	\end{Definition}
	
	\begin{Convention}
		From now on we will always assume $G$ and $H$ to be homogeneous Lie groups.
	\end{Convention}
	
	\begin{Remark}
		Since every Lie algebra, which is equipped with dilations, is nilpotent, a homogeneous Lie group is nilpotent, \cite[Proposition 3.1.10.]{FischerRuzhansky}.  This together with the connectedness and the simply connectedness implies that the exponential map is a global diffeomorphism, see \cite[Proposition 1.6.6.]{FischerRuzhansky}, which we use to identify the underlying sets of $G$ and $\fg$. On $\fg$ the group multiplication takes the form
		\begin{equation*}
			X \ast Y := \log(\exp(X)\exp(Y)).
		\end{equation*}
		The operation $\ast$ is called the Baker-Campbell-Hausdorff (BCH) multiplication, since to determine it we can use the BCH formula. The explicit formula for multiplication then is
		\begin{equation*}
			X \ast Y = X + \sum_{\substack{k,m \geq 0 \\ p_i+q_i \geq 0\\ i \in \{0,...,k\}}} (-1)^k \frac{{\ad_X}^{p_1} \circ {\ad_Y}^{q_1} \circ ... \circ {\ad_X}^{p_k} \circ {\ad_Y}^{q_k} \circ {\ad_X}^m }{(k+1)(q_1+...+q_k+1) \cdot p_1!\cdot q_1! \cdot ... \cdot p_k! \cdot q_k! \cdot m!}(Y).
		\end{equation*}
		In the case of a $n$-step nilpotent Lie group this sum is finite, since all terms there $m+\sum_i p_i + q_i \geq n$ are zero. The first few terms of the sum then look like 
		\begin{equation*}
			X \ast Y = X + Y +\frac{1}{2}[X,Y] + \frac{1}{12} \big([X,[X,Y]]-[Y,[X,Y]]\big)- \frac{1}{24} [Y,[X,[X,Y]]] + ...
		\end{equation*}
		Observe that the inverse of $X$ in terms of the action $\ast$ is given by the additive inverse of $X$ in the Lie algebra, i.e. $X^{-1}= -X$.\par
		
		In particular the group law is a polynomial, see \cite[Proposition 1.6.6.]{FischerRuzhansky}, this means that for $x=(x_1,...,x_n)$ and $y=(y_1,...,y_n)$ we have
		\begin{equation*}
			x \ast y = (P_1(x,y),..., P_n(x,y)),
		\end{equation*} 
		with $P_1,..., P_n$ polynomials in $2n$ variables, we will observe some restrictions for this polynomials in \cref{Appendix:Poly2}.\par \medskip 
		
		Observe that we can transport the dilations of the Lie algebra to the Lie group via the exponential map, so that we also have a dilation structure on the Lie group itself. This results in a family of dilations of the form $D_r=\exp(A \ln(r))$ with a diagonalisable linear operator $A$. The eigenvalues of $A$ are the weights we mentioned above, in accordance with \cite{FischerRuzhansky} we denote this weights by $\nu_1,...,\nu_n$. The trace of $A$ gives us the \textit{homogeneous dimension of $G$}, the properties listed below explain this terminology.\par \medskip
	\end{Remark}
	
	\begin{Definition}[{\cite[Definition 3.1.33.]{FischerRuzhansky}}]
		A \textit{homogeneous quasi-norm} is a continuous non-negative function $G \to [0,\infty), x \mapsto \vert x \vert$ satisfying
		\begin{enumerate}[label=\roman*)]
			\item $\vert x^{-1}\vert = \vert x \vert$,
			\item $\vert D_r( x )\vert = r \cdot \vert x \vert$ $ \forall r>0$,
			\item $\vert x \vert =0$ if and only if $x=e$.
		\end{enumerate}
		It is called a \textit{homogenous norm} if additionally
		\begin{enumerate}[label=\roman*)]
			\setcounter{enumi}{3}
			\item for all $x,y \in G$ it is $\vert xy\vert \leq \vert x \vert + \vert y\vert$.
		\end{enumerate}
	\end{Definition}
	
	\begin{Lemma}[{\cite[Theorem 3.1.39.]{FischerRuzhansky}}]\label{Lem:HomNorm}
		If $G$ is a homogeneous Lie group, then there exists a homogeneous norm $\vert \cdot \vert$ on $G$.
	\end{Lemma}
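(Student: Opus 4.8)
This is the Hebisch--Sikora theorem; one option is simply to invoke \cite[Theorem 3.1.39]{FischerRuzhansky}, but here is the construction I would carry out. Identify $G$ with $\fg\cong\RR^n$ via $\exp$, so that the dilations act linearly as $D_r=\exp(\ln r\cdot A)$ with $A$ diagonalisable and positive, and fix a basis of eigenvectors of $A$; let $\|\cdot\|$ and $\langle\cdot,\cdot\rangle$ be the Euclidean structure making that basis orthonormal, so that $A$ is self-adjoint and positive. Using the standard normalisation in which the smallest dilation weight equals $1$, one has the contraction estimate $\|D_\lambda z\|\le\lambda\|z\|$ for $0<\lambda\le1$, and also $\tfrac{d}{ds}\|D_{e^s}z\|^2=2\langle AD_{e^s}z,D_{e^s}z\rangle>0$ for $z\neq e$; hence for each $x\neq e$ the map $r\mapsto\|D_{1/r}x\|$ is a decreasing diffeomorphism of $(0,\infty)$ onto itself. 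Fix $\epsilon>0$ and define $|x|$, for $x\neq e$, to be the unique $r>0$ with $\|D_{1/r}x\|=\epsilon$, and $|e|=0$.

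The first thing I would do is verify conditions (i)--(iii), which hold for every $\epsilon$: homogeneity because $D_rD_s=D_{rs}$, so $D_{1/(tr)}(D_tx)=D_{1/r}x$; symmetry because $x^{-1}=-x$ in $\fg$-coordinates, $D_{1/r}$ is linear and $\|{-z}\|=\|z\|$; definiteness is immediate; and continuity and smoothness on $G\setminus\{e\}$ follow from the implicit function theorem applied to $(x,r)\mapsto\|D_{1/r}x\|^2-\epsilon^2$, whose partial derivative in $r$ does not vanish by the monotonicity above. Together with compactness of the level set $\{|x|=1\}$ (which gives equivalence of any two homogeneous quasi-norms), this already establishes the existence of a homogeneous quasi-norm; the only remaining point is the triangle inequality (iv), and it forces us to take $\epsilon$ small.

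For (iv), given $x,y$ put $r=|x|$, $s=|y|$ and $u=D_{1/(r+s)}x$, $v=D_{1/(r+s)}y$, so that $D_{1/(r+s)}(x\ast y)=u\ast v$ and, factoring $D_{1/(r+s)}=D_{r/(r+s)}\circ D_{1/r}$, one gets $\|u\|\le\tfrac{r}{r+s}\epsilon$ and $\|v\|\le\tfrac{s}{r+s}\epsilon$; the goal is $\|u\ast v\|\le\epsilon$, which is exactly $|x\ast y|\le r+s$. By the BCH formula $u\ast v=u+v+Q(u,v)$, where $Q$ is a finite sum of iterated brackets, each at least bilinear in $(u,v)$; since $D_r$ is a Lie-algebra morphism the eigenspaces of $A$ satisfy $[\fg_a,\fg_b]\subseteq\fg_{a+b}$, so $Q(u,v)$ lies entirely in the eigenspaces of weight $\ge2$, where $D_{1/(r+s)}$ contracts by an extra power. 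Decomposing $u\ast v$ along the eigenspaces of $A$, applying $\|u+v\|\le\|u\|+\|v\|$ in each block and then the Jensen inequality $(\lambda p+(1-\lambda)q)^2\le\lambda p^2+(1-\lambda)q^2$ to the weight-one and to the higher-weight parts separately, one finds that \emph{without} the term $Q$ the bound $\|u\ast v\|^2\le\epsilon^2$ holds (with equality in the worst case); the corrections coming from $Q$ are, by the previous sentence, of higher weight and smaller by a factor of order $\epsilon$ than the convexity deficits in those Jensen inequalities, hence are absorbed once $\epsilon$ is small enough, uniformly in $x,y$. This last accounting — reconciling the exact additivity in the bottom stratum with the bracket growth in the higher strata — is the only genuinely delicate step, and is essentially the estimate of Hebisch and Sikora; everything else is bookkeeping. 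The resulting $|\cdot|$ is then a homogeneous norm, proving the lemma.
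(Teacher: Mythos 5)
The paper does not prove this lemma; it imports it verbatim from Fischer--Ruzhansky, Theorem 3.1.39, which is the theorem of Hebisch and Sikora. Your construction is exactly that proof, so you are reproducing the cited argument rather than taking a different route. The setup is correct and complete: choosing the inner product diagonalising $A$, defining $|x|$ by $\|D_{1/r}x\|=\epsilon$, verifying quasi-norm axioms (i)--(iii) via $D_rD_s=D_{rs}$, linearity of $D_{1/r}$, and the implicit function theorem, and reducing (iv) to $\|u\ast v\|\le\epsilon$ with $u=D_{\lambda}u'$, $v=D_{1-\lambda}v'$ on the $\epsilon$-sphere, $\lambda=r/(r+s)$. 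The Jensen computation giving $\|u+v\|\le\epsilon$, with equality forcing $u',v'$ parallel in the weight-one stratum, and the observation that $Q(u,v)\in\fg_{\ge 2}$ and vanishes in that extremal case, are likewise right.

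The one point where the write-up falls short of a proof is exactly the one you flag. The sentence that the corrections from $Q$ are ``smaller by a factor of order $\epsilon$ than the convexity deficits \dots\ hence are absorbed once $\epsilon$ is small enough, uniformly in $x,y$'' is a restatement of the theorem, not a derivation. To close it one must quantify two things and match them: a lower bound for the Jensen deficit $\epsilon^2-\|u+v\|^2$ in terms of the distance of $(u',v',\lambda)$ from the extremal set where $u'=v'\in\fg_1$, and an upper bound for $\|u\ast v\|^2-\|u+v\|^2$ --- which includes the cross-terms $2\langle(u+v)_a,Q_a\rangle$ for $a\ge 2$, not just $\|Q\|^2$ --- showing it vanishes to at least matching order near that set with a spare power of $\epsilon$; compactness of the $\epsilon$-sphere then gives uniformity. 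That matching is the whole content of Hebisch--Sikora and is not automatic from weight-grading alone. You acknowledge this, so there is no error; but as written the lemma is, in your proposal as in the paper, effectively established by citation rather than by the displayed computation.
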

	
	\begin{Lemma}[{\cite[Proposition 3.1.35.]{FischerRuzhansky}}]
		Any two homogeneous quasi-norms $\vert \cdot \vert$ and $\vert \cdot \vert'$ on $G$ are mutually equivalent, in the sense that there exists $a,b >0$ such that for all $g \in G$ it is $a \vert x \vert' \leq \vert x\vert \leq b \vert x\vert'$.
	\end{Lemma}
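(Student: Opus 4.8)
The plan is to reduce the comparison of $\vert\cdot\vert$ and $\vert\cdot\vert'$ to a compactness argument on a ``unit sphere''. Identify the underlying set of $G$ with $\fg$ via $\exp$ (as in the Remark above) and choose linear coordinates $(x_1,\dots,x_n)$ on $\fg$ in which the dilation generator $A$ is diagonal, so that $D_r(x)=(r^{\nu_1}x_1,\dots,r^{\nu_n}x_n)$ with all weights $\nu_i>0$; fix also the associated Euclidean norm $\Vert x\Vert^2=\sum_i x_i^2$ and write $\Sigma$ for its unit sphere, which is compact.

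First I would record the elementary fact that for every $x\neq 0$ the function $r\mapsto\Vert D_r(x)\Vert^2=\sum_i r^{2\nu_i}x_i^2$ is continuous and strictly increasing, tending to $0$ as $r\to 0$ and to $\infty$ as $r\to\infty$; this is exactly the point at which positivity of the weights $\nu_i$ is used. Hence there is a unique $t=t(x)>0$ with $D_{t(x)}(x)\in\Sigma$ (and $t$ depends continuously on $x$ by the implicit function theorem), so that every $x\neq e$ factors as $x=D_{s}(y)$ with $y\in\Sigma$ and $s=t(x)^{-1}>0$.

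Next I would show that $S':=\{x\in G:\vert x\vert'=1\}$ is compact. It is closed since $\vert\cdot\vert'$ is continuous. For boundedness, suppose $x_k\in S'$ with $\Vert x_k\Vert\to\infty$ and write $x_k=D_{s_k}(y_k)$, $y_k\in\Sigma$. On the compact set $\Sigma$ the quantity $\Vert D_s(y)\Vert$ stays bounded as long as $s$ ranges over a bounded interval, so $\Vert x_k\Vert\to\infty$ forces $s_k\to\infty$. By homogeneity, property (ii), $\vert x_k\vert'=s_k\,\vert y_k\vert'$, and since $\vert\cdot\vert'$ is continuous and, by property (iii), strictly positive on the compact set $\Sigma$ (which does not contain $e$), we have $\vert y_k\vert'\geq m$ for some $m>0$; thus $\vert x_k\vert'=1$ contradicts $s_k\to\infty$. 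Therefore $S'$ is bounded, hence compact.

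Finally I would conclude: the continuous function $\vert\cdot\vert$ attains on the compact set $S'$ a minimum $a$ and a maximum $b$, with $0<a\leq b<\infty$ because $\vert\cdot\vert$ vanishes only at $e\notin S'$. Given $x\neq e$, set $t=\vert x\vert'$ and $y=D_{1/t}(x)$; property (ii) gives $\vert y\vert'=1$, i.e. $y\in S'$, and again by (ii) one has $\vert x\vert=\vert D_t(y)\vert=t\,\vert y\vert=\vert x\vert'\,\vert y\vert$, whence $a\vert x\vert'\leq\vert x\vert\leq b\vert x\vert'$; for $x=e$ all three quantities vanish. The only genuinely substantive step is the compactness of $S'$ — equivalently, the coercivity of a homogeneous quasi-norm with respect to a background Euclidean norm, which is where positivity of the dilation weights is essential; everything else is bookkeeping with property (ii). Note that neither the symmetry (i) nor the triangle inequality (iv) enters, so the statement indeed holds for quasi-norms.
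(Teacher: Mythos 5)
Your proof is correct, and it is worth noting that the paper itself does not prove this lemma but imports it directly from Fischer--Ruzhansky, so there is no ``paper proof'' to compare against. Your argument is the standard compactness-plus-homogeneity argument that one finds in \cite{FischerRuzhansky}, and every step checks out: the coercivity of $\vert\cdot\vert'$ (i.e.\ compactness of $S'$) is correctly reduced to the observation that $s\mapsto\Vert D_s(y)\Vert$ is bounded on bounded $s$-intervals uniformly over the Euclidean sphere $\Sigma$, while $\vert\cdot\vert'$ is bounded below on $\Sigma$; and the final reduction $\vert x\vert/\vert x\vert' = \vert y\vert$ for $y = D_{1/\vert x\vert'}(x)\in S'$ is exactly the homogeneity bookkeeping you describe. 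You are also right that only properties (ii), (iii) and continuity are used, so the result holds for quasi-norms.

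One small stylistic remark: you can shorten the argument by working on the Euclidean sphere $\Sigma$ directly rather than proving compactness of $S'$. Writing an arbitrary $x\neq e$ as $x=D_s(y)$ with $y\in\Sigma$ and $s>0$ (your unique factorisation), homogeneity gives $\vert x\vert/\vert x\vert'=\vert y\vert/\vert y\vert'$, and this quotient is a continuous, strictly positive function on the compact set $\Sigma$, hence bounded between some $a>0$ and $b<\infty$. This avoids the sequence argument for boundedness of $S'$ entirely; both routes are valid, but this variant makes the two quasi-norms play symmetric roles and keeps the only compactness input visibly on the Euclidean sphere. Also, the appeal to the implicit function theorem for continuity of $t(x)$ is superfluous for your argument, since you only use the existence and uniqueness of the factorisation, not its continuity.
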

	
	\begin{Remark}
		To such a homogeneous norm we can associate the right invariant metric $d$ given by $d(x,y):= \vert x y^{-1}\vert$.
	\end{Remark}
	
	\begin{Lemma}[{\cite[Proposition 3.1.37.]{FischerRuzhansky}}]\label{Lem:HomLieTop}
		If $\vert \cdot \vert$ is a homogeneous quasi-norm on a homogeneous Lie group $G$ of dimension $n$, then the topology induced by the quasi-norm coincides with the Euclidean topology on the underlying set $\RR^n$.
	\end{Lemma}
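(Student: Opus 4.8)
The plan is to compare the quasi-norm topology $\mathcal{T}_{|\cdot|}$ and the Euclidean topology $\mathcal{T}_{\mathrm{eucl}}$ on the underlying set $\RR^n$ directly, proving the two inclusions separately. By \cref{Lem:HomNorm} and the equivalence of homogeneous quasi-norms recalled above we may assume $|\cdot|$ is a homogeneous norm, so that $d(x,y):=|xy^{-1}|$ is a genuine right-invariant metric and the balls $B_r(x)=\{y:|yx^{-1}|<r\}$ form a base for $\mathcal{T}_{|\cdot|}$. We use the coordinates on $G\cong\RR^n$ coming from $\exp$, in which group multiplication and inversion are polynomial (hence $\mathcal{T}_{\mathrm{eucl}}$-continuous) by the remark above, and we work in linear coordinates in which the dilations act diagonally, $D_t(x_1,\dots,x_n)=(t^{\nu_1}x_1,\dots,t^{\nu_n}x_n)$ with all weights $\nu_i>0$ (a linear change of coordinates affects neither the Euclidean topology nor the continuity of $|\cdot|$).

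The inclusion $\mathcal{T}_{|\cdot|}\subseteq\mathcal{T}_{\mathrm{eucl}}$ is the easy one: for fixed $x$ the map $y\mapsto|yx^{-1}|$ is a composition of the polynomial map $y\mapsto yx^{-1}$ with the continuous function $|\cdot|$, hence $\mathcal{T}_{\mathrm{eucl}}$-continuous, so every ball $B_r(x)$ is Euclidean-open and therefore so is every $\mathcal{T}_{|\cdot|}$-open set.

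For the converse it suffices, by right-invariance, to show that every Euclidean neighbourhood of $e$ contains some $B_\epsilon(e)$: given a Euclidean-open $U\ni y$, the right translation $\rho_y$ is a Euclidean homeomorphism of $\RR^n$ (it and $\rho_{y^{-1}}$ are polynomial), so $Uy^{-1}$ is a Euclidean neighbourhood of $e$, and $B_\epsilon(e)\subseteq Uy^{-1}$ then yields $B_\epsilon(y)=\rho_y(B_\epsilon(e))\subseteq U$. To prove the claim at $e$ I would exploit compactness of the Euclidean unit sphere $S=\{\,\|x\|_{\mathrm{eucl}}=1\,\}$: since $|\cdot|$ is continuous and vanishes only at $0$, it attains a minimum $m>0$ on $S$. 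Every $x\neq 0$ can be written as $x=D_{t(x)}(\omega(x))$ with $t(x)>0$ and $\omega(x)\in S$, because $t\mapsto\|D_{1/t}(x)\|_{\mathrm{eucl}}$ is a continuous strictly decreasing bijection of $(0,\infty)$ onto itself; homogeneity of $|\cdot|$ then gives $|x|=t(x)\,|\omega(x)|\geq m\,t(x)$, so $t(x)\leq|x|/m$. On the other hand $\|D_t(\omega)\|_{\mathrm{eucl}}^2=\sum_i t^{2\nu_i}\omega_i^2\leq\max\!\big(t^{2\nu_{\min}},t^{2\nu_{\max}}\big)$ for $\omega\in S$, where $\nu_{\min},\nu_{\max}$ are the extreme weights, so $\|x\|_{\mathrm{eucl}}\to 0$ whenever $t(x)\to 0$, i.e. whenever $|x|\to 0$. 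Hence for each $\delta>0$ there is $\epsilon>0$ with $B_\epsilon(e)\subseteq\{\,\|x\|_{\mathrm{eucl}}<\delta\,\}$, which is exactly what was needed, and combined with the first inclusion this gives $\mathcal{T}_{|\cdot|}=\mathcal{T}_{\mathrm{eucl}}$.

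The continuity and polynomiality bookkeeping in the first two steps is routine; the only real content is the last step, where the dilation structure is used to upgrade ``$|x|$ small'' to ``$x$ Euclidean-small''. The point to get right there is the radial decomposition $x=D_{t(x)}(\omega(x))$ and the two-sided comparison between $t(x)$, $|x|$ and $\|x\|_{\mathrm{eucl}}$ across the distinct weights $\nu_i$ (which forces the case split between $t\leq 1$ and $t\geq 1$ in the estimate above). I note that this step genuinely fails for a merely continuous function that is positive away from $e$ but not homogeneous — one can build such a function with unbounded sublevel sets — so property (ii) in the definition of a homogeneous quasi-norm is essential here.
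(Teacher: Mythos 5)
The paper does not prove this lemma; it cites it directly from Fischer--Ruzhansky, so there is no in-paper proof to compare against. Your argument is correct and is essentially the standard one for this statement: the easy inclusion via polynomiality of the group operations and continuity of $\vert\cdot\vert$, and the hard inclusion via the radial decomposition $x=D_{t(x)}(\omega(x))$ together with compactness of the Euclidean unit sphere to get the uniform lower bound $m=\min_S\vert\cdot\vert>0$. The reduction to a genuine homogeneous norm at the start is not strictly necessary (the argument only uses continuity, positive definiteness and homogeneity, never the triangle inequality), but it is harmless and tidies up the claim that the balls form a base for $\mathcal{T}_{\vert\cdot\vert}$, which for a mere quasi-norm would otherwise require invoking the weak triangle inequality $\vert xy\vert\leq C(\vert x\vert+\vert y\vert)$. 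The estimate $\|D_t(\omega)\|_{\mathrm{eucl}}^2\leq\max(t^{2\nu_{\min}},t^{2\nu_{\max}})$ and the case split on $t\lessgtr 1$ are handled correctly, and the final remark that homogeneity is genuinely needed (not just continuity and positivity) is an accurate observation.
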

	
	\begin{Proposition}[{\cite[Section 3.1.3 and 3.1.6]{FischerRuzhansky}}]\label{Prop:BallMeas}
		Let $G$ be a homogeneous Lie group and $\vert \cdot \vert$ a homogeneous norm on $G$ with associated right invariant metric $d$, then for $x,y \in G$ and $r,s >0$:
		\begin{enumerate}[label=\roman*)]
			\item $B_r(x) = \{y \in G \mid \vert y x^{-1}\vert < r\} = B_r(e) \cdot x$,
			\item $D_r(x y) = D_r(x) D_r(y)$,
			\item $D_r(B_s(e)) = B_{r\cdot s}(e)$,
			\item $D_r(B_s(x))=B_{r \cdot s}(D_r(x))$,
			\item $\mu_{G}(B_r(x))= r^{\homdim(G)}\cdot \mu_{G}(B_1(e))$.
		\end{enumerate}
	\end{Proposition}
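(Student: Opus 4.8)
The plan is to derive all five items directly from the definitions together with the properties of homogeneous quasi-norms and dilations recalled above, treating them in the order listed since each one feeds into the next. None of the individual steps is deep; the work is essentially bookkeeping with the definitions.

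First I would handle (i) by unwinding $d(x,y) := \vert xy^{-1}\vert$: by definition of a metric ball $B_r(x) = \{y \in G \mid \vert yx^{-1}\vert < r\}$, and $\vert yx^{-1}\vert < r \iff yx^{-1} \in B_r(e) \iff y \in B_r(e)\cdot x$. For (ii) the key observation is that each $D_r$ is an automorphism of the Lie algebra $\fg$ (it respects the bracket, by \cref{Def:HomLie}), hence commutes with every iterated bracket; since under the identification $G \cong \fg$ the product $x \ast y$ is the universal BCH series built from iterated brackets, $D_r$ commutes with it, which is precisely $D_r(xy) = D_r(x)D_r(y)$. For (iii) I would invoke property ii) of a homogeneous quasi-norm, $\vert D_r(x)\vert = r\vert x\vert$: this gives $x \in B_s(e) \iff r\vert x\vert < rs \iff D_r(x) \in B_{rs}(e)$, and since $D_r$ is a bijection of $G$ we get $D_r(B_s(e)) = B_{rs}(e)$. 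Item (iv) is then just the combination $D_r(B_s(x)) = D_r(B_s(e)\cdot x) = D_r(B_s(e))\cdot D_r(x) = B_{rs}(e)\cdot D_r(x) = B_{rs}(D_r(x))$, applying (i), then (ii), then (iii), then (i).

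The only item requiring genuine input is (v), and that is where I expect the main (mild) obstacle to be. First, since $G$ is nilpotent it is unimodular, so the right-invariant $\mu_G$ is also left-invariant and by (i) we get $\mu_G(B_r(x)) = \mu_G(B_r(e)\cdot x) = \mu_G(B_r(e))$. It then remains to compute $\mu_G(B_r(e))$: under the identification $G \cong \fg \cong \RR^n$ via $\exp$, Lebesgue measure is a Haar measure, and in a basis diagonalizing the operator $A$ the dilation $D_r$ acts as $\mathrm{diag}(r^{\nu_1},\dots,r^{\nu_n})$, so its Jacobian determinant is $r^{\nu_1+\dots+\nu_n} = r^{\homdim(G)}$; hence $\mu_G(D_r(S)) = r^{\homdim(G)}\mu_G(S)$ for every measurable $S$. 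Taking $S = B_1(e)$ and using (iii) yields $\mu_G(B_r(e)) = \mu_G(D_r(B_1(e))) = r^{\homdim(G)}\mu_G(B_1(e))$, which finishes (v). The one place needing a little care is thus the identification of Lebesgue measure on $\fg$ with a Haar measure on $G$ and the computation of $\det D_r$; everything else reduces to the definitions.
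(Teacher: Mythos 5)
Your argument is correct. The paper itself gives no proof of this proposition and simply cites Fischer--Ruzhansky; what you wrote is the standard argument one would find there, so there is nothing to compare against on the paper's side. Items (i)--(iv) are exactly the definitional unwinding you describe, and for (v) the chain (right-invariance gives independence of the centre, $\exp$ identifies Haar with Lebesgue on $\fg$, and $\det D_r = r^{\sum \nu_i} = r^{\homdim(G)}$) is the intended computation. One small remark: you do not actually need unimodularity for the first step of (v) --- since $B_r(x)=B_r(e)\cdot x$ is a \emph{right} translate, right-invariance of $\mu_G$ already gives $\mu_G(B_r(x))=\mu_G(B_r(e))$; the unimodularity of nilpotent groups is true but superfluous here.
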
 
	
	\begin{Remark}
		The fifth point of \cref{Prop:BallMeas} tells us that a homogeneous Lie group has exact polynomial growth of degree $\homdim(G)$. Since all homogeneous quasi-norms on $G$ are mutually equivalent the behaviour is independent from the choice of a metric.
	\end{Remark}
	
	\begin{Definition}
		A \textit{hyperplane} in a homogeneous Lie group $G$ is the image of a hyperplane in the Lie algebra $\fg$ under the exponential map. The set of all hyperplanes in $G$ is denoted by $\cH(G)$.\par
		A \textit{half-space} in a homogeneous Lie group $G$ is the image of a half-space in the Lie algebra $\fg$ under the exponential map. 
	\end{Definition}
	
	\begin{Definition}
		A group $G$ is \textit{non-crooked} if $\cH(G) \subset \cP(G)$ is $G$ invariant.
	\end{Definition}
	
	\begin{Definition}
		A Lie group $G$ is called \textit{locally $k$-step nilpotent} if for all $X,Y \in \fg$ we have $\ad_X^k(Y) = 0$.
	\end{Definition}
	
	\begin{Theorem}\label{Thm:NonCrooked}
		Let $G$ be a homogenous Lie group, then the following are equivalent
		\begin{enumerate}
			\item $G$ is non-crooked,
			\item $G$ is $2$-step nilpotent or abelian,
			\item $G$ is locally $2$-step nilpotent.
		\end{enumerate}
	\end{Theorem}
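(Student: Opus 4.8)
The plan is to prove the cycle $(2)\Rightarrow(1)\Rightarrow(3)\Rightarrow(2)$, working throughout in the BCH coordinates on $\fg\cong G$ recalled above, so that ``hyperplane in $G$'' simply means ``affine hyperplane of the vector space $\fg$'' and the action of $G$ on such hyperplanes is by left translations $L_g\colon X\mapsto g\ast X$. (Invariance under left translations is equivalent to invariance under right translations, since $P\mapsto -P=P^{-1}$ preserves $\cH(G)$ and conjugating $L_{-g}$ by the linear map $Y\mapsto -Y$ produces right translation by $g$.) The case $\dim\fg\le 1$ is trivial — then $\fg$ is abelian, $\ad\equiv 0$, and hyperplanes are points — so I would assume $n:=\dim\fg\ge 2$. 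For $(2)\Rightarrow(1)$: if $G$ is abelian or two-step nilpotent then $[\fg,[\fg,\fg]]=0$, so in the BCH series every bracket of length $\ge 3$ vanishes and $X\ast Y=X+Y+\tfrac12[X,Y]$. Hence $L_g$ is the affine map $X\mapsto g+(\mathrm{Id}+\tfrac12\ad_g)X$; since $\fg$ is nilpotent, $\ad_g$ is nilpotent and $\mathrm{Id}+\tfrac12\ad_g$ is invertible, so $L_g$ is an affine automorphism of $\fg$, and affine automorphisms carry affine hyperplanes to affine hyperplanes. Thus $\cH(G)$ is $G$-invariant.

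For $(1)\Rightarrow(3)$: assuming $\cH(G)$ is $G$-invariant, each $L_g$ is a bijection of $\fg\cong\RR^n$ (inverse $L_{g^{-1}}$) carrying every affine hyperplane onto an affine hyperplane. Such a bijection maps intersections of hyperplanes to intersections, hence lines to lines, and over $\RR$ a line-preserving bijection of $\RR^n$ ($n\ge 2$) is affine by the fundamental theorem of affine geometry; so $X\mapsto g\ast X$ is affine in $X$ for every $g$. I would then expand $g\ast X$ by the BCH formula and sort the Lie monomials by bidegree in $(g,X)$: affineness in $X$ forces every contribution of $X$-degree $\ge 2$ to vanish, and the unique BCH monomial of bidegree $(1,2)$ is, up to a nonzero rational constant, $[X,[X,g]]=\ad_X^2 g$ (this is the bidegree-$(1,2)$ part of the degree-three term $\tfrac1{12}\big([X,[X,Y]]-[Y,[X,Y]]\big)$ written out above). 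Hence $\ad_X^2 g=0$ for all $X,g\in\fg$, which is exactly $(3)$.

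For $(3)\Rightarrow(2)$: from $\ad_X^2 Y=0$ for all $X,Y$, polarization (replacing $X$ by $X+Z$ and subtracting the two pure terms) gives $[X,[Z,Y]]+[Z,[X,Y]]=0$ for all $X,Y,Z$. Setting $T(X,Y,Z):=[X,[Y,Z]]$, this identity says $T$ is antisymmetric in its first two slots, while $T(X,Y,Z)=-T(X,Z,Y)$ gives antisymmetry in the last two; hence $T$ is totally antisymmetric, and the Jacobi identity $T(X,Y,Z)+T(Y,Z,X)+T(Z,X,Y)=0$ collapses to $3\,T(X,Y,Z)=0$, so $[\fg,[\fg,\fg]]=0$. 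Thus $G$ is abelian or two-step nilpotent, completing the cycle.

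The step I expect to be the real obstacle is $(1)\Rightarrow(3)$: passing from ``left translation preserves the family of hyperplanes'' to ``left translation is affine'' is where the fundamental theorem of affine geometry genuinely enters; the rest is BCH bookkeeping and elementary Lie algebra. As a fallback one could avoid that theorem by arguing directly: if $\ad_{X_0}^2 Y_0\neq 0$ for some $X_0,Y_0$, then the left translate by $\exp(X_0)$ of a suitably chosen hyperplane through $Y_0$ acquires a non-vanishing quadratic term in a coordinate along $Y_0$ and is therefore a genuine quadric, not a member of $\cH(G)$ — but the abstract argument via the bidegree-$(1,2)$ term of BCH is cleaner and is what I would write up.
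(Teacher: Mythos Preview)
Your proof is correct. The implications $(2)\Rightarrow(1)$ and $(3)\Rightarrow(2)$ follow the same route as the paper: a two-line BCH computation for the former (the paper states it as $(3)\Rightarrow(1)$, which is the same calculation), and polarization combined with the Jacobi identity for the latter.

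The genuine difference lies in $(1)\Rightarrow(3)$. The paper argues by direct computation --- essentially the ``fallback'' you sketch at the end: it parametrizes a hyperplane $H=v_0+\sum t_i v_i$, expands $X\ast H$ via BCH, and observes that the parametrization of the image acquires a quadratic term $-\tfrac{1}{12}\sum_{i,j}t_i t_j[v_i,[X,v_j]]$; for the image to remain a hyperplane this must vanish, and the diagonal entries $[v_i,[X,v_i]]=0$ yield local $2$-step nilpotency. Your main argument instead invokes the fundamental theorem of affine geometry: since $L_g$ and $L_{g^{-1}}$ both carry hyperplanes to hyperplanes, they preserve the lattice of affine subspaces and hence map lines to lines, so each $L_g$ is affine; then isolating the bidegree-$(1,2)$ BCH term gives $\ad_X^2=0$. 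Your route is cleaner in one respect: the paper's assertion that ``the last sum has to disappear'' tacitly assumes that a quadratic parametrization cannot have an affine hyperplane as its image, which is not literally true without a further argument (the quadratic coefficients could in principle lie in the tangent direction of the image). The fundamental-theorem argument bypasses this entirely, at the cost of importing a nontrivial external result; the paper's approach is more self-contained and makes the obstruction explicit in coordinates.
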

	
	For a proof of the theorem see \cref{Appendix:Poly}.
	\begin{Remark}
		The notion of locally $k$-step nilpotent and $k$-step nilpotent are only equivalent for $k=1$ and $k=2$ for greater $k$ the notions are different. 
	\end{Remark}
	
	\begin{Definition}
		We call a window \textit{polytopal} or of \textit{polytopal type} if it is the intersection of finitely many half-spaces.
	\end{Definition} 
	
	\begin{Convention}
		If $W$ is polytopal, we can express $W$ as $\bigcap_{i=1}^N P_i^+$, where each $P_i^+$ is a half-space with opposite half-space $P_i^-$ and bounding hyperplane $P_i$.
		Further we denote the faces of $W$ by $\partial_i W=W \cap P_i$. We will in the rest of the paper always use this notation for the half spaces and hyperplanes associated to a window of polytopal type.
	\end{Convention}
	
	We give a small example which the reader can keep in mind, we will now fix the basics of this example so the reader can follow the steps in the paper. 
	\begin{Example}
		We set $G=H=\HH$, where $\HH$ denotes the Heisenberg group. We view the underlying set of $\HH$ as $\RR^3$. Further we set 
		\begin{equation*} 
			\Gamma= \left\{(a,b,c, a^\ast,b^\ast,c^\ast) \in G \times H \,\middle\vert\, a,b,c \in \ZZ\left[\sqrt{2}\right]\right\}
		\end{equation*}
		and $W=[-\frac{1}{2},\frac{1}{2}]\times[-\frac{1}{2},\frac{1}{2}]\times[-\frac{1}{2},\frac{1}{2}]$, so it is clear that $W$ is non-empty, pre-compact and $\Gamma$-regular. Also $W$ is a polytope, in fact it is a cube.\par 
		A dilation structure on $\HH$ is given by $D_r((x,y,z)) = \left(r \cdot x, r \cdot y , r^2 \cdot z\right)$, further a homogeneous norm is given by the Kor{\'a}nyi-Cygan norm $\vert (x,y,z)\vert _{\HH} = \left( (x^2+y^2)^2 + z^2 \right)^{\frac{1}{4}}$.
	\end{Example}

	\subsection{Ergodic theorems for homogeneous Lie groups}\label{SubSec:Ergodic}
	
	\begin{Definition}[{\cite[Definition 1.1.]{GorodnikNevo2}}]
		Let $O_\epsilon$, $\epsilon >0$ be a family of symmetric neighbourhoods of the identity in a lcsc group $G$, which are decreasing in $\epsilon$. Then a family of bounded Borel subsets of finite Haar-measure $(B_t)_{t>0}$ is \textit{well-rounded w.r.t. $O_\epsilon$} if for every $\delta >0$ there exists $\epsilon, t_1 >0$ such that for all $t \geq t_1$ 
		\begin{equation*}
			\mu_G(O_\epsilon B_t O_\epsilon) \leq (1+\delta) \mu_G\left(\bigcap_{u,v \in O_\epsilon}u B_t v\right).
		\end{equation*}
	\end{Definition}
	In our setup we will always fix $O_\epsilon$ as $B_\epsilon(e)$, this does not make a difference by \cite[Remark 2.3.]{Yakov}.
	
	\begin{Definition}[{\cite[Definition 1.4 and 1.5]{GorodnikNevo2}}]
		Let $G$ be a lcsc group and $B_t$ a family of bounded Borel subsets of finite Haar-measure. And let $\beta_{G,B_t}$ be the operator
		\begin{equation*}
			\beta_{G,B_t}f(x) := \frac{1}{\mu_{G}(B_t)} \int_{B_t} f(g^{-1}x) d\mu_G(g)
		\end{equation*}
		for $f \in L^2(G)$. We say that the \textit{mean ergodic theorem in $L^2(G)$} holds if
		\begin{equation*}
			\left\vert\left\vert \beta_{G,B_t}f - \int_G f d\mu \right\vert\right\vert_{L^2(G)} \to 0 \text{ as } t \to \infty
		\end{equation*}
		for all $f \in L^2(G)$.
		We say that the \textit{stable mean ergodic theorem in $L^2(X)$} holds if the \textit{mean ergodic theorem in $L^2(G)$} holds for the sets
		\begin{equation*}
			B_t^+(\epsilon) = O_\epsilon B_t O_\epsilon \text{ and } B_t^-(\epsilon)=\bigcap_{u,v \in O_\epsilon}u B_t v,
		\end{equation*}
		for all $\epsilon \in (0, \epsilon_1)$ with $\epsilon_1 >0.$
	\end{Definition}
	
	\begin{Remark}
		From now on we fix a Haar-measure $\mu_{G \times H}$, which we assume to be normalized by $\mu_{G \times H /\Gamma}(G \times H / \Gamma) = 1$.
	\end{Remark}
	
	\begin{Theorem}[{\cite[Theorem 1.7]{GorodnikNevo2}}]\label{Thm:Nevo}
		Let $G$ be a lcsc group, $\Gamma \subset G$ a discrete lattice subgroup and $(B_t)_{t>0}$ a well-rounded family of subsets of $G$. Assume that the averages $\beta_{G/\Gamma, B_t}$ supported on $B_t$ satisfy the stable mean ergodic theorem in $L^2(G/\Gamma)$. Then
		\begin{equation*}
			\lim\limits_{t \to \infty} \frac{\vert \Gamma \cap B_t\vert}{\mu_{G}(B_t)} = 1.
		\end{equation*}
	\end{Theorem}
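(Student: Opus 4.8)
This statement is the Gorodnik--Nevo lattice-point-counting theorem, and I would prove it by the standard scheme of converting $\vert\Gamma\cap B_t\vert$ into a mean ergodic average on $G/\Gamma$. For the set-up: a group containing a lattice is unimodular, so $\mu_G$ is bi-invariant, and I normalise $\mu_{G/\Gamma}(G/\Gamma)=1$, which in particular places the constant function $\mathbf{1}$ in $L^2(G/\Gamma)$. I would then fix $\epsilon>0$ small enough that $O_\epsilon:=B_\epsilon(e)$ satisfies $O_\epsilon O_\epsilon\cap\Gamma=\{e\}$ (so that the quotient map $p\colon G\to G/\Gamma$ is injective on $O_\epsilon$), and let $\chi_\epsilon\in L^2(G/\Gamma)$ be the push-forward under $p$ of $\mu_G(O_\epsilon)^{-1}\mathbf{1}_{O_\epsilon}$; this is a non-negative $L^2$ function with $\int_{G/\Gamma}\chi_\epsilon\,d\mu_{G/\Gamma}=1$, concentrated in a small neighbourhood of the base point $e\Gamma$ as $\epsilon\to0$.

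The heart of the argument is an unfolding identity: expanding $\chi_\epsilon\circ p$ in terms of $\mathbf{1}_{O_\epsilon}$, carrying the two integrations (over $B$ and over the support of $\chi_\epsilon$) through Weil's integration formula, and using right-invariance of $\mu_G$, one obtains for any bounded Borel set $B\subseteq G$ an expression
\begin{equation*}
\langle \beta_{G/\Gamma,B}\chi_\epsilon,\chi_\epsilon\rangle_{L^2(G/\Gamma)} \;=\; \frac{1}{\mu_G(B)}\sum_{\gamma\in\Gamma} w_\gamma(B), \qquad w_\gamma(B)\in[0,1],
\end{equation*}
where $w_\gamma(B)=1$ as soon as $O_\epsilon\,\gamma\,O_\epsilon\subseteq B$ and $w_\gamma(B)=0$ as soon as $O_\epsilon\,\gamma\,O_\epsilon\cap B=\emptyset$. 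Since $O_\epsilon$ is symmetric, $O_\epsilon\gamma O_\epsilon\subseteq B$ is exactly $\gamma\in\bigcap_{u,v\in O_\epsilon}uBv$ and $O_\epsilon\gamma O_\epsilon\cap B\neq\emptyset$ is exactly $\gamma\in O_\epsilon B O_\epsilon$. Applying this with $B=B_t^+(\epsilon)=O_\epsilon B_t O_\epsilon$, together with $\gamma\in B_t\Rightarrow O_\epsilon\gamma O_\epsilon\subseteq B_t^+(\epsilon)$, gives
\begin{equation*}
\frac{\vert\Gamma\cap B_t\vert}{\mu_G(B_t^+(\epsilon))}\;\leq\;\langle\beta_{G/\Gamma,B_t^+(\epsilon)}\chi_\epsilon,\chi_\epsilon\rangle,
\end{equation*}
while applying it with $B=B_t^-(\epsilon)=\bigcap_{u,v\in O_\epsilon}uB_tv$, together with the inclusion $O_\epsilon B_t^-(\epsilon)O_\epsilon\subseteq B_t$, gives
\begin{equation*}
\langle\beta_{G/\Gamma,B_t^-(\epsilon)}\chi_\epsilon,\chi_\epsilon\rangle\;\leq\;\frac{\vert\Gamma\cap B_t\vert}{\mu_G(B_t^-(\epsilon))}.
\end{equation*}

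To conclude, I would invoke the stable mean ergodic theorem for the two families $(B_t^\pm(\epsilon))_{t>0}$ with test function $\chi_\epsilon$: it yields $\beta_{G/\Gamma,B_t^\pm(\epsilon)}\chi_\epsilon\to\mathbf{1}$ in $L^2(G/\Gamma)$, so by Cauchy--Schwarz both inner products above tend to $\langle\mathbf{1},\chi_\epsilon\rangle=1$ as $t\to\infty$. Fixing $\delta>0$ and using well-roundedness to choose $\epsilon$ with $\mu_G(B_t^+(\epsilon))\leq(1+\delta)\mu_G(B_t^-(\epsilon))$ for all large $t$, and then the trivial inclusions $B_t^-(\epsilon)\subseteq B_t\subseteq B_t^+(\epsilon)$ to deduce $\mu_G(B_t^-(\epsilon))\geq(1+\delta)^{-1}\mu_G(B_t)$ and $\mu_G(B_t^+(\epsilon))\leq(1+\delta)\mu_G(B_t)$, the two displays give $\limsup_t\vert\Gamma\cap B_t\vert/\mu_G(B_t)\leq 1+\delta$ and $\liminf_t\vert\Gamma\cap B_t\vert/\mu_G(B_t)\geq(1+\delta)^{-1}$. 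Letting $\delta\to0$ finishes the proof.

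I expect the main obstacle to be the unfolding identity: one must pick $O_\epsilon$ small enough for the push-forward computation to be clean, push the double integral through Weil's formula, and verify that the weight of a given $\gamma$ saturates precisely on the eroded set $\bigcap_{u,v}uBv$ and dies off the dilated set $O_\epsilon B O_\epsilon$ --- which is exactly why the hypothesis is stated in terms of $B_t^\pm(\epsilon)$ rather than $B_t$. Everything after that is bookkeeping with the well-roundedness estimate and one application of Cauchy--Schwarz.
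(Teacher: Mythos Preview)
The paper does not prove this theorem at all: it is quoted verbatim as \cite[Theorem 1.7]{GorodnikNevo2} and used as a black box, with the surrounding lemmas devoted only to verifying its hypotheses (well-roundedness and the stable mean ergodic theorem) in the specific setting of homogeneous Lie groups. Your sketch is correct and is precisely the standard Gorodnik--Nevo argument from the cited reference, so there is nothing to compare against within the paper itself.
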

	
	To apply this theorem, we have to show that the sets we consider are well-rounded and that they satisfy the stable means ergodic theorem. We will give criteria which ensure this.
	
	\begin{Lemma}\label{Lem:WellRoundedBalls}
		Let $G$ be a homogeneous Lie group and $(B_t(x))_{t>0}$ a family of balls in $G$. Then this family is well-rounded.
		\begin{proof}
			We have to show that for every $\delta>0$ there exists some $\epsilon, t_1 >0$ such that for all $t \geq t_1$ holds
			\begin{equation*}
				\mu_G(B_\epsilon(e) B_t(x) B_\epsilon(e)) \leq (1+\delta) \mu_G\left(\bigcap_{u,v \in B_\epsilon(e)}u B_t(x) v\right).
			\end{equation*}
			We first show that we can choose $\epsilon$ such that for a constant $k \in (0,t)$ we have $B_{t-k}(x) \subset \bigcap_{u,v \in B_\epsilon(e)}u B_t(x) v$. So let $g \in B_{t-k}(x)$, then we can write $g$ as $u u^{-1} g v^{-1} v$ with $u,v \in B_\epsilon(e)$ and we have to show that $u^{-1} g v^{-1} \in B_t(x)$.
			\begin{align*}
				d(x, u^{-1}g v^{-1}) &= \vert x v g^{-1} u\vert_G = \vert x v x^{-1} x g^{-1} u\vert_G\\
				&\leq \vert x v x^{-1}\vert_G + \vert xg^{-1} \vert_G + \vert u\vert_G \leq c_x(\epsilon) + t-k + \epsilon.
			\end{align*}
			Here the last inequality holds by \cref{Cor:ConBall}. And we have to choose $\epsilon$ so small that $k > \epsilon + c_x(\epsilon)$, which is possible since $c(\epsilon) \to 0$ as $\epsilon \to 0$.\par
			On the other hand $B_\epsilon(e)B_t(x)B_\epsilon(e) \subset B_{\epsilon + t}(x)B_\epsilon(e)$. Let $y \in B_{t+\epsilon}(x)$ and $u \in B_\epsilon(e)$ then
			\begin{align*}
				d(x, yu) &= \vert x u^{-1} y^{-1}\vert_G = \vert x u^{-1} x^{-1} x y^{-1} \vert_G\\
				&\leq \vert x u ^{-1} x^{-1}\vert_G + \vert x y^{-1} \vert_G \leq c_x(\epsilon) + \epsilon + t.
			\end{align*}
			Therefore  $B_\epsilon(e)B_t(x)B_\epsilon(e) \subset B_{\epsilon + t + c_x(\epsilon)}(x)$. Therefore we can choose $\epsilon > 0$ and $k \in (0, t)$ such that 
			\begin{equation*}
				B_\epsilon(e) B_t(x) B_\epsilon(e) \subset B_{\epsilon + t + c_x(\epsilon)}(x) \text{ and } B_{t-k}(x) \subset \bigcap_{u,v \in B_\epsilon(e)}u B_t(x) v
			\end{equation*}
			hold simultaneously. Now we can use that we can calculate the measure of balls in homogeneous Lie groups by \cref{Prop:BallMeas}:
			\begin{equation*}
				\mu_G(B_\epsilon(e) B_t(x) B_\epsilon(e)) \leq \mu_G(B_{\epsilon + t + c_x(\epsilon)}(x)) = (t+\epsilon+c_x(\epsilon))^{\homdim(G)} \mu_G(B_1(e))
			\end{equation*}
			and 
			\begin{equation*}
				(t-k)^{\homdim(G)} \mu_G(B_1(e)) \leq \mu_G(B_{t-k}(x)) \leq \mu_G \left(\bigcap_{u,v \in B_\epsilon(e)}u B_t(x) v\right).
			\end{equation*}
			Combining the arguments we see that we have to choose $\epsilon$, $k$ and $t_1$ such that for all $t > t_1$
			\begin{equation*}
				\left(\frac{(t+\epsilon+c_x(\epsilon))}{(t-k)}\right)^{\homdim(G)} \leq (1+\delta).
			\end{equation*}
		\end{proof}
	\end{Lemma}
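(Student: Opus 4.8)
The plan is to verify the well-roundedness inequality by a sandwiching argument: with $O_\epsilon = B_\epsilon(e)$ fixed as in the ambient convention, I would squeeze the thickened set $B_\epsilon(e) B_t(x) B_\epsilon(e)$ from above and the shrunk set $\bigcap_{u,v \in B_\epsilon(e)} u B_t(x) v$ from below between two balls concentric at $x$ whose radii differ from $t$ only by a quantity that becomes negligible as $t \to \infty$, and then read off the measures from the exact polynomial growth in \cref{Prop:BallMeas}.

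The preliminary ingredient is a conjugation estimate: for a fixed $x \in G$ the map $v \mapsto x v x^{-1}$ is a continuous automorphism fixing $e$, so there is a function $c_x(\epsilon)$ with $c_x(\epsilon) \to 0$ as $\epsilon \to 0$ and $x B_\epsilon(e) x^{-1} \subset B_{c_x(\epsilon)}(e)$ (this is \cref{Cor:ConBall}). Using right-invariance of $d$, the triangle inequality for the homogeneous norm, and this estimate, I would establish two inclusions. First, writing a typical element of the thickened set as $wgu$ with $w,u \in B_\epsilon(e)$ and $g \in B_t(x)$, the bound $|wgux^{-1}| \le |w| + |gx^{-1}| + |xux^{-1}|$ gives $B_\epsilon(e) B_t(x) B_\epsilon(e) \subset B_{t+\epsilon+c_x(\epsilon)}(x)$. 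Second, for any $k$ with $\epsilon + c_x(\epsilon) < k < t$ and any $g \in B_{t-k}(x)$, the bound $|xvg^{-1}u| \le |xvx^{-1}| + |xg^{-1}| + |u| < c_x(\epsilon) + (t-k) + \epsilon < t$ shows $u^{-1}gv^{-1} \in B_t(x)$ for all $u,v \in B_\epsilon(e)$, i.e. $B_{t-k}(x) \subset \bigcap_{u,v \in B_\epsilon(e)} u B_t(x) v$.

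Now \cref{Prop:BallMeas} turns both sides into powers of the radius: the left-hand side of the well-roundedness inequality is at most $(t + \epsilon + c_x(\epsilon))^{\homdim(G)}\mu_G(B_1(e))$, while the right-hand side is at least $(t-k)^{\homdim(G)}\mu_G(B_1(e))$. So it remains to arrange $\bigl((t+\epsilon+c_x(\epsilon))/(t-k)\bigr)^{\homdim(G)} \le 1+\delta$. Given $\delta > 0$, I would first pick $\epsilon$ small enough that $c_x(\epsilon)$ is small, then fix $k$ with $\epsilon + c_x(\epsilon) < k$ (also small), and finally choose $t_1$ so large that for all $t \ge t_1$ the fixed numbers $\epsilon + c_x(\epsilon)$ and $k$ are negligible against $t$, making the ratio as close to $1$ as desired.

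The one point that needs care --- and essentially the only place where anything nontrivial happens --- is the centre dependence of the conjugation estimate: $v \mapsto xvx^{-1}$ genuinely distorts small balls, so $c_x$ depends on $x$. For well-roundedness of a fixed family $(B_t(x))_t$ this causes no trouble, since $\epsilon$ and $k$ may be chosen after $x$ and the resulting control over the ratio is uniform in $t$; should one later need uniformity in $x$ over a compact set, one replaces $c_x(\epsilon)$ by its supremum over that set, which still tends to $0$. All remaining steps are routine norm manipulations combined with the scaling $\mu_G(B_r(x)) = r^{\homdim(G)}\mu_G(B_1(e))$.
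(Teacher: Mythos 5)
Your proposal follows the paper's proof almost line for line: the same two inclusions $B_\epsilon(e)B_t(x)B_\epsilon(e) \subset B_{t+\epsilon+c_x(\epsilon)}(x)$ and $B_{t-k}(x) \subset \bigcap_{u,v} u B_t(x) v$, obtained via the same triangle-inequality decompositions and the conjugation estimate of \cref{Cor:ConBall}, followed by the same application of \cref{Prop:BallMeas} and the same choice of $\epsilon$, $k$, $t_1$. This is correct and is essentially the paper's argument.
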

	
	\begin{Lemma}
		Let $G$ be a homogeneous Lie group and $(B_r(x))_{t>0}$ a constant family of balls in $G$. Then this family is well-rounded.
		\begin{proof}
			We have already seen in the proof of \cref{Lem:WellRoundedBalls} that $\bigcap_{u,v \in B_\epsilon(e)} u B_r(x) v$ contains a ball of the form $B_{r-k}(x)$ for any $k\in (0,t)$ if we choose $\epsilon$ accordingly. On the other hand  $B_\epsilon(e)B_r(x)B_\epsilon(e)$ is contained in a ball $B_{r+\epsilon+c_x(\epsilon)}(x)$ and therefore has finite measure. Choosing $\epsilon$ accordingly we are done.
		\end{proof}
	\end{Lemma}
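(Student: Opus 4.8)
The plan is to recycle the two set inclusions established in the proof of \cref{Lem:WellRoundedBalls} and to exploit that, for a constant family, the defining inequality of well-roundedness is a single inequality in $\epsilon$ rather than something that has to be controlled as $t \to \infty$.

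First I would note that the family trivially consists of bounded Borel sets of finite Haar-measure, since a single ball $B_r(x)$ in a homogeneous Lie group is precompact with $\mu_G(B_r(x)) = r^{\homdim(G)}\mu_G(B_1(e))$ by \cref{Prop:BallMeas}. Next, from the proof of \cref{Lem:WellRoundedBalls} I would import, for each $\epsilon > 0$ together with a $k \in (0,r)$ satisfying $k > \epsilon + c_x(\epsilon)$, the inclusions
\begin{equation*}
    B_{r-k}(x) \subset \bigcap_{u,v \in B_\epsilon(e)} u B_r(x) v \qquad\text{and}\qquad B_\epsilon(e) B_r(x) B_\epsilon(e) \subset B_{r+\epsilon+c_x(\epsilon)}(x),
\end{equation*}
where $c_x(\epsilon)$ is the conjugation distortion bounding $\vert x v x^{-1}\vert_G$ for $v \in B_\epsilon(e)$, with $c_x(\epsilon) \to 0$ as $\epsilon \to 0$. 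The arguments are verbatim those of \cref{Lem:WellRoundedBalls} with the growing radius replaced by the fixed radius $r$.

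Applying $\mu_G$ to both sides of these inclusions and invoking the exact volume formula of \cref{Prop:BallMeas}, the well-roundedness requirement reduces to the scalar inequality
\begin{equation*}
    \left(\frac{r+\epsilon+c_x(\epsilon)}{r-k}\right)^{\homdim(G)} \leq 1+\delta .
\end{equation*}
I would then fix $\delta > 0$, pick $k$ small, and pick $\epsilon$ smaller still so that both $k > \epsilon + c_x(\epsilon)$ holds and $c_x(\epsilon)$ is tiny; this forces the left-hand ratio as close to $1$ as needed, and any $t_1 > 0$ serves since nothing in the constant family depends on $t$. I do not expect a genuine obstacle here: all the analytic content — the two inclusions and the measure comparison — is already present in the variable-radius case, and the constant case is strictly easier, precisely because the quotient above no longer has to be bounded uniformly along $t \to \infty$.
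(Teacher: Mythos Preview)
Your proof is correct and follows exactly the paper's approach: import the two inclusions from \cref{Lem:WellRoundedBalls}, pass to measures via \cref{Prop:BallMeas}, and choose $\epsilon$ (and $k$) small enough. You have simply made explicit the scalar inequality and the choice of parameters that the paper compresses into the phrase ``Choosing $\epsilon$ accordingly we are done.''
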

	
	Now what is left to show is that the stable mean ergodic theorem holds for our families of sets. To do so we use \cite[Theorem 3.33.]{Glasner} which tells us that we have to check that our families are F\o lner sequences. Alternatively see the work by Nevo, \cite{NevoErgodic}, especially step I in the proof of Theorem 5.1 is exactly what we need.
	
	\begin{Definition}\textbf{(F\o lner sequences)}
		Let $G$ be a lcsc group acting on a measure space $(X,\mu)$. A sequence $F_1, F_2, ...$ of subsets of finite, non-zero measure in $X$ is called a \textit{(right) F\o lner sequence} if for all $g \in G$
		\begin{equation*}
			\lim\limits_{i \to \infty}\frac{\mu(F_i g \triangle F_i)}{\mu(F_i)} = 0.
		\end{equation*}
	\end{Definition}
	
	\begin{Lemma}
		Let $G \times H$ be a product of homogeneous Lie groups, $\Gamma \subset G \times H$ a lattice, $(B_t^G(e))_{t>0}$ a family of balls in $G$ and $(B_r^H(x))_{t>0}$ a constant family of balls in $H$. The family $(B_t^G(e) \times B_r^H(x))_{t>0}/\Gamma$ is a F\o lner sequence in $(G\times H)/ \Gamma$.
		\begin{proof}
			We use \cref{Lem:ProduktOffenKompakt}, which tells us that for every $r>0$ and every $x \in H$ we find a compact $K \subset G$ such that $(K \times B_r^H(x))\Gamma = G \times H$. And since every compact set $K$ is contained in some $B_t^G(e)$, for $t$ large enough, we get that $(B_t^G(e) \times B_r^H(x)) \Gamma = G \times H$. This also holds if we consider balls $B_t^G(g)$ instead of $B_t^G(e)$. Therefore we have for every $(g,h) \in G \times H$ that there exists a $t_{g,h} >0$ such that $(B_t^G(g) \times B_r^H(xh)) \Gamma = G \times H$ and so 
			\begin{equation*}
				\lim\limits_{t \to \infty} \mu_{(G \times H)/\Gamma}((B_t^G(g) \times B_r^H(xh))_{(G\times H)/\Gamma}  \triangle (B_t^G(e) \times B_r^H(x))_{(G\times H)/ \Gamma}) = 0
			\end{equation*}
			for all $(g,h) \in G \times H$.
		\end{proof}
	\end{Lemma}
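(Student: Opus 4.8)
The statement is that the images $F_t\subset X:=(G\times H)/\Gamma$ of the product sets $B_t^G(e)\times B_r^H(x)$ form a (right) F\o lner sequence, that is, $\mu_X\!\big(F_t\,(g,h)\,\triangle\,F_t\big)/\mu_X(F_t)\to 0$ for every $(g,h)\in G\times H$. The plan is to prove the much stronger fact that eventually $F_t\,(g,h)=F_t=X$, so that the symmetric difference is literally empty and the F\o lner ratio is eventually identically zero. Two preliminary remarks make this possible. First, with the normalisation $\mu_{(G\times H)/\Gamma}(X)=1$ fixed above one has $0<\mu_X(F_t)\le 1$, so the denominator never causes trouble. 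Second, the conclusion $(K\times U)\Gamma=G\times H$ of \cref{Lem:ProduktOffenKompakt} — which applies here since $\Gamma$ is a uniform lattice with $\pi_H(\Gamma)$ dense — says exactly that the image of a product $K\times U$ under the quotient map $G\times H\to X$ is already all of $X$: every coset has a representative in $K\times U$.

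The core step is to translate first and then invoke \cref{Lem:ProduktOffenKompakt}. Right-translating by $(g,h)$ turns $B_t^G(e)\times B_r^H(x)$ into $B_t^G(e)g\times B_r^H(x)h$, which by \cref{Prop:BallMeas} and right-invariance of the metric equals $B_t^G(g)\times B_r^H(xh)$ — a ball around $g$ in the $G$-factor times the fixed open set $B_r^H(xh)$ in the $H$-factor. Applying \cref{Lem:ProduktOffenKompakt} with the open set $U=B_r^H(xh)$ yields a compact $K\subset G$ with $(K\times B_r^H(xh))\Gamma=G\times H$; since $K$ is compact it lies in some ball $B_s^G(e)$, and the triangle inequality for the homogeneous norm (if $|y|<s$ then $|yg^{-1}|\le s+|g|$) gives $B_s^G(e)\subset B_{s+|g|}^G(g)$, hence $K\subset B_t^G(g)$ for every $t\ge s+|g|$. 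Therefore $(B_t^G(g)\times B_r^H(xh))\Gamma=G\times H$ for such $t$, which means $F_t\,(g,h)=X$. Taking $g=e$ and $h=e$ (no translation) gives $F_t=X$ for all large $t$ by the same computation. So beyond a threshold $t_{g,h}$ we have $F_t\,(g,h)\,\triangle\,F_t=\emptyset$, and the F\o lner limit is $0$.

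I do not expect a genuine obstacle: the argument is soft. The only points that need a line of care are the bookkeeping when absorbing the compact set $K$ into a $G$-ball — moving the centre from $e$ to $g$ costs $|g|$ in the radius, via the (quasi-)norm triangle inequality guaranteed by \cref{Lem:HomNorm} — and the trivial observation that $B_r^H(x)h=B_r^H(xh)$ is still an open subset of $H$, so that \cref{Lem:ProduktOffenKompakt} may be applied to it; both are immediate from \cref{Prop:BallMeas}. The one conceptual remark worth recording in the write-up is that the $H$-factor never shrinks, yet it does no harm precisely because density of $\pi_H(\Gamma)$ forces even a fixed ball in $H$, once crossed with a sufficiently large ball in $G$, to meet every point of the homogeneous space; and the same reasoning covers the constant-in-$t$ family of $G$-balls as well, and is indifferent to whether the $G\times H$-action on $X$ is read on the left or the right, since in either case the translate of $B_t^G(e)\times B_r^H(x)$ still contains a large $G$-ball crossed with an open subset of $H$.
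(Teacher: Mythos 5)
Your proof is correct and follows essentially the same route as the paper: invoke \cref{Lem:ProduktOffenKompakt} to get $(K\times B_r^H(xh))\Gamma=G\times H$, absorb the compact $K$ into a large $G$-ball, and conclude that the translated set already covers all of $(G\times H)/\Gamma$ for $t$ large, so the symmetric difference in the F\o lner quotient is eventually empty. Your write-up is a bit more explicit than the paper's about the bookkeeping (translating first, $B_r^H(x)h=B_r^H(xh)$, and the radius shift $K\subset B_s^G(e)\subset B_{s+|g|}^G(g)$), but the underlying argument is identical.
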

	
	\section{Growth of the complexity function}\label{Sec:Growth}
	In this section we proof our main theorem, modulo \cref{Thm:BeckInduction} which we will proof in \cref{Sec:Combinatorics}.
	
	\begin{Theorem}\label{MainTHM}
		Let $\Lambda(G,H, \Gamma, W)$ be a model set, where $G$ be a homogeneous Lie group and $H$ a non-crooked homogeneous Lie group. Further $W \subset H$ be a non-empty, precompact window of polytopal type with bounding hyperplanes $P_1,..., P_N$ such that $P_i$ has trivial stabilizer and that $P_i \cap \Gamma_H = \emptyset$ for all $i \in \{1,...,N\}$. Then for the complexity function $p$ of $\Lambda$ we have
		\begin{equation*}
			p(r) \asymp r^{\homdim(G)\dim(H)}.
		\end{equation*} 
	\end{Theorem}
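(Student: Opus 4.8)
Throughout write $d:=\dim(H)$ and identify $H$ with the underlying vector space $\RR^d$ of its Lie algebra. The plan is to squeeze $p(r)=\vert A_r^H\vert$ — this identity is the corollary to \cref{Thm:AcceptanceDomains} — between two chamber counts of hyperplane arrangements, namely
\[
\#\pi_0\!\left(B_h(c_W)\setminus\bigcup_{i=1}^N\bigcup_{\mu\in U_i(r)}\mu P_i\right)\ \le\ \vert A_r^H\vert\ \le\ \#\pi_0\!\left(H\setminus\bigcup_{\mu\in\cS_r}\bigcup_{i=1}^N\mu P_i\right),
\]
which is exactly the content of the cut-region theorem, and then to estimate both sides. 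Since $H$ is non-crooked, \cref{Thm:NonCrooked} guarantees that each shifted set $\mu P_i$ is again a genuine affine hyperplane in $\RR^d$, so both quantities really are chamber counts of honest hyperplane arrangements; the hypotheses $P_i\cap\Gamma_H=\emptyset$ are what allow one to run \cref{Thm:AcceptanceDomains} and the cut-region theorem at all.

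\emph{Upper bound.} The arrangement on the right has at most $N\vert\cS_r\vert$ hyperplanes. As $\cS_r=\pi_H\bigl((B^G_r(e)\times WW^{-1})\cap\Gamma\bigr)$ with $WW^{-1}$ bounded, the growth lemma (\cref{Prop:Growth}) together with the exact polynomial growth of $G$ of degree $\homdim(G)$ (\cref{Prop:BallMeas}) gives $\vert\cS_r\vert\ll r^{\homdim(G)}$. Since an arrangement of $m$ hyperplanes in $\RR^d$ has at most $\sum_{j=0}^d\binom{m}{j}=O(m^d)$ chambers \cite{Stanley}, we obtain $\vert A_r^H\vert\ll\bigl(N\vert\cS_r\vert\bigr)^{d}\ll r^{\homdim(G)\dim(H)}$.

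\emph{Lower bound.} Because $W$ is a bounded polytope, the outer normals of $P_1,\dots,P_N$ positively span $\RR^d$; fix indices $i_1,\dots,i_d$ whose normals are linearly independent. For each $j$ let $F_j$ consist of the shifts $\mu P_{i_j}$ with $\mu\in\cS_r$ subject to the extra requirements that $\mu P_{i_j}$ meets $B_h(c_W)$ and that its normal direction stays in a fixed narrow cone around that of $P_{i_j}$; since these conditions carve out a non-empty bounded open subset of $WW^{-1}$ (here one uses that $c_W$ lies in the interior of $W$ and, crucially, the non-crooked structure of $H$, which makes the normal direction of $\mu P_{i_j}$ depend affinely on $\mu$), the growth lemma again yields $\vert F_j\vert\asymp r^{\homdim(G)}$. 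The cone restriction forces all members of $F_j$ to be nearly parallel to $P_{i_j}$ on $B_h(c_W)$, so — for $h$ small — the well-conditioned normal matrix guarantees that every transversal $(f_1,\dots,f_d)\in F_1\times\dots\times F_d$ meets $B_h(c_W)$ in exactly one point. The trivial-stabilizer hypothesis on each $P_{i_j}$ keeps the shifts from lining up, which is what lets one verify the non-concentration condition ``at most $c\vert F_j\vert$ hyperplanes of $F_j$ through any point'' with $c<\tfrac{1}{100}$ once $r$ is large. Applying \cref{Thm:BeckInduction} with $n=d\cdot\vert F_j\vert\asymp r^{\homdim(G)}$ then produces at least $c_d\,n^{d}\asymp r^{\homdim(G)\dim(H)}$ vertices of the arrangement inside $B_h(c_W)$, and since a hyperplane arrangement has at least a dimensional constant times as many chambers as vertices, $\vert A_r^H\vert\gg r^{\homdim(G)\dim(H)}$. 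Combining the two bounds gives $p(r)\asymp r^{\homdim(G)\dim(H)}$. (The asymmetry of the exponent is natural: $\homdim(G)$ enters because the slab is counted by volumes of $G$-balls via \cref{Prop:Growth}, whereas $\dim(H)$ is the purely linear exponent governing chamber counts of arrangements in $\RR^{\dim(H)}$, which does not see the grading of $H$.)

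\emph{Main obstacle.} The delicate point is the lower bound, and within it the verification of the hypotheses of \cref{Thm:BeckInduction}. In the Euclidean case the shifts of a facet are exact translates, hence mutually parallel, so choosing $d$ transversal families and excluding concentration is essentially automatic; here $\Gamma_H$ genuinely rotates hyperplanes — the two-step phenomenon, which is also why non-crookedness (\cref{Thm:NonCrooked}) is indispensable — so one must first show that, over the bounded window, the rotation is controlled tightly enough (the normal-cone device) to keep the chosen families transversal, and then that the trivial-stabilizer assumption prevents a positive fraction of a family from meeting in a single point. Establishing \cref{Thm:BeckInduction} itself, by pushing the Szemer\'edi--Trotter / Beck incidence machinery up to $d$-fold family intersections, is the remaining combinatorial core.
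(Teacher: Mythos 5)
Your strategy matches the paper's: squeeze $p(r)=\vert A_r^H\vert$ between chamber counts of hyperplane arrangements inside $H$, control $\vert\cS_r\vert$ by the growth lemma, bound the big arrangement from above by Schläfli's inequality, and get the lower bound by constructing transversal families of shifted bounding hyperplanes inside a small ball $B_h(c_W)$ and invoking the Beck-type result (\cref{Thm:BeckInduction} via \cref{Prop:StanleyBound}). The upper bound, and the overall architecture, are correct.

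However, there is a genuine gap in your lower bound. The inequality $\#\pi_0\bigl(B_h(c_W)\setminus\bigcup_i\bigcup_{\mu\in U_i(r)}\mu P_i\bigr)\le\vert A_r^H\vert$ which you quote is proved in the paper (\cref{Lem:CutsAllround}) only for the specific choice $U_i(r)=\pi_H\bigl((B^G_r(e)\times B_k(c_Wp_i^{-1}))\cap\Gamma\bigr)$ with $(k,h)$ a \emph{good pair}: the entire argument rests on every shift $s\partial_iW$, $s\in U_i$, cutting $B_h(c_W)$ \emph{all-round}, i.e.\ $(sP_i^+)\cap B_h(c_W)=(sW)\cap B_h(c_W)$, so that a chamber of the arrangement sits inside exactly one acceptance domain. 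You then silently replace $U_{i_j}(r)$ by a different family $F_j$, namely all $\mu\in\cS_r$ whose shift meets $B_h(c_W)$ with normal in a fixed narrow cone. This set can contain $\mu$ far from $c_Wp_{i_j}^{-1}$ (elements of $WW^{-1}$ whose 2-step ``twist'' happens to bring $P_{i_j}$ back across $B_h(c_W)$), and for such $\mu$ the all-round property can fail — the shifted facet $\mu\partial_{i_j}W$ need not coincide with $\mu P_{i_j}$ inside $B_h(c_W)$, and other faces of $\mu W$ may intrude. Adding these extra hyperplanes can only increase the chamber count, so the chain of inequalities you rely on breaks: without a new verification of \cref{Lem:CutsAllround} for your $F_j$, the chamber count of $\bigcup_jF_j$ inside $B_h(c_W)$ is no longer known to be $\le\vert A_r^H\vert$. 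You either need to take $F_j\subset U_{i_j}(r)$ (and show the cone restriction still leaves a full-density subset so that $\vert F_j\vert\asymp r^{\homdim(G)}$) or redo the all-round argument for the cone-restricted family. The paper avoids the issue entirely by defining $U_i$ as a small ball $B_k(c_Wp_i^{-1})$, which by \cref{Lem:CutsFully}, \cref{Cor:Angle} and the good-pair conditions simultaneously forces the cut to be all-round and the normal to stay close to that of $P_i$.

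Two smaller points. First, the non-concentration bound in the hypotheses of \cref{Thm:BeckInduction} is not a consequence of the trivial-stabilizer assumption; it is proved (\cref{Lem:BoundInsidenz}) by an ergodic-theorem argument (\cref{Thm:Nevo}) combined with the fact that $\{u:s\in uP_i\}$ lies in a codimension-one subset of $U_i$. The trivial-stabilizer hypothesis plays a different role: it guarantees distinct $u$ give distinct hyperplanes $uP_i$, so the count of hyperplanes agrees with the count of lattice points. Second, in a two-step nilpotent group the \emph{direction vectors} of $\mu P_i$ are affine in $\mu$ (see the computation in \cref{Appendix:Poly}), but the normal, being a $(d-1)$-fold wedge of those directions, is polynomial, not affine; your ``affinely'' should be weakened to ``continuously with vanishing deviation as $\mu$ approaches $c_Wp_i^{-1}$'', which is all that \cref{Lem:Angle} asserts and all that you actually need.
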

	
	Notice that the condition of $\Gamma$-regularity is hidden in the stronger condition that all the hyperplanes $P_i$ do not intersect $\Gamma_H$. The condition that the $P_i$ have trivial stabilizer simplifies the problem, since otherwise we had to address the effects of the stabilizer. The effect of a non trivial stabilizer is similar to the Euclidean case, for which we refer to \cite{KoivusaloWalton2}.\par \medskip
	
	The proof of the \cref{MainTHM} will be divided into establishing an upper bound and a lower bound. For the lower bound we have to put in a lot more effort.
	
	\subsection{Upper bound of the growth}\label{SubSec:UpperBound}
	For an upper bound we consider the decomposition of the window $W \setminus \bigcup_{x\in\cS(r)} x\partial W$. We will show that counting the connected components is an upper bound for the number of acceptance domains. Then we will use the theory of real hyperplane arrangements which gives us an upper bound for our counting.
	
	\begin{Lemma}
		For a model set $\Lambda(G,H,\Gamma,W)$ we have
		\begin{equation*}
			\left\vert A_r^H\right\vert \leq \#\pi_0 \left( W \setminus \bigcup_{\mu\in\cS_r} \mu\partial W\right).
		\end{equation*}
		\begin{proof}
			By \cref{Thm:AcceptanceDomains} we know that the acceptance domains $W_r(\lambda)$ tile the window $W$ and that they are disjoint. Further for every $A_r(\lambda)$ we know that
			\begin{equation*}
				A_r(\lambda) \subset W_r(\lambda).
			\end{equation*}
			So 
			\begin{align*}
				\partial W_r(\lambda) = &\partial \left( \left(\bigcap_{\mu \in \cS_r(\lambda)}\mu \mathring{W}\right)\cap \left(\bigcap_{\mu \in \cS_r(\lambda)^\mathrm{C}} \mu W^\mathrm{C} \right) \right)\\
				&\subset \left(\bigcup_{\mu \in \cS_r(\lambda)}\mu \partial \mathring{W}\right) \cup \left(\bigcup_{\mu \in \cS_r(\lambda)^\mathrm{C}} \mu \partial W^\mathrm{C} \right) = \bigcup_{\mu \in S_r} \mu \partial W.
			\end{align*}
			
			Therefore every connected component of $ W \setminus \bigcup_{\mu\in\cS_r} \mu\partial W$ is contained in some $W_r(\lambda)$, so
			\begin{equation*}
				\left\vert A^H_r\right\vert = \vert W_r \vert \leq \#\pi_0 \left( W \setminus \bigcup_{\mu\in\cS_r} \mu\partial W\right). 
			\end{equation*}
		\end{proof}
	\end{Lemma}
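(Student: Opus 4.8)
The plan is to exploit the three conclusions of \cref{Thm:AcceptanceDomains} --- that $A_r^H(\lambda)\subset W_r(\lambda)$, that $W_r(\lambda)\cap W_r(\lambda')=\emptyset$ when $\lambda\not\sim_r\lambda'$, and that the closures $\overline{W_r(\lambda)}$ tile $\overline{W}$ --- together with the observation that the cut set $\bigcup_{\mu\in\cS_r}\mu\partial W$ contains the boundary of every acceptance domain. Granting this, each $W_r(\lambda)$ is a union of connected components of $W\setminus\bigcup_{\mu\in\cS_r}\mu\partial W$, and since by the corollary to \cref{Thm:AcceptanceDomains} there are exactly $\vert A_r^H\vert$ distinct acceptance domains, there are at least that many components, which is the claim.

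First I would record, straight from \cref{Def:Slab}, that $\cS_r=\cS_r(\lambda)\cup\cS_r^{\mathrm{C}}(\lambda)$ for every $\lambda\in\Lambda$, since the condition $\gamma^{-1}\in\mathrm{Disp}(\lambda)$ merely splits the set defining $\cS_r$ in two. As $W_r(\lambda)$ is a finite intersection of the sets $\mu\mathring{W}$ ($\mu\in\cS_r(\lambda)$) and $\mu W^{\mathrm{C}}$ ($\mu\in\cS_r^{\mathrm{C}}(\lambda)$), and left translation is a homeomorphism, we have $\partial(\mu\mathring{W})\subseteq\mu\partial W$, $\partial(\mu W^{\mathrm{C}})=\mu\partial W$, and the boundary of a finite intersection is contained in the union of the boundaries; hence
\begin{equation*}
\partial W_r(\lambda)\ \subseteq\ \Bigl(\bigcup_{\mu\in\cS_r(\lambda)}\mu\partial W\Bigr)\cup\Bigl(\bigcup_{\mu\in\cS_r^{\mathrm{C}}(\lambda)}\mu\partial W\Bigr)\ =\ \bigcup_{\mu\in\cS_r}\mu\partial W .
\end{equation*}

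Next I would show that each connected component $U$ of $W\setminus\bigcup_{\mu\in\cS_r}\mu\partial W$ lies in a single $W_r(\lambda)$. For $x\in U$, the tiling identity \eqref{Thm:AcceptanceDomainsEq3} gives $x\in\overline{W_r(\lambda)}$ for some $\lambda$; since $x$ avoids $\bigcup_{\mu\in\cS_r}\mu\partial W\supseteq\partial W_r(\lambda)$ and $W_r(\lambda)$ is open (as observed in the proof of \cref{Thm:AcceptanceDomains}), in fact $x\in W_r(\lambda)$. If $U$ were not contained in $W_r(\lambda)$ it would meet both $W_r(\lambda)$ and its complement, hence meet $\partial W_r(\lambda)$ --- impossible, since $U$ is disjoint from $\bigcup_{\mu\in\cS_r}\mu\partial W$. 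So $U\mapsto[\lambda]$ defines a map $\pi_0\bigl(W\setminus\bigcup_{\mu\in\cS_r}\mu\partial W\bigr)\to A_r^H$, which is surjective because $W_r(\lambda)\subseteq\mathring{W}$ contains the projected lattice point $\tau(\lambda)\in\Gamma_H$, and, as $\cS_r\subseteq\Gamma_H$, the shift-invariant form of $\Gamma$-regularity (the remark after \cref{Thm:AcceptanceDomains}) gives $\tau(\lambda)\notin\mu\partial W$ for all $\mu\in\cS_r$, so the component of $\tau(\lambda)$ maps to $[\lambda]$. Surjectivity is exactly the asserted inequality $\vert A_r^H\vert\le\#\pi_0\bigl(W\setminus\bigcup_{\mu\in\cS_r}\mu\partial W\bigr)$.

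The place I expect to need the most care is this surjectivity half --- verifying that each $W_r(\lambda)$ really meets $W\setminus\bigcup_{\mu\in\cS_r}\mu\partial W$. Here one uses that $\cS_r$ is finite (since $B_r^G(e)\times WW^{-1}$ is precompact and $\Gamma$ is a discrete uniform lattice) and the shift-invariant $\Gamma$-regularity of $W$; if one would rather avoid the lattice point and argue purely topologically, one also needs that each $\mu\partial W$ is closed with empty interior, so that finitely many of them cannot cover the open set $W_r(\lambda)$, which is clear for a polytopal window. Everything else is the routine fact that the connected components of the complement of a cut locus refine any tiling whose pieces have their boundaries inside that cut locus.
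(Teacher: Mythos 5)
Your argument is correct and follows the paper's route: derive $\partial W_r(\lambda)\subseteq\bigcup_{\mu\in\cS_r}\mu\partial W$ from the definition of the acceptance domains and then observe that the connected components of the cut complement refine the tiling $\{W_r(\lambda)\}$. You are in fact a little more careful than the paper, which passes silently over the surjectivity of the resulting map $\pi_0\bigl(W\setminus\bigcup_{\mu\in\cS_r}\mu\partial W\bigr)\to A_r^H$; your check that $\tau(\lambda)\in W_r(\lambda)$ avoids every $\mu\partial W$ (or, alternatively, that finitely many nowhere dense sets cannot cover the open set $W_r(\lambda)$) is exactly the missing justification.
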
 
	
	\begin{Lemma}
		For a model set $\Lambda(G,H,\Gamma, W)$ with polytopal window $W \subset H$ we have
		\begin{equation*}
			\#\pi_0 \left( W \setminus \bigcup_{\mu\in\cS_r} \mu\partial W\right) \leq \# \pi_0 \left( H \setminus \bigcup_{\mu\in\cS_r} \bigcup_{i=1}^N \mu P_i\right).
		\end{equation*}
		\begin{proof}
			Since $\bigcup_{\mu\in\cS_r} \mu\partial W \subset  \bigcup_{\mu\in\cS_r} \bigcup_{i=1}^N \mu P_i$ we have
			\begin{equation*}
				\#\pi_0 \left( W \setminus \bigcup_{\mu\in\cS_r} \mu\partial W\right) \leq \# \pi_0 \left( W \setminus \bigcup_{\mu\in\cS_r} \bigcup_{i=1}^N \mu P_i\right).
			\end{equation*}
			Since $e \in \cS_r$ for all $r$ we have $\partial W \subset \bigcup_{\mu \in S_r}\bigcup_{i=1}^N \mu P_i$ such that all regions inside $W$ stay the same if we increase $W$ to $H$. If we add the regions outside of $W$ we therefore get
			\begin{equation*}
				\#\pi_0 \left( W \setminus \bigcup_{\mu\in\cS_r} \bigcup_{i=1}^N \mu P_i\right) \leq \# \pi_0 \left( H \setminus \bigcup_{\mu\in\cS_r} \bigcup_{i=1}^N \mu P_i\right).
			\end{equation*}
		\end{proof}
	\end{Lemma}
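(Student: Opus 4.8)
The plan is to establish the inequality in two independent steps: first I would enlarge the removed set from the single boundary $\partial W$ to the full arrangement of bounding hyperplanes while keeping the ambient set equal to $W$, and then enlarge the ambient set from $W$ to all of $H$. Both steps are purely point-set topological; the only structural input needed is that $\cS_r$ is finite (being $\pi_H$ of the finitely many lattice points of $\Gamma$ in the bounded set $B^G_r(e)\times WW^{-1}$) and that each $\mu P_i$ is a closed, nowhere dense subset of $H$, namely a left translate of the image under $\exp$ of an affine hyperplane of $\fg$.

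For the first step I would use that the boundary of the polytopal window is covered by its bounding hyperplanes, $\partial W \subset \bigcup_{i=1}^N P_i$, hence $\bigcup_{\mu \in \cS_r} \mu \partial W \subset \bigcup_{\mu \in \cS_r}\bigcup_{i=1}^N \mu P_i$. The key observation is then that removing a larger, but still nowhere dense, closed family can only increase the number of connected components: writing $A := \bigcup_{\mu\in\cS_r}\mu\partial W$ and $B := \bigcup_{\mu\in\cS_r}\bigcup_{i=1}^N \mu P_i$, both are finite unions of closed nowhere dense sets, so $W\setminus B$ is dense in $W$ and a fortiori dense in every connected component $C$ of the open set $W\setminus A$. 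Thus each such $C$ contains at least one point, and hence at least one full connected component, of $W\setminus B$; the assignment sending a component of $W\setminus B$ to the component of $W\setminus A$ that contains it is therefore surjective, which gives $\#\pi_0(W\setminus A)\le\#\pi_0(W\setminus B)$.

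For the second step I would use that $e\in\cS_r$ for every $r$, so $\bigcup_{i=1}^N P_i\subset B$ and in particular $\partial W\subset B$; consequently $W\setminus B=\mathring W\setminus B$. Every connected component of $H\setminus B$ is a connected set disjoint from $\partial W\subset B$, hence lies entirely in $\mathring W$ or entirely in $H\setminus\overline W$, these being the two open pieces of $H\setminus\partial W$. The components landing in $\mathring W$ are exactly the components of $\mathring W\setminus B=W\setminus B$, so passing from $W$ to $H$ only adjoins the components outside $\overline W$, giving $\#\pi_0(W\setminus B)\le\#\pi_0(H\setminus B)$; chaining the two inequalities yields the claim. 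The one point I would be careful about is that the ``more cuts, more components'' principle used in the first step genuinely needs the removed set to be nowhere dense, which is why I invoke finiteness of $\cS_r$ together with the fact that each $\mu P_i$ is a nowhere dense hypersurface; with that in place there is no real obstacle, and the lemma does not even require the non-crooked hypothesis on $H$.
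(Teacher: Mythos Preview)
Your proof is correct and follows essentially the same two-step route as the paper: first enlarge the removed set from $\mu\partial W$ to $\mu P_i$ inside $W$, then enlarge the ambient space from $W$ to $H$ using $e\in\cS_r$. Your version is in fact more careful than the paper's, which asserts the ``more cuts, more components'' step without justification; your density-and-surjection argument (relying on finiteness of $\cS_r$ and nowhere-density of each $\mu P_i$) is exactly the kind of detail the paper omits.
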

	
	By \cref{Lem:HomLieTop} the topology on $H$ is the same as on $\RR^{\dim(H)}$ and we additionally assumed $H$ to be non-crooked. So the problem of counting the connected components is a well known problem from the theory of real hyperplane arrangements. A general upper bound for the number of connected components has been known for a long time and first appears in \cite{Schlafli} by L. Schläfli, see also \cite[Theorem 1.2]{Dimca}. For an arrangement $\cH \subset \RR^n$ consisting of $k$ different hyperplanes we get the general upper bound
	\begin{equation*} 
		\sum_{i=0}^n \begin{pmatrix} k \\ i\end{pmatrix} \asymp k^n.
	\end{equation*}
	In our case $n= \dim(H)$ and $k= \vert \cS(r)\vert$. We know from \cref{Prop:Growth} that ${\vert \cS(r)\vert \asymp r^{\homdim(G)}}$. Combining these results yields
	\small\begin{equation*}
		p(r) = \left\vert A_r^H\right\vert \leq \#\pi_0 \left( H \setminus \bigcup_{\mu\in\cS(r)} \bigcup_{i=1}^N \mu P_i\right) \ll \vert \cS(r)\vert^{\dim(H)} \asymp r^{\homdim(G) \cdot \dim (H)}.
	\end{equation*}\normalsize
	
	\subsection{Lower bound of the growth}\label{SubSec:LowerBound}
	We fix some parameters of the window which will help us in proving \cref{MainTHM}.
	\begin{Definition}
		For a given polytopal window $W$ we fix the following parameters:
		\begin{enumerate}[label=\roman*)]
			\item A \textit{center of the window} $c_W \in W$ such that $\inf\{r \in \RR \mid B_r(c_W) \subset W \}$ is maximal,
			\item the \textit{inner radius} of the window $I_W:=\sup\{r \in \RR \mid B_r(c_W) \subset W\}$,
			\item the \textit{outer radius} of the window $O_W:=\inf\{r \in \RR\mid W \subset B_r(c_W)\}$,
			\item the \textit{size of $\partial_i W$} 
			\begin{equation*} 
				F_i:=\sup\{r \in \RR \mid \exists p \in P_i: B_r(p)\cap \partial_i W = B_r(p) \cap P_i\}
			\end{equation*}
			and the minimum of all the sizes of the faces 
			\begin{equation*} 
				F_W:=\min\{F_i \mid i \in \{1,...,N\}\},
			\end{equation*}
			\item for each face $\partial_i W$ a \textit{face center} $p_i \in \partial_i W$ such that $B_{F_W}(p_i)\cap \partial_i W=B_{F_W}(p_i)\cap P_i$. 
		\end{enumerate}
	\end{Definition}
	We will use this parameters in our proof later. The centres may not be unique but we fix a choice for the rest of the argument. Further we need to widen the definition of parallel a bit, since we are only interested in intersections inside a bounded region.
	\begin{Definition}
		Let $B \subset H$ be a bounded region and $P_1, P_2$ two hyperplanes in $H$. We call $P_1$ and $P_2$ \textit{almost parallel with respect to $B$} if $P_1\cap P_2 \cap B = \emptyset$. 
	\end{Definition}
	The aim is now to find a region inside $W$ for which it makes no difference if it is divided by a face $\partial_i W$ or by the whole hyperplane $P_i$. Further we wish to get a one to one correspondence between the connected components and the acceptance domains in this small region.
	\begin{Definition}
		Let $B \subset H$ be a bounded region. For $s \in H$ we say $s \partial_i W$ \textit{cuts $B$ fully} if
		\begin{equation*}
			(s \partial_i W) \cap B = (s  P_i) \cap B \neq \emptyset.
		\end{equation*} 
		And if additionally
		\begin{equation*}
			(s  P_i^+) \cap B = (s  W) \cap B \neq \emptyset,
		\end{equation*} 
		we say  $s \partial_i W$ \textit{cuts $B$ all-round}.  
	\end{Definition}
	
	\begin{Remark}
		An all-round cut is always a full cut, but the converse is false, see \cref{Fig:Cuts}.
	\end{Remark}
	
	\begin{figure}
		\centering
		\captionsetup{width=0.75\linewidth}
		\begin{tikzpicture}[descr/.style={fill=white,inner sep=2.5pt}]
			\draw (0,0) circle (2);
			\draw[-] (-2,-1) -- (2,1);
			\draw[-, dotted] (-3,-1.5) -- (-2,-1); 
			\draw[-, dotted] (2,1) -- (3,1.5);
			\draw[-] (-2,-1) -- (-2,-2); 
			\draw[-] (2,1) -- (3,0);
			\fill[fill=gray!50, fill opacity=0.2] (-2,-2.5) -- (-2,-1) -- (2,1) -- (3,0) -- (3,-2.5) -- (-2,-2.5);
			\node at (-1.2,1.3) {$B$};
			\node at (2.5, -2) {$W$};
			\node at (2.8, 1.7) {$P_i$};
		\end{tikzpicture}
		\hspace{0.5cm}
		\begin{tikzpicture}[descr/.style={fill=white,inner sep=2.5pt}]
			\draw (0,0) circle (2);
			\draw[-] (-2,-1) -- (2,1);
			\draw[-, dotted] (-3,-1.5) -- (-2,-1); 
			\draw[-, dotted] (2,1) -- (3,1.5);
			\draw[-] (-2,-1) -- (-2.5,-2); 
			\draw[-] (2,1) -- (-1,-2);
			\fill[fill=gray!50, fill opacity=0.2] (-2.75,-2.5) -- (-2,-1) -- (2,1) -- (-1.5,-2.5) -- (-2.75,-2.5);
			\node at (-1.2,1.3) {$B$};
			\node at (-1.7, -2) {$W$};
			\node at (2.5, 1.6) {$P_i$};
		\end{tikzpicture}
		\caption{On the left $\partial_i W$ cuts $B$ fully and all-round on the right $\partial_i W$ cuts $B$ fully but not all-round.}
		\label{Fig:Cuts}
	\end{figure}
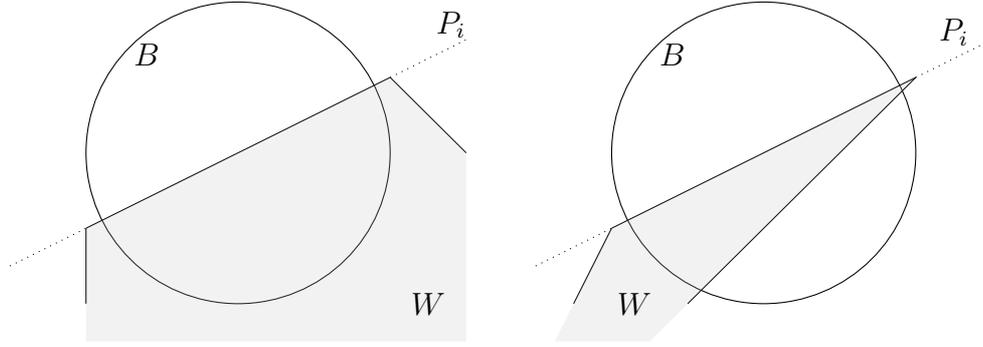
	
	Additionally to decreasing the area of interest we will decrease the set from which we operate on the window, instead of considering all elements from $\cS_r$  we define for each face a subset $U_i(r) \subset \cS_r$. The $U_i(r)$ will be defined such that we only obtain all-round cuts and get a one to one correspondence between the connected components and the acceptance domains.
	\begin{Definition}\label{Def:SmallBall}
		Let $(k,h) \in \RR^2$. The region we will consider is $B_h(c_W)$ and the set from which we operate is ${U_i:= B_k(c_W p_i^{-1})}$ for all $i\in \{1,...,N\}$. If the following conditions are fulfilled we call $(k,h)$ a \textit{good pair}:
		\begin{enumerate}[label=\roman*)]
			\item $0 < k <h$,
			\item $h < I_W$, therefore $B_h(c_W) \subset W$,
			\item $\forall a\in B_{O_W}(e)$, $x \in B_{2h}(e)$: $\vert a x a^{-1}\vert_H \leq F_W$,
			\item $\forall i \in \{1,...,N\}$: $\forall s \in U_i$: $(s  P_i^+) \cap B_h(c_W) = (s  W) \cap B_h(c_W)$.
		\end{enumerate}
	\end{Definition}
	
	\begin{Remark}
		Observe that if $(k,h)$ is a good pair, then $(k',h)$ is a good pair for all $0 < k' < k$.
	\end{Remark}
	
	\begin{Proposition}\label{Prop:GoodPair}
		A good pair exists.
	\end{Proposition}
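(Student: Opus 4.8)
The strategy is to fix $h$ first, and then, with $h$ in hand, to choose $k<h$ small; condition~(i) is then automatic and condition~(ii) amounts to the inequality $h<I_W$. The substantive requirements are (iii) and (iv), and both will be deduced from a single analytic fact: conjugation is uniformly continuous near the identity over compact sets. Indeed, since $H$ is a homogeneous Lie group its group law is polynomial and its metric is proper, so $\overline{B_{O_W}(e)}$ is compact, the map $(a,x)\mapsto axa^{-1}$ is continuous, and it sends every $(a,e)$ to $e$; hence, by compactness of $\overline{B_{O_W}(e)}$ (tube lemma), for each $\epsilon>0$ there is $\delta(\epsilon)>0$ with $|axa^{-1}|_H<\epsilon$ whenever $|a|_H\le O_W$ and $|x|_H<\delta(\epsilon)$.

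First I would record the local shape of the window at a face centre. For each $i$, no other bounding hyperplane passes through $p_i$: if $p_i\in P_j$ for some $j\ne i$, then $p_i\in P_i\cap P_j$ with $P_i\ne P_j$, so the slice $P_i\cap B_{F_W}(p_i)$ — which by the face-centre property lies inside $\partial_i W\subset W\subset P_j^+$ — would contain points of the flat $P_i$ strictly on both sides of $P_j$, a contradiction. Since the homogeneous-norm balls are path-connected (the dilation path $t\mapsto D_t(x)$ joins $e$ to $x$ inside $B_\rho(e)$, by dilation-homogeneity of $|\cdot|_H$) and each $P_j$ separates $H$, which by \cref{Lem:HomLieTop} is $\RR^{\dim H}$ topologically, into its two open half-spaces, there is $\rho>0$, depending only on $W$, with $B_\rho(p_i)\cap P_j=\emptyset$ for all $i$ and all $j\ne i$; as $p_i\notin P_j$ while $p_i\in P_j^+$, this forces $B_\rho(p_i)\subset P_j^+$, hence $P_i^+\cap B_\rho(p_i)\subset W$, i.e.
\[
	W\cap B_\rho(p_i)=P_i^+\cap B_\rho(p_i)\qquad\text{for every }i .
\]
(One may take $\rho=F_W$ if the face centre already isolates $P_i$ from the other $P_j$, in which case condition~(iii) is exactly the estimate used below.)

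Now I would fix $h$. Applying the uniform continuity above with $\epsilon=F_W$ and with $\epsilon=\rho$, pick $h_1,h_2>0$ so that conjugating $B_{2h_1}(e)$, resp.\ $B_{2h_2}(e)$, by $\overline{B_{O_W}(e)}$ stays inside $B_{F_W}(e)$, resp.\ $B_\rho(e)$, and then fix any $h\in(0,\min\{I_W,h_1,h_2\})$, which satisfies (ii) and (iii). For $k$, take any $0<k<h$, so (i) holds. Fix $i$ and $s\in U_i=B_k(c_W p_i^{-1})$, and write $s=t c_W p_i^{-1}$ with $|t|_H<k$; then for $x=w c_W\in B_h(c_W)$ with $|w|_H<h$ a short manipulation gives
\[
	s^{-1}x p_i^{-1}=(p_i c_W^{-1})\,(t^{-1}w)\,(p_i c_W^{-1})^{-1},
\]
a conjugate of $t^{-1}w$ by $p_i c_W^{-1}$. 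Here $|t^{-1}w|_H\le|t|_H+|w|_H<k+h<2h$ and $|p_i c_W^{-1}|_H\le O_W$ by the definition of $O_W$, so the choice of $h$ yields $|s^{-1}x p_i^{-1}|_H<\rho$, i.e.\ $x\in s B_\rho(p_i)$; hence $B_h(c_W)\subset s B_\rho(p_i)$. Transporting the local identity above by the homeomorphism $y\mapsto sy$ then gives
\[
	(sW)\cap B_h(c_W)=s\!\left(W\cap B_\rho(p_i)\right)\cap B_h(c_W)=s\!\left(P_i^+\cap B_\rho(p_i)\right)\cap B_h(c_W)=(sP_i^+)\cap B_h(c_W),
\]
which is condition~(iv). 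Thus $(k,h)$ is a good pair.

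The routine but delicate points — and where I would be most careful — are the bookkeeping that rewrites $s^{-1}x p_i^{-1}$ as a conjugate of a small element by the \emph{bounded} element $p_i c_W^{-1}$, and checking that a face-centre ball genuinely isolates $P_i$ from the remaining bounding hyperplanes; everything else is just two applications of the uniform continuity of conjugation near $e$. Note also that condition~(iii) is itself secured by the same continuity estimate and, apart from being one of the defining requirements, is not otherwise needed to produce the pair.
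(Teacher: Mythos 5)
Your proof is correct and, at the crucial point, genuinely differs from the paper's. Both you and the paper rely on the uniform continuity of conjugation by bounded elements (the paper's \cref{Cor:ConBall}, which you re-derive via the tube lemma using compactness of $\overline{B_{O_W}(e)}$). The divergence is in how condition (iv) is secured. The paper first invokes \cref{Lem:CutsFully} to get full cuts and then argues for all-round cuts by introducing the quantity $b_s^i = \inf\{r \mid \exists x \in B_r(c_W): x \in sP_i^+,\ x \notin sW\}$, setting $h$ below $\inf_s b_s^i$, and justifying positivity of that infimum only informally (``the polytope $sW$ cannot become arbitrarily thin since we only rotate the bounding hyperplanes by a small amount''). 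You instead establish a local identity $W \cap B_\rho(p_i) = P_i^+ \cap B_\rho(p_i)$ at each face centre — obtained from the clean observation that no other bounding hyperplane passes through $p_i$ — and then propagate it: the algebraic rewrite $s^{-1}xp_i^{-1} = (p_i c_W^{-1})(t^{-1}w)(p_i c_W^{-1})^{-1}$ together with the conjugation estimate shows $B_h(c_W) \subset sB_\rho(p_i)$ for all $s \in U_i$, whence applying $s$ to the local identity yields (iv) immediately. This buys a fully explicit and quantitative argument that avoids the paper's two-parameter compactness step, and bypasses \cref{Lem:CutsFully} entirely for this particular proposition (that lemma is of course still needed elsewhere, e.g.\ in \cref{Lem:CutsAllround}). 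The only point you should make sure is nailed down is the claim that, for distinct $i \ne j$, the hyperplanes $P_i$ and $P_j$ are genuinely distinct affine hyperplanes so that the transversality-at-$p_i$ argument applies; this is implicit in the paper's convention that $W = \bigcap_{i=1}^N P_i^+$ is an irredundant description of the polytope.
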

	
	For the proof we need some preparation. It will begin after \cref{Cor:Angle}
	
	\begin{Lemma}\label{Cor:ConBall}
		Let $G$ be a homogeneous Lie group and $x \in G$ fixed. Then for all $\epsilon >0$ there exists $\delta(x) >0$ such that for $u \in B_{\delta(x)}(e)$:
		\begin{equation*}
			x u x^{-1} \in B_{\epsilon}(e).
		\end{equation*}
		Further if $\vert x \vert \leq k$ then there exists a $\delta'(k)>0$ such that for $u \in B_{\delta'(k)}(e)$:
		\begin{equation*}
			x u x^{-1} \in B_{\epsilon}(e).
		\end{equation*}
		\begin{proof}
			Using the Baker-Campbell-Hausdorff formula, we get
			\small\begin{align*}
				x u &x^{-1} = \left(x + u + \frac{1}{2}[x,u]+\frac{1}{12}([x,[x,u]]-[u,[x,u]])-...\right) x^{-1}\\
				&= \left(x + u + \frac{1}{2}[x,u]+\frac{1}{12}([x,[x,u]]-[u,[x,u]])-...\right)\\
				&\quad\quad - x + \frac{1}{2}\left[\left(x + u + \frac{1}{2}[x,u]+\frac{1}{12}([x,[x,u]]-[u[x,u]])-...\right) ,x^{-1}\right]+...\\
				&= u +B(x,u),
			\end{align*}\normalsize
			where $B(x,u)$ only contains terms which include $[u,x]$. The continuity of the Lie bracket implies the claim. Be aware that we work in exponential coordinates here, as explained in \cref{Sec:HomLie}, so formally we should write $\exp(x)$ and $\exp(u)$ instead of $x$ and $u$.
		\end{proof}
	\end{Lemma}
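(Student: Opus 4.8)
The plan is to obtain this from the continuity of inner automorphisms, made quantitative via the polynomial form of the group law recalled in \cref{Sec:HomLie}, and then to promote it to a bound uniform in $x$ by a compactness argument over the ball $\overline{B_k(e)}$.

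First I would dispose of the pointwise statement. Working in exponential coordinates we identify the underlying set of $G$ with $\fg \cong \RR^n$, so $e$ corresponds to $0$, and by \cref{Lem:HomLieTop} the homogeneous quasi-norm is continuous for the Euclidean topology. Since the group law $\ast$ is polynomial and $x^{-1} = -x$, the map $u \mapsto x u x^{-1} = x \ast u \ast (-x)$ is polynomial in the coordinates of $u$, and it vanishes at $u = 0$ because $x \ast 0 \ast (-x) = x \ast (-x) = 0$; expanding by BCH one even sees $x u x^{-1} = u + B(x,u)$ with $B(x,\cdot)$ collecting the terms containing a bracket in which $u$ occurs (a finite sum by nilpotency). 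In particular $x u x^{-1} \to 0$ as $u \to 0$, so continuity of $\vert\cdot\vert$ furnishes $\delta(x) > 0$ with $\vert x u x^{-1}\vert < \epsilon$ whenever $\vert u\vert < \delta(x)$.

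For the uniform version I would use that $\overline{B_k(e)}$ is compact: the homogeneous norm is proper, so its closed balls are Euclidean-closed and bounded, hence compact by \cref{Lem:HomLieTop}. The map $(x,u) \mapsto x u x^{-1}$ is jointly continuous, so if some $\epsilon > 0$ admitted no uniform $\delta'(k)$ we could pick $x_n$ with $\vert x_n\vert \le k$ and $u_n \to e$ yet $\vert x_n u_n x_n^{-1}\vert \ge \epsilon$; passing to a subsequence with $x_n \to x_\infty$ (where $\vert x_\infty\vert \le k$), joint continuity forces $x_n u_n x_n^{-1} \to x_\infty e x_\infty^{-1} = e$, contradicting $\vert x_n u_n x_n^{-1}\vert \ge \epsilon$. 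Equivalently, one may be fully explicit: the coefficients of the polynomial $u \mapsto B(x,u)$ are polynomials in the coordinates of $x$, hence bounded on the bounded set $\{\vert x\vert \le k\}$, which gives an estimate of the form $\vert x u x^{-1}\vert \le \vert u\vert + C(k)\,\varphi(u)$ with $\varphi$ a polynomial vanishing at $0$, and this tends to $0$ as $u \to 0$ uniformly over such $x$.

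I do not anticipate a genuine obstacle: this is just the quantitative form of ``conjugation is continuous''. The only things to watch are the bookkeeping in exponential coordinates --- noting that the $u$-independent part of the BCH expansion of $x \ast u \ast (-x)$ cancels --- and the relative compactness of homogeneous-norm balls, which is exactly what legitimises the compactness argument for the uniform bound; the alternative polynomial estimate avoids compactness but trades it for a slightly longer computation.
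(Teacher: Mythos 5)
Your argument is correct and follows the same route as the paper: write $xux^{-1} = u + B(x,u)$ via BCH in exponential coordinates, where $B(x,\cdot)$ collects the bracket terms, and conclude by continuity of the polynomial group law and of the homogeneous norm at the identity --- you simply make explicit the uniformity over $\overline{B_k(e)}$ (via compactness, or by bounding the coefficients of $B(x,\cdot)$) that the paper condenses into the single sentence ``the continuity of the Lie bracket implies the claim.'' One small caveat on your alternative polynomial estimate: the homogeneous norm is subadditive for the group product, not for Lie-algebra addition, so the displayed inequality $\vert xux^{-1}\vert \le \vert u\vert + C(k)\,\varphi(u)$ should really be read as a Euclidean bound on $B(x,u)$ followed by continuity of $\vert\cdot\vert$ with respect to the Euclidean topology (\cref{Lem:HomLieTop}), rather than a literal triangle inequality --- the compactness route you give first sidesteps this subtlety entirely.
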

	
	\begin{Lemma}\label{Lem:CutsFully}
		Let $(k,h)$ fulfil conditions i) and iii) of \cref{Def:SmallBall}, then for any $i\in \{1,...,N\}$ and for every $s \in U_i$ it holds that $s P_i $ cuts $B_h(c_W)$ fully.
		\begin{proof}
			First we show that for all $s \in U_i$ we get $s P_i \cap B_h(c_W) \neq \emptyset$. We can write $s=a \cdot c_W \cdot p_i^{-1}$ with $a \in B_k(e)$. Then
			\begin{equation*}
				d(s \cdot p_i, c_W)= \left\vert a \cdot c_W \cdot p_i^{-1} \cdot p_i \cdot c_W^{-1}\right\vert = \vert a \vert < k < h.
			\end{equation*}
			Now we need to show that $s \partial_i W \cap B_h(c_W) = s P_i \cap B_h(c_W)$. This is equivalent to
			\begin{equation*}
				\partial_i W \cap s^{-1} B_h(c_W) =  P_i\cap s^{-1} B_h(c_W).
			\end{equation*}
			The inclusion $\subseteq$ is obvious since $\partial_i W \subset P_i$. We show that $s^{-1} B_h(c_W) \subseteq B_{F_W}(p_i)$, then the claim follows from the definition of $p_i$ and $F_W$. Let $x \cdot c_W \in B_h(c_W)$ be an arbitrary element and $s=a \cdot c_W \cdot p_i^{-1}$ as above. 
			\begin{equation*}
				d(s^{-1} x c_W, p_i)= \big\vert \hspace{-0.45cm}\underbrace{p_i c_W^{-1}}_{=: y \in B_{O_W}(e) } \cdot \underbrace{a^{-1} x_{}}_{\in B_{h + k}(e)} \cdot \underbrace{c_W p_i^{-1}}_{=y^{-1}}\big\vert \leq F_W.
			\end{equation*}
			The inequality follows by iii) of \cref{Def:SmallBall}.
		\end{proof}
	\end{Lemma}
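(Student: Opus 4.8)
The plan is to unwind what it means for $sP_i$ to cut $B_h(c_W)$ fully: for $s\in U_i$ one must verify that $(sP_i)\cap B_h(c_W)$ is nonempty and that it coincides with $(s\partial_i W)\cap B_h(c_W)$. Since $U_i=B_k(c_W p_i^{-1})$ by \cref{Def:SmallBall}, every $s\in U_i$ can be written in the normal form $s=a\,c_W\,p_i^{-1}$ with $\vert a\vert<k$, and I would carry this form through the whole argument. Nonemptiness is then witnessed by the single point $s p_i=a\,c_W$: it satisfies $d(s p_i,c_W)=\vert a\vert<k<h$ by condition i), so $sp_i\in B_h(c_W)$, and it lies in $sP_i$ because the face centre $p_i$ belongs to $P_i$.

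For the equality, the inclusion $(s\partial_i W)\cap B_h(c_W)\subseteq (sP_i)\cap B_h(c_W)$ is automatic from $\partial_i W\subseteq P_i$. The reverse inclusion, after left translation by $s^{-1}$, is equivalent to $P_i\cap s^{-1}B_h(c_W)\subseteq\partial_i W$, and the key reduction is the claim
\begin{equation*}
  s^{-1}B_h(c_W)\subseteq B_{F_W}(p_i).
\end{equation*}
Granting it, the defining property of the face centre $p_i$, namely $B_{F_W}(p_i)\cap P_i=B_{F_W}(p_i)\cap\partial_i W$, yields $P_i\cap s^{-1}B_h(c_W)\subseteq P_i\cap B_{F_W}(p_i)=\partial_i W\cap B_{F_W}(p_i)\subseteq\partial_i W$, which is exactly what is needed.

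It remains to establish the claim. Take an arbitrary $x\,c_W\in B_h(c_W)$, so $\vert x\vert<h$, and write $s=a\,c_W\,p_i^{-1}$ as above; then $s^{-1}x\,c_W=p_i\,c_W^{-1}\,a^{-1}x\,c_W$. Set $y:=p_i c_W^{-1}$. Since $p_i\in\partial_i W\subseteq W$ one has $\vert y\vert\le O_W$, and since $\vert a\vert<k<h$ the triangle inequality gives $\vert a^{-1}x\vert\le\vert a\vert+\vert x\vert<2h$. Therefore
\begin{equation*}
  d(s^{-1}x\,c_W,\,p_i)=\vert\, y\,(a^{-1}x)\,y^{-1}\,\vert_H\le F_W,
\end{equation*}
where the last inequality is precisely condition iii) of \cref{Def:SmallBall}, applied with outer element $y$ of norm at most $O_W$ and inner element $a^{-1}x\in B_{2h}(e)$. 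Hence $s^{-1}x\,c_W\in B_{F_W}(p_i)$, which proves the claim and the lemma.

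The one genuinely delicate ingredient is the uniform conjugation estimate $\vert y(a^{-1}x)y^{-1}\vert_H\le F_W$; in an arbitrary lcsc group conjugation need not contract small balls in this controlled way, and this is exactly why condition iii) is folded into the notion of a good pair. The lemma itself is therefore mostly bookkeeping: it records that the substitution $s=a\,c_W\,p_i^{-1}$ turns a cut by a face $\partial_i W$ into a cut by the full bounding hyperplane $P_i$ throughout $B_h(c_W)$. The substantive work is deferred to \cref{Prop:GoodPair}, whose proof must produce $(k,h)$ with property iii) via \cref{Cor:ConBall}: on a homogeneous Lie group the group law is polynomial and the commutator vanishes to first order, so $y\,u\,y^{-1}=u+B(y,u)$ with $B(y,u)$ assembled from iterated brackets each containing $u$, and continuity of the bracket lets one shrink the ball containing $u$ until its $y$-conjugate lies in $B_{F_W}(e)$.
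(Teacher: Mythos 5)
Your proof is correct and follows the paper's argument essentially verbatim: same witness $sp_i=ac_W$ for nonemptiness, same translation by $s^{-1}$ reducing the equality to $s^{-1}B_h(c_W)\subseteq B_{F_W}(p_i)$, and the same conjugation identity $s^{-1}xc_W=y(a^{-1}x)y^{-1}$ with $y=p_ic_W^{-1}$ feeding into condition iii). Your closing remarks on where the real content lies (the conjugation estimate deferred to the good-pair construction via \cref{Cor:ConBall}) are accurate commentary but not a deviation in method.
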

	
	From the proof we can extract the following corollary.
	
	\begin{Corollary}\label{Cor:IntersectB_k}
		Let $i\in \{1,...,N\}$, for every $s \in U_i$ we have that $s P_i $ intersects $B_k(c_W)$ non-trivially.
	\end{Corollary}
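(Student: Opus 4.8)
The plan is to produce an explicit witness point lying in $(s P_i) \cap B_k(c_W)$, and the computation needed is already buried in the proof of \cref{Lem:CutsFully}. First I would unravel the definition of $U_i$: by right-invariance of the metric, $s \in U_i = B_k(c_W p_i^{-1})$ means exactly $\lvert s p_i c_W^{-1}\rvert < k$, so I may write $s = a \cdot c_W \cdot p_i^{-1}$ for some $a \in B_k(e)$. The natural candidate for a point of $(s P_i)\cap B_k(c_W)$ is then $s p_i$.

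Next I would verify the two memberships. On the one hand, since the face center $p_i$ lies in $\partial_i W = W \cap P_i \subset P_i$, we get $s p_i \in s P_i$. On the other hand, exactly as in the first displayed inequality of the proof of \cref{Lem:CutsFully},
\[
d(s p_i, c_W) = \bigl\lvert a \cdot c_W \cdot p_i^{-1} \cdot p_i \cdot c_W^{-1}\bigr\rvert = \lvert a \rvert < k,
\]
so $s p_i \in B_k(c_W)$. Combining the two gives $s p_i \in (s P_i)\cap B_k(c_W)$, hence the intersection is non-empty, which is the assertion.

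There is essentially no obstacle here: the statement is a direct strengthening of (part of) \cref{Lem:CutsFully}, recording that the intersection witness $s p_i$ in fact lands in the smaller ball $B_k(c_W)$ rather than only in $B_h(c_W)$. The only point to be slightly careful about is the left-translate convention for $s P_i$ and the identification $\partial_i W = W \cap P_i$, both of which are fixed by the conventions established earlier in the paper.
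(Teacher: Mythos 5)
Your proof is correct and is exactly the argument the paper intends: the corollary is stated as being "extracted from the proof" of \cref{Lem:CutsFully}, whose first displayed computation shows $d(s p_i, c_W) = \lvert a \rvert < k$, which is precisely your witness $s p_i \in (sP_i) \cap B_k(c_W)$.
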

	
	\begin{Corollary}
		Let $(k,h)$ fulfil conditions i), iii) and iv) of \cref{Def:SmallBall}, then for any $i\in \{1,...,N\}$ and for every $s \in U_i$ holds $s P_i $ cuts $B_h(c_W)$ all-round.
	\end{Corollary}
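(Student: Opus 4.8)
The plan is to read this corollary off from \cref{Lem:CutsFully} together with condition iv) of \cref{Def:SmallBall}, so that almost no new work is required. Unwinding the definition of an all-round cut, to say that $s P_i$ cuts $B_h(c_W)$ all-round we must check two statements: first, that $s P_i$ cuts $B_h(c_W)$ fully, i.e.\ $(s\partial_i W)\cap B_h(c_W) = (s P_i)\cap B_h(c_W)\neq\emptyset$; and second, that $(s P_i^+)\cap B_h(c_W) = (s W)\cap B_h(c_W)\neq\emptyset$.

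The first statement is exactly the conclusion of \cref{Lem:CutsFully}, whose hypotheses are conditions i) and iii), both of which are among our assumptions; so it is immediate. For the second statement, the required equality $(s P_i^+)\cap B_h(c_W) = (s W)\cap B_h(c_W)$ is precisely the content of condition iv) of \cref{Def:SmallBall} for $s\in U_i$. Thus the only point left to verify is that this common set is non-empty.

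For the non-emptiness I would argue through the elementary inclusions $s\partial_i W = s(W\cap P_i)\subset sW$ and $s\partial_i W\subset s P_i\subset s P_i^+$, the last using that the half-space $P_i^+$ is closed and hence contains its bounding hyperplane $P_i$. Intersecting with $B_h(c_W)$ and invoking the non-emptiness already provided by \cref{Lem:CutsFully}, namely $(s\partial_i W)\cap B_h(c_W)\neq\emptyset$, we conclude that both $(s W)\cap B_h(c_W)$ and $(s P_i^+)\cap B_h(c_W)$ contain this non-empty set, so the second statement holds in full and $s P_i$ cuts $B_h(c_W)$ all-round.

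I do not anticipate a genuine obstacle here: the corollary is a bookkeeping combination of \cref{Lem:CutsFully} and condition iv), and the single point that warrants a sentence is the non-emptiness clause, settled by the inclusions above. While writing I would just make sure the argument does not quietly invoke condition ii) (which is deliberately absent from the hypothesis list): it does not, since non-emptiness is extracted solely from the fact that $s\partial_i W$ meets $B_h(c_W)$, and not from $B_h(c_W)\subset W$.
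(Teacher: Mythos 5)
Your proof is correct. The paper states this as a corollary without proof, and your argument is precisely the intended bookkeeping: \cref{Lem:CutsFully} gives the full cut under conditions i) and iii), condition iv) of \cref{Def:SmallBall} gives the required set equality $(sP_i^+)\cap B_h(c_W)=(sW)\cap B_h(c_W)$, and the one non-trivial sentence is the non-emptiness, which you correctly extract via $s\partial_i W = s(W\cap P_i)\subset sW\cap sP_i\subset sW\cap sP_i^+$ together with the non-emptiness of $(s\partial_i W)\cap B_h(c_W)$ already furnished by the full-cut conclusion. Your closing remark that condition ii) is never invoked is also accurate and worth the sentence: the lemma's proof only needs $B_h(c_W)$ to be small enough in the sense of condition iii), not that it sits inside $W$.
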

	
	We will also need the definition of an intersection angle between to hyperplanes, since we will show that by acting with a small element, we can only rotate a plane a bit.
	
	\begin{Definition}\label{Def:Angle}
		The angle between two hyperplanes $P$ and $Q$ in $\RR^d$ with normals $n_P$ and $n_Q$, both normalized, is given by
		\begin{equation*}
			\sphericalangle(P,Q) := \cos^{-1} \left(\vert \langle n_P \cdot n_Q \rangle \vert\right).
		\end{equation*} 
		For $i,j \in \{1,...,N\}$ with $i \neq j$ we denote by $\alpha_{ij}$ the angle between $c_Wp_i^{-1} P_i$ and $c_W p_j^{-1} P_j$, i.e. 
		\begin{equation*}
			\alpha_{ij}:=\sphericalangle(c_W p_i^{-1} P_i, c_W p_j^{-1} P_j).
		\end{equation*}
	\end{Definition}
	
	\begin{Remark}\label{Rem:FixFamily} 
		In the definition we use $c_W p_i^{-1} P_i$ instead of $P_i$ since this plane is sort of the prototype for the family $U_i P_i$, all the other planes from this family then result from an action with a small element. Since $u \in U_i$ is of the form $a c_Wp_i^{-1}$ with $a\in B_k(e)$.
	\end{Remark}	
	
	\begin{Convention}
		We choose $i_1,..., i_{\dim(H)}$ such that 
		\begin{equation*}
			\bigcap_{l=1}^{\dim(H)} c_W p_{i_l}^{-1} P_{i_l}= \{c_W\}.
		\end{equation*}
		So this is a set of hyperplanes in which each intersection of $k$ hyperplanes has dimension $\dim(H)-k$. From now on we fix such a family and denote it by $\cF$. Without loss of generality $\cF = \{P_1,...,P_{\dim(H)}\}$. 
	\end{Convention}
	
	\begin{Lemma}\label{Lem:Angle}
		For all $r>0$ there exists $\beta(r)$, with $\beta(r) \to 0$ for $r \to 0$, such that for all $x \in B_r(e) \subset H$ and any hyperplane $P$ we have $\sphericalangle(x P, P) \leq \beta(r)$.
		\begin{proof}
			Since $H$ is a non-crooked homogeneous Lie group we know that $xP$ is again a hyperplane. So let 
			\begin{equation*}
				P= \left\{a + \sum_{i=1}^n t_i v_i \mid t_i \in \RR \right\}
			\end{equation*}
			where $a, v_i \in \RR^n$. By the form of the group action, which we discuss in \hyperref[Appendix:Poly2]{Appendix B.1}, we know that $xP$ is of the form
			\begin{equation*}
				xP = \big(f_1(x,P),...,f_n(x,P)\big)^\mathrm{T}
			\end{equation*}
			with $f_i$ polynomials of a special form, namely
			\begin{equation*}
				f_i(x,P) = x_i + P_i +\sum_{k=1}^n\sum_{\substack{\alpha_1,...,\alpha_n \in \NN \\ \sum \alpha_i \neq 0}} c_{k,\alpha_1,...,\alpha_n} P_k x_1^{\alpha_1}...x_n^{\alpha_n}.
			\end{equation*}
			The direction vectors are the ones from $P$ plus some deviation which depends on $x$. If $x$ gets smaller the two planes are getting closer to being parallel. 
		\end{proof}
	\end{Lemma}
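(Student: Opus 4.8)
\medskip

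\textit{Sketch of the intended argument.} The plan is to use the non-crooked hypothesis in a stronger form than just ``$xP$ is a hyperplane'': in an abelian or two-step nilpotent homogeneous Lie group the left translation $L_x\colon y\mapsto x\ast y$ is actually an \emph{affine} map of $\RR^{\dim(H)}$, and its linear part is a fixed linear map depending on $x$ but not on the point at which one translates. Indeed, by \cref{Thm:NonCrooked} all iterated brackets of length $\ge 3$ vanish, so the BCH formula reduces to $x\ast y = x + y + \tfrac12[x,y]$; hence $L_x(y) = x + A_x(y)$ with $A_x := \mathrm{Id} + \tfrac12\ad_x$ linear. Note $A_x$ is invertible, with $A_x^{-1} = A_{x^{-1}} = \mathrm{Id} - \tfrac12\ad_x$ (using $\ad_x^2 = 0$). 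This is the key step, since it reduces the whole question to the linear maps $A_x$, uniformly in $P$: if $P = a + V$ with $V$ the $(\dim(H)-1)$-dimensional direction space of $P$, then $xP = L_x(a) + A_x(V)$, so the direction space of $xP$ is $A_x(V)$, independently of the base point $a$.

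First I would make this reduction precise. Since $\sphericalangle(xP,P)$ depends only on the direction spaces, and since a unit normal of $A_x(V)$ is $A_x^{-\mathrm{T}} n_P / \|A_x^{-\mathrm{T}}n_P\|$, where $n_P$ is a unit normal of $V$, we get
$\sphericalangle(xP,P) = \cos^{-1}\bigl(|\langle A_x^{-\mathrm{T}}n_P, n_P\rangle| \,/\, \|A_x^{-\mathrm{T}}n_P\|\bigr)$.
It therefore suffices to show that $A_x \to \mathrm{Id}$ in operator norm as $|x|_H \to 0$, with a rate: there is $\eta(r)\to 0$ with $\|A_x - \mathrm{Id}\|_{\mathrm{op}} \le \eta(r)$ for all $x \in B_r(e)$. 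Since $\ad_x$ is linear in the Euclidean coordinates of $x$ and, by \cref{Lem:HomLieTop}, the homogeneous-norm topology agrees with the Euclidean one near $e$, such an $\eta$ exists (the same elementary continuity as in \cref{Cor:ConBall}). A short expansion of $A_x^{-\mathrm{T}} = \mathrm{Id} + F$ with $\|F\| \le \eta(r)/(1-\eta(r))$ then yields $|\langle A_x^{-\mathrm{T}}n_P, n_P\rangle| / \|A_x^{-\mathrm{T}}n_P\| \ge 1 - 2\eta(r)$ for every unit vector $n_P$, as soon as $\eta(r) < \tfrac12$. Hence one may take $\beta(r) := \cos^{-1}\bigl(1 - 2\eta(r)\bigr)$ for $r$ small and $\beta(r) := \pi/2$ otherwise, and $\beta(r)\to 0$ as $r\to 0$.

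I expect the only genuine obstacle to be the \emph{uniformity over all hyperplanes} $P$, in particular over hyperplanes far from the origin; the point of the argument above is that this dissolves the moment one notices that $L_x$ acts affinely, so that $P$ enters only through its normal direction and the translation part of $L_x$ plays no role. In the abelian case $A_x = \mathrm{Id}$ and there is nothing to prove; the two-step case is exactly where the $\ad$-term appears and where \cref{Thm:NonCrooked} enters. (For the eventual use in \cref{Prop:GoodPair} one in fact only needs the estimate for the finitely many fixed hyperplanes $c_W p_i^{-1}P_i$ with $x$ ranging over a fixed ball $B_k(e)$, where plain continuity of $x \mapsto (\text{normal of } xP)$ on $\overline{B_k(e)}$ already suffices; but the affine description gives the stated uniform statement at no extra cost.)
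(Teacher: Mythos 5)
Your proof is correct and is, if anything, sharper and more transparent than the paper's. The paper appeals to the coordinate form of the group law from Appendix B.1, notes that each coordinate of $x\ast y$ is $x_i + y_i$ plus terms linear in $y$ with $x$-dependent coefficients, and then concludes informally that the direction space of $xP$ is a small perturbation of that of $P$. Your argument makes the same underlying point but in a coordinate-free and fully quantified way: in the (at most) two-step case BCH collapses to $x\ast y = x + y + \tfrac12[x,y]$, so $L_x$ is affine with linear part $A_x = \mathrm{Id} + \tfrac12\ad_x$, and since $\ad_x^2 = 0$ one even has the exact inverse $A_x^{-1} = \mathrm{Id} - \tfrac12\ad_x$ (so your Neumann-series bound $\|F\| \le \eta(r)/(1-\eta(r))$ can be replaced by the cleaner $\|F\| = \|\tfrac12\ad_x^{\mathrm T}\| \le \eta(r)$, but this is cosmetic). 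The pay-off of your phrasing is exactly the point the paper leaves implicit: the direction space of $xP$ is $A_x(V)$ regardless of where $P$ sits, so the estimate is automatically uniform over all hyperplanes, and one gets an explicit $\beta(r) = \cos^{-1}(1-2\eta(r))$. Both routes rest on the same non-crooked input; yours just extracts the affine structure cleanly rather than reading it off from the shape of the polynomials.
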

	
	\begin{Corollary}\label{Cor:Angle}
		For all $r>0$ there exists $\beta(r)$, with $\beta(r) \to 0$ for $r \to 0$, such that for all $x, y \in B_r(e) \subset H$ and any hyperplane $P$ we have $\sphericalangle(x P, y P) \leq 2\beta(r)$.
	\end{Corollary}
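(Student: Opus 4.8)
The plan is to obtain \cref{Cor:Angle} immediately from \cref{Lem:Angle} by means of a triangle inequality for the angle functional $\sphericalangle$. The only point that genuinely requires a comment is that $\sphericalangle$ obeys a triangle inequality; everything else is a one-line estimate.

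First I would observe that $\sphericalangle$ is the pullback of a genuine metric. A hyperplane $P \subset \RR^d$ is determined by its normal direction, that is, by the unordered pair $\{n_P, -n_P\}$ of unit vectors, and $\sphericalangle(P,Q) = \cos^{-1}(\vert \langle n_P, n_Q\rangle\vert)$ is exactly the standard geodesic distance on the real projective space of lines through the origin in $\RR^d$ (equivalently $\min(\theta, \pi - \theta)$, where $\theta = \angle(n_P, n_Q)$ is the spherical distance between unit representatives). This being a well known metric, we have in particular
\[
\sphericalangle(P_1, P_3) \le \sphericalangle(P_1, P_2) + \sphericalangle(P_2, P_3)
\]
for any three hyperplanes $P_1, P_2, P_3 \subset \RR^d$.

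Then, fixing $r > 0$, a hyperplane $P$, and $x, y \in B_r(e) \subset H$, I would note that $xP$ and $yP$ are again hyperplanes because $H$ is non-crooked, apply the triangle inequality to the triple $(xP, P, yP)$ to get
\[
\sphericalangle(xP, yP) \le \sphericalangle(xP, P) + \sphericalangle(P, yP),
\]
and bound each summand by $\beta(r)$ using \cref{Lem:Angle} (applied to $x \in B_r(e)$ and to $y \in B_r(e)$ in turn, with the same function $\beta$). Since $\beta(r) \to 0$ as $r \to 0$ by that lemma, we conclude $\sphericalangle(xP, yP) \le 2\beta(r)$, as desired. The main, and very mild, obstacle is exactly the triangle inequality for $\sphericalangle$, which is handled by the identification with the projective metric above; the remainder of the argument is automatic.
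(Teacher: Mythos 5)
Your proposal is correct and is essentially the argument the paper intends (the corollary is stated without proof, and the factor $2$ makes clear that a triangle inequality applied to $(xP, P, yP)$ together with \cref{Lem:Angle} is what is meant). Your additional remark that $\sphericalangle$ is the geodesic metric on real projective space, and hence satisfies the triangle inequality, is a useful justification of the step the paper leaves implicit.
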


	\begin{proof}[Proof of \cref{Prop:GoodPair}]
		By \cref{Cor:ConBall} there exists an upper bound $b_1$ on $h$ such that for all $a\in B_{O_W}(e)$, $x \in B_{2h}(e)$: $\vert a x a^{-1}\vert_H \leq F_W$. Set $h':=\min\{b_1, \frac{I_W}{2}\}$ and set $k':=\frac{h'}{2}$ then the conditions i), ii) and iii) from \cref{Def:SmallBall} are fulfilled.\par 
		By \cref{Lem:CutsFully} for any $i \in \{1,...,N\}$ and all $s \in B_{k'}(c_Wp_i^{-1})$ we have $sP_i$ cuts $B_{h'}(c_W)$ fully, this also holds for all $h \leq h'$.\par 
		Now assume that there is a cut which is full but not all-round, therefore 
		\begin{equation*}
			s P_i^+ \cap B_{h'}(c_W) \neq sW \cap B_{h'}(c_W).
		\end{equation*}
		To be more precise we have $\ sW \cap B_{h'}(c_W) \subsetneq s P_i^+ \cap B_{h'}(c_W)$ since $sW \subset sP_i^+$. Let 
		\begin{equation*} 
			b_{s}^i:= \inf\{r \in \RR \mid  \exists x \in B_r(c_W) : x \in s P_i^+, x \notin sW\}.
		\end{equation*}
		We see that $b_s^i \neq 0$ since $sW$ is a polytope with non-empty interior. Now set 
		\begin{equation*} 
			h:=\min\big\{h', \inf_{s\in B_{k'}(c_Wp_i^{-1})}\{b_s^i\}\big\}
		\end{equation*}
		and $k=\frac{h}{2}$. The last thing to observe now is that the infimum over the $b_s^i$ is not zero. If it were zero this would mean that the polytope $sW$ could become arbitrary thin such that only an even smaller ball would fit in. But this can not be the case since we have seen that we can only rotate the bounding hyperplanes only by a small amount.
	\end{proof}
	
	\begin{Convention}
		From now on let $(k,h)$ be a good pair.
	\end{Convention}
	
	Observe that $U_i \subset WW^{-1}$ for all $i\in \{1,...,N\}$. Further notice that we operate differently on the different hyperplanes which bound $W$, the $U_i$ may overlap but they are not equal. Additionally we have chosen $B_h(c_W)$ so that for each of the hyperplanes it does not make a difference if we operate on the face $\partial_i W$ or on the hyperplane $P_i$. \par \medskip
	
	Now we will reconsider the dependence of the growing parameter $r$ and the lattice $\Gamma$.
	
	\begin{Definition}
		Set $U_i(r):= \pi_H((B_r^G(e) \times U_i)\cap \Gamma)$ which is a finite subset of $U_i$.
	\end{Definition}
	
	\begin{Remark}
		Observe that $U_i(r)$ is a subset of the $r$-slab $\cS_r$, since $U_i \subset WW^{-1}$.
	\end{Remark}
	
	\begin{Proposition}\label{Lem:CutsAllround}
		The number of connected components of $ B_h(c_W) \setminus \bigcup_{i=1}^N \bigcup_{s \in U_i(r)} s \partial_i W$
		is a lower bound of the number of acceptance domains $\vert A_r^H \vert$, i.e.
		\begin{equation*}
			\# \pi_0 \left( B_h(c_W) \setminus \bigcup_{i=1}^N \bigcup_{s \in U_i(r)} s \partial_i W \right) \leq \vert A_r^H \vert.
		\end{equation*}
		\begin{proof}
			Recall that a pre-acceptance domain $A_r^H(\lambda)$ is contained in an acceptance domain $W_r(\lambda)$. Let $C$ be a connected component of $B_h(c_W) \setminus \bigcup_{i=1}^N \bigcup_{s \in U_i(r)} s \partial_i W$. By \cref{Lem:CutsFully} we can replace the faces by the hyperplanes without changing the connected components in $B_h(c_W)$, so we consider ${B_h(c_W) \setminus \bigcup_{i=1}^N \bigcup_{s \in U_i(r)} s P_i}$.\par 
			We show that if an acceptance domain intersects a connected component of $B_h(c_W) \setminus \bigcup_{i=1}^N \bigcup_{s \in U_i(r)} s P_i$ it is fully contained in it. Let $C'$ be another connected component of $B_h(c_W) \setminus \bigcup_{i=1}^N \bigcup_{s \in U_i(r)} s P_i$ and assume that $C \cap W_r(\lambda) \neq \emptyset \neq C' \cap W_r(\lambda)$. Between $C$ and $C'$ is a hyperplane $s P_i$ for some $i\in \{1,...,N\}$ and $s \in U_i(r)$. Therefore $C \subset s \mathring{P^+}$ and $C'\subset s \mathring{P^-}$ or the other way around. And since the cut $s\partial_i W$ is all-round we get that $C \subset s \mathring{W}$ and $C' \subset s W^\mathrm{C}$ or the other way around. But either $W_r(\lambda) \subset s\mathring{W}$ or $W_r(\lambda) \subset sW^\mathrm{C}$, a contradiction.	
		\end{proof}
	\end{Proposition}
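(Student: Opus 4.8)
The plan is to compare the decomposition of $B_h(c_W)$ induced by the finite hyperplane family $\mathcal{A}:=\bigcup_{i=1}^{N}\bigcup_{s\in U_i(r)}sP_i$ with the tiling of $\overline{W}$ by closed acceptance domains provided by \cref{Thm:AcceptanceDomains}, and to build an injection from the connected components of $B_h(c_W)\setminus\mathcal{A}$ into $A_r^H$. As a preliminary reduction, since $U_i(r)\subseteq U_i$ and $(k,h)$ is a good pair, \cref{Lem:CutsFully} yields $s\partial_i W\cap B_h(c_W)=sP_i\cap B_h(c_W)$ for every $i$ and every $s\in U_i(r)$; hence $B_h(c_W)\setminus\bigcup_{i,s}s\partial_i W=B_h(c_W)\setminus\mathcal{A}$ and it suffices to bound $\#\pi_0\bigl(B_h(c_W)\setminus\mathcal{A}\bigr)$. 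I will use three facts: there are only finitely many acceptance domains (finite local complexity of the model set); the closures $\overline{W_r(\lambda)}$ cover $\overline{W}\supseteq\overline{B_h(c_W)}$ because $h<I_W$; and for each $\lambda$ and each $\mu\in\cS_r=\cS_r(\lambda)\cup\cS_r^{\mathrm{C}}(\lambda)$ one has $W_r(\lambda)\subseteq\mu\mathring{W}$ or $W_r(\lambda)\subseteq\mu W^{\mathrm{C}}$, straight from the definition of $W_r(\lambda)$.

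The heart of the matter is the claim that every acceptance domain $W_r(\lambda)$ meets at most one connected component of $B_h(c_W)\setminus\mathcal{A}$. Fix $i$ and $s\in U_i(r)$. Since $s\in\cS_r$, the third fact gives $W_r(\lambda)\subseteq s\mathring{W}$ or $W_r(\lambda)\subseteq sW^{\mathrm{C}}$. On the other hand condition iv) of \cref{Def:SmallBall} (the all-round property), namely $sP_i^+\cap B_h(c_W)=sW\cap B_h(c_W)$, forces $s\mathring{W}$ and $sW^{\mathrm{C}}$ to meet $B_h(c_W)$ exactly in the two open sides of the hyperplane $sP_i$, so in either case $W_r(\lambda)\cap B_h(c_W)$ misses $sP_i$ and sits on a single side of it. Letting $i$ and $s$ run, $W_r(\lambda)\cap B_h(c_W)$ lies in one sign cell of the arrangement $\mathcal{A}$ inside $B_h(c_W)$; as $\mathcal{A}$ is finite these cells are open, disjoint and cover $B_h(c_W)\setminus\mathcal{A}$, hence are clopen in it, and once one knows each cell is connected it is a single connected component, which proves the claim. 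Conversely each component $C$ of $B_h(c_W)\setminus\mathcal{A}$ is a non-empty open subset of $\bigcup_\lambda\overline{W_r(\lambda)}$, so — the finitely many $\partial W_r(\lambda)$ being nowhere dense — $C$ meets at least one open acceptance domain. A count of incident pairs $(C,W_r(\lambda))$, each component meeting at least one domain and each domain meeting at most one component, gives $\#\pi_0\bigl(B_h(c_W)\setminus\mathcal{A}\bigr)\le\vert A_r^H\vert$.

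The step I expect to be the main obstacle is the passage from ``single sign cell'' to ``single connected component'', i.e.\ the connectedness of the regions of $B_h(c_W)\setminus\mathcal{A}$; this is exactly where the good-pair conditions on the small ball $B_h(c_W)$ — that the rotated hyperplanes $sP_i$, $s\in U_i(r)$, cut it fully and all-round — have to be exploited beyond mere bookkeeping. If connectedness of the cells is not directly at hand, the fallback is a pairwise argument: given distinct components $C,C'$ with $C\cap W_r(\lambda)\neq\emptyset\neq C'\cap W_r(\lambda)$, separate $C$ and $C'$ inside $B_h(c_W)$ by a single hyperplane $sP_i$ with $s\in U_i(r)$, put $C$ and $C'$ on opposite open sides, translate this through the all-round property into $C\subseteq s\mathring{W}$ and $C'\subseteq sW^{\mathrm{C}}$ (or the reverse), and contradict the fact that $W_r(\lambda)$ lies wholly in $s\mathring{W}$ or wholly in $sW^{\mathrm{C}}$. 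The remaining parts are straightforward consequences of the tiling statement of \cref{Thm:AcceptanceDomains}, the inclusion $A_r^H(\lambda)\subseteq W_r(\lambda)$, and $U_i(r)\subseteq\cS_r$.
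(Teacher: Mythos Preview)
Your proposal is correct and your fallback pairwise argument is exactly the paper's proof: take two components $C,C'$ both meeting $W_r(\lambda)$, separate them by some $sP_i$ with $s\in U_i(r)$, upgrade via the all-round property to $C\subseteq s\mathring{W}$ and $C'\subseteq sW^{\mathrm{C}}$ (or vice versa), and contradict the fact that $W_r(\lambda)\subseteq s\mathring{W}$ or $W_r(\lambda)\subseteq sW^{\mathrm{C}}$. The connectedness of sign cells that worries you is handled by the paper only tacitly --- it later treats $B_h(c_W)$ as convex, so distinct components have distinct sign vectors and a separating hyperplane exists --- and your explicit verification that every component meets some open $W_r(\lambda)$ is a detail the paper leaves to the reader.
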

	
	It remains to find a combinatorial argument for counting the connected components of 
	\begin{equation*}
		B_h(c_W) \setminus \bigcup_{i=1}^N \bigcup_{s \in U_i} s \partial_i W=B_h(c_W) \setminus \bigcup_{i=1}^N \bigcup_{s \in U_i} s P_i,
	\end{equation*}
	which yields a lower bound by \cref{Lem:CutsAllround}. This will be done in the next section.\par \medskip
	
	In the rest of the section we will prove the following proposition, which gives us the tools for the combinatorics in the next section.
	
	\begin{Proposition}\label{Prop:ToolComb}
		There exists a good pair $(k,h)$ such that:
		\begin{enumerate}
			\item For all $I \subset \{1,....,N\}$ with $\vert I \vert = \dim(H)$ and all $u_{1} \in U_{i_1},...,u_{{\dim(H)}} \in U_{i_{\dim(H)}}$ we get
			\begin{equation*}
				u_{1} P_{i_1} \cap ... \cap u_{{\dim(H)}} P_{i_{\dim(H)}} = \{s\}, \text{where } s \in B_h(c_W).
			\end{equation*}
			\item For every constant $c>0$ and all $s \in H$, there is a $r_0$ such that for all $r > r_0$ we get that
			\begin{equation*}
				\big\vert \{u \in U_i(r) \mid s \in u P_i\}\big\vert \leq c \vert U_i(r) \vert.
			\end{equation*}
		\end{enumerate}
	\end{Proposition}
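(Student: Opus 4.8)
The plan is to produce the required good pair by starting from any good pair $(k_0,h)$ given by \cref{Prop:GoodPair} and then shrinking $k\le k_0$; by the Remark after \cref{Def:SmallBall} this keeps the pair good, and since part (2) will hold for every good pair, only part (1) forces $k$ to be small. For part (1) I would argue by perturbation: the fixed family $\cF=\{P_1,\dots,P_{\dim(H)}\}$ was chosen so that the prototype hyperplanes $c_Wp_i^{-1}P_i$ meet in the single point $c_W$, i.e.\ their normalised normals $n_1^0,\dots,n_{\dim(H)}^0$ form a basis of $\RR^{\dim(H)}$, so $\lvert\det(n_1^0,\dots,n_{\dim(H)}^0)\rvert\ge\sigma_0>0$. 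Writing $u_l=a_l\cdot c_Wp_{i_l}^{-1}$ with $a_l\in B_k(e)$ one has $u_lP_{i_l}=a_l(c_Wp_{i_l}^{-1}P_{i_l})$, so \cref{Lem:Angle} lets us pick the normal $n_l$ of $u_lP_{i_l}$ with $\lvert n_l-n_l^0\rvert\le\beta(k)$, and multilinearity of the determinant gives $\lvert\det(n_1,\dots,n_{\dim(H)})\rvert\ge\sigma_0-C_{\dim(H)}\beta(k)>0$ for $k$ small, so the hyperplanes $u_1P_{i_1},\dots,u_{\dim(H)}P_{i_{\dim(H)}}$ are transversal and meet in one point $s$. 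To place $s$ in $B_h(c_W)$ I would use that each $u_lP_{i_l}$ meets $B_k(c_W)$ (\cref{Cor:IntersectB_k}); since the homogeneous and Euclidean metrics of $\RR^{\dim(H)}$ are comparable on the relevant compact set (\cref{Lem:HomLieTop}), each such hyperplane lies within Euclidean distance $\omega(k)\to0$ of $c_W$, and Cramer's rule together with the determinant bound gives $\lvert s-c_W\rvert_{\mathrm{eucl}}\le C\,\omega(k)/\sigma_0$, so $d_H(s,c_W)\to0$ as $k\to0$; a final shrink of $k$ makes $d_H(s,c_W)<h$ for all the relevant $s$.

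For part (2) I would first make the set $\{u\mid s\in uP_i\}$ explicit. As $H$ is non-crooked it is at most two-step nilpotent (\cref{Thm:NonCrooked}), so the BCH product is $X*Y=X+Y+\tfrac12[X,Y]$ and
\begin{equation*}
  s\in uP_i\iff u^{-1}*s\in P_i\iff s-\bigl(\mathrm{id}-\tfrac12\ad_s\bigr)(u)\in P_i .
\end{equation*}
The map $\mathrm{id}-\tfrac12\ad_s$ is invertible (inverse $\mathrm{id}+\tfrac12\ad_s$, because $\ad_s^2=0$ in a two-step group), so $L_s^i:=\{u\in H\mid s\in uP_i\}$ is the preimage of the affine hyperplane $P_i$ under an invertible affine map, hence itself an affine hyperplane of $H\cong\RR^{\dim(H)}$; in particular $\{u\in U_i(r)\mid s\in uP_i\}\subset U_i\cap L_s^i$ has $\mu_H$-measure zero. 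I would then thicken it to the open bounded set $O_{s,\epsilon}:=\{x\in U_i\mid d_H(x,L_s^i)<\epsilon\}$, which contains $U_i\cap L_s^i$ and satisfies $\mu_H(O_{s,\epsilon})\to0$ as $\epsilon\to0$, uniformly for $s$ in a fixed compact set.

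To finish I would apply the ergodic theorem \cref{Thm:Nevo} to the group $G\times H$, the lattice $\Gamma$ and the sets $B_r^G(e)\times O$ for $O\subset H$ open bounded — these are F\o lner in $(G\times H)/\Gamma$ and well-rounded (after covering $O$ by finitely many balls) by the arguments of \cref{SubSec:Ergodic} — to get $\limsup_{r\to\infty}\lvert(B_r^G(e)\times O)\cap\Gamma\rvert/\mu_G(B_r^G(e))\le\mu_H(O)$, while \cref{Prop:Growth} gives $\lvert U_i(r)\rvert\gg\mu_G(B_r^G(e))$. Then for any $c>0$, choosing $\epsilon$ with $\mu_H(O_{s,\epsilon})$ small enough forces $\lvert\{u\in U_i(r)\mid s\in uP_i\}\rvert\le\lvert(B_r^G(e)\times O_{s,\epsilon})\cap\Gamma\rvert\le c\,\lvert U_i(r)\rvert$ for all large $r$, uniformly for $s$ in a compact set, which is what the combinatorics of \cref{Sec:Combinatorics} will use. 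The hard part is exactly this last estimate: the bound must be $o(\lvert U_i(r)\rvert)$ for every $c$, and the elementary lattice count of \cref{Sec:Lattice} cannot deliver it, since its error is governed by the uniform‑discreteness constant of $\Gamma$ and hence cannot resolve a slab thinner than that constant — one genuinely needs the equidistribution of $\Gamma$‑translates in the compact homogeneous space $(G\times H)/\Gamma$, combined with the two‑step‑nilpotency of $H$ which makes $\{u\mid s\in uP_i\}$ flat. Part (1), by contrast, is comparatively routine: the only care needed is in tracking how transversality degrades under the controlled rotations of \cref{Lem:Angle} and in passing between the homogeneous and Euclidean metrics.
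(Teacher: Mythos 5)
Your proof is correct and, for part (a), takes a cleaner and more quantitative route than the paper, while part (b) follows essentially the same strategy. For part (a), the paper (\cref{Lem:IntersectionPoint} and \cref{Cor:IntersectionPoint}) argues by stepwise replacement: it swaps one prototype hyperplane $c_W p_i^{-1} P_i$ at a time for $u_i P_i$, tracks the drift $d(x_l,x_{l-1})<\epsilon_l(k)$ of the intersection point at each step, and establishes transversality by discussing which direction vectors can remain parallel to all but one of the slightly rotated hyperplanes. Your determinant-and-Cramer argument compresses this into a single estimate $\lvert\det(n_1,\dots,n_{\dim H})\rvert\ge\sigma_0-C\,\beta(k)$ together with a simultaneous bound on $\lvert s-c_W\rvert$, which makes the uniformity over all tuples $(u_1,\dots,u_{\dim H})$ transparent and sidesteps the paper's somewhat delicate parallelism bookkeeping. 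For part (b), the paper (\cref{Lem:BoundInsidenz}) likewise covers $\{u\in U_i\mid s\in uP_i\}=U_i\cap(P_is^{-1})^{-1}$ by finitely many $\epsilon$-balls and applies \cref{Thm:Nevo} ball by ball; what you add is the observation, via the two-step BCH formula and $\ad_s^2=0$, that $L_s^i=\{u\mid s\in uP_i\}$ is literally an affine hyperplane, which explains why the covering-number estimate behaves as needed and which the paper leaves implicit. Your closing remark — that an equidistribution input such as \cref{Thm:Nevo} is genuinely required because the elementary count of \cref{Prop:Growth} cannot resolve a slab thinner than the discreteness constant of $\Gamma$ — correctly identifies the real content of this step.
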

	
	\begin{Lemma}\label{Lem:IntersectionPoint}
		For $i,j \in \{1,...,\dim(H)\}$, $i \neq j$, there exists a good pair $(k,h)$ such that for all $u \in U_i=B_k(c_Wp_i^{-1}), v \in U_j$ we have $u P_i$ and $vP_j$ are not almost parallel with respect to $B_h(c_W)$.  
		\begin{proof}
			Fix some $i,j \in \{1,...,N\}$ with $i \neq j$. By \cref{Cor:IntersectB_k} all the $u P_i$, $v P_j$ with $u \in U_i$, $v \in U_j$  intersect $B_k(c_W)$.\par 
			Further we can control the angle between the two hyperplanes by \cref{Lem:Angle} so that for all $ u \in U_i, v \in U_j $:
			\begin{equation*}
				\sphericalangle(u P_i, v P_j) \geq \sphericalangle(P_i, P_j) - \sphericalangle(u P_i, P_j) - \sphericalangle(P_i, v P_j)\geq  \alpha_{ij} - 2 \beta(k),
			\end{equation*}
			where $\beta(k)$ is from \cref{Lem:Angle}. We can choose $k$ so small such that $0 < \alpha_{ij} - 2 \beta(k) < \frac{\pi}{2}$, this means that the hyperplanes can not be parallel so they intersect somewhere. For two hyperplanes which intersect the same ball of radius $k$ and which intersect in at least a given angle there is a bound for the intersection to the center point of the ball
			\begin{equation*}
				c(k):= k \left( 1+\frac{1}{\tan\left(\frac{\alpha_{ij}-2 \beta(k)}{2}\right)}\right).
			\end{equation*}
			The idea how to establish this bound is to consider the space which is orthogonal to the intersection of $u P_i$ and $v P_j$ and contains $c_W$. Then one can argue in a two-dimensional plane.\par
			The bound $c(k)$ goes to zero if $k$ goes to zero, so we can choose $k$ so small that $c(k) < h$. Therefore the two planes intersect inside $B_h(c_W)$.
		\end{proof}
	\end{Lemma}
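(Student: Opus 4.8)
The plan is to show that, for a suitably small $k$, the two hyperplanes $uP_i$ and $vP_j$ are genuinely transverse and that their codimension-two intersection passes so close to $c_W$ that it must enter $B_h(c_W)$. I would start from a good pair $(k',h)$, which exists by \cref{Prop:GoodPair}, and then shrink the first coordinate; by the Remark following \cref{Def:SmallBall} any $(k,h)$ with $0<k<k'$ is again a good pair, so the final object produced will indeed be a good pair. First record that, since $i,j$ both lie in the fixed family $\cF$, the prototype hyperplanes $c_Wp_i^{-1}P_i$ and $c_Wp_j^{-1}P_j$ meet transversally at $c_W$; in particular they are not parallel and the angle $\alpha_{ij}=\sphericalangle(c_Wp_i^{-1}P_i,c_Wp_j^{-1}P_j)$ of \cref{Def:Angle} is strictly positive. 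By \cref{Cor:IntersectB_k} every $uP_i$ with $u\in U_i$ and every $vP_j$ with $v\in U_j$ already meets $B_k(c_W)$, so both hyperplanes pass within distance $k$ of the centre. Writing $u=a\,c_Wp_i^{-1}$ with $a\in B_k(e)$ as in \cref{Rem:FixFamily}, the hyperplane $uP_i$ is the image of the prototype under the small element $a$, so \cref{Lem:Angle} gives $\sphericalangle(uP_i,c_Wp_i^{-1}P_i)\le\beta(k)$ and likewise on the $j$-side; the reverse triangle inequality for angles between hyperplanes then yields, uniformly over $u\in U_i$, $v\in U_j$,
\[
\sphericalangle(uP_i,vP_j)\ \ge\ \alpha_{ij}-2\beta(k).
\]
Since $\beta(k)\to 0$ as $k\to 0$, I would first fix $k$ small enough that $\alpha_{ij}-2\beta(k)>0$, so that $uP_i$ and $vP_j$ are never parallel and their intersection is a genuine affine subspace of codimension two.

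The quantitative heart of the argument is to convert this uniform lower bound on the angle into a uniform upper bound on the distance from $c_W$ to $uP_i\cap vP_j$. Here I would pass to the $2$-plane through $c_W$ orthogonal to the subspace $uP_i\cap vP_j$; inside it the two hyperplanes trace out two straight lines, each passing within distance $k$ of $c_W$ and crossing at an angle at least $\alpha_{ij}-2\beta(k)$, and an elementary planar computation bounds the distance of their crossing point to $c_W$ by a quantity of the order
\[
c(k):=k\left(1+\frac{1}{\tan\!\left(\tfrac{\alpha_{ij}-2\beta(k)}{2}\right)}\right).
\]
Because $\tan\!\big(\tfrac{\alpha_{ij}-2\beta(k)}{2}\big)\to\tan(\tfrac{\alpha_{ij}}{2})>0$, we have $c(k)\to 0$ as $k\to 0$, so after shrinking $k$ once more I may assume $c(k)<h$. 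Then $uP_i\cap vP_j$ meets $B_h(c_W)$, i.e. $uP_i$ and $vP_j$ are not almost parallel with respect to $B_h(c_W)$; this holds simultaneously for all $u\in U_i$, $v\in U_j$, which is the assertion.

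The step I expect to be the main obstacle is the planar distance estimate together with its uniformity. One has to check that the reduction to the $2$-plane orthogonal to $uP_i\cap vP_j$ is legitimate here: it is, because by \cref{Lem:HomLieTop} the underlying space of $H$ is just $\RR^{\dim(H)}$ with its Euclidean structure and the angle bound \cref{Lem:Angle} is phrased there. The more delicate point is that the final bound must depend only on $k$ and $\alpha_{ij}$, not on the particular pair $u,v$, so that a single choice of $k$ serves all of $U_i\times U_j$ at once — and this uniformity is precisely what the uniform angle inequality above and the uniform "meets $B_k(c_W)$" statement of \cref{Cor:IntersectB_k} deliver.
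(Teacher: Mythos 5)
Your proposal is correct and follows essentially the same route as the paper: use \cref{Cor:IntersectB_k} to get both hyperplanes meeting $B_k(c_W)$, \cref{Lem:Angle} plus the triangle inequality for angles to get the uniform lower bound $\alpha_{ij}-2\beta(k)$, reduce to a $2$-plane orthogonal to the intersection to obtain the same bound $c(k)$, and shrink $k$ so that $c(k)<h$. You are slightly more careful than the paper in two places — explicitly invoking the remark after \cref{Def:SmallBall} to keep $(k,h)$ a good pair under shrinking, and noting that $\alpha_{ij}>0$ because $i,j$ lie in the fixed family $\cF$ — but these are clarifications of the same argument, not a different one.
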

	
	\begin{Corollary}\label{Cor:IntersectionPoint}
		There exists a good pair $(k,h)$ such that for all $u_i \in U_{i}$ and $i \in \{1,...,\dim(H)\}$ we can find some $x \in B_h(c_W)$ such that:
		\begin{equation*}
			\bigcap_{i=1}^{\dim(H)} u_i P_i = \{x\}.
		\end{equation*}
		\begin{proof}
			By the choice of the family $\cF$ we know that $\bigcap_{i=1}^{\dim(H)} c_W p_{i}^{-1} P_{i}= \{c_W\}$, we will first show that there is a $k_0$ such that for all $0< k \leq k_0$ we also get a zero dimensional intersection inside $B_h(c_W)$ if we replace $c_W p_i^{-1}$ by $u_i \in U_i=B_k(c_Wp_i^{-1})$.\par 
			This intersection behaviour means that if we choose some vector $v \parallel c_Wp_i^{-1}P_i$, then $v \parallel c_Wp_j^{-1}P_j$ can maximally hold for all but one $j$. Since otherwise the intersection of all hyperplanes would end up in a line instead of a point. We have to choose $k_0$ such that for all $i\in \{1,...,\dim(H)\}$ and all $v \parallel u_i P_i$, $u_i \in U_i$, there exists a $j\in \{1,...,\dim(H)\}\setminus\{i\}$ and a $u_j \in U_j$ such that $v \nparallel u_jP_j$. Since operating with an element from $U_i$ only rotates the hyperplane a little it is possible to find such a $k_0$ and than the property also holds for all $k$ smaller than $k_0$.\par 
			Now we have to check that the intersection point also lies inside of $B_h(c_W)$. We do this stepwise. It is clear that $\bigcap_{i=1}^{\dim(H)} c_W p_{i}^{-1} P_{i}= \{c_W\}$ and $c_W \in B_h(c_W)$. Now we change $c_W p_1^{-1}$ to some $u_1 \in U_1$ and consider $u_1 P_1 \cap \bigcap_{i=2}^{\dim(H)} c_W p_{i}^{-1} P_{i} =\{x_1\}$. We already know that $\bigcap_{i=2}^{\dim(H)} c_W p_{i}^{-1} P_{i}$ is a subspace of dimension $1$ and that $u_1 P_1$ intersects this subspace. Since $u_1= a_1 c_W p_1^{-1}$ with $a_1\in B_k(e)$ the plane $u_1 P_1$ is just a small shift, this follows from the form of the group action, which we explained in \cref{Sec:HomLie}, and a small rotation away from $c_W p_1^{-1} P_1$, this follows from \cref{Lem:Angle}. Therefore $d(x_1, c_W) < \epsilon_1(k)$, where $\epsilon_1$ depends on $k$ and goes to zero if $k$ goes to zero. We can iterate this process and get a new solution on each step until we end at $x_d$, $d=\dim(H)$, where we have
			\small\begin{align*}
				d(x_d,c_W) &< d(x_d, x_{d-1})+ d(x_{d-1},x_{d-2})+... + d(x_2,x_1)+d(x_1,c_W) < \sum_{i=1}^d \epsilon_i(k) =:\epsilon(k).
			\end{align*}\normalsize
			So by choosing $k$ such that $\epsilon(k)<h$ we get the claim. This is possible since $\epsilon(k) \to 0$ for $k \to 0$.
		\end{proof}
	\end{Corollary}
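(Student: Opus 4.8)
The plan is to regard $\bigcap_{i=1}^{\dim(H)} u_i P_i$ as a small perturbation of the transversal intersection $\bigcap_{i=1}^{\dim(H)} c_W p_i^{-1}P_i = \{c_W\}$ that was used to define the fixed family $\cF$, and to exploit that transversality --- equivalently, linear independence of the normal vectors --- is an open condition which moreover controls quantitatively how far the intersection point can travel under the perturbation.

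First I would record the relevant nondegeneracy: by the choice of $\cF$ (with the relabeling $\cF = \{P_1,\dots,P_{\dim(H)}\}$) the unit normals $n_1,\dots,n_{\dim(H)}$ of the prototype hyperplanes $c_W p_1^{-1}P_1,\dots,c_W p_{\dim(H)}^{-1}P_{\dim(H)}$ are linearly independent, since otherwise $\bigcap_i c_W p_i^{-1}P_i$ would be positive-dimensional. Writing $u_i = a_i\, c_W p_i^{-1}$ with $a_i\in B_k(e)$, \cref{Lem:Angle} gives $\sphericalangle(u_i P_i, c_W p_i^{-1}P_i)\le\beta(k)$ with $\beta(k)\to 0$ as $k\to 0$, so the unit normal $n_i'$ of $u_i P_i$ satisfies $\vert\langle n_i,n_i'\rangle\vert\ge\cos\beta(k)$ and hence $n_i'\to n_i$ uniformly as $k\to 0$, uniformly over $u_i\in U_i$. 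Since $\det(n_1,\dots,n_{\dim(H)})\ne 0$ and the determinant is continuous, there is $k_0>0$ such that for every $0<k\le k_0$ and every choice $u_i\in U_i$ the perturbed normals $n_1',\dots,n_{\dim(H)}'$ are still linearly independent; consequently $\bigcap_{i=1}^{\dim(H)} u_i P_i$ is a single point $x$. (This refines \cref{Lem:IntersectionPoint}, which only records pairwise non-parallelism inside $B_h(c_W)$.)

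To locate $x$ I would exchange the hyperplanes one at a time. Put $x_0:=c_W$ and, for $j=1,\dots,\dim(H)$, let $x_j$ be the unique point of $u_1 P_1\cap\dots\cap u_j P_j\cap c_W p_{j+1}^{-1}P_{j+1}\cap\dots\cap c_W p_{\dim(H)}^{-1}P_{\dim(H)}$, again a single point for $k\le k_0$ by the same open-condition argument applied to the mixed family. Passing from $x_{j-1}$ to $x_j$ only replaces $c_W p_j^{-1}P_j$ by $u_j P_j = a_j(c_W p_j^{-1}P_j)$; since $d$ is right-invariant, $d(a_j y, y)=\vert a_j\vert < k$ for every $y$, so $u_j P_j$ lies within Hausdorff distance $k$ of $c_W p_j^{-1}P_j$, and by \cref{Lem:Angle} its normal is within angle $\beta(k)$ of the old one. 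Combining this displacement bound with the fact that the coefficient matrix of the linear system defining the intersection point has determinant bounded away from $0$ uniformly for $k\le k_0$, one gets $d(x_j,x_{j-1})\le\epsilon_j(k)$ with $\epsilon_j(k)\to 0$ as $k\to 0$. By the triangle inequality $d(x_{\dim(H)},c_W)\le\sum_{j=1}^{\dim(H)}\epsilon_j(k)=:\epsilon(k)\to 0$, so after shrinking $k$ --- keeping $k\le k_0$ and keeping $(k,h)$ a good pair, which is allowed since shrinking $k$ preserves all the good-pair conditions --- we obtain $\epsilon(k)<h$, whence $x=x_{\dim(H)}\in B_h(c_W)$.

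The main obstacle is the quantitative step $d(x_j,x_{j-1})\le\epsilon_j(k)\to 0$: one has to bound the condition number of the linear system defining the intersection point uniformly over the whole perturbation range $0<k\le k_0$, not merely at the prototype configuration. The angle bound of \cref{Lem:Angle}, together with the openness of the nondegeneracy locus, supplies exactly such a uniform bound after possibly shrinking $k_0$; the remaining ingredients are the triangle inequality and the earlier observation that shrinking $k$ leaves $(k,h)$ a good pair.
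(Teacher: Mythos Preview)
Your proposal is correct and follows essentially the same strategy as the paper's proof: both establish first that the intersection remains a single point for small $k$ (you via linear independence of the perturbed normals and continuity of the determinant, the paper via the equivalent statement about direction vectors and small rotations from \cref{Lem:Angle}), and then both locate that point inside $B_h(c_W)$ by the identical stepwise replacement $x_0=c_W,\,x_1,\dots,x_{\dim(H)}$ together with the triangle inequality and $\epsilon(k)\to 0$. Your version is somewhat more explicit about the uniformity (the determinant/condition-number bound over the whole perturbation range), which is exactly the point the paper leaves implicit when it passes from ``small shift and small rotation'' to $d(x_1,c_W)<\epsilon_1(k)$.
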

	
	The corollary tells us that all intersections result in a single point in $B_h(c_W)$, but it is not clear that different choices of $u_j$ result in different intersection points. This is a major difference to the Euclidean case, since here the action is just translation, so by acting on a hyperplane we get a parallel hyperplane, which then either is still the same hyperplane or does not intersect the original hyperplane at all.\par \medskip 
	
	To proof part (b) of \cref{Prop:ToolComb} we need \cref{Thm:Nevo}, we can use it by the discussion in \cref{SubSec:Ergodic}.
	
	\begin{Lemma}\label{Lem:BoundInsidenz}
		Consider a family $U_i(r) \cdot P_i$. For any constant $c >0$  and all $s \in H$ there is a $r_0$ such that for all $r \geq r_0$ we get that
		\begin{equation*}
			\big\vert\{u \in U_i(r) \mid s \in u P_i  \}\big\vert \leq c \cdot \vert U_i(r) \vert.
		\end{equation*}
		\begin{proof}
			Let $u \in U_i(r)$ such that $s\in u P_i$. This implies that $u^{-1} \in P_i s^{-1}$, so $u \in U_i(r) \cap (P_i s^{-1})^{-1}$. So the question is how many elements are in $U_i(r) \cap (P_i s^{-1})^{-1}$ compared to the number of elements in $U_i(r)$. To get an estimate via the Haar-measure we have to thicken $(P_i s^{-1})^{-1}$ since it is a subset of lower dimension. We consider an $\epsilon$-strip around the set, so we choose a finite set $A(\epsilon) \subset (P_is^{-1})^{-1}\cap U_i$ such that
			\begin{equation*}
				U_i \cap (P_i s^{-1})^{-1} \subset \bigcup_{p \in A(\epsilon)}B_\epsilon(p)
			\end{equation*}
			and further let $S_\epsilon:= U_i(r) \cap \cup_{p \in A(\epsilon)} B_\epsilon(p)$. We have seen that we can use \cref{Thm:Nevo} so for every $\delta >0$ and $r$ large enough
			\begin{align*}
				\delta &\geq \big\vert \vert U_i(r)\vert - \mu_{G \times H}(B_r(e) \times U_i) \big\vert = \big\vert \vert U_i(r)\vert - \mu_{G}(B_r(e)) \mu_{H}(U_i) \big\vert\\
				&= \big\vert \vert U_i(r)\vert - r^{\homdim(G)}\mu_{G}(B_1(e)) \mu_{H}(U_i)\big\vert.
			\end{align*}
			Since $S_\epsilon$ is a finite union of balls we can use the same argument for all the balls simultaneously and get for $\delta >0$ that
			\small\begin{align*}
				\lim\limits_{r \to \infty} \frac{\vert U_i(r) \cap (P_i s^{-1})^{-1}\vert }{\vert U_i(r)\vert} &< \lim\limits_{r \to \infty}\frac{\vert S_\epsilon \vert}{\vert U_i(r)\vert}
				= \frac{\sum_{p \in A(\epsilon)} r^{\homdim(G)}\mu_{G}(B_1(e))\mu_{H}(B_\epsilon(p))}{r^{\homdim(G)}\mu_{G}(B_1(e))\mu_{H}(U_i)}\\
				&= \frac{\vert A(\epsilon) \vert \cdot \mu_{H}(B_\epsilon(e))}{\mu_{H}(U_i)} \xrightarrow{\epsilon \to 0} 0.
			\end{align*}\normalsize
			
		\end{proof}
	\end{Lemma}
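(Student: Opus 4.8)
The overall idea is to play off two facts against each other: as $r\to\infty$ the lattice points making up $U_i(r)$ equidistribute in the fixed open ball $U_i$, whereas the condition ``$s\in uP_i$'' confines $u$ to a Haar--null subset of $H$, so only a vanishing fraction of $U_i(r)$ can meet it. Concretely, I would first reformulate the condition. Since $uP_i$ is the left translate of $P_i$ by $u$, one has $s\in uP_i\Leftrightarrow u^{-1}s\in P_i\Leftrightarrow u^{-1}\in P_is^{-1}\Leftrightarrow u\in(P_is^{-1})^{-1}=:M$, so the set to be estimated is exactly $U_i(r)\cap M$. As $P_i$ is the image under $\exp$ of an affine hyperplane it is a smooth embedded hypersurface in $H\cong\RR^{\dim(H)}$; right translation and inversion being diffeomorphisms of $H$, the set $M$ is again a smooth embedded hypersurface. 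In particular $\mu_H(M)=0$, and $M\cap\overline{U_i}$ is compact, being a closed subset of $\overline{B_k(c_Wp_i^{-1})}$.

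Next I would invoke the equidistribution of $U_i(r)$. By the discussion in \cref{SubSec:Ergodic} --- well-roundedness of ball families from \cref{Lem:WellRoundedBalls} and the F\o lner property verified there --- the family $\{B_r^G(e)\times B\}_{r>0}$ in $(G\times H)/\Gamma$ meets the hypotheses of \cref{Thm:Nevo} for every ball $B\subset H$, of arbitrary centre. Using $\mu_{G\times H}(B_r^G(e)\times B)=\mu_G(B_r^G(e))\,\mu_H(B)$ and that $\pi_H$ restricted to $\Gamma$ is injective (so $U_i(r)$, and the analogous set with $U_i$ replaced by a ball, is in bijection with the corresponding lattice points), \cref{Thm:Nevo} gives
\begin{equation*}
	\lim_{r\to\infty}\frac{\big\vert(B_r^G(e)\times B)\cap\Gamma\big\vert}{\mu_G(B_r^G(e))\,\mu_H(B)}=1 ,
\end{equation*}
and in particular $\lim_{r\to\infty}|U_i(r)|/(\mu_G(B_r^G(e))\,\mu_H(U_i))=1$. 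Here \cref{Prop:Growth} alone would be too lossy, since its shifts $k_1,k_2$ spoil the ratio; it is the exact constant supplied by \cref{Thm:Nevo} that makes the argument work.

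The conclusion then follows by a covering estimate. Fix $c>0$. Since $M\cap\overline{U_i}$ is a compact piece of a $C^1$ hypersurface, it has finite $(\dim(H)-1)$-dimensional measure, so for $\epsilon$ small enough it can be covered by finitely many balls $B_\epsilon(p_1),\dots,B_\epsilon(p_m)$ with $\sum_{j=1}^m\mu_H(B_\epsilon(p_j))<\tfrac{c}{2}\,\mu_H(U_i)$ (one uses $\asymp\epsilon^{-(\dim(H)-1)}$ balls of volume $\asymp\epsilon^{\dim(H)}$, so total volume $\asymp\epsilon\to 0$). Fixing such an $\epsilon$ and the resulting finite cover, we get
\begin{equation*}
	\big\vert\{u\in U_i(r)\mid s\in uP_i\}\big\vert=\big\vert U_i(r)\cap M\big\vert\le\sum_{j=1}^m\big\vert U_i(r)\cap B_\epsilon(p_j)\big\vert ,
\end{equation*}
and applying \cref{Thm:Nevo} to each of the finitely many families $\{B_r^G(e)\times B_\epsilon(p_j)\}_r$ and to $\{B_r^G(e)\times U_i\}_r$, there is an $r_0$ --- the maximum of these finitely many thresholds --- such that for all $r\ge r_0$
\begin{equation*}
	\frac{\big\vert\{u\in U_i(r)\mid s\in uP_i\}\big\vert}{|U_i(r)|}\le(1+o(1))\,\frac{\sum_{j=1}^m\mu_H(B_\epsilon(p_j))}{\mu_H(U_i)}<c ,
\end{equation*}
which is the assertion of \cref{Lem:BoundInsidenz}.

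I expect the main obstacle to be keeping the quantifiers in the right order: \cref{Thm:Nevo} is an asymptotic statement as $r\to\infty$ for a \emph{fixed} family of sets, so $\epsilon$ (hence the finite cover) must be chosen before $r$ is sent to infinity, and uniformity is then just the maximum over finitely many thresholds. One should also confirm that the well-roundedness and F\o lner verifications of \cref{SubSec:Ergodic} really apply to the products $B_r^G(e)\times B_\epsilon(p_j)$ whose $H$-factor is not centred at $e$; they do, since \cref{Lem:WellRoundedBalls} and the F\o lner lemma are proved for balls $B_t(x)$ with arbitrary centre. The remaining point --- that a compact piece of a $C^1$ hypersurface admits $\epsilon$-ball covers of total volume $O(\epsilon)$ --- is routine but should be recorded; everything else is bookkeeping.
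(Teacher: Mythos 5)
Your proposal follows essentially the same route as the paper: reformulate the incidence condition as $u\in U_i(r)\cap(P_is^{-1})^{-1}$, cover this lower-dimensional set by finitely many $\epsilon$-balls of small total volume, and apply the Gorodnik--Nevo counting theorem (\cref{Thm:Nevo}) to each ball and to $U_i$ to compare the asymptotic counts. You are somewhat more explicit than the paper on two points that it leaves implicit --- that the number of covering balls scales like $\epsilon^{-(\dim H-1)}$ so the total covering volume is $O(\epsilon)$, and that the exact constant from \cref{Thm:Nevo} (rather than the lossy bounds of \cref{Prop:Growth}) is what makes the ratio argument close --- but the underlying argument is the same.
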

	
	\section{Combinatorics}\label{Sec:Combinatorics}
	The aim of this section is to give a lower bound for the number of connected components of
	\begin{equation*}
		B_h(c_W) \setminus \bigcup_{i=1}^N \bigcup_{s \in U_i(r)} s P_i,
	\end{equation*}
	under some conditions on the family $s P_i$. This will fill the gap we left in the last section. To do so we will give a short introduction to the theory of hyperplane arrangements and fix the common notation for this setup. To do so we will follow the lines of Dimca, \cite{Dimca}, and Stanley, \cite{Stanley}. Furthermore we will consider Beck's theorem which was first proved in \cite{Beck}, but also follows from the Sz\'emeredi-Trotter theorem, \cite{SzemerediTrotter}. An easier proof for the Sz\'emeredi-Trotter theorem can be found in the paper of Szekely, \cite{Szekely}, we leave it to the reader to see that the two versions are equivalent.\par\medskip
	
	\begin{Theorem}[Higher dimensional local dual of Beck's Theorem]\label{Thm:BeckInduction}
		There exists a constant $c_d$ such that for a hyperplane arrangement $\cH$ in $\RR^d$ and $B \subset \RR^d$ convex. Where $\cH$ consist of $d$ families $F_1,...,F_d$ with $\vert F_i \vert = \frac{n}{d}$ and for all $(f_1,...,f_d) \in F_1 \times ... \times F_d$ we have $B \cap \bigcap_{i=1}^d f_i = \{p\}$ for some point $p \in B$. And additionally at most $c \cdot \vert F_i \vert$ hyperplanes from $F_i$ can intersect in one point, with $0 < c < \frac{1}{100}$. Then the number of intersection points in $B$ exceeds $c_d \cdot n^d$, i.e. $\vert F_{0,B} \vert \geq c_d \cdot n^d$.
	\end{Theorem}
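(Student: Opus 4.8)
The plan is to prove the statement by induction on the dimension $d$. To make the induction run, what I would actually prove is the slightly more flexible statement: for each $1\le j\le d$, $j$ families $F_1,\dots,F_j$ of hyperplanes in $\RR^d$, with $m:=\vert F_i\vert$, all of whose transversal $j$-tuples meet in a flat of dimension $d-j$ that lies in $B$, and with no $c\vert F_i\vert$ members of a single family concurrent, produce $\gg m^{\,j}$ distinct $j$-fold intersection flats $\bigcap_{i\le j}f_i$ meeting $B$. For $j=d$ these flats are exactly the points counted by $F_{0,B}$; the intermediate cases $j<d$ (distinct $(d-1)$-fold intersection \emph{lines}, and so on) are precisely the auxiliary lower bounds that the inductive step will need to consume, which is why one cannot get away with inducting on the $j=d$ statement alone.

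The base case is $j=2$: two families of $m$ lines in the plane (or, after a generic projection, in a generic $2$-plane of $\RR^d$), every line of one crossing every line of the other inside the convex set $B$, and no $cm$ lines of either family concurrent. This is a localized dual form of Beck's theorem, and I would establish it along the Szemer\'edi--Trotter / Sz\'ekely crossing-number route of \cite{SzemerediTrotter, Szekely, Beck}: every line of $F_1$ is met by all $m$ lines of $F_2$ inside $B$ and each crossing point absorbs at most $cm$ of them, so each line of $F_1$ carries at least $1/c$ crossing points; comparing the crossing patterns of pairs of lines of $F_1$, using that two distinct points determine a unique line, forces the lines of $F_1$ to be collectively rich, and the incidence upper bound then pins the number of crossing points from below. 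The hypothesis $c<\tfrac1{100}$ is exactly what excludes the pencil-type configurations on which the crossing count would collapse, and it is what turns the Beck dichotomy into the bound $\gg m^2$.

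For the inductive step I would fix $f_d\in F_d$ and restrict the whole arrangement to the hyperplane $f_d\cong\RR^{d-1}$. The traces $f_i\cap f_d$ ($f_i\in F_i$, $i<d$) form $d-1$ families of $m$ hyperplanes of $f_d$; the hypothesis on $d$-tuples guarantees both that all intermediate intersections remain proper and that each transversal $(d-1)$-tuple of traces meets at the single point $\bigcap_{i=1}^d f_i\in B\cap f_d$; and a concurrence of traces inside $f_d$ is a concurrence of hyperplanes of $\RR^d$, so the concurrency bound passes to the restriction. The induction hypothesis inside $f_d$ then yields $\ge c_{d-1}m^{d-1}$ full intersection points in $B\cap f_d$, and running over the $m$ members $f_d\in F_d$ produces $\ge c_{d-1}m^{d}$ incidences between full points and members of $F_d$. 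In the same way, the induction applied to the $d-1$ families in $\RR^d$ gives $\gg m^{d-1}$ distinct transversal lines $\bigcap_{i<d}f_i$ meeting $B$.

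The crux — and the reason an off-the-shelf Beck statement will not do — is the passage from $\gg m^{d}$ incidences to $\gg m^{d}$ \emph{distinct} full points. Dividing by the trivial bound $cm$ on the number of members of $F_d$ through a point only gives $\gg m^{d-1}$, and iterating that loss over the $d$ levels leaves one with nothing but a constant. Instead I would project the $\gg m^{d-1}$ transversal lines and the full points to a generic plane, where incidences are preserved, and apply Szemer\'edi--Trotter there to bound the number of (transversal line, full point) incidences from above, the matching lower bound coming from the fact that each transversal line meets the $m$ hyperplanes of $F_d$ at $\ge 1/c$ points of $B$; balancing the two estimates, with $c<\tfrac1{100}$ once more killing the structured extremizers, yields $\vert F_{0,B}\vert\ge c_d n^d$. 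I expect this last step to be the main obstacle: getting the quantitatively correct two-sided incidence estimate, checking that the generic projection does not merge distinct points or destroy incidences, and packaging the ``number of transversal lines'' bound as a genuine part of the induction. This is presumably exactly the content of the ``special version of Beck's theorem'' the paper advertises, and it is where the combinatorial inputs from \cite{Szekely} and \cite{SzemerediTrotter} are really used.
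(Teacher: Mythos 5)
Your overall architecture --- two-dimensional dual Beck as base case, induction on dimension via restriction to the hyperplanes of one family, and a double-counting step to recover distinct points from incidences --- matches the paper up to the inductive step. But the crux, which you correctly flag as ``the main obstacle,'' is not resolved by your Szemer\'edi--Trotter plan, and the paper handles it by a different mechanism.

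The incidence lower bound you propose --- each of the $L\gg m^{d-1}$ transversal lines $\bigcap_{i<d}f_i$ carries at least $1/c$ distinct full points, hence $I\ge L/c$ --- is too weak. Feeding $I\gg L$ into Szemer\'edi--Trotter $I\ll P^{2/3}L^{2/3}+P+L$ only forces $P\gg L^{1/2}\asymp m^{(d-1)/2}$, nowhere near the target $m^d$. Even if one could upgrade the per-line count to $\sim m$ distinct points (which the hypotheses do not give --- the $1/c$ bound is tight when the $m$ traces of $F_d$ on a transversal line bunch into $1/c$ clusters of size $cm$), Szemer\'edi--Trotter would still yield only $P\gg m^{(d+2)/2}$, short of $m^d$ for every $d\ge3$. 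The generic-projection step is fine; the incidence bookkeeping simply does not close the gap, and the citations \cite{Szekely,SzemerediTrotter} are in fact consumed entirely by the two-dimensional base case, not by the higher-dimensional step.

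The paper's resolution is to strengthen the induction hypothesis in a different direction: it proves that already the number of points with \emph{bounded multiplicity}, namely $\{p\in F_{0,B}(\cH)\mid d\le a(p)<d^k\}$ for a fixed absolute constant $k$, exceeds $c_d\,n^d$. Because multiplicity is bounded above by $d^k$, each such point is counted at most $\binom{d^k}{d}$ times in the double-count
\[
\vert F_{0,B}(\cH)\vert\cdot\binom{d^k}{d}\ >\ \sum_{\substack{p\in F_{0,B}(\cH)\\ d\le a(p)<d^k}}\binom{a(p)}{d}\ \ge\ \sum_{H\in F_1}\ \sum_{\substack{p\in F_{0,B}(\cH^{H\ast})\\ d-1\le a_{\cH^{H\ast}}(p)<(d-1)^k}}\binom{a_{\cH^{H\ast}}(p)}{d-1},
\]
and the right-hand side is controlled by the $(d-1)$-dimensional bounded-multiplicity hypothesis applied inside each $H\in F_1$. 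Dividing by the constant $\binom{d^k}{d}$ costs only a constant, not a power of $m$ --- this is exactly what replaces your Szemer\'edi--Trotter step. The two-dimensional dual Beck theorem appears twice: once as the base case, and once, via a generic $2$-plane, to show that a positive fraction of the hyperplanes in $F_1$ each carry $\gg n$ induced flats, so the induction hypothesis applies non-degenerately on those restrictions. Your proposed strengthening --- tracking distinct $j$-fold transversal flats for every $j\le d$ --- is a genuinely different inductive handle, and as far as I can tell an insufficient one; the bounded-multiplicity refinement is the missing idea.
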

	
	\begin{Corollary}
		In the situation of \cref{MainTHM} and \cref{Prop:ToolComb}:
		\begin{equation*}
			\# \pi_0 \left( B_h(c_W) \setminus \bigcup_{i=1}^N \bigcup_{s \in U_i(r)} s P_i \right) \gg r^{\homdim(G) \cdot \dim(H)}.
		\end{equation*}
		\begin{proof}
			Notice that $B_h(c_W)$ is convex and the family we constructed in \cref{SubSec:LowerBound} fulfils the requirements in \cref{Thm:BeckInduction} by \cref{Prop:ToolComb}. Then by \cref{Prop:StanleyBound} the claim follows. 
		\end{proof}
	\end{Corollary}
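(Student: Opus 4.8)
The plan is to throw away almost all of the arrangement, keeping only the $d:=\dim(H)$ families coming from the distinguished set $\cF=\{P_1,\dots,P_d\}$ fixed in the Convention before \cref{Lem:IntersectionPoint}; deleting hyperplanes can only merge connected components, so a lower bound for the sub-arrangement is a fortiori a lower bound for the full arrangement $\bigcup_{i=1}^N\bigcup_{s\in U_i(r)}sP_i$. For $1\le i\le d$ set $F_i:=\{\,sP_i\mid s\in U_i(r)\,\}$. Since $H$ is non-crooked, each $sP_i$ is an honest hyperplane in $H\cong\RR^{\dim(H)}$ (\cref{Lem:HomLieTop}), and since the $P_i$ have trivial stabilizer (hypothesis of \cref{MainTHM}) the map $s\mapsto sP_i$ is injective on $U_i(r)$, so $\vert F_i\vert=\vert U_i(r)\vert$. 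By the Growth Lemma \cref{Prop:Growth} together with the ergodic input of \cref{SubSec:Ergodic} one has $\vert U_i(r)\vert\asymp\mu_G(B_r^G(e))\asymp r^{\homdim(G)}$ for every $i$. Discarding a few hyperplanes from the larger families, we may assume $\vert F_1\vert=\dots=\vert F_d\vert=m(r)$ with $m(r)\asymp r^{\homdim(G)}$; put $\cH:=F_1\cup\dots\cup F_d$ and $n:=d\cdot m(r)\asymp r^{\homdim(G)}$.

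The next step is to check that the pair $(\cH,B_h(c_W))$ satisfies the hypotheses of \cref{Thm:BeckInduction}. The region $B_h(c_W)$ is convex (and if the fixed homogeneous norm on $H$ does not have convex balls, one re-runs the good-pair construction of \cref{Prop:GoodPair} with a Euclidean ball about $c_W$ in the role of $B_h(c_W)$, which is admissible since every condition there holds once the radius is taken small enough). That every transversal $d$-tuple $(f_1,\dots,f_d)\in F_1\times\dots\times F_d$ meets $B_h(c_W)$ in a single point lying inside $B_h(c_W)$ is exactly \cref{Prop:ToolComb}(a) (equivalently \cref{Cor:IntersectionPoint}). Finally, apply \cref{Prop:ToolComb}(b) (i.e. \cref{Lem:BoundInsidenz}) with the constant $c=\tfrac{1}{200}<\tfrac{1}{100}$: for each $i$ and each $s\in H$ there is an $r_0$ beyond which at most $c\,\vert U_i(r)\vert$ of the hyperplanes $sP_i$ meet in a common point. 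A compactness argument over $\overline{B_h(c_W)}$ promotes this to a bound uniform in $s$ — the ``bad'' hyperplanes $(P_is^{-1})^{-1}$ depend continuously on $s$ and the Haar-measure estimate in the proof of \cref{Lem:BoundInsidenz} is continuous in $s$ — so the incidence hypothesis holds for all sufficiently large $r$, uniformly over $B_h(c_W)$.

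With the hypotheses in place, \cref{Thm:BeckInduction} delivers at least $c_d\,n^d\asymp r^{\homdim(G)\dim(H)}$ intersection points of $\cH$ inside $B_h(c_W)$. It remains only to convert vertices into regions: by \cref{Prop:StanleyBound} the number of connected components of $B_h(c_W)\setminus\bigcup_{i=1}^d\bigcup_{s\in U_i(r)}sP_i$ is bounded below, up to a dimensional constant, by the number of these intersection points, so $\#\pi_0\big(B_h(c_W)\setminus\bigcup_{i=1}^d\bigcup_{s\in U_i(r)}sP_i\big)\gg r^{\homdim(G)\dim(H)}$, and reinstating the discarded hyperplanes and the families $F_{d+1},\dots,F_N$ only increases the count. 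The only genuinely delicate point is the one flagged above, namely upgrading the pointwise bound of \cref{Prop:ToolComb}(b) to one uniform in $s$ over $\overline{B_h(c_W)}$ so that the incidence hypothesis of \cref{Thm:BeckInduction} is literally met; the rest is bookkeeping together with the two black boxes \cref{Thm:BeckInduction} and \cref{Prop:StanleyBound}.
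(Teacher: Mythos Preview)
Your proposal is correct and follows exactly the paper's approach: check that $B_h(c_W)$ and the families $F_i=\{sP_i\mid s\in U_i(r)\}$ satisfy the hypotheses of \cref{Thm:BeckInduction} via \cref{Prop:ToolComb}, then feed the resulting vertex count through \cref{Prop:StanleyBound}. You have in fact been more careful than the paper's three-line argument --- spelling out the reduction to the $d=\dim(H)$ distinguished families $\cF$, the growth $\vert U_i(r)\vert\asymp r^{\homdim(G)}$, the equalization of family sizes, and in particular the uniformity-in-$s$ issue when passing from the pointwise statement \cref{Prop:ToolComb}(b) to the global incidence hypothesis of \cref{Thm:BeckInduction}, a subtlety the paper silently glosses over.
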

	
	This finishes the proof of the main theorem, \cref{MainTHM}. The rest of the section is devoted to proof \cref{Thm:BeckInduction}.
	
	\begin{Definition}
		A finite set of affine hyperplanes $\cH=\{P_1,...,P_n\}$ in $\RR^d$ is called a \textit{hyperplane arrangement}.
	\end{Definition}
	
	\begin{Definition}
		Let $\cH$ be a hyperplane arrangement in $\RR^d$.
		\begin{enumerate}[label=\roman*)]
			\item A non-empty intersection of hyperplanes from $\cH$ is called a \textit{flat} of $\cH$. The set of all flats is denoted by $F(\cH)$. If we are only interested in flats of a certain dimension $k$ we denote this set by $F_k(\cH)$. Notice that the whole space is also a flat, as a result of the intersection over the empty set.
			\item The connected components of 
			\begin{equation*}
				\RR^d \setminus \bigcup_{H \in \cH} H
			\end{equation*}
			are called \textit{regions of the arrangement}. The set of all regions is denoted by $f_d(\cH)$ and the number of these regions is denoted by $r(\cH)$.
			\item Let $B\subset \RR^d$ then the connected components of 
			\begin{equation*}
				B \setminus \bigcup_{H \in \cH} H
			\end{equation*}
			are called \textit{regions of the arrangement with respect to $B$} and the number of these regions is denoted by $r_B(\cH)$.
			\item Let $B\subset \RR^d$, then define the \textit{arrangement with respect to $B$} as 
			\begin{equation*}
				\cH_B := \{H \in \cH \mid H \cap B \neq \emptyset\}.
			\end{equation*}
			\item Let $B\subset \RR^d$ then a \textit{flat with respect to $B$} is a flat of $\cH$ which intersects $B$. The set of these flats is denoted by $F_B(\cH)$ and again if we only consider the flats of dimension $k$, we write $F_{k,B}(\cH)$.
		\end{enumerate} 
	\end{Definition}
	
	\begin{Remark}
		Obviously $r_B(\cH)$ only depends on the hyperplanes which intersect $B$, so $r_B(\cH)=r_B(\cH_B)$. Further notice that in general $r_B(\cH) \neq \vert \{ R \in f_d(\cH) \mid R \cap B \neq \emptyset\}\vert$, but the following proposition will show that equality holds for convex $B$. 
	\end{Remark}
	
	\begin{Proposition}\label{Prop:RegionsWithRespectB}
		Let $\cH$ be a hyperplane arrangement in $\RR^d$ and $B \subset \RR^d$ convex, then
		\begin{equation*}
			r_B(\cH) = \vert\{R \in f_d(\cH) \mid R \cap B \neq \emptyset\}\vert.
		\end{equation*}
		\begin{proof}
			The regions of a hyperplane arrangement are convex, since $B$ is also convex we have for all $R \in f_d(\cH)$ that $R \cap B$ is convex and especially connected. So each region of the arrangement either intersects $B$ and therefore contributes exactly one to $r_B(\cH)$ or it does not intersect at all.
		\end{proof}
	\end{Proposition}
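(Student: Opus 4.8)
The plan is to exhibit the obvious candidate bijection $R \mapsto R \cap B$ between the set $\{R \in f_d(\cH) \mid R \cap B \neq \emptyset\}$ and the set of connected components of $B \setminus \bigcup_{H\in\cH} H$. Everything hinges on one structural fact that I would establish first: the regions of an arrangement are open and convex. Fix for each $H \in \cH$ one of its two open half-spaces, and assign to a point $x \notin \bigcup_{H\in\cH} H$ its sign vector $\sigma(x)\in\{+,-\}^{\cH}$ recording which chosen half-space $x$ lies in (with $\sigma(x)_H$ flipped if $x$ is on the other side). The fibres of $\sigma$ are exactly the intersections of open half-spaces $\bigcap_{H\in\cH} H^{\sigma(H)}$; they are open, convex, pairwise disjoint, and cover $\RR^d \setminus \bigcup_{H\in\cH}H$. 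Being open and partitioning that open set, each fibre is clopen in the subspace topology, hence a union of connected components; being convex it is connected, so it is a single connected component. Thus $f_d(\cH)$ is precisely the collection of nonempty $\sigma$-fibres, and every $R \in f_d(\cH)$ is open and convex. (Alternatively one can simply cite the standard references on arrangements, e.g. \cite{Dimca, Stanley}, for convexity of regions.)

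Next I would intersect with $B$. Since $B$ and each $R$ are convex, $R \cap B$ is convex, hence connected. The sets $\{R \cap B \mid R \in f_d(\cH)\}$ are pairwise disjoint and cover $B \setminus \bigcup_{H\in\cH}H$. Each $R \cap B$ is open in $B \setminus \bigcup_{H\in\cH}H$ (because $R$ is open in $\RR^d$), and its complement inside $B \setminus \bigcup_{H\in\cH}H$ is $\bigcup_{R' \neq R}(R' \cap B)$, again open; so $R \cap B$ is clopen in $B \setminus \bigcup_{H\in\cH}H$. A nonempty clopen connected subset of a space is one of its connected components, so each nonempty $R \cap B$ is exactly one connected component of $B \setminus \bigcup_{H\in\cH}H$, and distinct $R$'s give distinct (disjoint) components. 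Conversely every connected component of $B \setminus \bigcup_{H\in\cH}H$ lies in some fibre $R$ and therefore equals $R \cap B$. This yields the claimed bijection, hence $r_B(\cH) = \vert\{R \in f_d(\cH) \mid R \cap B \neq \emptyset\}\vert$.

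The only delicate point — and the place where convexity of $B$ is genuinely used — is the implication "$R\cap B$ clopen and connected $\Rightarrow$ $R\cap B$ is a single component": for a non-convex $B$ a single region $R$ could meet $B$ in several connected pieces, so $R\cap B$ would fail to be connected and the right-hand count would strictly exceed $r_B(\cH)$. Once convexity of both $R$ and $B$ is in hand, connectedness of $R\cap B$ is immediate and the rest is bookkeeping with clopen sets, so I do not expect any real obstacle beyond recording the argument carefully.
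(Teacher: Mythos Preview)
Your proof is correct and follows essentially the same line as the paper's: both arguments rest on the single observation that the regions $R \in f_d(\cH)$ are convex, so each $R \cap B$ is convex and hence connected, which forces the nonempty intersections $R \cap B$ to be precisely the connected components of $B \setminus \bigcup_{H\in\cH} H$. The paper's proof compresses this into two sentences, taking the convexity of regions as known and leaving the clopen/partition bookkeeping implicit; your version spells out the sign-vector description of regions and the clopen argument explicitly, but the underlying idea is identical.
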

	
	\begin{Definition}
		An arrangement $\cH$ in $\RR^d$ is called:
		\begin{enumerate}[label=\roman*)]
			\item \textit{Central} if $\bigcap_{H \in \cH} H \neq \emptyset$ \footnote{observe that the empty arrangement is central since the empty intersection is the whole space},
			\item \textit{central with respect to $B$}, for some $B \subset  \RR^d$, if $B \cap \bigcap_{H \in \cH} H \neq \emptyset$. 
		\end{enumerate}
	\end{Definition}
	
	We will now define the characteristic polynomial for the arrangement $\cH$ which depends on $B\subset \RR^d$, this idea is similar to the standard idea of considering the characteristic polynomial of the arrangement.
	
	\begin{Definition}
		Let $\cH$ be a hyperplane arrangement in $\RR^d$. The \textit{characteristic polynomial with respect to $B$} is defined by
		\begin{equation*}
			\chi_{\cH,B}(t):= \hspace{-0.5cm }\sum_{\substack{\cA \subset \cH\\ \cA \text{ central with respect to } B}} \hspace{-0.5cm } (-1)^{\vert \cA \vert} t^{\dim(\cap_{H \in \cA} H)}.
		\end{equation*}
	\end{Definition}
	
	Following the argumentation in \cite{Dimca} for the characteristic polynomial with respect to $B$ instead of the characteristic polynomial we can establish the following theorem. Since the argument is exactly the same we will not prove the statement here.
	
	\begin{Theorem}[{\cite[Theorem 2.8]{Dimca}}]\label{Thm:RegionsCharPoly}
		Let $\cH$ be a hyperplane arrangement in $\RR^d$ and $B \subset \RR^d$ convex, then 
		\begin{equation*}
			r_{B}(\cH) = (-1)^d \chi_{\cH,B}(-1).
		\end{equation*}
	\end{Theorem}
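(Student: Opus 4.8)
\textbf{The plan} is to deduce the identity from Zaslavsky's formula in the classical (global) case by the same deletion--restriction recursion, carried out simultaneously for $r_B$ and for $\chi_{\cH,B}$, following the argument of Dimca for the unrestricted statement. Two harmless reductions come first. We may assume $B\neq\emptyset$ (in the application $B=B_h(c_W)$ is a ball), and we may assume that every hyperplane of $\cH$ meets $B$: a hyperplane $H$ with $H\cap B=\emptyset$ changes neither $r_B(\cH)$ nor $\chi_{\cH,B}$, since if $\cA\ni H$ were central with respect to $B$ then $\emptyset\neq B\cap\bigcap_{H'\in\cA}H'\subseteq B\cap H$, which is absurd. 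If $\cH=\emptyset$ then $r_B(\cH)=1$ (as $B$ is nonempty and convex, hence connected) while $\chi_{\emptyset,B}(t)=t^d$, so $(-1)^d\chi_{\emptyset,B}(-1)=1$; this starts an induction on $|\cH|$.

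\textbf{Inductive step and the geometric recursion.} Fix $H_0\in\cH$, put $\cH':=\cH\setminus\{H_0\}$, and let $\cH^{H_0}:=\{H\cap H_0\mid H\in\cH',\ H\cap H_0\neq\emptyset\}$ be the restricted arrangement inside $H_0\cong\RR^{d-1}$, with convex body $B\cap H_0$ (nonempty, by the reduction above). I claim $r_B(\cH)=r_B(\cH')+r_{B\cap H_0}(\cH^{H_0})$. Every cell of $B\setminus\bigcup\cH'$ is convex --- it is the intersection of $B$ with a collection of open half-spaces, cf.\ \cref{Prop:RegionsWithRespectB} --- so reinserting $H_0$ either misses the interior of such a cell $C'$, leaving it whole, or meets it, splitting it into exactly two cells of $B\setminus\bigcup\cH$. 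Using convexity of $B$ and of the cells, the cells $C'$ that get split correspond bijectively to the connected components of $(B\cap H_0)\setminus\bigcup_{H\in\cH'}(H\cap H_0)$, that is, to the cells of $\cH^{H_0}$ with respect to $B\cap H_0$: a component $D$ of the latter is relatively open and convex in $H_0$, and near any $p\in D$ a small ball lies in $B\setminus\bigcup\cH'$ except for its slice on $H_0$, exhibiting the unique $C'$ that $D$ splits; conversely $H_0\cap C'$ is convex and lies in exactly one such component.

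\textbf{The characteristic-polynomial recursion and conclusion.} I claim $\chi_{\cH,B}(t)=\chi_{\cH',B}(t)-\chi_{\cH^{H_0},B\cap H_0}(t)$. Splitting the defining sum according to whether $H_0\in\cA$, the terms with $H_0\notin\cA$ give exactly $\chi_{\cH',B}(t)$. A term with $\cA=\{H_0\}\cup\cB$, $\cB\subseteq\cH'$, contributes $(-1)^{|\cB|+1}t^{\dim(H_0\cap\bigcap\cB)}$ and occurs iff $B\cap H_0\cap\bigcap\cB\neq\emptyset$. No $H\in\cB$ can be parallel to $H_0$ (otherwise $\{H_0,H\}$ is central with respect to nothing), so each such $H$ determines a flat $K=H\cap H_0$ of $\cH^{H_0}$; grouping those $\cB$ that map onto a fixed $\cC\subseteq\cH^{H_0}$ and using $\sum_{\emptyset\neq S\subseteq T}(-1)^{|S|}=-1$ for $T\neq\emptyset$ (the case $\cB=\cC=\emptyset$ being handled directly, giving $-t^{d-1}$ on both sides), this partial sum collapses to $(-1)^{|\cC|}t^{\dim\bigcap\cC}$ subject to $B\cap\bigcap\cC\neq\emptyset$; summing over $\cC$ yields $-\chi_{\cH^{H_0},B\cap H_0}(t)$. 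Now induct: $\cH'$ has $|\cH|-1$ hyperplanes and $\cH^{H_0}$ has at most $|\cH|-1$ hyperplanes (in dimension $d-1$), so the inductive hypothesis gives $r_B(\cH')=(-1)^d\chi_{\cH',B}(-1)$ and $r_{B\cap H_0}(\cH^{H_0})=(-1)^{d-1}\chi_{\cH^{H_0},B\cap H_0}(-1)$. Combining these with the two recursions, $r_B(\cH)=(-1)^d\big(\chi_{\cH',B}(-1)-\chi_{\cH^{H_0},B\cap H_0}(-1)\big)=(-1)^d\chi_{\cH,B}(-1)$.

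\textbf{The main obstacle} I expect is the characteristic-polynomial recursion, not the geometric one: $\cH^{H_0}$ is a set of \emph{distinct} flats whereas $\cB$ runs over subsets of $\cH'$, so the map $\cB\mapsto\{H\cap H_0\}$ is many-to-one, and one must check that the over-counting cancels exactly via the alternating-sum identity while the ``central with respect to $B$'' condition transfers correctly under restriction (a flat contained in $H_0$ meets $B$ if and only if it meets $B\cap H_0$, which keeps the bookkeeping consistent). On the geometric side the only delicate point is to use the convexity of $B$ --- through \cref{Prop:RegionsWithRespectB} --- so that each cell of $\cH'$ meets $B$ in a single convex piece and $H_0$ cuts it into at most two.
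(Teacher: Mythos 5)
The paper offers no proof of its own here: it states that the argument is ``exactly the same'' as Dimca's and moves on, so your deletion--restriction proof is a reconstruction of precisely what the paper relies on, and it takes the intended approach. The reduction to hyperplanes meeting $B$, the base case, the geometric recursion $r_B(\cH)=r_B(\cH')+r_{B\cap H_0}(\cH^{H_0})$ via convexity of the cells $R\cap B$ (using \cref{Prop:RegionsWithRespectB}), and the characteristic-polynomial recursion $\chi_{\cH,B}=\chi_{\cH',B}-\chi_{\cH^{H_0},B\cap H_0}$ are all set up correctly; the alternating-sum collapse over the many-to-one map $\cB\mapsto\{H\cap H_0:H\in\cB\}$ checks out on factoring over the fibres $T_K=\{H\in\cH':H\cap H_0=K\}$, where $\sum_{\emptyset\neq S\subseteq T_K}(-1)^{|S|}=-1$ gives $\sum_{\cB\mapsto\cC}(-1)^{|\cB|+1}=(-1)^{|\cC|+1}$.

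One caveat, which is a defect of the theorem's hypotheses as printed rather than of your reasoning: both the geometric recursion and the identity itself can fail for convex $B$ that is not open. Take $B=\overline{B_1}(e_1)$, the closed unit ball tangent to $H_0=\{x_1=0\}$ at the origin, and $\cH=\{H_0\}$. Then $B\setminus H_0$ is connected, so $r_B(\cH)=1$, while both $\emptyset$ and $\{H_0\}$ are central with respect to $B$, so $\chi_{\cH,B}(t)=t^d-t^{d-1}$ and $(-1)^d\chi_{\cH,B}(-1)=2$. In your proof the failure point is the bijection claim: a component $D$ of $(B\cap H_0)\setminus\bigcup(H\cap H_0)$ lying on $\partial B$ is contained in a cell $C'$ but misses its interior, so it contributes to $r_{B\cap H_0}(\cH^{H_0})$ without producing a split of $C'$. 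The fix is either to assume $B$ open (as it is in the application, $B=B_h(c_W)$) or to define ``central with respect to $B$'' via $\mathring{B}$ instead of $B$. With that added hypothesis your proof is correct.
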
 
	
	We can use this formula with the help of the following lemma, which we import from Stanley.
	
	\begin{Lemma}[{\cite[Theorem 3.10.]{Stanley}}]\label{Lem:Stanley}
		Let $\chi(t)$ be a characteristic polynomial of an hyperplane arrangement $\cH$ in $\RR^d$, then
		\begin{equation*}
			\chi(t) = \sum_{f \in F_B(\cH)} a_f t^{\dim(f)}
		\end{equation*}
		with $(-1)^{d-\dim(f)} a_f >0$ for $f \in F_B(\cH)$.
	\end{Lemma}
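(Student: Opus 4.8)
The plan is to recognise $\chi_{\cH,B}$ as the characteristic polynomial of the intersection poset of $\cH$ cut down to the flats meeting $B$, and then to quote the classical sign alternation of the Möbius function of a geometric lattice. Write $\chi=\chi_{\cH,B}$. Let $L(\cH)$ be the intersection poset of $\cH$, i.e.\ the set of all flats ordered by reverse inclusion, with minimum $\hat{0}=\RR^d$, and let $L_B$ be the sub-poset consisting of the flats in $F_B(\cH)$. The key preliminary remark is that $L_B$ is closed downward in $L(\cH)$: if $f\in L_B$ and $g$ is a flat of $\cH$ with $g\supseteq f$, then $g\cap B\supseteq f\cap B\neq\emptyset$, so $g\in L_B$. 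Hence for every $f\in F_B(\cH)$ the entire interval between $\hat{0}$ and $f$ already lies in $L_B$, and therefore $\mu_{L_B}(\hat{0},f)=\mu_{L(\cH)}(\hat{0},f)$. This is the only spot where the relativisation to $B$ intervenes, and it is why the treatment of the ordinary characteristic polynomial in \cite{Stanley} transfers without change.

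Next I would reorganise the defining sum of $\chi$ by grouping the subsets $\cA\subseteq\cH$ according to their intersection. If $\cA$ is central with respect to $B$ then $\bigcap_{H\in\cA}H$ is a non-empty intersection of hyperplanes of $\cH$ meeting $B$, hence an element of $F_B(\cH)$, and conversely each $f\in F_B(\cH)$ arises (e.g.\ as $\bigcap\{H\in\cH\mid H\supseteq f\}$). Grouping gives
\begin{equation*}
\chi(t)=\sum_{f\in F_B(\cH)}\Bigg(\sum_{\substack{\cA\subseteq\cH\\ \bigcap_{H\in\cA}H=f}}(-1)^{|\cA|}\Bigg)\,t^{\dim(f)}.
\end{equation*}
The inner sum is the standard Weisner/Möbius-inversion identity, whose value is $\mu_{L(\cH)}(\hat{0},f)$; this is exactly the computation performed in \cite{Dimca,Stanley} in the non-relative case, and by the previous paragraph nothing changes here. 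Thus $a_f=\mu_{L(\cH)}(\hat{0},f)$ for all $f\in F_B(\cH)$.

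It remains to read off the sign. The interval $[\hat{0},f]$ in $L(\cH)$ is isomorphic to the intersection lattice of the central subarrangement $\{H\in\cH\mid H\supseteq f\}$, which is a geometric lattice; moreover $L(\cH)$ is ranked with each covering relation lowering the dimension of a flat by exactly one, so $\rk(f)=d-\dim(f)$. By Rota's theorem on the alternation of the Möbius function of a geometric lattice, $(-1)^{\rk(f)}\mu_{L(\cH)}(\hat{0},f)>0$, i.e.\ $(-1)^{d-\dim(f)}a_f>0$; in particular $a_f\neq 0$, so the expansion is genuinely indexed by all of $F_B(\cH)$, as claimed.

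I do not anticipate a real obstacle: the whole argument is classical, and the work is bookkeeping. The two points needing care are (i) verifying that $\chi_{\cH,B}$ is literally the characteristic polynomial of the downward-closed sub-poset $L_B\subseteq L(\cH)$ — which rests on the observation that $f\cap B\neq\emptyset$ is inherited by every flat containing $f$ — and (ii) matching conventions, namely checking that in the affine intersection poset $\rk(f)=d-\dim(f)$, so that Rota's inequality $(-1)^{\rk(f)}\mu(\hat{0},f)>0$ is precisely the stated one.
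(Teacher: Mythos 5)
The paper does not give its own proof of this lemma; it imports the statement by citation to \cite[Theorem 3.10]{Stanley}, which is the sign-alternation of the M\"obius function on a geometric lattice stated for the ordinary (non-relative) characteristic polynomial. Your write-up is correct and is essentially the standard argument: group the defining alternating sum over central $\cA$ by the flat $f=\bigcap_{H\in\cA}H$, identify the inner sum with $\mu_{L(\cH)}(\hat{0},f)$ via the crosscut theorem applied in the geometric interval $[\hat{0},f]$, and then invoke Rota's sign theorem together with $\rk(f)=d-\dim(f)$. The only genuinely non-formal point is the one you single out: that passing from $\chi_{\cH}$ to $\chi_{\cH,B}$ is harmless because $F_B(\cH)$ is downward-closed in the intersection poset, so every interval $[\hat{0},f]$ with $f\in F_B(\cH)$ already lies inside $L_B$ and no M\"obius values change. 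This is exactly the verification the paper tacitly relies on when it cites Stanley's theorem (stated for $\chi_\cH$) while summing over $F_B(\cH)$, so your proposal usefully fills in that gap rather than departing from the intended argument. Two small polishing remarks: first, you should note explicitly that $a_f\in\ZZ$, so $(-1)^{d-\dim(f)}a_f>0$ actually gives $(-1)^{d-\dim(f)}a_f\geq 1$ (which is what \cref{Prop:StanleyBound} really uses); second, the phrase ``Weisner/M\"obius-inversion identity'' is slightly loose -- the identity
\begin{equation*}
\mu(\hat{0},f)=\sum_{\substack{\cA\subseteq\{H\in\cH\,:\,H\supseteq f\}\\ \bigcap_{H\in\cA}H=f}}(-1)^{|\cA|}
\end{equation*}
is most cleanly justified as the atomic crosscut theorem in the geometric lattice $[\hat{0},f]$, which is how Stanley and Dimca present it, and you may as well name it that way.
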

	
	\begin{Corollary}\label{Prop:StanleyBound}
		Let $\cH$ be a hyperplane arrangement in $\RR^d$ and $B \subset \RR^d$ convex, then
		\begin{equation*}
			r_B(\cH) \geq \vert F_B(\cH) \vert \geq \vert F_{0,B}(\cH) \vert.
		\end{equation*}
		\begin{proof}
			By \cref{Thm:RegionsCharPoly} and \cref{Lem:Stanley} it is 
			\begin{equation*}
				r_{B}(\cH) = (-1)^d \chi_{\cH,B}(-1) = \sum_{f \in F_B(\cH)} (-1)^d a_f (-1)^{\dim(f)} > \sum_{f \in F_B(\cH)} 1 = \vert F_B(\cH) \vert.
			\end{equation*}
		\end{proof}
	\end{Corollary}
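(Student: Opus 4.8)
The plan is to read the chain of inequalities straight off the two results just imported: \cref{Thm:RegionsCharPoly}, which expresses $r_B(\cH)$ as the signed evaluation $(-1)^d\chi_{\cH,B}(-1)$ of the characteristic polynomial relative to $B$, and \cref{Lem:Stanley}, which decomposes that polynomial as a sum over the flats meeting $B$ with a controlled sign pattern. No new machinery is needed; the whole argument is bookkeeping with signs.

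First I would apply \cref{Thm:RegionsCharPoly} to write $r_B(\cH) = (-1)^d\chi_{\cH,B}(-1)$. Then, feeding in the expansion $\chi_{\cH,B}(t) = \sum_{f\in F_B(\cH)} a_f\, t^{\dim(f)}$ furnished by \cref{Lem:Stanley} and evaluating at $t=-1$, I obtain
\begin{equation*}
  r_B(\cH) \;=\; (-1)^d \sum_{f\in F_B(\cH)} a_f\,(-1)^{\dim(f)} \;=\; \sum_{f\in F_B(\cH)} (-1)^{d-\dim(f)} a_f .
\end{equation*}
By the sign statement in \cref{Lem:Stanley} each summand $(-1)^{d-\dim(f)}a_f$ is strictly positive, and since the $a_f$ are integers — being the coefficients of a characteristic polynomial — each summand is an integer, hence at least $1$. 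Summing over $F_B(\cH)$ yields $r_B(\cH) \ge \vert F_B(\cH)\vert$, the first inequality.

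For the second inequality I would simply note that a $0$-dimensional flat meeting $B$ is in particular a flat meeting $B$, so $F_{0,B}(\cH)\subseteq F_B(\cH)$ and therefore $\vert F_B(\cH)\vert \ge \vert F_{0,B}(\cH)\vert$. If $B=\emptyset$ all three quantities vanish and the statement is trivial, so one may assume $B\ne\emptyset$, in which case the ambient flat $\RR^d$ lies in $F_B(\cH)$ and contributes the term $(-1)^{d-d}\cdot 1 = 1$.

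The argument has no genuine obstacle; the only point demanding a moment's care is that the terms of the flat-indexed sum are honest positive integers rather than merely positive reals, which is exactly why integrality of the coefficients of $\chi_{\cH,B}$ is invoked. It is also worth recording that, because of the $\RR^d$-term above, the bound $r_B(\cH)\ge\vert F_B(\cH)\vert$ can be an equality — for instance for the empty arrangement — so the statement is correctly phrased with $\ge$ rather than a strict inequality.
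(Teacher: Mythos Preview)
Your argument is correct and follows the same route as the paper: apply \cref{Thm:RegionsCharPoly} to write $r_B(\cH)=(-1)^d\chi_{\cH,B}(-1)$, expand via \cref{Lem:Stanley}, and use the sign condition to bound each summand below by $1$. Your write-up is in fact slightly more careful than the paper's --- you make explicit the integrality of the $a_f$ (needed to pass from ``positive'' to ``$\geq 1$''), you spell out the trivial inclusion $F_{0,B}(\cH)\subseteq F_B(\cH)$ for the second inequality, and you correctly observe that the bound should be $\geq$ rather than the strict $>$ appearing in the paper's display.
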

	
	This proposition means that to establish a lower bound of the regions it is enough to count the number of intersection points. We will do this by following the idea of the proof of Beck's theorem, \cite{Beck}. But instead of considering the set of lines/hyperplanes spanned by a point set we turn all the arguments around and consider the intersection points of a given arrangement. We will first handle the case of dimension two and then use induction to generalize the statement.\par\medskip
	We first state the Szemer\'edi-Trotter Theorem in two equivalent ways.
	
	\begin{Theorem}[Szemer\'edi-Trotter Theorem, {\cite{Szekely}}, {\cite{SzemerediTrotter}}]\label{Thm:SzemerediTrotter}~\par
		Let $n,m \in \NN$, we set
		\begin{equation*}
			I(n,m)= \max_{\vert P \vert =n, \vert L \vert =n} \vert\{(p,l) \in P \times L \mid p \in l\} \vert,
		\end{equation*}
		where $P$ denotes a set of points and $L$ a set of lines in $\RR^2$.
		\begin{enumerate}[label=\roman*)]
			\item There exists a constant $c>0$ such that $I(n,m) < c \cdot (n^{2/3}m^{2/3} + n + m)$.
			\item Let $\sqrt{n} \leq m \leq \begin{pmatrix} n \\ 2 \end{pmatrix}$, then there exists a constant $c>0$ such that \linebreak ${I(n,m) < c \cdot (n^{2/3}m^{2/3})}$.
		\end{enumerate}
	\end{Theorem}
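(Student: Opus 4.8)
The plan is to follow Sz\'ekely's crossing-number proof for part (i) and then deduce part (ii) from it by a size comparison. Note first that the argument for (i) never uses $\vert L\vert=\vert P\vert$, so I will work with a set $P$ of $n$ points and a set $L$ of $m$ lines in $\RR^2$ realising the maximum $I=I(n,m)$ of incidences; the stated form with $\vert P\vert=\vert L\vert=n$ is the special case $m=n$.

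The one substantial input I will invoke, as a black box, is the \emph{crossing lemma}: a simple graph $G$ drawn in the plane with $v$ vertices and $e\geq 4v$ edges has crossing number $\mathrm{cr}(G)\geq e^{3}/(64\,v^{2})$. This follows from Euler's formula (a simple planar graph on $v\geq 3$ vertices has at most $3v-6$ edges, whence $\mathrm{cr}(G)\geq e-3v+6$ in general) sharpened by a random vertex-deletion argument optimised at retention probability $p\asymp v/e$; a self-contained proof is in \cite{Szekely}, so I will not reproduce it.

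The first real step is the graph construction. I will set $V(G)=P$ and, along each line $\ell\in L$ containing $k_{\ell}:=\vert P\cap\ell\vert\geq 2$ points, join consecutive points of $P\cap\ell$ (in the order induced by $\ell$) by the straight segment between them. Because two distinct lines meet in at most one point, $G$ has no multi-edges and no edge of this drawing passes through a third vertex, so the drawing is ``good''; moreover two edges on different lines cross at most once and two edges on a common line do not cross, hence $\mathrm{cr}(G)\leq\binom{m}{2}<m^{2}/2$. Counting edges, $e=\sum_{\ell:\,k_{\ell}\geq 1}(k_{\ell}-1)=I-\#\{\ell:k_{\ell}\geq 1\}\geq I-m$, i.e.\ $I\leq e+m$. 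The second step is a dichotomy on $e$: if $e<4n$ then $I\leq e+m<4n+m$; otherwise the crossing lemma gives $e^{3}/(64n^{2})\leq\mathrm{cr}(G)<m^{2}/2$, so $e^{3}<32\,n^{2}m^{2}$, hence $e<4\,n^{2/3}m^{2/3}$ and $I\leq e+m<4\,n^{2/3}m^{2/3}+m$. In both cases $I(n,m)<4\bigl(n^{2/3}m^{2/3}+n+m\bigr)$, which is (i).

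Finally I will obtain (ii): assuming $\sqrt{n}\leq m\leq\binom{n}{2}$, from $\sqrt{n}\leq m$ one gets $n^{1/3}\leq m^{2/3}$, hence $n\leq n^{2/3}m^{2/3}$, and from $m\leq\binom{n}{2}\leq n^{2}$ one gets $m^{1/3}\leq n^{2/3}$, hence $m\leq n^{2/3}m^{2/3}$; so $n+m\leq 2\,n^{2/3}m^{2/3}$ and (i) yields $I(n,m)<3c\,n^{2/3}m^{2/3}$, which is (ii) (and the passage between the two displayed forms is precisely this comparison). All of the depth sits in the crossing lemma, which I treat as known; within the reduction the one point that is not bookkeeping is the verification that the straight-line drawing of $G$ is good — no multi-edges and no edge through a third vertex — and this is exactly where I expect the write-up to need care, the resolution being that two lines meet in at most one point.
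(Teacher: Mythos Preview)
Your proof is correct; it is precisely Sz\'ekely's crossing-number argument for (i), and your derivation of (ii) from (i) via the size comparisons $n\leq n^{2/3}m^{2/3}$ and $m\leq n^{2/3}m^{2/3}$ under the hypothesis $\sqrt{n}\leq m\leq\binom{n}{2}$ is exactly the equivalence of the two formulations.

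The paper itself does not give a proof of this theorem: it is stated with citations to \cite{Szekely} and \cite{SzemerediTrotter}, and the surrounding text explicitly says that the two versions are equivalent and leaves this to the reader. So there is no ``paper's own proof'' to compare against here; you have supplied what the paper imports as a black box, following the same source the paper points to. The only thing worth tightening in your write-up is the observation that the edge count is $e=\sum_{\ell:k_\ell\geq 1}(k_\ell-1)$ rather than $\sum_{\ell:k_\ell\geq 2}(k_\ell-1)$ (these agree, since lines with $k_\ell=1$ contribute zero), and perhaps to note that a crossing of two edges corresponds to the unique intersection point of their supporting lines, giving $\mathrm{cr}(G)\leq\binom{m}{2}$ directly.
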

	
	\begin{Remark}
		The constant for the growth in the Szemer\'edi-Trotter Theorem is known to be less than $2.5$ but more than $0.4$.
	\end{Remark}
	
	\begin{Definition}
		Let $\cH$ be a hyperplane arrangement then for a flat $f \in F(\cH)$ we define $S(f):=\{H \in \cH \mid f \subset H \}$, and further $a(f):=\vert S(f) \vert$.
	\end{Definition}
	
	\begin{Definition}
		For a hyperplane arrangement $\cH$ let
		\begin{align*}
			t(\cH,k) &:= \vert \{p \in F_0(\cH) \mid a(p) \geq k\}\vert, \\
			t^\ast(\cH,k) &:= \vert \{p \in F_0(\cH) \mid k \leq a(p) < 2k\}\vert. 
		\end{align*}
		Further we consider the maximal value of the two terms
		\begin{align*}
			t(n,k) &:= \max_{\vert \cH \vert = n } t(\cH,k), \\
			t^\ast(n,k) &:= \max_{\vert \cH \vert = n } t^\ast(\cH,k) . 
		\end{align*}
	\end{Definition}
	
	For our proof we need some bounds on these terms. The first two are from the paper of Beck, \cite{Beck}, and are easy to prove. The third is a corollary of the Szemer\'edi-Trotter Theorem. Beck proves a similar inequality, \cite[Lemma 2.2]{Beck}, which is the main part of his argument. For us it would also be possible in our setup to translate the proof of Beck, but this would require a lot of effort, if the reader is interested we challenge him to follow the proof, it certainly is illuminating to translate all the arguments.
	
	\begin{Lemma}[{\cite[Lemma 2.1]{Beck}}]\label{Lem:Beck2.1}
		For a hyperplane arrangement $\cH$ in $\RR^2$, with $\vert \cH \vert = n$, we have
		\begin{align*}
			t(n,k) \leq \frac{n(n-1)}{k(k-1)},\quad &\forall 2 \leq k \leq n,\\
			t(n,k) < \frac{2n}{k},\quad &\forall \sqrt{2n} < k \leq n.
		\end{align*} 
		\begin{proof}
			For the first formula we consider the number of pairs of lines. On the one hand we consider all possible pairs and on the other hand the pairs through points in which at least $k$ lines intersect:
			\begin{equation*}
				t(n,k) \cdot \begin{pmatrix} k \\ 2 \end{pmatrix} \leq \begin{pmatrix} n \\ 2 \end{pmatrix}.
			\end{equation*}
			For the second inequality the points in which at least $k$ lines intersect are denoted by $p_1,...,p_t$. We assume that $t=\frac{2n +l }{k} \in \NN$, where $l \in \{0,...,k-1\}$. Then $t < \sqrt{2n} + \frac{l}{k}$, since $\sqrt{2n} < k$. Notice that $\vert S(p_i) \vert \geq k$ and $\vert S(p_i) \cap S(p_j) \vert \leq 1$ for $i \neq j$, since two points are connected by exactly one line. Then
			\begin{align*}
				n &= \vert \cH \vert \geq \left\vert \bigcup_{i=1}^t S(p_i) \right\vert \geq \sum_{i=1}^t \vert S(p_i)\vert - \sum_{1 \leq i < j \leq t} \vert S(p_i) \cap S(p_j)\vert\\
				&\geq \sum_{i=1}^t k - \sum_{1 \leq i < j \leq t} 1 = tk - \frac{1}{2}t (t-1) > 2n+l - \frac{1}{2} \left(\sqrt{2n}+\frac{l}{k}\right) \left(\sqrt{2n}+\underbrace{\frac{l}{k}-1}_{<0}\right)\\
				&> 2n+l - n - \sqrt{\frac{n}{2}} \frac{l}{k} = n+ l \left( 1 - \frac{\sqrt{n}}{\sqrt{2}k}\right) > n+l\left(1- \frac{1}{2}\right) \geq n.
			\end{align*}
			This is a contradiction, so $t=\lceil \frac{2n}{k}\rceil$ can not hold and also $t > \frac{2n}{k}$ is not possible since we can simply ignore some points to get the same contradiction.
		\end{proof}
	\end{Lemma}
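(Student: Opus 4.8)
The plan is to prove both inequalities by double counting, the one structural fact used throughout being that in $\RR^2$ two distinct lines meet in at most one point, equivalently that two distinct points lie on at most one common hyperplane of $\cH$; in the notation introduced above this says $\vert S(p)\cap S(q)\vert\le 1$ whenever $p\ne q$ in $F_0(\cH)$. For the first bound, $t(n,k)\le\frac{n(n-1)}{k(k-1)}$ valid for $2\le k\le n$, I would count unordered pairs of hyperplanes of $\cH$ in two ways. There are exactly $\binom{n}{2}$ of them. On the other hand, each point $p$ with $a(p)\ge k$ determines at least $\binom{k}{2}$ pairs of hyperplanes through $p$, and since a pair of hyperplanes has at most one common point these collections of pairs are pairwise disjoint as $p$ runs over the points with $a(p)\ge k$. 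Hence $t(\cH,k)\binom{k}{2}\le\binom{n}{2}$, and taking the maximum over all $\cH$ with $\vert\cH\vert=n$ gives the claim; this half needs no case distinction.

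For the second bound, $t(n,k)<\frac{2n}{k}$ in the range $\sqrt{2n}<k\le n$, I would argue by contradiction using a Bonferroni inequality. Suppose $t(\cH,k)\ge 2n/k$; since $t(\cH,k)$ is an integer we may discard surplus points and keep exactly $t:=\lceil 2n/k\rceil$ of them, say $p_1,\dots,p_t$ with $\vert S(p_i)\vert\ge k$, and write $t=\tfrac{2n+\ell}{k}$ with $\ell\in\{0,\dots,k-1\}$. Using $\vert S(p_i)\cap S(p_j)\vert\le 1$ for $i\ne j$,
\begin{equation*}
n=\vert\cH\vert\ge\Bigl\vert\bigcup_{i=1}^{t}S(p_i)\Bigr\vert\ge\sum_{i=1}^{t}\vert S(p_i)\vert-\sum_{1\le i<j\le t}\vert S(p_i)\cap S(p_j)\vert\ge tk-\binom{t}{2}.
\end{equation*}
It then remains to show $tk-\binom{t}{2}>n$ under the hypotheses. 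For this I would substitute $t=\tfrac{2n+\ell}{k}$, use $t<\sqrt{2n}+\ell/k$ (which holds because $\sqrt{2n}<k$ forces $2n/k<\sqrt{2n}$) together with $\ell/k-1<0$ to bound $tk-\tfrac12 t(t-1)$ from below by $2n+\ell-n-\sqrt{n/2}\,(\ell/k)=n+\ell\bigl(1-\tfrac{\sqrt n}{\sqrt2\,k}\bigr)$, and finally observe that $k>\sqrt{2n}$ makes $\tfrac{\sqrt n}{\sqrt2\,k}<\tfrac12$, so $tk-\binom{t}{2}>n+\ell/2\ge n$. This contradicts the displayed inequality, so no such configuration exists and $t(n,k)<2n/k$ follows after taking the maximum over $\cH$.

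The first inequality is immediate; the genuine work, and the step I expect to be the main obstacle, is the arithmetic in the second bound: one must pick the integer $t$ and its slack $\ell$ correctly and push the chain $tk-\binom{t}{2}>\dots>n$ through while keeping careful track of the two places where the hypothesis $k>\sqrt{2n}$ enters — once to control $t$ in terms of $\sqrt{2n}$, and once to make the final coefficient of $\ell$ positive. Everything else reduces to bookkeeping with binomial coefficients and the single incidence fact $\vert S(p)\cap S(q)\vert\le 1$.
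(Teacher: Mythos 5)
Your proof is correct and follows essentially the same route as the paper: double-counting pairs of lines for the first bound, and for the second, the same contradiction argument with $t=\lceil 2n/k\rceil$ written as $\tfrac{2n+\ell}{k}$, the Bonferroni estimate $n\ge tk-\binom{t}{2}$, and the identical chain of inequalities using $k>\sqrt{2n}$ in the two places you identify.
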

	
	\begin{Corollary}[{Corollary of {\cref{Thm:SzemerediTrotter}}, {\cite[Theorem 2]{SzemerediTrotter}}}]\label{Cor:SzemTrot}
		For a hyperplane arrangement $\cH$ in $\RR^2$ there is some constant $\beta >0$ such that
		\begin{align*}
			t(n,k) < \beta \frac{n^2}{k^3},\quad  \forall 3 \leq k \leq \sqrt{n} .
		\end{align*}
		\begin{proof}
			Assume there are $t=\frac{c^3n^2+l}{k^3} \in \NN$, where $l\in \{0,..., k^3-1\}$ and $c=2.5$, points with $a(p)\geq k$.\par 
			Then 
			\begin{equation*}
				\sqrt{t} = n \sqrt{\frac{c^3}{k^3} + \frac{l}{n^2 k^3}} \leq n \sqrt{\frac{c^3}{k^3} + \frac{k^3-1}{n^2k^3}} < n \sqrt{\frac{2.5^3}{3^3}+\frac{1}{n^2}} < n,
			\end{equation*}
			since $n > 3$. Further
			\begin{align*}
				\begin{pmatrix} t \\ 2 \end{pmatrix} &= \frac{1}{2} \frac{c^3 n^2+l}{k^3}\left(\frac{c^3 n^2+l}{k^3}-1 \right) \geq \frac{1}{2} \left(c^3 \sqrt{n}+ \frac{l}{n^{\frac{3}{2}}}\right) \left(c^3 \sqrt{n}+ \frac{l}{n^{\frac{3}{2}}}-1 \right)\\
				&\geq \frac{1}{2} c^3 \sqrt{n}(c^3 \sqrt{n}-1) >n,
			\end{align*}
			since $c=2.5$ and $n >3$. Therefore we can use version ii) of the Szemer\'edi-Trotter Theorem: There is a constant $c$ such that $I(n,m) < c n^{2/3}m^{2/3}$ and we know that $c=2.5$ works. The $t$ point induce $t \cdot k$ incidences, but
			\begin{equation*}
				t \cdot k = c^3 \frac{n^2 + l}{k^2} \not< c n^{2/3}t^{2/3},
			\end{equation*}
			a contradiction to the theorem. Therefore $t=\lceil\frac{c^3n^2}{k^3} \rceil$ is not possible and also $t> \frac{c^3n^2}{k^3}$ is not possible by the same argument if we ignore some points. We see that $\beta =2.5^3$ is a possible choice.
		\end{proof}
	\end{Corollary}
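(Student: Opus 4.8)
The plan is to run the classical double-counting argument relating point--line incidences to the Szemer\'edi--Trotter bound, \cref{Thm:SzemerediTrotter}. Fix a hyperplane arrangement $\cH$ in $\RR^2$ with $\vert\cH\vert=n$ and $3\le k\le\sqrt n$. Viewing the lines of $\cH$ as the ``lines'' and the elements of $F_0(\cH)$ as the ``points'' of an incidence configuration, let $t:=t(\cH,k)$ and let $p_1,\dots,p_t$ be the points with $a(p_i)\ge k$. Since each $p_i$ lies on at least $k$ members of $\cH$, the number $I$ of incidences between $\{p_1,\dots,p_t\}$ and $\cH$ satisfies $I\ge tk$; this is the elementary lower bound.

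For the matching upper bound I would apply version ii) of \cref{Thm:SzemerediTrotter} to the $t$ points and the $n$ lines. The only real point to check is that we are in the admissible range, and this is exactly where the hypothesis $3\le k\le\sqrt n$ is used. Arguing by contradiction, suppose $t\ge\beta n^2/k^3$ for a constant $\beta$ to be fixed at the end. First, $\sqrt t\le n$ holds for free, since $t\le\binom n2<n^2$. Second, $n\le\binom t2$ holds because $t\ge\beta n^2/k^3\ge\beta\sqrt n$ (using $k\le\sqrt n$), so that $\binom t2\ge n$ as soon as $\beta$ exceeds a small absolute constant. With both conditions verified, version ii) gives $I<c\,t^{2/3}n^{2/3}$, and the literature value $c=2.5$ may be used.

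Combining the two bounds yields $tk\le I<c\,t^{2/3}n^{2/3}$, hence $t^{1/3}k<c\,n^{2/3}$ and therefore $t<c^3 n^2/k^3$. Taking $\beta:=c^3$ (so that $\beta=2.5^3=15.625$ is admissible) contradicts $t\ge\beta n^2/k^3$; and since one may always discard some of the $p_i$ to pass to the boundary case, the possibility $t>\beta n^2/k^3$ is excluded as well. Hence $t(\cH,k)<\beta n^2/k^3$, and maximising over all $\cH$ with $\vert\cH\vert=n$ gives $t(n,k)<\beta n^2/k^3$ for all $3\le k\le\sqrt n$, as claimed.

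I expect the only obstacle to be bookkeeping: one must carry the inequalities with explicit constants (rather than $\ll$-notation) to see that the single choice $\beta=c^3$ is simultaneously compatible with $\sqrt t\le n$ and $n\le\binom t2$ across the whole window $3\le k\le\sqrt n$, and to handle the integrality of $t$ cleanly (this is the role of the auxiliary remainder $l$ in the displayed computation). As an alternative route that sidesteps the range-checking, one could instead invoke version i) of \cref{Thm:SzemerediTrotter}, $I<c(n^{2/3}m^{2/3}+n+m)$, applied to the $t$ points and $n$ lines, and split according to which summand dominates $tk$: the summand $m=t$ never dominates the product summand because $t\le\binom n2<n^2$; the product summand gives $t<(3c)^3 n^2/k^3$ as above; and the summand $n$ gives $t<3cn/k$, which is at most $3cn^2/k^3$ precisely because $k\le\sqrt n$. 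This yields the same conclusion with the slightly larger but still explicit constant $\beta=(3c)^3$.
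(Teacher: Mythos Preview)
Your proof is correct and follows essentially the same approach as the paper's: both argue by contradiction, verify the range hypothesis $\sqrt{t}\le n\le\binom{t}{2}$ for version~ii) of \cref{Thm:SzemerediTrotter}, and compare the trivial lower bound $I\ge tk$ with the Szemer\'edi--Trotter upper bound to force $t<c^3 n^2/k^3$ with $\beta=c^3=2.5^3$. Your verification of $\sqrt{t}\le n$ via the universal bound $t\le\binom{n}{2}$ is in fact slightly cleaner than the paper's explicit computation, and your alternative route through version~i) is a correct addition not present in the paper.
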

	
	\begin{Theorem}[Local dual of Becks Theorem]\label{Thm:DualBeck}
		There exists a constant $c_2$ such that for a hyperplane arrangement $\cH$ in $\RR^2$ and $B \subset \RR^2$ convex. Where $\cH$ consist of two disjoint families $F_1$ and $F_2$ of hyperplanes with $\vert F_1 \vert = \vert F_2 \vert = \frac{n}{2}$ and such that for all $(f,g)\in F_1 \times F_2$ we have $B \cap f \cap g =\{p\}$, for some point $p \in B$ depending on $f$ and $g$. Then one of the following two cases holds:
		\begin{enumerate}
			\item There is a point $p \in B$ such that $a(p) \geq \frac{n}{100}$.
			\item The number of intersection points in $B$ exceeds $c_2 \cdot n^2$, i.e. $\vert F_{0,B}(\cH) \vert \geq c\cdot n^2$. 
		\end{enumerate}
		\begin{proof}
			We count the number of pairs of lines, we get
			\begin{equation*}
				\begin{pmatrix} n \\ 2 \end{pmatrix} \geq \sum_{p \in F_{0,B}(\cH)} \begin{pmatrix} a(p) \\ 2 \end{pmatrix} \geq \vert F_1 \vert \cdot \vert F_2 \vert = \frac{1}{4}n^2.
			\end{equation*}
			On the left side we counted all the possible options, in the middle we counted the pairs which intersect inside $B$ and on the right we counted the pairs of lines from the two families, since we know that they intersect in $B$. We will split the sum into three parts:
			\begin{align*}
				S_1 &:= \sum_{\substack{p \in F_{0,B}(\cH)\\ 2^k \leq a(p) < \sqrt{n}}} \begin{pmatrix} a(p) \\ 2\end{pmatrix},\\
				S_2 &:= \sum_{\substack{p \in F_{0,B}(\cH)\\ \sqrt{n} \leq a(p) < \frac{n}{100}}} \begin{pmatrix} a(p) \\ 2\end{pmatrix},\\
				S_3 &:= \sum_{\substack{p \in F_{0,B}(\cH)\\ 2 \leq a(p) < 2^k}} \begin{pmatrix} a(p) \\ 2\end{pmatrix} + \sum_{\substack{p \in F_{0,B}(\cH)\\ \frac{n}{100} \leq a(p) \leq n}} \begin{pmatrix} a(p) \\ 2\end{pmatrix},
			\end{align*}
			where $k=10$ is constant. Now we will bound $S_1$ and $S_2$. We start with $S_1$ using \cref{Cor:SzemTrot}:
			\begin{align*}
				S_1 &= \sum_{l \geq k} \sum_{\substack{p \in F_{0,B}(\cH) \\ 2^l \leq a(p) < 2^{l+1} \\ a(p) < \sqrt{n}}} \begin{pmatrix} a(p) \\ 2 \end{pmatrix} 
				\leq \sum_{\substack{l \geq k \\ 2^{l+1}<\sqrt{n}}}  t^{\ast}(n,2^l) \begin{pmatrix} 2^{l+1} \\ 2 \end{pmatrix} \\
				&= \sum_{\substack{l \geq k \\ 2^{l+1} < \sqrt{n}}}  t^{\ast}(n,2^l) 2^l (2^{l+1}-1)		
				\leq \sum_{\substack{l \geq k \\ 2^{l+1} < \sqrt{n}}} \beta \frac{n^2}{2^{3l}} 2^l (2^{l+1}-1)\\
				&\leq 2 \beta n^2 \sum_{l \geq k} \frac{1}{2^l} = \frac{4 \beta}{2^k} n^2 \leq \frac{1}{8} \begin{pmatrix} n \\ 2\end{pmatrix},
			\end{align*}
			Since $\beta = 2.5^3$, $k=10$ and $n\geq 2$. 
			For the next sum we use \cref{Lem:Beck2.1} and \cref{Cor:SzemTrot}:
			\begin{align*}
				S_2 &= \sum_{l \geq0} \sum_{\substack{p \in F_{0,B}(\cH) \\ 2^l \sqrt{n} \leq a(p) < 2^{l+1} \sqrt{n} \\ a(p) < \frac{n}{100}}} \begin{pmatrix} a(p) \\ 2 \end{pmatrix} \leq \sum_{\substack{l \geq 0\\ 2^{l} \sqrt{n} < \frac{n}{100}}} t(n, 2^l \sqrt{n}) \begin{pmatrix} 2^{l+1} \sqrt{n} \\ 2 \end{pmatrix}\\
				&= t(n, \sqrt{n}) \begin{pmatrix} 2 \sqrt{n} \\ 2 \end{pmatrix} +\sum_{\substack{l \geq 1\\ 2^{l} \sqrt{n} < \frac{n}{100}}} t(n,\underbrace{2^l \sqrt{n}}_{> \sqrt{2n}}) \begin{pmatrix} 2^{l+1} \sqrt{n} \\ 2 \end{pmatrix}\\
				&< \beta \frac{n^2}{n^{3/2}} \sqrt{n} (2 \sqrt{n}-1) + \sum_{\substack{l \geq 1\\ 2^{l} \sqrt{n} < \frac{n}{100}}} \frac{2n}{2^l \sqrt{n}} 2^l \sqrt{n} (2^{l+1} \sqrt{n}-1)\\
				&< 2 \beta n^{3/2} + 4 n^{3/2} \sum_{\substack{l \geq 1 \\ 2^{l} < \frac{\sqrt{n}}{100}}} 2^l = 2 \beta n^{3/2} + 4 n^{3/2} \left( \frac{\sqrt{n}}{50}-2\right)\\
				&= \frac{2}{25} n^2 + (2 \beta -8 )n^{3/2} \leq \frac{1}{4} \begin{pmatrix} n \\ 2\end{pmatrix}.
			\end{align*}
			Combining the two results we get a lower bound for $S_3$
			\begin{equation*}
				S_3 \geq \vert F_1 \vert \cdot \vert F_2 \vert - \frac{1}{4}\begin{pmatrix} n \\ 2 \end{pmatrix} - \frac{1}{8}\begin{pmatrix} n \\ 2 \end{pmatrix} \geq \frac{1}{16}n^2.
			\end{equation*}
			So now assume that condition (a) of the theorem does not hold, then
			\begin{equation*}
				\vert F_{0,B}(\cH) \vert \geq \sum_{\substack{p \in F_{0,B}\\ 2 \leq a(p) < 2^k}} 1 \geq \begin{pmatrix} 2^k \\ 2 \end{pmatrix}^{-1} \sum_{\substack{p \in F_{0,B}\\ 2 \leq a(p) < 2^k}} \begin{pmatrix} a(p) \\ 2 \end{pmatrix} \geq \begin{pmatrix} 2^k \\ 2 \end{pmatrix}^{-1} \frac{1}{16} n^2.
			\end{equation*}
			Since we have seen that $k=10$ is a possible choice the constant would be $c=\frac{1}{8 380416}$.
		\end{proof}
	\end{Theorem}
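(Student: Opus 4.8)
The plan is to run the dual of Beck's double-counting argument: count pairs of lines in two ways, then use the dyadic multiplicity bounds \cref{Lem:Beck2.1} and \cref{Cor:SzemTrot} (which package the Szemer\'edi--Trotter theorem) to show that the "low-multiplicity" intersection points alone already carry a positive fraction of all incident pairs, which forces many of them. Since the conclusion is only asymptotic in $n$ we may assume $n$ is larger than any fixed constant; for small $n$ every intersection point $p$ of a cross-pair satisfies $a(p)\ge 2 > n/100$, so alternative (a) holds trivially, and finitely many exceptional $n$ can be absorbed into $c_2$.

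The first step is the master inequality. Since $\cH$ is an arrangement of $n$ distinct lines, any two of them meet in at most one point, so $\sum_{p\in F_0(\cH)}\binom{a(p)}{2}\le\binom{n}{2}$. Conversely, the hypothesis says every one of the $|F_1|\cdot|F_2|=n^2/4$ unordered cross-pairs $\{f,g\}$ with $f\in F_1$, $g\in F_2$ (genuinely distinct pairs since $F_1\cap F_2=\emptyset$) meets at a point of $B$, hence $\sum_{p\in F_{0,B}(\cH)}\binom{a(p)}{2}\ge\tfrac14 n^2$. The object to control is therefore this $B$-restricted sum.

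The second step is the dyadic decomposition. Fix $k_0=10$ and split $\sum_{p\in F_{0,B}(\cH)}\binom{a(p)}{2}$ into $S_1$ (over $2^{k_0}\le a(p)<\sqrt n$), $S_2$ (over $\sqrt n\le a(p)<n/100$), and $S_3$ (the remainder: the range $2\le a(p)<2^{k_0}$ together with the range $n/100\le a(p)\le n$). For $S_1$, group terms dyadically by $2^l\le a(p)<2^{l+1}$; each such block has at most $t^\ast(n,2^l)\le t(n,2^l)<\beta n^2/2^{3l}$ points by \cref{Cor:SzemTrot}, and each contributes $\binom{a(p)}{2}<\binom{2^{l+1}}{2}\asymp 2^{2l}$, so the block contributes $O(\beta n^2/2^l)$; summing the geometric series over $l\ge k_0$ gives $S_1\le (4\beta/2^{k_0})n^2$, which with $\beta=2.5^3$ and $k_0=10$ is at most $\tfrac18\binom n2$ for large $n$. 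For $S_2$, treat the boundary block $a(p)\approx\sqrt n$ with \cref{Cor:SzemTrot} and the higher blocks $2^l\sqrt n\le a(p)<2^{l+1}\sqrt n$ (where $2^l\sqrt n>\sqrt{2n}$) with the bound $t(n,k)<2n/k$ from \cref{Lem:Beck2.1}; since $\binom{2^{l+1}\sqrt n}{2}\asymp 2^{2l}n$, each block contributes $O(2^l n^{3/2})$, and summing up to $2^l<\sqrt n/100$ yields a main term $\tfrac{2}{25}n^2$ plus a lower-order $O(n^{3/2})$, at most $\tfrac14\binom n2$ for large $n$.

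The third step concludes: from the master inequality, $S_3\ge\tfrac14 n^2-S_1-S_2\ge\tfrac14 n^2-\tfrac18\binom n2-\tfrac14\binom n2\ge\tfrac1{16}n^2$. If some point of $B$ has $a(p)\ge n/100$ we are in case (a); otherwise the "huge" part of $S_3$ is empty, so $S_3=\sum_{p\in F_{0,B}(\cH),\,2\le a(p)<2^{k_0}}\binom{a(p)}{2}$, and each summand is at most $\binom{2^{k_0}}{2}$, whence $|F_{0,B}(\cH)|\ge\binom{2^{k_0}}{2}^{-1}S_3\ge\big(16\binom{2^{k_0}}{2}\big)^{-1}n^2$, giving case (b) with $c_2=1/(16\binom{1024}{2})=1/8380416$. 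The main obstacle is the constant bookkeeping: one must check that $k_0=10$ is simultaneously large enough to make the geometric tail $S_1$ a small fraction of $\binom n2$, that the dyadic ranges genuinely satisfy the hypotheses of \cref{Lem:Beck2.1} (the $2n/k$ bound requires $k>\sqrt{2n}$) and of \cref{Cor:SzemTrot} (requiring $3\le k\le\sqrt n$), and that the lower-order $n^{3/2}$ terms are dominated — so the clean $n^2$ estimates hold only in the large-$n$ regime, which is exactly why the statement is phrased asymptotically.
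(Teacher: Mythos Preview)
Your proposal is correct and follows essentially the same route as the paper's own proof: the same master inequality $\binom{n}{2}\ge\sum_p\binom{a(p)}{2}\ge n^2/4$, the same three-way split $S_1,S_2,S_3$ with the same threshold $k_0=k=10$, the same dyadic bounds via \cref{Cor:SzemTrot} and \cref{Lem:Beck2.1}, and the same final constant $c_2=1/8380416$. Your added remark that small $n$ falls under alternative (a) trivially is a helpful clarification the paper leaves implicit.
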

	
	\begin{Remark}
		In condition (a) the constant $\frac{1}{100}$ is by no means optimal, but since for us the constant plays an insignificant role we stick to the original constant used by Beck.\par 
		Further notice that we have even proved a stronger theorem, namely that
		\begin{equation*} 
			\big\vert \{p \in F_{0,B}(\cH) \mid 2 \leq a(p) \leq 2^k\}\big\vert \geq c \cdot n^2,
		\end{equation*}
		if condition (a) does not hold.
	\end{Remark}

	\begin{proof}[Proof of \cref{Thm:BeckInduction}]
		The idea of the proof is that for a family $F_i$ of hyperplanes, the other families induce a hyperplane arrangement in all $H \in F_i$, thus we can conclude by induction. The case $d=2$ is already completed by \cref{Thm:DualBeck}, where we even proved the stronger statement that 
		\begin{equation*} 
			\vert \{p \in F_{0,B}(\cH) \mid 2 \leq a(p) < 2^k\}\vert \geq c_2 \cdot n^2
		\end{equation*}
		if for all $p \in B$ we have $a(p) < \frac{n}{100}$. That $a(p) < \frac{1}{100} n$ is guaranteed by the assumption that at most $c \cdot \vert F_i \vert$ hyperplanes from $F_i$ can intersect in one point and $c < \frac{1}{100}$. So the initial case of the induction holds.\par\medskip
		
		Now consider the family $F_1$, we are interested in the $d-1$ dimensional arrangement which is induced on the hyperplanes $H\in F_1$. Notice that for $H_i \in F_i, H_j \in F_j$ and $H_k\in F_k$, $i \neq j \neq k$, we have $H_i \cap H_j \neq H_i \cap H_k$ since otherwise we get a contradiction to the assumption that $B \cap \bigcap_{i=1}^d f_i = \{p\}$ for all $(f_1,...,f_d) \in F_1 \times ... \times F_d$. So the different families induce different $(d-2)$-hyperplanes on the hyperplanes of $F_1$. Set
		\begin{equation*}
			\cH^{H \ast} := \{H \cap f \mid f \in F_2 \cup ... \cup F_d\},
		\end{equation*}
		here $H \cap f \neq \emptyset$ holds for all $f$ by the assumption on the intersection behaviour.  Now we prove the following claim, which gives us the induction hypothesis:\par\medskip 
		
		\textbf{Claim:} $\vert \cH^{H \ast} \vert > \delta \cdot n$, for some constant $\delta$, and at least $\epsilon \cdot \frac{n}{d}$ hyperplanes $H \in F_1$, for some constant $\epsilon >0$.\par \medskip 
		We proof the claim: We do this by considering two families and show that the one induces enough planes on the second one. To do so let $P$ be a generic $2$-dimensional plane in $\RR^d$, i.e. $P \cap H$ is one dimensional for all $H \in F_1 \cup F_2$ and they are all distinct for different $H$. And $P\cap f$ is a point for all $f=H \cap K$ with $H \in F_1$, $K \in F_2$ which are also distinct if the $K \cap H$ are distinct. So each hyperplane corresponds to a line and each $d-2$ dimensional flat corresponds to a point. The intersection behaviour for the lines clearly fulfils the assumptions in \cref{Thm:DualBeck}. So we can apply the theorem and get $c \cdot \left(\frac{n}{d}\right)^2$ intersections points and therefore $c \cdot \left(\frac{n}{d}\right)^2$ induced flats. Since each hyperplane can carry at most $\frac{n}{d}$ induced flats we see that the flats have to spread out such that the claim holds.\par\medskip
		
		Denote the set of hyperplanes from $F_1$ for which the claim holds by $\tilde{F_1}$. It is clear that the intersection behaviour of the different families also holds in the $d-1$ dimensional arrangement induced on the hyperplanes in $\tilde{F_1}$. Also assume that the stronger statement 
		\begin{equation*} 
			\vert \{p \in F_{0,B}(\cH) \mid l \leq a(p) < l^k\}\vert \geq c_l \cdot n^l
		\end{equation*}
		is proved for all dimensions $l$ up to $d-1$, where $k$ is a constant.\par\medskip 
		
		Now we can do the induction step. We get the following inequality
		\begin{align*}
			\vert F_{0,B}(\cH) \vert \cdot \begin{pmatrix} d^k \\ d \end{pmatrix} > \sum_{\substack{p \in F_{0,B}(\cH)\\ d \leq a(p) < d^{k}}} \begin{pmatrix} a(p) \\ d \end{pmatrix} \geq \sum_{H \in F_1} \sum_{\substack{p \in F_{0,B}(\cH^{H\ast})\\ d-1 \leq a_{\cH^{H\ast}}(p) < (d-1)^k}} \begin{pmatrix} a_{\cH^{H\ast}}(p) \\ d-1 \end{pmatrix}.
		\end{align*}
		For the last inequality notice that $a_{\cH^{H\ast}}(p)$ now only counts the hyperplanes in $\cH^{H \ast}$ and we only have to take $d-1$ out of them since we fixed the choice $H \in F_1$. Now further by the induction assumption 
		\begin{align*}
			\sum_{H \in F_1} &\sum_{\substack{p \in F_{0,B}(\cH^{H\ast})\\ d-1 \leq a(p) < (d-1)^k}} \begin{pmatrix} a(p) \\ d-1 \end{pmatrix} 
			\geq \sum_{H \in \tilde{F_1}} \sum_{\substack{p \in F_{0,B}(\cH^{H\ast})\\ d-1 \leq a(p) < (d-1)^k}} \begin{pmatrix} a(p) \\ d-1 \end{pmatrix}\\
			&\geq \sum_{H \in \tilde{F_1}} \vert \{ p \in F_{0,B}(\cH^{H\ast}) \mid d-1 \leq a(p) < (d-1)^k\}\vert\\
			&\geq \sum_{H \in \tilde{F_1}} c_{d-1} \cdot \delta^{d-1} n^{d-1} \geq \epsilon \frac{n}{d} c_{d-1} \delta^{d-1} n^{d-1} = \frac{\epsilon c_{d-1}}{d} \cdot \delta^{d-1} n^d.\\
		\end{align*}
		This finally yields
		\begin{equation*}
			\big\vert \{ F_{0,B}(\cH) \mid d \leq a(p) < d^k \} \big\vert \geq \begin{pmatrix} d^k \\ d \end{pmatrix}^{-1} \frac{\epsilon c_{d-1}}{d} \cdot \delta^{d-1} n^d =: c_d n^d.
		\end{equation*}
	\end{proof}
	
	\begin{appendices}
		\section{FLC in non-abelian lcsc groups}\label[appendix]{Appendix:FLC}
		This appendix is dedicated to give some more information on sets with finite local complexity for lcsc groups and we will show that all model sets have FLC.\par\medskip
		
		A locally finite subset $\Lambda \subset G$ which fulfils one and therefore all of the conditions in the following lemma has \textit{finite local complexity} as defined in \cref{Rem:FLC}. In the lemma we only need that $\Lambda$ is locally finite, so we could also define the term of finite local complexity for this type of sets.
		
		\begin{Lemma}[Finite local complexity] \label{FLC_Lemma}
			Let $G$ be a lcsc group and $\Lambda \subset G$ a locally finite set, i.e. for all bounded $B\subset G$ we have that $B \cap \Lambda$ is finite. Then the following are equivalent:
			\begin{enumerate}[label=\roman*)]
				\item For all $B \subset G$ bounded there exists a finite $F_B \subset G$ such that
				\begin{equation*}
					\forall g \in G \,\exists h \in \Lambda^{-1}\Lambda \,\exists f \in F_B: (B g^{-1}\cap \Lambda)h=Bf^{-1}\cap \Lambda.
				\end{equation*} 
				\item For all $B \subset G$ bounded there exists a finite $F_B \subset G$ such that
				\begin{equation*}
					\forall g \in G \,\exists h \in G \,\exists f \in F_B: (B g^{-1}\cap \Lambda)h=Bf^{-1}\cap \Lambda.
				\end{equation*}
				\item $\Lambda \Lambda^{-1}$ is locally finite.
				\item For all $B \subset G$ bounded: 
				\begin{equation*}
					\big\vert \{B \cap \Lambda \lambda^{-1}\mid \lambda \in \Lambda\}\big\vert< \infty.
				\end{equation*}
				\item The complexity function $p(r)$ is finite for all $r\geq 0$.
			\end{enumerate}
			\begin{proof}
				First we will show the equivalence of $i), ii)$ and $iii)$. Afterwards we will show the equivalence of $iii)$ and $iv)$. Finally the will show the equivalence of $iv)$ and $v)$.\par
				
				$i) \Rightarrow ii)$: This step is obvious, since $\Lambda^{-1}\Lambda \subset G$.\par
				
				$ii) \Rightarrow iii)$: Without loss of generality we can assume that $B$ is compact and contains the identity, otherwise we just simply expand $B$ and notice that this would just increase the number of elements in the intersection. For this $B$ we choose $F_B$ such that $ii)$ holds. Since $F_B$ is finite and $B$ is bounded we see that $B':=BF_B^{-1}$ is also bounded, further we see, since $\Lambda$ is locally finite, that $F:=B'\cap \Lambda$ is finite.\\
				Now let $\lambda_1, \lambda_2 \in \Lambda$ be arbitrary with $\lambda_1\lambda_2^{-1} \in B$. We get $\lambda_1 \in B\lambda_2 \cap \Lambda$ and since we assumed $e \in B$ we also get $\lambda_2 \in B \lambda_2 \cap \Lambda$. With our assumption we get that $h_1 \in G$ and $f_1 \in F_B$ exist with
				\begin{equation*}
					(B \lambda_2 \cap \Lambda)h_1=B f_1^{-1} \cap \Lambda.
				\end{equation*} 
				Putting the pieces together we obtain
				\begin{equation*}
					\{\lambda_1,\lambda_2\} h_1 \subseteq (B \lambda_2 \cap \Lambda) h_1 = B f_1^{-1}\cap \Lambda \subset B F_B^{-1} \cap \Lambda = B'\cap \Lambda =F.
				\end{equation*}
				So $\lambda_1 \lambda_2^{-1} = (\lambda_1 h_1^{-1})(\lambda_2 h_1^{-1})^{-1}\in F F^{-1}$ and we get that $\Lambda\Lambda^{-1} \cap B \subset F F^{-1}$ is finite.\par\medskip
				
				$iii) \Rightarrow i)$: Let $B \subset G$ be bounded. Without loss of generality we can assume $B$ to be symmetric, i.e. $B=B^{-1}$. Since $B$ is bounded $B^2$ is also bounded and $B^2 \cap \Lambda \Lambda^{-1}$ is finite by assumption. Then
				\begin{equation*}
					BB \cap \Lambda \Lambda^{-1} = \bigcup_{b \in B} \bigcup_{\lambda \in \Lambda} Bb \cap \Lambda \lambda^{-1},
				\end{equation*}
				and since we know that this is finite we conclude that $Bb \cap \Lambda \lambda^{-1}$ can only have finitely many different forms. So we find $b_1,...,b_s \in B$ and $\lambda_1,...,\lambda_t \in \Lambda$ such that for arbitrary $b\in B$ and $\lambda \in \Lambda$ there exists a $n\in \{1,...,s\}$ and a $m\in \{1,...,t\}$ with
				\begin{equation*}
					Bb \cap \Lambda \lambda^{-1} = Bb_n \cap \Lambda \lambda_m^{-1}.
				\end{equation*}
				Let $g\in G$ be arbitrary. Then the two following cases can appear:\par \medskip
				\textbf{Case 1}: $B g^{-1} \cap \Lambda = \emptyset$, to deal with this case we simply set $f_0 = g$, for one such $g$.\par \medskip
				\textbf{Case 2:} $B g^{-1} \cap \Lambda \neq \emptyset$, then there exists a $b'\in B$ such that $b'g^{-1} = \lambda$. Set $b:=b'^{-1}$, then $g^{-1}=b\lambda$ and, since $B$ is symmetric, $b \in B$. Now choose $n$ and $m$ such that $Bb \cap \Lambda \lambda^{-1}=B b_n \cap \Lambda \lambda_n^{-1}$ and set $h:=\lambda^{-1}\lambda_n \in \Lambda^{-1} \Lambda$. Now we get
				\begin{align*}
					(Bg^{-1} \cap \Lambda)h &= (Bb \lambda \cap \Lambda) \lambda^{-1} \lambda_m = (Bb \cap \Lambda \lambda^{-1}) \lambda_m\\
					&=(B b_n \cap \Lambda \lambda_m^{-1}) \lambda_m = B b_n \lambda_m \cap \Lambda
				\end{align*}
				Finally we set $F_B':=\big\{\lambda_m^{-1} b_n^{-1} \mid n \in \{1,...,s\}, m\in\{1,...,t\}\big\}$, which is finite.\par\medskip
				
				To combine both cases define $F_B := F_B' \cup {f_0}$.\par \medskip
				
				$iv) \Rightarrow iii)$: Let $B\subset G$ be bounded. For $\{B \cap \Lambda \lambda^{-1} \mid \lambda \in \Lambda\}$ we use our assumption to find finitely many representatives $\lambda_1,...,\lambda_k$ such that 
				\begin{equation*}
					\{B \cap \Lambda \lambda^{-1} \mid \lambda \in \Lambda\} = \bigcup_{l=1}^k \{B \cap \Lambda \lambda_l^{-1}\}.
				\end{equation*}
				We get 
				\begin{equation*} 	
					B \cap \Lambda \Lambda^{-1} = \bigcup_{\lambda \in \Lambda} B \cap \Lambda \lambda^{-1} =\bigcup_{l=1}^k B \cap \Lambda \lambda_l^{-1}. 
				\end{equation*}
				The sets $B \cap \Lambda \lambda_l^{-1} = (B \lambda_l \cap \Lambda)\lambda_l^{-1}$ are finite, since $\Lambda$ is locally finite.\medskip
				
				$iii) \Rightarrow iv)$: Let $B \subset G$ be bounded, then by our assumption $B \cap \Lambda\Lambda^{-1}$ is finite. Further
				\begin{equation*}
					B \cap \Lambda\Lambda^{-1} = \bigcup_{\lambda \in \Lambda}  B \cap \Lambda \lambda^{-1}.
				\end{equation*}
				Since the left hand side is finite this also holds for the right hand side. But this means that there can only be finitely many combinations for the sets $B \cap \Lambda \lambda^{-1}$. So we get
				\begin{equation*}
					\vert \{ B \cap \Lambda \lambda^{-1} \mid \lambda \in\Lambda\} \vert < \infty.
				\end{equation*}\medskip
				
				$iv) \Leftrightarrow v)$: This is obvious since each ball $B_r(e)$ is a bounded set and on the other hand for every bounded set $B$ we can find a $r>0$ such that $B \subset B_r(e)$ holds.  
			\end{proof}
		\end{Lemma}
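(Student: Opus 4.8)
The plan is to prove all five conditions equivalent via the cycle $(i) \Rightarrow (ii) \Rightarrow (iii) \Rightarrow (i)$, supplemented by $(iii) \Leftrightarrow (iv)$ and $(iv) \Leftrightarrow (v)$. The only standing hypothesis I would exploit is local finiteness of $\Lambda$: whenever a set of the form $C \cap \Lambda$ or $C \cap \Lambda\Lambda^{-1}$ appears with $C$ bounded (and, in the latter case, $\Lambda\Lambda^{-1}$ already known to be locally finite), finiteness comes for free.

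For the easy links: $(i) \Rightarrow (ii)$ is trivial since $\Lambda^{-1}\Lambda \subseteq G$, so a witness $h$ for $(i)$ also witnesses $(ii)$. For $(ii) \Rightarrow (iii)$ I would fix bounded $B$, enlarge it to be compact, symmetric and to contain $e$, and take the finite $F_B$ from $(ii)$. Then $B' := B F_B^{-1}$ is bounded, so $F := B' \cap \Lambda$ is finite. Given $\lambda_1, \lambda_2 \in \Lambda$ with $\lambda_1\lambda_2^{-1} \in B$, both points lie in $B\lambda_2 \cap \Lambda$ (using $e \in B$), and $(ii)$ supplies $h_1, f_1$ with $(B\lambda_2 \cap \Lambda) h_1 = B f_1^{-1} \cap \Lambda \subseteq F$; hence $\lambda_1\lambda_2^{-1} = (\lambda_1 h_1^{-1})(\lambda_2 h_1^{-1})^{-1} \in FF^{-1}$, so $B \cap \Lambda\Lambda^{-1}$ is finite.

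The \emph{main} effort is $(iii) \Rightarrow (i)$. Given bounded $B$, I may take it symmetric; then $B^2$ is bounded, so $B^2 \cap \Lambda\Lambda^{-1}$ is finite. Since $B^2 \cap \Lambda\Lambda^{-1} = \bigcup_{b \in B,\ \lambda \in \Lambda}(Bb \cap \Lambda\lambda^{-1})$, only finitely many distinct sets $Bb \cap \Lambda\lambda^{-1}$ occur; fix representing pairs $(b_n,\lambda_m)$ for $n \leq s$, $m \leq t$. For arbitrary $g \in G$: if $Bg^{-1} \cap \Lambda = \emptyset$, all such $g$ are handled by a single constant $f_0$; otherwise some $b' \in B$ has $b'g^{-1} = \lambda \in \Lambda$, and with $b := b'^{-1} \in B$ one gets $g^{-1} = b\lambda$, so choosing $n,m$ with $Bb \cap \Lambda\lambda^{-1} = Bb_n \cap \Lambda\lambda_m^{-1}$ and setting $h := \lambda^{-1}\lambda_m \in \Lambda^{-1}\Lambda$ yields $(Bg^{-1} \cap \Lambda)h = Bb_n\lambda_m \cap \Lambda$. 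Taking $F_B := \{\lambda_m^{-1}b_n^{-1}\} \cup \{f_0\}$ finishes $(i)$. The delicate point is precisely this bookkeeping of the two cases together with the conjugation/translation identities; everything else is formal.

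Finally, $(iii) \Leftrightarrow (iv)$: from $(iv)$ pick finitely many $\lambda_l$ realizing all sets $B \cap \Lambda\lambda^{-1}$, so $B \cap \Lambda\Lambda^{-1} = \bigcup_l (B \cap \Lambda\lambda_l^{-1})$ is a finite union of finite sets; conversely, if $B \cap \Lambda\Lambda^{-1} = \bigcup_{\lambda}(B \cap \Lambda\lambda^{-1})$ is finite, only finitely many distinct terms can occur. And $(iv) \Leftrightarrow (v)$ is immediate: $p(r) = |\{B_r(e) \cap \Lambda\lambda^{-1} \mid \lambda \in \Lambda\}|$, each $B_r(e)$ is bounded, every bounded $B$ lies in some $B_r(e)$, and intersecting equal sets $B_r(e) \cap \Lambda\lambda^{-1}$ with $B$ can only decrease the count.
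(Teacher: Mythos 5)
Your proposal is correct and follows essentially the same route as the paper: the same cycle $(i)\Rightarrow(ii)\Rightarrow(iii)\Rightarrow(i)$, the same construction of $B' = BF_B^{-1}$ and $F = B'\cap\Lambda$ in $(ii)\Rightarrow(iii)$, the same symmetric-$B$ case analysis with finitely many representative pairs $(b_n,\lambda_m)$ in $(iii)\Rightarrow(i)$, and the same union decomposition for $(iii)\Leftrightarrow(iv)$ and ball comparison for $(iv)\Leftrightarrow(v)$. One small remark: in $(ii)\Rightarrow(iii)$ you (like the paper) write $\lambda_1\lambda_2^{-1}=(\lambda_1 h_1^{-1})(\lambda_2 h_1^{-1})^{-1}$, but since $\{\lambda_1,\lambda_2\}h_1\subseteq F$ the factors that actually lie in $F$ are $\lambda_1 h_1$ and $\lambda_2 h_1$, so it should read $\lambda_1\lambda_2^{-1}=(\lambda_1 h_1)(\lambda_2 h_1)^{-1}\in FF^{-1}$.
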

		
		The following proposition justifies our restriction to precompact windows with non-empty interior.
		
		\begin{Proposition}\label{Prop:CPSFLCrelativelydense}
			Let $(G,H, \Gamma)$ be a CPS, $W \subset H$ a subset and $\Lambda:= \pi_G((G \times W)\cap \Gamma)$.
			\begin{enumerate}[label=\roman*)]
				\item If $W^\circ \neq \emptyset$, then $\Lambda$ is relatively dense,
				\item if $W$ is relatively compact, then $\Lambda$ is uniformly discrete,
				\item if $W$ is relatively compact and $W^\circ \neq \emptyset$, then $\Lambda$ has FLC.
			\end{enumerate}
			\begin{proof}
				\begin{enumerate}[label=\roman*)]
					\item We are using \cref{Lem:ProduktOffenKompakt}. Since ${W^\circ \neq \emptyset}$ this also holds for the inverse ${(W^{-1})^\circ \neq \emptyset}$ and we can choose an open subset $\emptyset \neq U \subset W^{-1}$. By the \cref{Lem:ProduktOffenKompakt} we find a compact set $K$ such that $G\times H= (K \times U)\Gamma$. Let $g \in G$ be arbitrary, we can find $u \in U, k \in K$ and $\gamma \in \Gamma$ such that
					\begin{equation*}
						(g, e_H) = (k,u)(\gamma_{G}, \gamma_{H}).
					\end{equation*}
					This tells us that $u \gamma_{H} = e_H$ and therefore $\gamma_{H}=u^{-1} \in (W^{-1})^{-1}=W$, so $\gamma_{G} \in \Lambda$. Therefore $g = k \gamma_{G} \in K \Lambda$. This shows the claim.
					
					\item Let us assume $\Lambda$ is not uniformly discrete, then for all $r>0$ there exists ${x,y \in \Lambda}$ such that ${d(x,y)<r}$. By the right-invariance of $d$ this is equivalent to ${d(e, yx^{-1})<r}$. We can lift $x$ and $y$ to elements in the product and get
					\begin{equation*}
						\pi_G\vert_\Gamma^{-1}(x)=:(x_G,x_H),\, \pi_G\vert_\Gamma^{-1}(y)=:(y_G,y_H) \in \Gamma \cap (G \times W). 
					\end{equation*}
					Since $\Gamma$ and $G$ are groups we can deduce 
					\begin{equation*}
						{(y_G,y_H)(x_G^{-1},x_H^{-1}) \in \Gamma \cap (G \times WW^{-1})}.
					\end{equation*}
					Since we know that ${yx^{-1} \in B_r(x)}$ we get 
					\begin{equation*}
						{(y_G,y_H)(x_G^{-1},x_H^{-1}) \in \Gamma \cap (B_r(e_G) \times WW^{-1})}.
					\end{equation*}
					Since $W$ is relatively compact, $WW^{-1}$ is also relatively compact and therefore bounded. Moreover $B_r(e_G)$ is bounded so the product ${B_r(e_G)\times WW^{-1}}$ is bounded. Thus, since $\Gamma$ is a lattice, we get that ${\Gamma \cap (B_r(e_G) \times WW^{-1})}$ is finite. By the injectivity of ${\pi_G}$ we know that ${d(a_G,b_G) \neq 0}$ for ${a \neq b \in \Gamma}$ so we get that $d(a_G,b_G) > 0$ for $a,b \in \Gamma \cap (B_r(e_G) \times WW^{-1})$ and by finiteness there is a minimal distance $\tilde{d}$. Now set $\tilde{r}< \tilde{d}$ and conclude ${\Gamma \cap (B_{\tilde{r}}(e_G) \times WW^{-1})=\{(e_G,e_H)\}}$. This is a contradiction to the assumption, since we do not find two elements, which are this close together. Therefore $\Lambda$ has to be uniformly discrete for $\tilde{r}$.
					
					\item By i) and ii) we know that $\Lambda$ is a Delone set and therefore locally finite. We want to use the characterisation iii) of \cref{FLC_Lemma}, so we show that $B \cap \Lambda \Lambda^{-1}$ is finite for a bounded set $B \subset H$. It is enough to show that the preimage of this set is finite. Since taking the preimage and intersecting commutes we get
					\begin{equation*}
						\pi_G^{-1}(B \cap \Lambda\Lambda^{-1}) = \pi_G^{-1}(B) \cap \pi_G^{-1}(\Lambda \Lambda^{-1}).
					\end{equation*}
					Now we can consider the two parts separately and then intersect them, so the preimage of $B$ is obviously $\pi_G^{-1}(B)=B \times H$.\\
					For the second part we need to remember the definition of $\Lambda$, this was given by ${\Lambda=\pi_G((G \times W)\cap \Gamma)}$, so
					\begin{equation*}
						\pi_G^{-1}(\Lambda \Lambda^{-1}) = \pi_G^{-1}(\pi_G((G \times W)\cap \Gamma) \pi_G((G \times W)\cap \Gamma)^{-1}).
					\end{equation*} 
					We want to show that this is a subset of $\Gamma \cap (G \times WW^{-1})$. So let $\lambda_1, \lambda_2 \in \Lambda$ then they are both in $\Gamma_G$ and therefore $\lambda_1 \lambda_2^{-1} \in \Gamma_G$ and there exists a unique preimage inside $\Gamma$ which we name $(\lambda_1 \lambda_2^{-1},x)$. On the other hand the preimage of $\lambda_i$, $i\in\{1,2\}$, is $(\lambda_i, w_i) \in \Gamma \cap (G \times W)$. And
					\footnotesize\begin{equation*}
						(\lambda_1, w_1)(\lambda_2, w_2)^{-1} = (\lambda_1, w_1)(\lambda_2^{-1}, w_2^{-1}) = (\lambda_1 \lambda_2^{-1}, w_1w_2^{-1}) \in \Gamma \cap (G \times WW^{-1}).
					\end{equation*}\normalsize
					Since the preimage was unique and we see that 
					\begin{equation*}
						\pi_G(\lambda_1 \lambda_2^{-1}, w_1w_2^{-1})= \lambda_1 \lambda_2^{-1}
					\end{equation*}
					we get that $\pi_G^{-1}(\lambda_1 \lambda_2^{-1}) \in \Gamma \cap (G \times WW^{-1})$.\\
					Combining the two arguments we get
					\small\begin{equation*}
						\pi_G^{-1}(B \cap \Lambda\Lambda^{-1}) = (B \times H) \cap \Gamma \cap (G \times WW^{-1}) = \Gamma \cap (B \times WW^{-1}).
					\end{equation*}\normalsize
					
					Since $W$ is relatively compact we get that $\overline{W}$ is compact. Since $W \subset \overline{W}$ we see $W \subset H$ is bounded. Hence there exists a $r>0$ such that $r>d(w_1, w_2)$ for all ${w_1,w_2 \in W}$. And once more by right-invariance of the metric we get ${r>d(w_1w_2^{-1},e)}$. This tells us that $WW^{-1} \subset B_r(e)$ and therefore it is bounded. Further $B \subset G$ was a bounded set. We see that $B \times WW^{-1} \subset G \times H$ is bounded in the product. Since $\Gamma$ is a lattice it has FLC and therefore $(B \times WW^{-1}) \cap \Gamma$ is finite.
				\end{enumerate}
			\end{proof}
		\end{Proposition}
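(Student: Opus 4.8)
The plan is to establish the three assertions in sequence, each reducing to a compactness/boundedness bookkeeping argument in the product $G \times H$ together with the two general facts already available, \cref{Lem:ProduktOffenKompakt} and the equivalences in \cref{FLC_Lemma}. For (i): since $W$ has non-empty interior and inversion is a homeomorphism of $H$, so does $W^{-1}$; I would pick an open non-empty $U \subseteq W^{-1}$ and apply \cref{Lem:ProduktOffenKompakt} to get a compact $K \subseteq G$ with $G \times H = (K \times U)\Gamma$. For arbitrary $g \in G$ write $(g, e_H) = (k,u)\gamma$ with $(k,u) \in K \times U$ and $\gamma \in \Gamma$: the $H$-coordinate gives $\pi_H(\gamma) = u^{-1} \in (W^{-1})^{-1} = W$, hence $\pi_G(\gamma) \in \Lambda$, while the $G$-coordinate gives $g\,\pi_G(\gamma)^{-1} = k \in K$. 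Choosing $R$ with $K \subseteq B_R(e)$, right-invariance of the metric yields $d(g, \pi_G(\gamma)) = |k| < R$, so $\Lambda$ is $R$-relatively dense.

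For (ii) I would argue by contradiction. If $\Lambda$ is not uniformly discrete then for every $r > 0$ there are $x \neq y \in \Lambda$ with $yx^{-1} \in B_r(e_G)$. Lifting to the unique $\Gamma$-preimages $(x_G,x_H),(y_G,y_H) \in \Gamma \cap (G \times W)$ and multiplying, $(y_G,y_H)(x_G,x_H)^{-1} \in \Gamma \cap (B_r(e_G) \times WW^{-1})$. The key move is to fix one admissible radius $r_0$ first: since $W$ is relatively compact, $WW^{-1}$ is bounded, so $B_{r_0}(e_G) \times WW^{-1}$ is bounded in $G \times H$ and meets the lattice $\Gamma$ in a finite set $S$; by injectivity of $\pi_G|_\Gamma$ the $G$-coordinates of $S$ are pairwise distinct with a minimal gap $\tilde d > 0$. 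Taking $r < \min\{r_0,\tilde d\}$ then forces $\Gamma \cap (B_r(e_G) \times WW^{-1}) = \{e\}$, contradicting the existence of $x \neq y$; hence $\Lambda$ is $r$-uniformly discrete.

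For (iii): by (i) and (ii), $\Lambda$ is Delone and in particular locally finite, so I may apply \cref{FLC_Lemma} and verify its characterisation that $\Lambda\Lambda^{-1}$ is locally finite. For bounded $B \subseteq G$ and $\lambda_1,\lambda_2 \in \Lambda$ with unique $\Gamma$-lifts $(\lambda_1,w_1),(\lambda_2,w_2) \in \Gamma \cap (G \times W)$, the product $(\lambda_1,w_1)(\lambda_2,w_2)^{-1} = (\lambda_1\lambda_2^{-1}, w_1 w_2^{-1})$ lies in $\Gamma \cap (G \times WW^{-1})$ and projects to $\lambda_1\lambda_2^{-1}$; by uniqueness of lifts the $\Gamma$-lift of any element of $B \cap \Lambda\Lambda^{-1}$ sits in the bounded set $B \times WW^{-1}$, so $\Gamma \cap (B \times WW^{-1})$ is finite and therefore so is $B \cap \Lambda\Lambda^{-1}$. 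Thus $\Lambda$ has FLC.

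The routine parts are the boundedness bookkeeping — that $WW^{-1}$ is bounded once $W$ is relatively compact, that a product of bounded sets is bounded in $G \times H$, and that compact sets lie inside metric balls. The one step needing genuine care is the order of quantifiers in (ii): one must first fix $r_0$, then extract the finite lattice set and its minimal $G$-gap $\tilde d$, and only afterwards shrink $r$ below $\tilde d$; performing these in the wrong order would make the argument circular. Beyond that I anticipate no real obstacle, since everything else is a direct application of \cref{Lem:ProduktOffenKompakt} and \cref{FLC_Lemma}.
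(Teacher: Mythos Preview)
Your proposal is correct and follows essentially the same approach as the paper: the same use of \cref{Lem:ProduktOffenKompakt} with $U\subseteq W^{-1}$ in (i), the same contradiction via $\Gamma\cap(B_r(e_G)\times WW^{-1})$ in (ii), and the same reduction to characterisation (iii) of \cref{FLC_Lemma} via lifting $\Lambda\Lambda^{-1}$ into $\Gamma\cap(B\times WW^{-1})$ in (iii). Your explicit attention to the quantifier order in (ii) --- fixing an initial $r_0$ before extracting the finite set and its minimal $G$-gap --- is in fact a cleaner presentation than the paper's, which leaves that step slightly implicit.
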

		
		\section{Homogenous Lie groups}\label[appendix]{Appendix:Poly}
		In the first part of this appendix we are concerned with the proof of \cref{Thm:NonCrooked}.
		\begin{Proposition}
			Every locally two-step nilpotent homogeneous Lie group is non-crooked.
			\begin{proof}
				This follows directly by considering the BCH-formula and noticing that by using the assumption on locally two-step nilpotent Lie groups that for all $X,Y \in \fg$
				\begin{equation*}
					X \ast Y = X + Y + \frac{1}{2}[X,Y].
				\end{equation*}
				So for a hyperplane $H =v_0 + \sum_{i=1}^d t_i v_i$, with $t_i \in \RR$, $v_i \in \RR^d$ and $d$ the dimension of the Lie group. We get for all $X \in \fg$
				\small\begin{align*}
					X \ast H &= X + H + \frac{1}{2}[X,H] = v_0 +X +\sum_{i=1}^d t_i v_i + \frac{1}{2} [X, v_0] + \frac{1}{2} \sum_{i=1}^d t_i [X,v_i]\\
					&= v_0 +X + \frac{1}{2} [X, v_0] +\sum_{i=1}^d t_i (v_i + [X,v_i]) =: \tilde{v_0} + \sum_{i=1}^d t_i \tilde{v_i}.
				\end{align*}\normalsize
				And this is again a hyperplane.
			\end{proof}
		\end{Proposition}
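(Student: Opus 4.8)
The plan is to combine the Baker--Campbell--Hausdorff description of the group law with the fact that, in exponential coordinates, the underlying set of $G$ is $\fg\cong\RR^{\dim G}$ and a hyperplane of $G$ is just an affine hyperplane of $\fg$. Since $G$ acts on $\cH(G)$ by the left translations $L_Z\colon X\mapsto Z\ast X$, it suffices to check that every $L_Z$ carries affine hyperplanes to affine hyperplanes.

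First I would observe that the hypothesis $\ad_X^2=0$ for all $X\in\fg$ forces the BCH series to collapse to its quadratic truncation. Writing the BCH formula as in the excerpt, every term of degree $\geq 3$ has the shape $\ad_{A_1}\circ\cdots\circ\ad_{A_\ell}(Y)=\bigl[A_1,[A_2,[\cdots,[A_\ell,Y]]]\bigr]$ with $\ell\geq 2$ and $A_1,\dots,A_\ell\in\{X,Y\}$. Its innermost three-fold bracket $[A_{\ell-1},[A_\ell,Y]]$ vanishes: if $A_\ell=Y$ it contains $[Y,Y]=0$, and if $A_\ell=X$ it equals $[A_{\ell-1},[X,Y]]$, which is $\ad_X^2 Y$ (if $A_{\ell-1}=X$) or $-\ad_Y^2 X$ (if $A_{\ell-1}=Y$), hence $0$ by hypothesis. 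Therefore the whole term is zero, and
\begin{equation*}
	Z\ast X = Z + X + \tfrac12[Z,X]
\end{equation*}
for all $Z,X\in\fg$, which is exactly the identity used (for the two letters) in the excerpt.

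Next I would read off that $L_Z$ is affine: $L_Z(X)=Z+\bigl(\mathrm{Id}+\tfrac12\ad_Z\bigr)(X)$, and its linear part $\mathrm{Id}+\tfrac12\ad_Z$ is invertible because $\ad_Z$ is nilpotent ($\ad_Z^2=0$). An invertible affine self-map of $\RR^{\dim G}$ carries affine hyperplanes bijectively to affine hyperplanes: for $H=\{v_0+\sum_i t_i v_i:t_i\in\RR\}$ with $v_1,\dots,v_{\dim G-1}$ linearly independent, one computes $L_Z(H)=\{\,\widetilde v_0+\sum_i t_i\widetilde v_i\,\}$ with $\widetilde v_i=\bigl(\mathrm{Id}+\tfrac12\ad_Z\bigr)v_i$ again independent. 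Hence $L_Z(\cH(G))=\cH(G)$ for every $Z$, i.e. $\cH(G)\subset\cP(G)$ is $G$-invariant, so $G$ is non-crooked.

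I do not expect a genuine obstacle here: the only slightly delicate point is the truncation of the BCH series, after which everything is linear algebra. The substantive part of the surrounding equivalence \cref{Thm:NonCrooked} is rather the converse, that a non-crooked homogeneous Lie group is already locally $2$-step nilpotent — there one argues that each $L_Z$ maps lines (intersections of hyperplanes) to lines and is bijective, deduces from the fundamental theorem of affine geometry that $L_Z$ is affine, and then extracts $[Y,[Z,Y]]=0$ for all $Y,Z$ from the vanishing of the degree-$3$ part of BCH — and is best treated separately.
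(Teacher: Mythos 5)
Your proposal is correct and follows essentially the same route as the paper: truncate the BCH series to $Z\ast X = Z + X + \tfrac12[Z,X]$ using local two-step nilpotency, and then observe that left translation maps the affine hyperplane $v_0+\sum_i t_i v_i$ to another affine hyperplane with directions $\tilde v_i=(\mathrm{Id}+\tfrac12\ad_Z)v_i$. You are slightly more careful than the paper on two small points — you spell out why every nested bracket of order $\geq 2$ vanishes, and you note that $\mathrm{Id}+\tfrac12\ad_Z$ is invertible so the image is genuinely a hyperplane rather than a degenerate affine subspace — but the underlying argument is the same computation.
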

		
		At first sight the condition of locally two-step nilpotent seems weaker than the condition of being two-step nilpotent, but in fact the two are equivalent by the following proposition.
		
		\begin{Proposition}
			Let $G$ be a Lie group. If $G$ is locally two-step nilpotent then $G$ is two-step nilpotent or abelian. If $G$ is two-step nilpotent it is locally two-step nilpotent.
			\begin{proof}
				The conclusion from two-step nilpotent to locally two-step nilpotent is trivial, so two-step nilpotent Lie groups are locally two-step nilpotent Lie groups.\par 
				So now assume that we have a locally two-step nilpotent Lie group and arbitrary $X,Y,Z \in \fg$, then
				\begin{align*}
					0 &= [X+Y,[X+Y,Z] = [X,[X,Z]]+[X,[Y,Z]]+[Y,[X,Z]]+[Y,[Y,Z]]\\
					&= [X,[Y,Z]]+[Y,[X,Z]].
				\end{align*}
				This means that $[X,[Y,Z]] =-[Y,[X,Z]] = [Y,[Z,X]]$. By using Jacobi's identity we have
				\begin{equation*}
					0 = [X,[Y,Z]]+[Y,[Z,X]]+[Z,[X,Y]].
				\end{equation*}
				And therefore by using the equality we found before we have $2[Y,[Z,X]]=[Z,[Y,X]]$. Since $X$, $Y$ and $Z$ are arbitrary, we can switch the roles of $Y$ and $Z$. Thus \linebreak ${2[Z,[Y,X]]=[Y,[Z,X]]}$. So in total this means
				\begin{equation*}
					[Z,[Y,X]]=2[Y,[Z,X]] = 4 [Z,[Y,X]],
				\end{equation*}
				and therefore $[Z,[Y,X]]=0$. So $G$ is two-step nilpotent or abelian.
			\end{proof}	
		\end{Proposition}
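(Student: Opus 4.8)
The plan is to reduce everything to an identity in the Lie algebra $\fg$ and to exploit a polarization of the locally two-step hypothesis. First I would dispose of the trivial implication: if $G$ is two-step nilpotent then $[\fg,[\fg,\fg]]=0$, and since $[X,Y]\in[\fg,\fg]$ for all $X,Y\in\fg$ we get $\ad_X^2(Y)=[X,[X,Y]]=0$, so $G$ is locally two-step nilpotent. Nothing else is needed for that direction.

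For the converse I would assume $\ad_X^2=0$ for every $X\in\fg$, i.e. $[X,[X,Z]]=0$ for all $X,Z\in\fg$, and aim to show $[\fg,[\fg,\fg]]=0$. The first key step is to polarize: substituting $X+Y$ for $X$ and expanding $0=[X+Y,[X+Y,Z]]$ kills the two pure terms $[X,[X,Z]]$ and $[Y,[Y,Z]]$ and leaves the bilinear identity $[X,[Y,Z]]+[Y,[X,Z]]=0$, equivalently $[X,[Y,Z]]=[Y,[Z,X]]$ after using antisymmetry. The second step is to feed this into the Jacobi identity $[X,[Y,Z]]+[Y,[Z,X]]+[Z,[X,Y]]=0$, which then reads $2[Y,[Z,X]]+[Z,[X,Y]]=0$, i.e. $[Z,[Y,X]]=2[Y,[Z,X]]$.

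The final step is to use the symmetry of the hypothesis in the three variables: interchanging $Y$ and $Z$ in the last identity gives $[Y,[Z,X]]=2[Z,[Y,X]]$, and combining the two relations yields $[Z,[Y,X]]=4[Z,[Y,X]]$, whence $3[Z,[Y,X]]=0$ and, since $\fg$ is a real vector space, $[Z,[Y,X]]=0$ for all $X,Y,Z\in\fg$. This is precisely the assertion that $G$ is abelian or two-step nilpotent. The argument is short and purely multilinear; the only point requiring a little care --- and the closest thing to an obstacle --- is realizing that the identity must be polarized before Jacobi can be brought to bear, and that one then has to invoke the symmetry between the variables a second time in order to close the loop.
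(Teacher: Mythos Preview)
Your proof is correct and follows essentially the same route as the paper's: polarize the hypothesis $\ad_X^2=0$ to obtain $[X,[Y,Z]]+[Y,[X,Z]]=0$, combine with the Jacobi identity to get $[Z,[Y,X]]=2[Y,[Z,X]]$, swap $Y\leftrightarrow Z$, and conclude $[Z,[Y,X]]=4[Z,[Y,X]]$. Your explicit remark that $\fg$ is a real vector space (so division by $3$ is harmless) is a small but welcome addition.
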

		We have seen that the class of non-crooked homogeneous Lie groups contains the abelian and the two-step nilpotent homogeneous Lie groups. We will now see that a higher nilpotency degree always implies crookedness.
		
		\begin{Proposition}
			A three-step homogeneous Lie group is crooked.
			\begin{proof}
				Let $H$ be a hyperplane given by $H=v_0 + \sum_{i=1}^n t_i v_i$, with $t_i \in \RR$, $v_i \in \RR^d$ and $d$ the dimension of the Lie group. We get for all $X \in \fg$
				\footnotesize\begin{align*}
					X \ast H &= X + H + \frac{1}{2}[X,H] + \frac{1}{12}([X,[X,H]]-[H,[X,H]])\\
					&= X+ v_0 + \frac{1}{2}[X,v_0]+\frac{1}{12}\left([X,X,v_0]+[v_0,[X,v_0]]\right)\\
					&\quad\quad + \sum_{i=1}^n t_i\cdot \left( v_i+\frac{1}{2} [X,v_i]+ \frac{1}{12}([X,[X,v_i]] - [v_0,[X,v_i]] - [v_i,[X,v_0]]) \right)\\
					&\quad\quad - \frac{1}{12} \sum_{i=1}^n \sum_{j=1}^n t_i t_j [v_i,[X,v_j]].
				\end{align*}\normalsize
				So for this to be non-crooked the last sum has to disappear, i.e.
				\begin{equation*}
					\sum_{i=1}^n \sum_{j=1}^n t_i t_j [v_i,[X,v_j]] = 0.
				\end{equation*}
				But since the $t_i$, $t_j$ are parameters we can only compare the summands with the same coefficients, so for all $i,j \in \{1,...,n\}$, $i\neq j$
				\begin{equation*}
					[v_i,[X,v_j]] + [v_j, [X,v_i]] =0
				\end{equation*}
				and for the diagonal, i.e. $i=j$
				\begin{equation*}
					[v_i,[X,v_i]]=0.
				\end{equation*}
				But this last conditions is the locally two-step nilpotency condition, which as we have seen implies two-step nilpotency.
			\end{proof} 
		\end{Proposition}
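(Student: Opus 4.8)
The plan is to prove the contrapositive of the implications set up in this appendix, namely: \emph{if a homogeneous Lie group $G$ is non-crooked, then $G$ is two-step nilpotent or abelian.} Since a three-step nilpotent group is, by definition, neither two-step nilpotent nor abelian, it then cannot be non-crooked, i.e.\ it is crooked. As throughout this section I identify the underlying set of $G$ with $\fg$ via $\exp$, so that products are given by the BCH multiplication $\ast$, and a hyperplane of $G$ is an affine hyperplane $H = \{v_0 + \sum_{i=1}^{d-1} t_i v_i \mid t_i \in \RR\}$ of $\fg \cong \RR^d$, with $v_1,\dots,v_{d-1}$ a basis of its direction space.

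First I would write down the BCH formula available for a three-step nilpotent $\fg$, namely $X \ast Y = X + Y + \tfrac12[X,Y] + \tfrac1{12}\bigl([X,[X,Y]] - [Y,[X,Y]]\bigr)$, and substitute $Y = v_0 + \sum_i t_i v_i$. Expanding by bilinearity of the bracket and sorting the output by total degree in the parameters $t_i$, the terms of degree $\le 1$ assemble into an affine map $t \mapsto \tilde v_0 + \sum_i t_i \tilde v_i$ of $\RR^{d-1}$ into $\fg$ (and, since $Y \mapsto X \ast Y$ is a diffeomorphism, the $\tilde v_i$ are linearly independent), while the only contribution of degree $2$ comes from $-\tfrac1{12}[Y,[X,Y]]$ and equals the vector-valued quadratic form $Q(t) := -\tfrac1{12}\sum_{i,j} t_i t_j\,[v_i,[X,v_j]]$. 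Hence $X \ast H$ is the image of the polynomial map $t \mapsto \tilde v_0 + \sum_i t_i \tilde v_i + Q(t)$.

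Next I would argue that non-crookedness forces $Q \equiv 0$. The left translation $L_X\colon Y \mapsto X \ast Y$ is a polynomial bijection of $\fg$, and by hypothesis it sends every affine hyperplane onto an affine hyperplane; since an affine line is an intersection of affine hyperplanes and $L_X$ is bijective, $L_X$ also sends affine lines onto affine lines, hence is an affine map by the fundamental theorem of affine geometry, and therefore has vanishing quadratic part. (Alternatively one checks directly that if $Q \not\equiv 0$ then the parametrization above does not surject onto a full affine hyperplane, so $X \ast H$ is a proper subset of an affine hyperplane and hence not a hyperplane of $G$.) Consequently $\sum_{i,j} t_i t_j\,[v_i,[X,v_j]] = 0$ identically in $t$ for every hyperplane $H$ and every $X \in \fg$. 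Comparing the coefficient of $t_i^2$ gives $[v_i,[X,v_i]] = 0$ for every basis direction $v_i$; as an arbitrary nonzero $v \in \fg$ is a direction vector of some hyperplane, this yields $\ad_v^2(X) = [v,[v,X]] = 0$ for all $v,X \in \fg$, i.e.\ $G$ is locally two-step nilpotent. By the preceding proposition of this appendix a locally two-step nilpotent Lie group is two-step nilpotent or abelian, contradicting the assumption that $G$ is three-step nilpotent; hence $G$ is crooked.

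I expect the only genuinely delicate point to be the step ``$X \ast H$ a hyperplane $\Rightarrow$ $Q \equiv 0$'': the affine-geometry argument above is clean but invokes a classical rigidity theorem, and the direct alternative requires tracking the image (not merely the coefficients) of the quadratic parametrization. One can sidestep this entirely by producing a single witness. In the free three-step nilpotent Lie algebra on two generators $x,y$, take $X = x$ and a hyperplane $H$ through the origin whose direction space is $\mathrm{span}(x,y,[x,y],[x,[x,y]])$; then for this $H$ one has $\mathrm{span}(\tilde v_i) = \mathrm{span}(x,y,[x,y],[x,[x,y]])$ as well, while $Q(t)$ contains the monomial $-\tfrac1{12}t_y^2\,[y,[x,y]]$, and since $[y,[x,y]]$ is a nonzero degree-$3$ basis vector lying outside $\mathrm{span}(\tilde v_i)$, the image $x \ast H$ is not contained in any affine hyperplane, hence is not a hyperplane of $G$. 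Finally, every three-step nilpotent group contains such a configuration, since by the contrapositive of the locally-two-step proposition it fails to be locally two-step nilpotent, so there exist $X,Y$ with $[Y,[Y,X]] \neq 0$, and one runs the same computation relative to a hyperplane whose direction avoids this bracket vector.
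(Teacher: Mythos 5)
Your proposal is correct and rests on the same Baker--Campbell--Hausdorff computation as the paper's proof: both extract the quadratic-in-$t$ coefficient $Q(t) = -\tfrac1{12}\sum_{i,j} t_i t_j\,[v_i,[X,v_j]]$ from $X \ast H$ and reduce the question to its vanishing, then pass through the locally-two-step proposition proved earlier in the appendix. The genuine difference is that you flag, and then close, a step the paper asserts without justification: the paper simply declares ``for this to be non-crooked the last sum has to disappear,'' but non-crookedness says only that the \emph{image set} $X \ast H$ is an affine hyperplane, not that the particular polynomial parametrization $t \mapsto \tilde v_0 + \sum_i t_i \tilde v_i + Q(t)$ is affine. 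A priori $Q(t)$ could land in $\mathrm{span}(\tilde v_i)$ for every $t$ without being identically zero. Your affine-geometry argument is a clean repair: since $L_X$ is a bijection sending hyperplanes onto hyperplanes, it sends lines (as intersections of hyperplanes) onto lines, hence is affine by the fundamental theorem of affine geometry, and an affine $L_X$ forces the quadratic part $-\tfrac1{12}[Y,[X,Y]]$ of BCH to vanish identically, which is exactly local two-step nilpotency. That argument is complete and preferable. Your alternative direct-witness route is also essentially right for the free three-step algebra on two generators, but the final sentence -- transporting the witness into an arbitrary three-step group by choosing ``a hyperplane whose direction avoids this bracket vector'' -- would still need a short verification that one can pick the hyperplane so that the offending component of $Q$ does not get absorbed into $\mathrm{span}(\tilde v_i)$; the affine-geometry route sidesteps this entirely, so lean on that one. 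Net: same computation, but your version makes explicit (and actually justifies) the ``image is a hyperplane $\Rightarrow$ quadratic coefficient vanishes'' step that the paper glosses over, and also makes explicit the observation that every nonzero $v$ is a direction vector of some hyperplane so that the diagonal identity upgrades to local two-step nilpotency for all of $\fg$.
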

		
		\begin{Corollary}
			All nilpotent homogeneous Lie groups, with nilpotency degree greater than two, are crooked. 
		\end{Corollary}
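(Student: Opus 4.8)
The plan is to reduce the statement to the three-step case treated in the previous proposition by means of a quotient. Let $G$ be a homogeneous Lie group of nilpotency degree $k \geq 3$ and Lie algebra $\fg$, and set $\mathfrak{n} := \fg^{(4)}$, the fourth term of the lower central series. Since the dilations $D_r$ are Lie algebra automorphisms, the operator $A$ in $D_r = \exp(A\ln r)$ is a derivation, and derivations preserve every term of the lower central series; hence $\mathfrak{n}$ is a dilation-invariant ideal and $\bar{\fg} := \fg / \mathfrak{n}$ carries the structure of a homogeneous Lie algebra, with the induced $\bar{A}$ still diagonalisable with positive eigenvalues. This quotient is three-step nilpotent: $\bar{\fg}^{(4)} = 0$, while $\bar{\fg}^{(3)} = \fg^{(3)}/\fg^{(4)} \neq 0$, because $\fg^{(3)} = \fg^{(4)}$ would give $\fg^{(3)} = \fg^{(m)}$ for all $m \geq 3$ and hence $\fg^{(3)} = 0$, contradicting $k \geq 3$. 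By the previous proposition, the homogeneous Lie group $\bar{G}$ attached to $\bar{\fg}$ is crooked: there are a hyperplane $\bar{H} \subset \bar{\fg}$ and an element $\bar{X} \in \bar{\fg}$ with $\bar{X} \ast \bar{H}$ not a hyperplane.

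Next I would lift this obstruction to $G$. Working in exponential coordinates throughout, so that $G = (\fg, \ast)$ and $\bar{G} = (\bar{\fg}, \ast)$, the canonical projection $\pi : \fg \to \bar{\fg}$ is simultaneously a linear surjection and a group homomorphism, since the BCH product is built from the bracket. Pick any $X \in \pi^{-1}(\bar{X})$ and put $H := \pi^{-1}(\bar{H})$, which is a hyperplane in $\fg$. The key point is that $\mathfrak{n}$ is an ideal, so $g \ast \mathfrak{n} = g + \mathfrak{n}$ for every $g \in \fg$ (immediate from the BCH formula: $g \ast n - g \in \mathfrak{n}$, and $n \mapsto g \ast n - g$ is a polynomial bijection of $\mathfrak{n}$). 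Consequently $H$, being a union of $\mathfrak{n}$-cosets, is $\mathfrak{n}$-saturated as a subset of the vector space $\fg$, and then so is $X \ast H = \{X \ast h : h \in H\}$, because $(X \ast h) \ast \mathfrak{n} = X \ast (h \ast \mathfrak{n}) \subseteq X \ast H$. Now if $X \ast H$ were a hyperplane $H'$, then $H' + \mathfrak{n} = H'$ would force $\mathfrak{n}$ to lie in the direction space of $H'$, so $\pi(H')$ would be an affine subspace of $\bar{\fg}$ of dimension $(\dim \fg - 1) - \dim \mathfrak{n} = \dim \bar{\fg} - 1$, i.e. a hyperplane; but $\pi(H') = \pi(X \ast H) = \bar{X} \ast \bar{H}$, which is not a hyperplane. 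This contradiction shows $X \ast H$ is not a hyperplane, so $G$ is crooked.

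I expect the main obstacle to be the careful bookkeeping in the lifting step: proving cleanly that $g \ast \mathfrak{n} = g + \mathfrak{n}$ (equivalently, that $H = \pi^{-1}(\bar{H})$ is saturated for both the group cosets and the vector-space cosets of $\mathfrak{n}$) and that this saturation survives left translation by $X$, so that the dimension count $\dim \pi(H') = \dim \bar{\fg} - 1$ is rigorous. All of this follows from $\mathfrak{n} = \fg^{(4)}$ being an ideal together with the BCH formula, but it must be phrased precisely. The case $k = 3$, where $\mathfrak{n} = 0$, is the degenerate one in which the corollary is nothing but the previous proposition, so no generality is lost there.
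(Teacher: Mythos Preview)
The paper states this corollary without proof, immediately after the proposition on three-step groups; the implicit suggestion is that the same BCH analysis applies. Your argument is correct and takes a cleaner route: rather than recomputing the BCH expansion in higher step (where additional terms appear and one must argue they lie in $\fg^{(4)}$ and hence cannot cancel the obstructing quadratic part $-\tfrac{1}{12}\sum_{i,j} t_it_j[v_i,[X,v_j]]$), you pass to the quotient $\bar\fg=\fg/\fg^{(4)}$, observe it is a three-step homogeneous Lie algebra, apply the previous proposition there, and lift the witness of crookedness back through the linear-and-group epimorphism $\pi$.

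All the ingredients check out: $\fg^{(4)}$ is preserved by the derivation $A$ and hence by the dilations, so $\bar\fg$ is homogeneous with $\bar A$ still diagonalisable with positive eigenvalues; the strict inclusion $\fg^{(4)}\subsetneq\fg^{(3)}$ follows from nilpotency as you note, so $\bar\fg$ is genuinely three-step; the identity $g*\mathfrak n=g+\mathfrak n$ for an ideal $\mathfrak n$ (immediate from BCH, since every commutator term lies in $\mathfrak n$) shows that $X*H$ is $\mathfrak n$-saturated; and then, if $X*H$ were a hyperplane $H'$, the inclusion $\mathfrak n\subset$ (direction of $H'$) forces $\dim\pi(H')=\dim\bar\fg-1$, contradicting $\pi(X*H)=\bar X*\bar H$. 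Your approach has the advantage of making the reduction to the three-step case explicit and avoiding any new BCH bookkeeping.
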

		
		\subsection{Form of the polynomial action}\label[appendix]{Appendix:Poly2}~\par\medskip
		
		We can find some restrictions on the form of the polynomials in the group law. Since we have a dilation structure we get
		\begin{align*}
			(r^{\nu_1}P_1(x,y),&...,r^{\nu_n}P_n(x,y))=D_r( P_1(x,y),..., P_n(x,y)) = D_r(xy)\\
			&= D_r(x) D_r(y) = ( P_1(D_r(x),D_r(y)),..., P_n(D_r(x),D_r(y))).
		\end{align*}
		This means for the polynomials we have
		\begin{equation*}
			P_i(D_r(x),D_r(y))= r^{\nu_i}P_i(x,y).
		\end{equation*}
		This gives us a restriction on the form of the $P_i$ namely if
		\begin{equation*}
			P_i(x,y)= \sum_{\alpha_1,...,\alpha_n,\beta_1,...,\beta_n \in \NN} c_{(\alpha_1,...,\alpha_n,\beta_1,...,\beta_n)} x_1^{\alpha_1}...x_n^{\alpha_n} y_1^{\beta_1}...y_n^{\beta_n} 
		\end{equation*}
		then for all summands with $c\neq 0$ it is $\nu_i=\nu_1\alpha_1+...+\nu_n\alpha_n+\nu_1\beta_1+...+\nu_n\beta_n$. Since we sorted the $\nu_i$ by their size we see that in the $i$-th entry all $\alpha_j$ and $\beta_j$ have to be zero if $\nu_j > \nu_i$. This means that for the Polynomial $P_i$ we have that it is only dependent on the first $i$ entries of both $x$ and $y$.\par
		Another restriction which can be seen from the BCH formula is that the polynomials are of a certain form, namely
		\begin{equation*}
			P_i(x,y) = x_i+y_i + \sum_{\substack{\alpha_1,...,\alpha_n,\beta_1,...,\beta_n \in \NN\\\sum_i \alpha_i \neq 0, \sum_i \beta_i \neq 0 }} c_{(\alpha_1,...,\alpha_n,\beta_1,...,\beta_n)} x_1^{\alpha_1}...x_{i-1}^{\alpha_{i-1}} y_1^{\beta_1}...y_{i-1}^{\beta_{i-1}}. 
		\end{equation*}
		This means that in the polynomial all summands except the two in front have entries from $x$ and $y$. \par
		
		Notice that in the non-crooked case there can be no product of the $t_i$, this means that we have either one of the $\beta_j$ is one and all the overs are zero or all $\beta_j$ are zero.\par \medskip

		To finish this appendix we give one example of such a group. Consider the Heisenberg group $\HH$ of upper triangular matrices with ones on the diagonal. The entries of the second diagonal have weight 1 and the third diagonal has weight 2. For two elements 
		\begin{equation*}
			\begin{pmatrix} 1 & a & c \\ 0 & 1 & b \\ 0 & 0 & 1	\end{pmatrix}, \begin{pmatrix} 1 & x & z \\ 0 & 1 & y \\ 0 & 0 & 1	\end{pmatrix}
		\end{equation*}
		the polynomials then are given by
		\begin{align*}
			&P_1((a,b,c),(x,y,z))= a+x,\\
			&P_2((a,b,c),(x,y,z))= b+y,\\
			&P_3((a,b,c),(x,y,z))= c+z + ay. 
		\end{align*}
		And we see that $\HH$ is $2$-step nilpotent and therefore non-crooked.\par \medskip 
			
	\end{appendices}
	
\printbibliography
\end{document}